\newtheorem{theorem}{Theorem}[section]
\newtheorem{assumption}[theorem]{Assumption}
\newtheorem{corollary}[theorem]{Corollary}
\newtheorem{construction}[theorem]{Construction}
\newtheorem{proposition}[theorem]{Proposition}
\newtheorem{lemma}[theorem]{Lemma}
\theoremstyle{definition}
\newtheorem{definition}[theorem]{Definition}
\theoremstyle{remark}
\newtheorem{remark}[theorem]{Remark}
\newtheorem{notation}[theorem]{Notation}
\newtheorem*{claim}{Claim}
\newcounter{notes}
\newenvironment{Notes}
{\begin{list}{
\arabic{notes}. }{\usecounter{notes}%
\setlength{\labelsep}{0pt}%
\setlength{\itemsep}{5pt}%
\setlength{\leftmargin}{0pt}%
\setlength{\labelwidth}{0pt}%
\setlength{\listparindent}{0pt}}}%
{\end{list}}
\newcommand\A{\mathcal{A}}
\newcommand\mN{\mathcal{N}}
\newcommand\mC{\mathcal{C}}
\newcommand\mP{\mathcal{P}}
\newcommand{\K}{\mathcal{K}}
\renewcommand{\L}{\mathcal{L}}
\newcommand{\F}{\mathcal{F}}
\newcommand{\N}{\mathbb{N}}
\newcommand{\R}{\mathbb{R}}
\newcommand{\C}{\mathbb{C}}
\newcommand{\Z}{\mathbb{Z}}
\newcommand\G{\mathcal{G}}
\renewcommand\H{\mathcal{H}}
\newcommand\ulg{{\on g}}
\newcommand{\ddt}{\frac{d}{dt}}
\renewcommand{\P}{\mathbb{P}}
\newcommand{\Def}{ \on{Def} } 
\newcommand{\Spec}{ \on{Spec} } 
\newcommand{\oSig}{\overline \Sigma}
\newcommand{\tSig}{\tilde \Sigma}
\newcommand{\Edge}{\on{Edge}}
\newcommand{\vertex}{\on{Vert}}
\newcommand{\Cyl}{\on{Cyl}}
\newcommand{\inc}{\on{\iota}}
\newcommand{\ev}{\on{ev}}
\newcommand{\ft}{\on{ft}}
\newcommand{\st}{\on{st}}
\newcommand{\img}{i}
\newcommand\lie[1]{\mathfrak{#1}}
\renewcommand{\k}{\lie{k}}
\newcommand{\g}{\lie{g}}
\newcommand{\on}{\operatorname}
\newcommand{\Aut}{ \on{Aut} }
\newcommand{\Ad}{ \on{Ad} } 
\newcommand{\End}{\on{End}}
\newcommand\Hol{\on{Hol}}
\newcommand\Id{\on{Id}}
\newcommand\D{D}
\newcommand\bd{\on{bd}}
\newcommand{\hra}{\hookrightarrow}
\newcommand{\tensor}{\otimes}
\renewcommand{\d}{{\on{d}}}
\newcommand{\ol}{\overline}
\newcommand\ul{\underline}
\newcommand{\delbar}{\ol{\partial}}
\newcommand\bs{\backslash}
\newcommand\Sig{\Sigma}
\newcommand\sig{\sigma}
\newcommand\eps{\epsilon}
\newcommand\Om{\Omega}
\newcommand\om{\omega}
\newcommand{\lan}{\langle}
\newcommand{\ran}{\rangle}
\newcommand{\hh}{{\frac{1}{2}}}
\newcommand{\qq}{{\frac{1}{4}}}
\renewcommand{\ss}{{\operatorname{ss}}}
\newcommand{\s}{{\operatorname{s}}}
\newcommand{\dual}{{\raisebox{0.5ex}{\scalebox{0.5}[0.25]{$\vee$}}}}
\newcommand\Mod[1]{\lVert #1 \rVert}
\newcommand\qu{/\kern-.7ex/} 
\begin{document}
\author{Sushmita Venugopalan} \address{ Chennai Mathematical
  Institute, H1 Sipcot IT Park, Siruseri, Chennai, India.  }
 
\title{Vortices on surfaces with cylindrical ends}
\begin{abstract}
  We consider Riemann surfaces obtained from nodal curves with
  infinite cylinders in the place of nodal and marked points, and
  study the space of finite energy vortices defined on these
  surfaces. To compactify the space of vortices, we need to consider
  stable vortices -- these incorporate breaking of cylinders and
  sphere bubbling in the fibers. In this paper, we prove that the
  space of gauge equivalence classes of stable vortices representing a
  fixed equivariant homology class is compact and Hausdorff under the
  Gromov topology. We also show that this space is homeomorphic to the
  moduli space of quasimaps defined by Ciocan-Fontanine, Kim and
  Maulik in \cite{CKM:quasimap}.
\end{abstract}

\maketitle
\tableofcontents

\parskip .05in The moduli space of stable quasimaps described by
Ciocan-Fontanine, Kim and Maulik in \cite{CKM:quasimap} is a
compactification of the space of maps from non-singular curves to
targets that are Geometric Invariant Theory (GIT) quotients. The
domain curves have genus $g$ and $n$ marked points and the complex
structure is allowed to vary. The points in the boundary of the
quasimap moduli space exhibit two phenomena: first, maps may acquire a
finite number of base points, and second, the domain curve may
degenerate to a nodal curve. Base points are required to be away from
marked points and nodal points. This paper provides a symplectic
version of stable quasimaps.

The quasimap compactification is different from Kontsevich's stable
map compactification, which works for general targets. The boundary
points of the Kontsevich moduli space are maps whose domains are nodal
curves. In contrast, quasimaps with base points are no longer honest
maps from a nodal curve to the target. We explain this point. Let $G$
be a connected complex reductive group and $X$ be a polarized affine
$G$-variety. The GIT quotient $X \qu G$ is an open substack of the quotient
stack $X/G$. A quasimap $u$ is a map from a curve $C$ to the stack
$X/G$. In the complement of the set of base points in $C$, the map $u$
has image in $X \qu G$. Maps from a curve $C$ to the quotient stack
correspond to $G$-maps from a principal $G$-bundle on $C$ to $X$. The
GIT quotient is the stack-theoretic quotient of the semistable locus $X^\ss$. Base points are thus the points on $C$ that map to $X \bs
X^\ss$.

Suppose $K$ is a maximal compact subgroup of $G$. In symplectic
geometry, the maps $C \to X/G$ are analogous to {\em $K$-vortices}
from $C$ to $X$. When $X$ is as above, the action of $K$ on $X$ is
Hamiltonian and has a moment map $\Phi:X \to \k^\dual$. By choosing an
$\Ad_K$-invariant metric on $\k$, we assume there is an identification
$\k^\dual \simeq \k$. A {\em vortex} $(A,u)$ consists of a connection
$A$ on a principal $K$-bundle $P \to C$ and a holomorphic section $u:C
\to P \times_K X$ with respect to $\delbar_A$ that satisfies the
equation
$$F_A + \Phi(u)vol_C=0.$$
This equation requires a choice of area form on $C$. To make sure that
base points are away from special points on $C$, we `blow up' the area
form at the special points. Punctured neighborhoods of these points
will be isometric to semi-infinite cylinders.  The blowing up of the
area form near the special points ensures that the limit of $u$, as we
approach the special points, lies in $\Phi^{-1}(0)$ and hence is in
the semistable locus $X^\ss$. We define a smooth family of metrics, called
the {\em neck-stretching metrics} on stable nodal genus $g$,
$n$-pointed curves such that the metric blows up at special points in
the above-mentioned way, so any such curve now corresponds to a
Riemann surface with cylindrical ends. The space of vortices
representing a given equivariant homology class in $H_2^K(X)$ defined
on stable nodal curves with neck-stretching metric is not compact. To
compactify it, we allow breaking of cylinders (as in Floer theory) and
sphere bubbles in $X$-fibers (as in the work of Ott \cite{Ott}).  The
resulting objects are called stable vortices. The space of stable
vortices in a fixed class of $H_2^K(X)$ modulo $K$-gauge
transformations is compact and Hausdorff under the Gromov topology.

Suppose $\ol M_{g,n}$ is the coarse moduli space of stable nodal
curves of genus $g$ with $n$ marked points. We assume $n \geq 1$, and
for stability $n+2g-3 \geq 0$. If a vortex $(A,u)$ has finite energy
and bounded image in $X$, a removal of singularity result applies at
the cylindrical ends, which means that $u$ is well-defined over a
closed complex curve. Then, $u$ represents a class in $H_2^K(X)$. Let
$MV^K_{g,n}(X,\beta)$ be the space of vortices $(A,u)$ on stable genus
$g$, $n$-pointed curves equipped with the neck-stretching metric, such
that $[u]=\beta \in H_2^K(X)$ modulo the group of (unitary) gauge
transformations. Removal of singularity at the cylindrical ends
ensures that the evaluation maps
$$\ev_j:MV^K_{g,n}(X) \to X\qu G \quad (A,u)\mapsto \lim_{z \to z_j}Ku(z_j)$$
are well-defined for marked points $z_1,\dots,z_n$. By the definition
of $MV^K_{g,n}(X)$, there is a forgetful map
$$\ft:MV^K_{g,n}(X) \to \ol M_{g,n}.$$
In the compactification of $MV^K_{g,n}(X)$, the domain may not be
stable, $\ft$ is defined as the stabilization of the domain, achieved
by contracting unstable components. Our first result is:
\begin{theorem}\label{thm:maincpt} Suppose $(X,\om,K,\Phi)$ is a
  $K$-Hamiltonian symplectic manifold that is equivariantly convex at
  $\infty$ (see Definition \ref{def:conv}), has a proper moment map
  $\Phi$ and has free action of $K$ on $\Phi^{-1}(0)$.
 The compactification of $MV^K_{g,n}(X,\beta)$, called $\ol
  {MV}^K_{g,n}(X,\beta)$, is a compact Hausdorff space under the
  Gromov topology. The forgetful map $\ft$ and the evaluation maps
  $\ev_j$ are well-defined and continuous on
  $\ol{MV}^K_{g,n}(X,\beta)$.
\end{theorem}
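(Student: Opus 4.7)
The plan is to establish compactness by a Gromov-style extraction argument combining three degeneration mechanisms: Deligne-Mumford degeneration of the underlying stable $n$-pointed curve, bubbling of holomorphic spheres in the $X$-fibers in the spirit of Ott \cite{Ott}, and breaking of finite-energy cylindrical vortices along the long neck regions produced by the neck-stretching metric, in the spirit of Floer theory. Continuity of $\ft$ and $\ev_j$ together with the Hausdorff property will then follow from uniqueness of the Gromov limit up to gauge.

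I would begin by fixing a sequence $(A_\nu,u_\nu)$ in $MV^K_{g,n}(X,\beta)$ and deriving uniform a priori bounds. Since the class $\beta \in H_2^K(X)$ is fixed and the vortex equation $F_{A_\nu}+\Phi(u_\nu) vol_{C_\nu}=0$ holds, the Yang-Mills-Higgs energy can be rewritten as a topological pairing depending only on $\beta$, yielding a uniform energy bound. Properness of $\Phi$ combined with equivariant convexity at infinity produces a uniform $C^0$ bound by a maximum principle applied to a convex exhausting function, so the images $u_\nu$ lie in a fixed $K$-invariant compact subset of $X$.

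After passing to a subsequence, the domains converge in $\ol M_{g,n}$ to a stable nodal curve $C_\infty$ equipped with its neck-stretching metric, with long cylindrical necks forming around the nodes of $C_\infty$. Uhlenbeck's gauge fixing provides local Coulomb gauges on compact subsets of the smooth locus and of each forming neck, and elliptic regularity applied to the vortex equation gives $C^\infty_\loc$ convergence to a limit vortex on each limiting component and on each broken cylinder, away from a finite set of bubble points. At each bubble point a standard rescaling extracts a nonconstant holomorphic sphere in an $X$-fiber; at each long neck, a Floer-type rescaling recovers a finite-energy cylindrical vortex, possibly breaking further. The free action of $K$ on $\Phi^{-1}(0)$ ensures removal of singularity at every cylindrical end, so every limiting piece is a genuine vortex with well-defined asymptotic evaluation in $X\qu G$. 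Additivity of the equivariant homology class across nodes, bubbles, and broken cylinders ensures the assembled limit represents $\beta$, and by construction it is stable, hence lies in $\ol{MV}^K_{g,n}(X,\beta)$.

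Continuity of $\ft$ is essentially built into the definition of Gromov convergence once stabilization of unstable ghost components is incorporated, and continuity of $\ev_j$ follows from continuous dependence of the asymptotic limit on the vortex data at each marked cylindrical end. Hausdorffness of the gauge-equivalence quotient follows from uniqueness of the Gromov limit. The hard part will be the energy quantization step: one needs uniform positive lower bounds on the energy of any nontrivial fiber sphere bubble and on any nonconstant cylindrical vortex connecting critical orbits of $\tfrac12|\Phi|^2$, together with a no-energy-loss statement at each long neck, in order to conclude that the extracted limit decomposes into only finitely many components of the type allowed by the definition of stable vortex. This quantization in the cylindrical vortex setting is the most delicate point of the proof, and it is precisely where the properness of $\Phi$ and the free $K$-action on $\Phi^{-1}(0)$ enter most crucially.
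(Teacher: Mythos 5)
Your plan follows the paper's proof essentially verbatim: the energy bound comes from the topological energy identity \eqref{eq:energyhom}, the $C^0$ bound from equivariant convexity together with properness of $\Phi$ (Lemma 2.7 of \cite{CGMS}), and compactness is assembled exactly as in the paper from convergence on stable components, breaking of cylinders at marked points and at nodes, and (in the non-aspherical case) fiber sphere bubbles in the sense of \cite{Ott}, with energy quantization for cylindrical vortices as the key input. The one point you understate is Hausdorffness: the paper stresses that uniqueness of Gromov limits alone does not immediately yield a Hausdorff topology, and it constructs auxiliary distance-like functions $d_v$ satisfying properties D1--D3 and invokes Proposition 5.6.5 of \cite{MS} to conclude.
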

Compactifications of the space of symplectic vortices have been
constructed by \cite{MundetTian:cpt}, \cite{Zilt:QK} and
\cite{Ott}. Ziltener (\cite{Zilt:QK}) constructs a compactification of
the space of vortices on the complex plane $\C$. In this case, besides
sphere bubbling in the fibers, there is bubbling at infinity that
produces sphere bubbles in the quotient $X \qu G$ and vortices on $\C$
attached to these bubbles. Our situation for vortices on surfaces with
cylindrical ends is simpler in comparison.

Mundet-Tian (\cite{MundetTian:cpt}) have constructed a
compactification for vortices with varying domain curve, equipped with
a finite volume metric. In this case, when the domain curve
degenerates to a nodal curve, the map $u$ can degenerate to a chain of
gradient flow lines of the moment map $\Phi$ ($K=S^1$, so $i\Phi$ maps
to $\R$). In our approach, by allowing infinite volume at nodal points
we avoid these structures. The infinite volume also ensures that nodal
points map to the semistable locus, where the group action is free,
which helps us avoid meromorphic connections present in
\cite{MundetTian:cpt}. Further, the behavior of vortices away from
base points is similar to that of holomorphic curves on GIT quotient
$X \qu G$. This phenomenon ties in with the philosophy of gauged
Gromov-Witten theory where the moduli spaces of vortices with target
$X$ is related to the moduli space of holomorphic curves on the
quotient $X \qu G$ by wall-crossing as in Toda \cite{Toda} and
Woodward (\cite{W:qkirwan1}, \cite{W:qkirwan2}, \cite{W:qkirwan3}).

Let $Qmap_{g,n}(X \qu G, \beta)$ be the space of stable quasimaps
whose domains are nodal $n$-pointed curves of genus $g$ and which
represent the homology class $\beta \in H_2^G(X)$. Assuming that the
$G$-action on the semistable locus $X^\ss$ is free and that $X$ is an
affine variety, the paper \cite{CKM:quasimap} proves that the moduli
space of quasimaps $Qmap_{g,n}(X \qu G, \beta)$ is a Deligne-Mumford
stack that is proper over the affine quotient. The next result of this
paper is that $Qmap_{g,n}(X \qu G, \beta)$ is homeomorphic to the
space of stable vortices that are in the equivariant homology class
$\beta$.  We state the following theorem for the case that the affine
quotient is a single point, or in other words, the GIT quotient is
projective. In that case, $X$ can be realized as a K\"ahler
Hamiltonian $K$-manifold with a proper moment map that is
equivariantly convex at infinity.
\begin{theorem} \label{thm:main} Let $g$, $n$, $K$, $G$, $X$ be as
  above and $\beta \in H_2^K(X)$. Suppose the GIT quotient $X \qu G$
  is projective. There is a homeomorphism
$$\Psi:Qmap_{g,n}(X \qu G, \beta) \to \ol {MV}^K_{g,n}(X,\beta)$$ 
that commutes with the evaluation maps $\ev_j$ to the quotient $X \qu
G$ and the forgetful map $\ft$ to $\ol M_{g,n}$.
\end{theorem}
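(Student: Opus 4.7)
The plan is to produce $\Psi$ by a relative Hitchin--Kobayashi correspondence, show it is a bijection, and invoke the fact that any continuous bijection between compact Hausdorff spaces is a homeomorphism. Theorem \ref{thm:maincpt} supplies compactness and Hausdorffness on the vortex side, and \cite{CKM:quasimap} supplies the same on the quasimap side under the projectivity hypothesis on $X\qu G$. Compatibility of $\Psi$ with $\ev_j$ and $\ft$ will be manifest from the pointwise construction.

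Given a stable quasimap $(C,\{z_j\},\mathcal{P},u)$ with base-point set $B$ disjoint from marked and nodal points, I equip $C$ with the neck-stretching metric so that cylindrical ends replace each marked and nodal point. On the complement of $B$ the section $u$ takes values in $\mathcal{P}\times_G X^\ss$, where $K$ acts freely, so a fiberwise version of the Hitchin--Kobayashi correspondence in the spirit of Mundet--Tian \cite{MundetTian:cpt} produces a unique (up to real gauge) complex gauge transformation on each irreducible component converting the holomorphic data into a $K$-reduction $P$ together with a solution of $F_A + \Phi(u)\,vol_C = 0$. Equivariant convexity at infinity and properness of $\Phi$ supply the a priori $C^0$ bounds needed for finite energy and bounded image, and hence for removal of singularity at each cylindrical end. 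At a base point $p\in B$ of length $m$, a rescaling of the local picture converts the base-point datum into a vortex bubble tree in the fiber $(\mathcal{P}\times_G X)_p$ whose total equivariant class matches $m$ times the generator of the orbit class; at each node, the infinite neck either carries no additional energy or breaks into a chain of cylinders, as forced by the matching of asymptotic limits in $\Phi^{-1}(0)/K$.

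Conversely, a stable vortex $(P,A,u)$ yields a holomorphic $G$-bundle $\mathcal{P}:=P\times_K G$ with a $\delbar_A$-holomorphic section of $\mathcal{P}\times_G X$; collapsing each fiber bubble tree to a base point of the appropriate multiplicity and each broken cylinder chain to a node produces a quasimap, and I would need to verify that the vortex stability condition translates exactly into the quasimap stability condition of \cite{CKM:quasimap}, so that the two constructions are mutually inverse on underlying sets.

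The principal obstacle is continuity of $\Psi$ across boundary strata. For a Gromov-convergent family of quasimaps acquiring a new base point $p$ of multiplicity $m$ in the limit, I must show that the complex gauge transformations $g_\nu$ on the vortex side blow up in a controlled way near $p$ and that their rescaled limit yields exactly the fiber bubble tree dictated by $m$; this is the analytic avatar of the base-point/bubbling dictionary, and will depend on energy concentration, quantization of energy for $X$-fiber bubbles, and uniform $C^\infty_\loc$ estimates away from $p$. The reverse continuity --- from Gromov convergence of vortices to convergence of the associated quasimaps in the sense of \cite{CKM:quasimap} --- should follow from the fact that on every compact subset of $C\setminus(B\cup\{\text{nodes}\})$ the $K$-reduction depends continuously on the underlying holomorphic data (by uniqueness in the Hitchin--Kobayashi correspondence and elliptic regularity for the vortex equation), combined with a matching argument at cylindrical ends identifying the asymptotic limit of a broken cylinder chain with the evaluation of the limiting quasimap at the corresponding node or marked point.
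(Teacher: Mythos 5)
Your overall strategy --- build $\Psi$ by a Hitchin--Kobayashi correspondence, check it is a continuous bijection, and conclude by compactness and Hausdorffness of both sides --- is exactly the paper's strategy. But your description of what $\Psi$ does to base points is wrong, and this is not a cosmetic slip: it would break the bijection and the compatibility with $\ft$. You assert that a base point of length $m$ is converted ``into a vortex bubble tree in the fiber'' by rescaling, and conversely that fiber bubble trees must be collapsed to base points. In fact the correspondence of Theorem \ref{thm:HK} leaves the domain curve untouched: the complex gauge transformation $g$ is smooth across base points (which by stability lie in the finite-volume part of $\Sig$, away from all cylindrical ends), and the resulting vortex simply has a section passing through the unstable locus there --- a condition that is not even expressible on the vortex side, since no $G$-action is used. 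Since $X$ is affine, hence aspherical, there are no fiber sphere bubbles at all in this setting; the only degenerate components of a stable vortex are cylindrical ($\R\times S^1$) ones, and these correspond to unstable two-pointed rational components of the quasimap domain on which $u$ is non-constant --- not to base points. The combinatorial types of the quasimap domain and the vortex domain agree exactly, which is precisely why $\Psi$ commutes with $\ft$.

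Because of this, your continuity discussion attacks the wrong difficulty. No blow-up of $g_\nu$ near an emerging base point occurs or needs to be controlled; base points play no role in the convergence analysis. The genuine analytic work, which your sketch does not engage with, is at the cylindrical ends when the domain degenerates: one needs (i) a standard-form normalization of the family of gauged maps near marked points and nodes (Lemmas \ref{lem:stdform}, \ref{lem:stdform_node}), (ii) exponential decay of the correcting complex gauge transformation on the necks, resting on the $W^{1,p}$ control of Proposition \ref{prop:connW1p}, and (iii) uniform invertibility of the linearized operator $\d_A^*\d_A+L_u$ on a family of surfaces with stretching necks, proved by patching the inverse on the central nodal fiber through the sleeves (Lemma \ref{lem:vortcontnodal}). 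Also note that your final paragraph on ``reverse continuity'' is superfluous: once $\Psi$ is a continuous bijection between compact Hausdorff spaces, continuity of $\Psi^{-1}$ is automatic, which is the whole point of the compactness reduction you correctly set up at the start.
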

The map $\Psi$ in the above Theorem is
still a homeomorphism in the case of a general affine quotient, i.e. when $X\qu G$ is not projective -- this
generalization is discussed in Section \ref{sec:noncpt}. The proof of the Theorem \ref{thm:main} does not require that $X$ is affine. It only
requires that $Qmap_{g,n}(X \qu G, \beta)$ is compact. Therefore, we
expect $Qmap_{g,n}(X \qu G, \beta)$ to be compact in more general
situations, provided $X$ is aspherical. In order to remove the
asphericity assumption, the definition of quasimaps will have to be
broadened to include sphere bubbles in the fiber. 

In the proof of Theorem \ref{thm:main}, the bijection $\Psi$ is a
Hitchin-Kobayashi correspondence established in \cite{VW:affine}. The
notion of stability required by this correspondence -- of the point at
infinity (marked points and nodal points in the case of $Qmap$)
mapping to the semistable locus $X^\ss$ -- is part of the definition of
quasimaps. The proof of continuity of $\Psi$ relies on the convexity
of the moment map -- but there are analytic difficulties arising
because of the non-compact domains. To overcome these, we crucially
rely on a stronger version of removal of singularity at infinity for
vortices. The original such result proved by Ziltener in his thesis
(\cite{Zilt:thesis}) for the affine case gives only $L^p$ control on
the decay of the connection. But in the cylindrical case, we are able
to show a similar result (Proposition \ref{prop:connW1p}) giving
$W^{1,p}$ control.

The paper is organized as follows. Section 1 constructs a smooth
family of neck-stretching metrics on stable nodal curves parametrized
by $\ol M_{g,n}$. Sections 2-5 describe vortices and prove Theorem
0.1, the proof appears in Section 5. Section 6 introduces quasimaps
and Section 7 establishes the homeomorphism $\Psi$, the proof of
Theorem 0.2 appears in Section 7.4.

{\bf Acknowledgements:} I want to thank Chris Woodward for suggesting
the idea for this paper and many discussions that helped me along the
way. I was a post-doctoral fellow in Tata Institute of Fundamental
Research at the time the article was written. I was also hosted by
Department of Mathematics, Rutgers University for a month, whose
hospitality I am grateful for.  Finally, I thank the referee for
carefully reading the paper and suggesting improvements.
 
\section{Description of neck stretching
  metrics}\label{sec:neckstrectch}
In this section, we construct a family of metrics for stable nodal
marked curves with special points deleted so that punctured
neighborhoods in the curves are isometric to semi-infinite
cylinders. The presentation is similar to Section 2 in Gonzalez-Woodward
\cite{GW:deform}.
\subsection{Stable curves, gluing}
A compact complex {\em nodal} curve $C$ is obtained from a collection
of smooth compact curves $(C_1,\dots,C_k)$ by identifying a collection
of distinct {\em nodal points}
$$w_j^+\sim w_j^-, \quad w_j^\pm \in C_{\inc(w_j^\pm)}, \quad j=1,\dots,m.$$  
Points on the curves $C_i$ that are not nodal points are called {\em
  smooth points}. A nodal curve with marked points comes with a
collection of $n$ distinct smooth points $\{z_1, \dots, z_n\}$. A
marked nodal curve is {\em stable} if it has finite automorphism
group, i.e. every genus 0 component has at least 3 special points
(marked or nodal point) and a genus 1 component has at least 1 special
point. The {\em genus} of a nodal curve is the genus of a
``smoothing'' of the curve. For example, the genus of the curve in
Figure \ref{fig:nodal_curve} is one.

A {\em family of nodal curves} over a scheme $S$ is a proper flat
morphism $\pi:\mC \to S$ such that each fiber $\mC_s$, $s \in S$ is a
nodal curve. One can ask if there is a space $M$ and a family $U \to
M$ such that for any family $\mC \to S$, there is a unique map $\phi:
S \to M$ such that $\mC$ is isomorphic to the pullback $\phi^*U$. For
marked nodal curves with genus $\geq 1$, such a family does not exist
because there are curves with a non-trivial automorphism group - see
p. 267 in the book \cite{Arb}.
For stable curves, the automorphism group is finite. In that case,
there is a {\em coarse moduli space} $\ol M_{g,n}$ -- this means for
any family of curves $\mC \to S$, there is a unique map $\phi_S: S \to
\ol M_{g,n}$. There is a universal curve $\ol U_{g,n}:=\ol M_{g,n+1}$
that maps to $\ol M_{g,n}$ by forgetting the last point and
contracting unstable components. For any stable curve $C$, the fiber
over the point $[C] \in \ol M_{g,n}$ in $\ol U_{g,n}$ is isomorphic to
$C/\Aut(C)$. Given a family of curves over a pointed scheme $\mC \to
(S,0)$, the finite automorphism group of the central curve
$G:=\Aut(\mC_0)$ acts on an open neighborhood of $0$ in $S$ (which we
continue to call $S$). Further, the map $\phi_S:S \to \ol M_{g,n}$
factors through the quotient:
$$S \overset{\pi}{\to} S/G \overset{\phi_{S/G}}{\to} \ol M_{g,n},\quad \phi_S=\phi_{S/G} \circ \pi.$$
The action of $G$ lifts to $\mC$ and the quotient $\mC/G$ is
isomorphic to the pullback $\phi_{S/G}^*\ol U_{g,n}$. The moduli space
$\ol M_{g,n}$ has the structure of a {\em complex orbifold}. We use
the definition of an orbifold in Satake \cite{Satake}, which says that
locally a complex orbifold is homeomorphic to a neighborhood of the
quotient of $\C^N$ under the holomorphic action of a finite group, and
the transition functions are bi-holomorphisms. This definition has
problems and more sophisticated definitions have been provided, for
example Lerman \cite{Lerm:stack} says that orbifolds should be thought of as Deligne-Mumford stacks. But Satake's definition is
enough for the purposes of this paper.

The combinatorial type of a marked nodal curve $(C,z)$ is a {\em
  modular graph} $\Gamma=(\vertex(\Gamma), \Edge(\Gamma),
\Edge_\infty(\Gamma))$ and a genus function $g:\vertex(\Gamma) \to
\Z_{\geq 0}$. The vertices are the components of $C$, the edges in
$\Edge(\Gamma)$ are nodes in $C$ and the infinite edges
$\Edge_\infty(\Gamma)$ correspond to markings. An edge $w \in
\Edge(\Gamma)$ has two end points $w^\pm$, so it is incident on two
vertices $\inc(w^\pm)$. It is possible that these two vertices are the
same -- see for example Figure \ref{fig:nodal_curve}. For the
markings, there is an ordering of the set $\Edge_\infty(\Gamma)$ by a
bijection $\{1,\dots,n\} \to \Edge(\Gamma)$ and each edge $z \in
\Edge_\infty(\Gamma)$ is incident on only one vertex $\inc(z)$.

A modular graph is {\em stable} if any curve corresponding to it is
stable. The {\em stabilization} of a nodal curve $C$ is obtained by contracting curve components that have genus $0$ and less than 3 marked points.
For a graph $\Gamma$, the stabilization 
$\st(\Gamma)$ is the combinatorial type of the stabilization of any curve $C_\Gamma$ of type $\Gamma$. A {\em morphism} $f:\Gamma_1 \to \Gamma_2$ of
modular graphs corresponds to a sequence of moves, each of one of the two forms:
\begin{description}
\item [Contracting a non-loop edge] Contract an edge that is incident
  on vertices $v_1$, $v_2$, the genus of the combined vertex is the sum of the genus of $v_1$ and $v_2$.
\item [Contracting a loop edge] Delete an edge both whose end-points are incident on the same vertex $v$ and increment the genus of $v$ by $1$.
\end{description}
In terms of the
related nodal curves, each move smooths out a nodal singularity. Based
on the combinatorial type $\Gamma(C,z)$, we can define a
stratification of $\ol M_{g,n}$. If there is a morphism $\Gamma(C) \to
\Gamma(C')$, then $[C'] \preceq [C]$. The lowest stratum consists of
points representing smooth curves, and it is an open set in $\ol
M_{g,n}$. Denote by $M_{g,n}^\Gamma \subset \ol M_{g,n}$ the subspace
parametrizing curves of combinatorial type $\Gamma$. Then, the
boundary of $M_{g,n}^\Gamma$ consists of $\bigcup_{\Gamma \prec
  \Gamma'}M_{g,n}^{\Gamma'}.$

\begin{figure}[t]
  \centering \scalebox{.5}{
  \input{nodalsmooth.pstex_t} }
  \caption{A nodal curve and its combinatorial type. Both vertices
    have genus $0$, but the curve has genus $1$.}
  \label{fig:nodal_curve}
\end{figure}

Given a combinatorial type $\Gamma$, we next discuss how a
neighborhood in $M_{g,n}^\Gamma$ fits into $\ol M_{g,n}$ -- this is
done through deformation theory. Suppose $C$ is a compact curve. Then
a {\em deformation of $C$ by a pointed scheme $(S,0)$}, $0 \in S$, is
a proper flat morphism $\phi:\mC \to S$ plus an isomorphism between
$C$ and the central fiber $\phi^{-1}(0) \simeq \mC_0$. A deformation
$\mC$ of $C$ is {\em universal} if given any other deformation $\mC'
\to (S',0)$, for any sufficiently small neighborhood $U$ of $0$ in
$S'$, there is a {\em unique} morphism $\iota:U \to S$ such that
$\mC'|_U$ is isomorphic to the fibered product $\mC \times_S
U$. Stable curves possess universal deformations. Suppose $\pi:\mC \to
(S,0)$ is a universal deformation of the curve $C$. If $C$ has a
non-trivial stabilizer $G$, then for a sufficiently small neighborhood
$V$ of $0$, the action of $G$ extends to compatible actions on $V$ and
$\mC|_V$ (theorem 6.5, chapter 11, \cite{Arb}).  If $V$ is small
enough and $G$-invariant, then there is an injection $V/G \hra \ol
M_{g,n}$, and thus deformations of curves provide holomorphic orbifold
charts for $\ol M_{g,n}$.  Suppose $\mC \to (S,0)$ is a universal
deformation of the curve $C$. Then, the {\em space of infinitesimal
  universal deformations} of $C$, denoted by $\Def(C)$ is the tangent
space $T_0S$. If $C$ is a nodal curve of type $\Gamma$, {\em a
  deformation of $C$ of type $\Gamma$} is a family of curves $\mC \to
(S_\Gamma,0)$ where every fiber is a curve of type $\Gamma$, along
with an isomorphism between $C$ and the central curve $\mC_0$. If $\mC
\to S_\Gamma$ is a universal deformation of $C$ of type $\Gamma$, then
the {\em space of infinitesimal deformations of type $\Gamma$},
denoted by $\Def_\Gamma(C)$, is defined as the tangent space
$T_0S_\Gamma$. For a nodal curve $C$ of type $\Gamma$, given a
universal deformation $S_\Gamma$ of type $\Gamma$, we can construct a
universal deformation $S$ using a gluing procedure. Suppose $\tilde C$
is the normalization of $C$. By Proposition 3.32 in
\cite{HarrisMorrison},
$$\Def(C)/\Def_\Gamma(C)\simeq \oplus_{w \in \Edge(C)}T_{w^+}\tilde
C_{\inc(w^+)}\tensor T_{w^-}\tilde C_{\inc(w^-)}.$$
\vskip .05in
\begin{construction} \label{const:glue} Let $C$ be a nodal curve and
  suppose for all nodes $w \in \Edge(C)$, we are given holomorphic
  coordinates $\kappa_{w^\pm}$ in small neighborhoods of the lifts of
  the nodes $w^\pm$ in the normalization $\tilde C$. To every small
  $\delta \in \oplus_{w \in \Edge(C)}T_{w^+}\tilde
  C_{\inc(w^+)}\tensor T_{w^-}\tilde C_{\inc(w^-)}$, we can associate
  a curve $C^\delta$.
\end{construction}
\vskip -.1in
\noindent
$C^\delta$ is constructed as follows. The tensor
$\delta_w^\kappa:=(\d\kappa^+_w \tensor \d\kappa^-_w)(\delta_w)$ is a
complex number and we can define an equivalence
\begin{equation}\label{eq:gluedid}
  z_1 \sim z_2 \Leftrightarrow
  \kappa_w^+(z_1)\kappa_w^-(z_2)=\delta_w^\kappa.
\end{equation}
Define the glued curve as a quotient $C^\delta:=C - \{w^\pm:w \in
\Edge(C)\}/\sim$.  If $\delta_w=0$, we let the node $w$ stay in place
in $C^\delta$. Since the coordinates $\kappa_w^\pm$ are holomorphic,
the quotient relation respects complex structure, and so $C^\delta$
has a complex structure.\qed

The gluing process in Construction \ref{const:glue} can be done in
families also. Suppose, $\mC \to S_\Gamma$ is a family of curves of type $\Gamma$ and
we have a family of holomorphic co-ordinates
on neighborhoods of the lifts of nodes in $\mC_s$, where $s \in S_\Gamma$. Let
$I_\Gamma \to S_\Gamma$ be a vector bundle whose fiber over $s \in
S_\Gamma$ is
$$I_{\Gamma,s}:=\oplus_{w \in \Edge(\Gamma)}T_{w^+}\tilde \mC_{s,\inc(w^+)}\tensor T_{w^-}\tilde \mC_{s,\inc(w^-)}.$$
In a neighborhood of the zero section of
$I_\Gamma$, we can associate a glued curve to every point. 
If $S_\Gamma$ were a universal deformation of type $\Gamma$ of a curve $C$, by Theorem
 3.17, chapter 11 in \cite{Arb}, the gluing procedure above produces a universal deformation of the curve $C$.

\subsection{Riemann surfaces with cylindrical ends}

\begin{definition} Let $C$ be a nodal curve with marked points
  $z_1,\dots,z_n$ and nodal points $w_1,\dots, w_k$. A {\em Riemann
    surface with cylindrical ends} \footnote{Strictly speaking, the
    terminology is {\em Riemann surface with metric that has
      cylindrical ends at punctures}. To shorten notation, we assume
    {\em cylindrical ends} implies an underlying metric.}  associated
  to the curve $C$ is the punctured curve $\Sig:=C \bs
  \{z_1,\dots,z_n, w_1,\dots,w_k\}$ equipped with a metric satisfying
  the following property: for any $z=z_j,w^\pm_j$ there is a
  neighborhood of $z$ in the normalization of $C$, $N(z) \subset
  \tilde C$ and an isometry
$$\rho_z:N(z)\bs \{z\} \to \{r+\img\theta:r > 0, \theta \in \R/2\pi \Z\},$$
such that $\lim_{r \to \infty} \rho_z(r,\theta)=z$. The metric on the
right hand side is $\d r^2+\d\theta^2$. We further require that the
volume of $\Sig \bs (\cup_{z=z_j,w^\pm_j} N(z))$ is finite and any $f
\in \Aut(C)$ preserves the metric on $\Sig$.
\end{definition}

\begin{notation} In the context of cylindrical co-ordinates, $S^1$ will denote $\R/2\pi\Z$.
\end{notation}

Given integers $g,n \geq 0$ such that $n+2g-3 \geq 0$, the goal of
this section is to show that to any stable $n$-pointed curve of genus
$g$, we can associate a Riemann surface with cylindrical ends, and
that the metric varies smoothly as we vary $C$. In other words, if
$\mC \to S$ is a family of such curves, we need to put a metric on the
fibers of $\mC\bs (z_j(S) \cup w_j(S)) \to S$ that varies smoothly
with $s \in S$. We call such a family of metrics {\em neck-stretching
  metrics}, because they stretch the `neck region' in the glued
surfaces described in Construction \ref{const:glue}. A family of neck-stretching metrics is constructed using a family of holomorphic co-ordinates $\kappa$
defined on $\ol M_{g,n}$ on neighborhoods of marked points and lifts
of nodal points. Note that in the construction \ref{const:glue}, for a
curve $C^\delta$, if a node $w$ stays in place, i.e. $\delta_w=0$,
then the coordinates $\kappa_{w^\pm}$ on $C$ also induce coordinates in the neighborhoods of $w^\pm$
on $\tilde C^\delta$. A family of holomorphic coordinates defined on a family
of nodal curves is said to be {\em compatible} if the co-ordinate
$\kappa_{w^\pm}$ on $C$ and $C^\delta$ agree, when $\delta$ is in a small neighborhood of $0$.

\begin{proposition}\label{prop:neckstretch} There exists a family of
  neck-stretching metrics on stable curves parametrized by $\ol
  M_{g,n}$. The space of neck-stretching metrics on $\ol M_{g,n}$ is
  contractible.
\end{proposition}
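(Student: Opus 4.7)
The plan is to build the family of neck-stretching metrics locally in the orbifold charts of $\ol M_{g,n}$ furnished by universal deformations, and then to glue the local pieces together by a partition of unity. I would first fix a combinatorial type $\Gamma$ and a universal deformation $\mC \to S_\Gamma$ of a curve $C$ of type $\Gamma$, together with an $\Aut(C)$-invariant compatible family of holomorphic coordinates $\kappa$ at the markings and lifts of nodes on the nearby curves. On each coordinate disk $N(z)$ the logarithmic change of variable $\rho_z := -\log \kappa_z$ identifies $N(z)\bs\{z\}$ with a semi-infinite cylinder $\{r>0\}\times S^1$ and pulls the flat metric $\d r^2+\d\theta^2$ back to a canonical cylindrical model on the neighbourhood.

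Incorporating the gluing of Construction \ref{const:glue}, I would then work over the total space of $I_\Gamma \to S_\Gamma$. For a smoothing parameter $\delta$ with $\delta_w \neq 0$, the relation $\kappa_w^+(z_1)\kappa_w^-(z_2)=\delta_w^\kappa$ glues the two semi-infinite cylinders near $w^\pm$ along their ends via an isometry of the form $(r,\theta)\mapsto(-\log|\delta_w^\kappa|-r,\arg\delta_w^\kappa-\theta)$, producing a finite cylinder of length $-\log|\delta_w^\kappa|$ on $C^\delta$ carrying the same flat metric; when $\delta_w=0$ the ends remain infinite, and the metric depends continuously on $\delta$ in the obvious sense. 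To extend this to all of $\Sig_s$, I would patch the cylindrical model across the compact complement $\mC_s\bs\bigcup_z N(z)$ using any smooth Hermitian metric on the family together with a bump function supported in a collar of $\bd N(z)$; compactness of the complement gives finite volume away from the cylindrical ends. Averaging over $\Aut(C)$ makes the local metric descend to the orbifold chart.

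To globalize, cover $\ol M_{g,n}$ by finitely many such charts and sum using an invariant partition of unity. The central point is that, once the coordinates $\kappa$ agree on overlaps, the prescribed cylindrical model near each puncture is pointwise identical on the two sides, so a convex combination of neck-stretching metrics is again neck-stretching. The main obstacle is therefore the construction of a globally compatible family of $\kappa$: the cylindrical coordinates must match before and after any smoothing of a node so that the semi-infinite cylindrical necks on the two sides of a stratum boundary glue seamlessly to the finite neck of length $-\log|\delta_w^\kappa|$ across the transition. Using the identification $\Def(C)/\Def_\Gamma(C)\simeq \oplus_{w\in \Edge(C)}T_{w^+}\tilde C_{\inc(w^+)}\otimes T_{w^-}\tilde C_{\inc(w^-)}$ from Proposition 3.32 of \cite{HarrisMorrison} together with the fact that the jet bundles at the marked points extend across every stratum, such a globally compatible family of $\kappa$ can be patched together from local choices by partition-of-unity averaging in the affine space of coordinate germs.

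For the second statement, fix any global compatible choice of $\kappa$. The space of neck-stretching metrics with this $\kappa$ is affine: the metric on each cylindrical end is prescribed by $\kappa$, while on the compact complement it may be any smooth $\Aut$-invariant finite-volume metric matching the cylindrical model smoothly in the collar, and this is a convex subset of a space of smooth symmetric $2$-tensors. Hence the fibre over every $\kappa$ is contractible. The space of compatible families of $\kappa$ is itself convex in the relevant Taylor coefficients, so a straight-line homotopy in $\kappa$ combined with the affine structure on the fibres yields a deformation retraction of the entire space of neck-stretching metrics onto a single base point.
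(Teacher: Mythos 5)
Your construction of the metrics themselves follows the paper's route closely: pull back the flat cylinder metric by $-\ln\kappa$ near each special point, observe that the gluing relation $\kappa_{w}^{+}(z_1)\kappa_{w}^{-}(z_2)=\delta_w^\kappa$ identifies the two half-cylinders by an isometry so that the metric descends to a finite neck of length $-\log|\delta_w^\kappa|$ on $C^\delta$, extend with finite volume over the compact complement, and average over $\Aut(C)$. Your globalization by an invariant partition of unity (rather than the paper's inductive extension over the strata of $\ol M_{g,n}$) is a legitimate variant, \emph{provided} the cylindrical models are already pinned down by a single globally compatible family $\kappa$, since then any convex combination of neck-stretching metrics is again neck-stretching.

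The gap is in how you handle the dependence on $\kappa$, and it occurs twice: once when you produce the global compatible family of coordinates by ``partition-of-unity averaging in the affine space of coordinate germs,'' and again when you assert that the space of compatible families of $\kappa$ is ``convex in the relevant Taylor coefficients.'' Neither is true as stated. A holomorphic coordinate at $z$ must have nonvanishing derivative there, so its leading Taylor coefficient lives in $\C\bs\{0\}$, which is not convex: if $\d\kappa_1(z)=-\d\kappa_0(z)$, then $\tfrac12(\kappa_0+\kappa_1)$ is not a coordinate, and the straight line $(1-t)\kappa_0+t\kappa_1$ degenerates at $t=\tfrac12$, so your claimed deformation retraction leaves the space of neck-stretching metrics. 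This is exactly where the paper does extra work: before interpolating, it multiplies $\kappa_1$ by a continuously varying family of nonzero scalars so that $\d\kappa_0(z)=\d\kappa_1(z)$ at every special point (noting that such a rescaling changes only the labelling $\delta\mapsto C^\delta$, not the metric), and the existence of this family of scalars uses that the strata of $\ol M_{g,n}$ are simply connected. With that normalization the interpolation $(1-t)\kappa_0+t\kappa_1$ is again a family of coordinates on possibly smaller neighborhoods, and the argument closes. You need the same normalization both to make your partition-of-unity construction of $\kappa$ produce genuine coordinates and to justify contractibility.
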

\begin{proof}
  Assume $\kappa$ is a compatible family of holomorphic coordinates
  defined on $\ol M_{g,n}$ in neighborhoods of marked points and lifts
  of nodal points.  We first describe a neck-stretching metric on a
  component of a curve, which is called $C$ and has special points
  $z_1,\dots,z_n$. Holomorphic co-ordinates $\kappa_{C,j}$ are defined
  on neighborhoods $N(z_j)$ of $z_j$. Define map $f:B_1 \to \R \times
  \img S^1$ as $z \mapsto -\ln z$. In the neighborhood
  $N(z_j)\bs \{z_j\}$, define co-ordinates $\rho_{z_j}:N(z_j) \bs
  \{z_j\} \to \R \times \img S^1$ as $f \circ
  \kappa_{C,i}$. If $z_j=w^-$ for some edge $w$, we use the
  co-ordinates $\rho_{w^-}:=-f \circ \kappa_{C,w^-}$.  A cylindrical
  metric is given on $N(z_j) \bs \{z_j\}$ by pulling back the
  Euclidean metric by $\rho_{z_j}$. This metric can be extended to all
  of $\Sig:=C\bs \{z_1,\dots,z_n\}$ so that the volume of $C \bs \cup_j N(z_j)$ is finite.

  For a combinatorial type $\Gamma$, the family of metrics defined on
  the stratum $M_{g,n}^\Gamma$ can be extended to a small neighborhood
  of $M_{g,n}^\Gamma$ by the gluing construction for families. Given a
  curve $C$ of type $\Gamma$, and $\delta \in \oplus_{w \in
    \Edge(\Gamma)}T_{w^+}\tilde C \tensor T_{w^-}\tilde C$, we now
  describe the metric on the curve $\Sig^\delta:=C^\delta \bs
  \{\text{special points}\}$. In the previous paragraph, by pulling
  back the Hermitian metric on $B_1$ via $\kappa_{w^\pm}$ to a
  neighborhood $N(w^\pm) \subset \tilde C$, the element $\delta_w$ can
  be identified to a complex number. Let $l_w=L_w+it_w:=-\ln \delta_w
  \in \R_{\geq 0} \times S^1$. The identification \eqref{eq:gluedid}
  used in the construction of $C^\delta$ can be re-written as
  \begin{equation}\label{eq:lid}
    (\rho_{w^+})^{-1}(r+\img\theta) \sim (\rho_{w^-})^{-1}(l_w+r+\img\theta).
  \end{equation}
  To construct the punctured curve with metric $\Sig^\delta$, we start
  with $\Sig$. For any node $w \in \Edge(\Sig)$, parts of the
  semi-infinite cylinders $\rho_{w^\pm}^{-1}\{r> \pm L_w\}$ are
  discarded and the remainder of the cylinders are identified via
  \eqref{eq:lid} to produce a finite cylinder in $\Sig^\delta$. The
  two regions being identified are isometric, so the metric on $\Sig$
  induces a metric on $\Sig^\delta$. If a modular graph $\Gamma'$ is
  obtained from $\Gamma$ by contracting a single edge, the family of
  metrics can be extended to $M_{g,n}^{\Gamma'}$. If we assume that
  the metrics on curves of type $\Gamma$ are invariant under the
  action of the automorphism group of the curve, the condition would
  also be satisfied for the metrics on curves of type $\Gamma'$.  A
  family of metrics on $\ol M_{g,n}$ can thus be constructed
  inductively.

  Finally we prove that the space of such metrics is contractible. For
  a fixed family $\kappa$ of holomorphic co-ordinates in the
  neighbourhood of nodes in curves parametrized by $\ol M_{g,n}$, the space of
  metrics on stable curves is convex, hence contractible: Given two
  families of neck-stretching metrics $\ulg_0$, $\ulg_1$, for any $t
  \in [0,1]$, $(1-t)\ulg_0+t\ulg_1$ is also a neck-stretching
  metric. This is because the metrics $\ulg_0$ and $\ulg_1$ agree on
  the neighborhoods of special points. Now, consider different choices
  of $\kappa$, say $\kappa_0$ and $\kappa_1$. We remark that
  multiplying $\kappa_{w^+}$ or $\kappa_{w^-}$ by a constant non-zero
  complex number does not affect the metric constructed above - it
  only modifies the identification $\delta \mapsto C^\delta$,
  i.e. $C^\delta$ would now be relabelled by a multiple of
  $\delta$. So, the family of co-ordinates $\kappa_1$ can be
  multiplied by a family of scalars so that for any curve $C$ and
  special point $z$, $\d\kappa_0(z)=\d\kappa_1(z)$. Such a
  modification is possible because any strata of $\ol M_{g,n}$ is
  simply connected. Now, we see that for $t \in [0,1]$, the
  interpolation $\kappa_t:=(1-t)\kappa_0+t\kappa_1$ is also a family
  of holomorphic coordinates in (possibly smaller) neighborhoods of
  special points (see Remark 2.1.1 in \cite{GW:deform}).
\end{proof}

\begin{figure}[t]
  \centering \scalebox{.5}{
    \begin{picture}(0,0)%
      \includegraphics{neckstretch.pstex}%
    \end{picture}%
    \setlength{\unitlength}{4144sp}%
    \begingroup\makeatletter\ifx\SetFigFont\undefined%
    \gdef\SetFigFont#1#2#3#4#5{%
      \reset@font\fontsize{#1}{#2pt}%
      \fontfamily{#3}\fontseries{#4}\fontshape{#5}%
      \selectfont}%
    \fi\endgroup%
    \begin{picture}(8823,4543)(-321,-5955)
      \put(5401,-3931){\makebox(0,0)[lb]{\smash{{\SetFigFont{12}{14.4}{\rmdefault}{\mddefault}{\updefault}{\color[rgb]{0,0,0}$\Sig_2$}%
            }}}}
      \put(271,-5236){\makebox(0,0)[lb]{\smash{{\SetFigFont{12}{14.4}{\rmdefault}{\mddefault}{\updefault}{\color[rgb]{0,0,0}$z_2$}%
            }}}}
      \put(226,-2176){\makebox(0,0)[lb]{\smash{{\SetFigFont{12}{14.4}{\rmdefault}{\mddefault}{\updefault}{\color[rgb]{0,0,0}$z_1$}%
            }}}}
      \put(2521,-3976){\makebox(0,0)[lb]{\smash{{\SetFigFont{12}{14.4}{\rmdefault}{\mddefault}{\updefault}{\color[rgb]{0,0,0}$w_1^+$}%
            }}}}
      \put(3916,-4021){\makebox(0,0)[lb]{\smash{{\SetFigFont{12}{14.4}{\rmdefault}{\mddefault}{\updefault}{\color[rgb]{0,0,0}$w_1^-$}%
            }}}}
      \put(8416,-3166){\makebox(0,0)[lb]{\smash{{\SetFigFont{12}{14.4}{\rmdefault}{\mddefault}{\updefault}{\color[rgb]{0,0,0}$w_2^+$}%
            }}}}
      \put(8416,-4291){\makebox(0,0)[lb]{\smash{{\SetFigFont{12}{14.4}{\rmdefault}{\mddefault}{\updefault}{\color[rgb]{0,0,0}$w_2^-$}%
            }}}}
      \put(766,-3751){\makebox(0,0)[lb]{\smash{{\SetFigFont{12}{14.4}{\rmdefault}{\mddefault}{\updefault}{\color[rgb]{0,0,0}$\Sig_1$}%
            }}}}
    \end{picture}%
  }
  \caption{Nodal curve in Figure \ref{fig:nodal_curve} with
    neck-stretching metric}
  \label{fig:neckstretch}
\end{figure}

We make a choice of a family of neck-stretching metrics and fix it for
the rest of the paper. Now, we can talk about a `Riemann surface with
cylindrical ends corresponding to a stable nodal curve'.

\section{Preliminaries: vortices on surfaces with cylindrical
  ends}\label{sec:prelim}
\subsection{Definitions}
Let $K$ be a compact connected Lie group, $(\Sig,j)$ be a
Riemann surface with metric and $P \to \Sig$ be a principal $K$-bundle.  Let
$(X,\om, J)$ be a K\"{a}hler manifold with a $K$-action that preserves $\om$ and $J$. We assume the $K$-action on $X$ is {\em Hamiltonian}, whose meaning we now define.

\begin{definition}
  \begin{Notes} \item {\rm (Hamiltonian actions)} A {\em moment map}
    is a $K$-equivariant map $\Phi$ such that $\iota(\xi_X) \omega =
    \d \lan \Phi, \xi \ran , \ \forall \xi \in \k$, where the vector field $\xi_X \in \on{Vect}(X)$ is the infinitesimal action of $\xi$ on $X$, i.e. $\xi(x):= \ddt \exp(t\xi)x \in T_xX$.  The
    action of $K$ is {\em Hamiltonian} if there exists a moment map
    $\Phi : X \to \k^\dual$. Since $K$ is compact, $\k$ has an
    $\Ad$-invariant metric. We fix such a metric so
    the moment map becomes a map $\Phi : X \to \k$.
  \item {\rm (Connections, curvature and gauge transformations)} A {\em connection} is a $K$-equivariant
    one-form $A \in \Om^1(P,\k)$ that satisfies $A(\xi_P)=\xi$ for all
    $\xi \in \k$. The space of connections $\A(P)$ is an affine space
    modeled on $\Om^1(\Sig,P(\k))$ where $P(\k):=P\times_K \k$ is the
    adjoint bundle. The form $\d A+\hh[A \wedge A] \in \Om^2(P,\k)$ is
    basic, so it descends to a two-form $F_A \in \Om^2(\Sig,P(\k))$,
    which is the {\em curvature} of the connection $A$. A {\it gauge
      transformation} is an automorphism of $P$ -- it is an
    equivariant bundle map $P \to P$. The group of gauge
    transformations on $P$ is denoted by $\K(P)$.
  \end{Notes}
\end{definition}

In case $P$ is a trivial bundle $\Sig \times K$, there is a trivial
connection $\d$. The adjoint bundle has a trivialization $P(\k) \simeq
\Sig \times \k$. A connection $A$ is then of the form $A=\d+a$ where
$a \in \Om^1(\Sig,\k)$. The formula of curvature is $F_A=\d a + \hh[a
\wedge a].$ A gauge transformation $k:\Sig \to K$ acts on the
connection $A=\d + a$ as $$k(A)= \d+(\d k k^{-1} + \Ad_ka).$$

On an associated bundle $P(X):=P \times_K X$, a connection $A$ defines
a covariant derivative $d_A$ on sections of $P(X)$:
$$ \d_A: \Gamma(\Sig,P(X)) \to \cup_{u \in  \Gamma(\Sig,P(X))}\Om^1(\Sig, u^*T^{vert}P(X)).$$
On a local trivialization of the bundle $P$, where $A=\d+a$, the
covariant derivative of a section $u:\Sig \to X$ is $\d u + a_u \in
\Om^1(\Sig,u^*TX)$. At a point $x \in \Sig$, $a_u(x)$ is the
infinitesimal action of $a(x)$ at $u(x)$. 
\begin{remark} \label{rem:conntohol} Suppose $(M,J)$ is a complex
  $K$-manifold whose complex structure $J$ is preserved by $K$.  A
  connection $A$ on $P$ determines a holomorphic structure on the
  associated bundle $P(M):=P \times_K M$ via the operator
  $\delbar_A:=\d_A^{0,1}$. This construction can be reversed when $M$
  is the complexified group $G=K^\C$. Then, the associated bundle is a
  principal $G$-bundle containing $P$ as a sub-bundle. Given a
  holomorphic structure on $P_\C$, the co-dimension one distribution
  $TP \cap J(TP) \subset TP$ corresponds to a connection on $P$ (see
  \cite{Singer:conn}).
\end{remark}
\begin{definition} \label{def:gaugedmap} 
  \begin{Notes}
  \item {\rm (Gauged holomorphic maps)} A {\em gauged holomorphic map}
    $(A,u)$ from $P$ to $X$ consists of a connection $A$ and a section
    $u$ of $P(X)$ that is holomorphic with respect to $\delbar_A$. The
    space of gauged holomorphic maps from $P$ to $X$ is called
    $\H(P,X)$.
  \item {\rm (Symplectic vortices)} A {\em symplectic vortex} is a
    gauged holomorphic map that satisfies 
$$F_A +\Phi(u)\om_\Sig=0,$$
    where $\om_\Sig \in \Omega^2(\Sig)$ is an area form on $\Sig$.
  \item {\rm (Energy)} The {\em energy} of a gauged holomorphic map
    $(A,u)$ is
    $$E(A,u):= \int_\Sig (|F_A|^2+|\d_Au|^2+|\Phi \circ u|^2)
    \om_\Sig.$$
  \end{Notes}
\end{definition}

For a compact base manifold $\Sig$, one can define Sobolev completions
of the spaces $\A(P)$, $\K(P)$ in a standard way (see Appendix B of
the book \cite{Weh:Uh}).  But for a manifold with cylindrical ends
$\Sig$, we restrict our attention to a trivial principal bundle $P
=\Sig \times K$. Let $k$, $p$ such that $(k+1)p>\dim(\Sig)$. The space
of $W^{k,p}$ connections is $\A^{k,p}(P):=\d+ W^{k,p}(\Sig,\k)$ where
$\d$ is the trivial connection on $\Sig \times K$. The space of
$W^{k+1,p}$ gauge transformations is $\K^{k+1,p}(P):=\{ke^s|k \in K, s
\in W^{k+1,p}(\Sig,\k)\}$. Unlike in the case of a compact base
manifold, these Sobolev spaces are dependent on the choice of
trivialization and hence it is not possible to define these concepts
for a general principal bundle. The space $\K^{k+1,p}(P)$ acts
smoothly on $\A^{k,p}(P)$. For integers $k$, we also use the Sobolev spaces $H^k$, which are the same as $W^{k,2}$.

We assume the following in the rest of this article.

\begin{assumption}\label{ass:freeaction} $K$ acts freely on
  $\Phi^{-1}(0)$. 
\end{assumption}

\begin{definition}[Equivariant convexity at infinity]\label{def:conv} The $K$-Hamiltonian manifold $X$ is equivariantly convex if there is a proper $K$-invariant function $f:X \to \R_{\geq 0}$, and
  a constant $c>0$ such that $x \in f^{-1}[0,c)$ and $\xi \in T_xX$,
  \begin{equation*}
\lan \nabla_\xi \nabla f(x), \xi \ran \geq 0, \quad \d f(x)(J\Phi(x)_X) \geq 0.
  \end{equation*}
\end{definition}
The above notion is defined by Cieliebak et al. in \cite{CGMS} and is an equivariant version of the idea of convexity in
symplectic geometry introduced by Eliashberg and Gromov
\cite{Eliashberg:convexity}. If the moment map $\Phi$ is proper and
$X$ is equivariantly convex, the image of a finite energy vortex with bounded image is
contained in the compact set $\{f \leq c\}$ (see Lemma 2.7 in \cite{CGMS}). An important example of equivariantly
convex spaces are symplectic vector spaces with a linear group action and a
proper moment map.
 
\subsection{Asymptotic behavior on cylindrical ends}

On a non-compact base space, vortices with bounded image and finite energy have good
asymptotic properties (see Ziltener \cite{Zilt:invaction}). In this section we prove a stronger result (Proposition \ref{prop:connW1p}) in the special case that the base space has cylindrical ends.

\begin{proposition}[Decay for vortices on the half
  cylinder]\label{prop:decaycyl} Let $\Sig$ be a half cylinder
$$\Sig:=\{r+\img\theta:r \geq 0, \theta \in S^1\}$$ 
with the standard metric $\om_\Sig=\d r \wedge \d\theta$. Let $X$ be a
Hamiltonian $K$-manifold with a proper moment map and such that $K$
acts freely on $\Phi^{-1}(0)$. There exists a constant $0<\gamma<1$
that satisfies the following. Suppose $(A,u)$ is a finite energy
vortex from the trivial bundle $\Sig \times K$ to $X$ whose image is
contained in a compact subset of $X$. Then, there exists a constant
$C>0$ so that
\begin{align*}
  |F_A(z)|^2+|\d_Au(z)|^2+|\Phi(u(z))|^2 \leq Ce^{-\gamma r}.
\end{align*}
where $z=r+\img\theta$.
\end{proposition}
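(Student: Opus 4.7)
The plan rests on three ingredients standard in the analysis of vortices on non-compact domains: asymptotic convergence of $u$ to a point of $\Phi^{-1}(0)$, a differential inequality for the tail energy, and an $\varepsilon$-regularity / mean-value inequality that converts integrated decay into pointwise decay. The first and third are already available in \cite{CGMS}; the heart of the argument is the second, where the free $K$-action on $\Phi^{-1}(0)$ enters essentially.

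First I would observe that finite energy together with bounded image forces the annular energy $\int_{[r,r+1]\times S^1} e\,\d s\,\d\theta$ to tend to zero as $r\to\infty$. Combined with the $\varepsilon$-regularity inequality for solutions of the vortex equations, this yields pointwise $e(z)\to 0$; in particular $\Phi\circ u\to 0$, so for $r$ large the loop $u(r,\cdot)$ takes values in a small $K$-invariant neighborhood $U$ of $\Phi^{-1}(0)$. After a smooth $r$-dependent gauge transformation I may put $A$ in temporal gauge $A=\d+a(r,\theta)\,\d\theta$. The vortex equations then become the gradient-flow system
\begin{equation*}
\partial_r u + J\bigl(\partial_\theta u + a_u\bigr) = 0, \qquad \partial_r a + \Phi(u) = 0,
\end{equation*}
which is the negative $L^2$-gradient flow on loops $\{r\}\times S^1\to X$ of a symplectic action perturbed by the moment-map pairing.

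Second I would derive exponential decay of the tail energy $E(r):=\int_{[r,\infty)\times S^1} e\,\d s\,\d\theta$. Writing $\epsilon(r):=\int_{S^1} e(r,\theta)\,\d\theta$, so that $E'(r)=-\epsilon(r)$, the goal is the coercivity inequality $\epsilon(r)\ge\gamma\,E(r)$ for all $r$ sufficiently large, which is equivalent to $E'+\gamma E\le 0$ and hence gives $E(r)\le E(0)e^{-\gamma r}$. Using the temporal-gauge equations, $\epsilon(r)$ is comparable to the squared $L^2(S^1)$-norm of the velocity vector $(\partial_r u,\partial_r a)$ of the gradient flow along $\{r\}\times S^1$, while $E(r)$ is the corresponding squared $L^2([r,\infty)\times S^1)$-norm. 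The required inequality reduces to a uniform spectral-gap estimate for the Hessian of the action functional at the constant loops, which are precisely points of $\Phi^{-1}(0)/K$. The assumption that $K$ acts freely on $\Phi^{-1}(0)$ makes this quotient a smooth symplectic manifold, the infinitesimal action $\xi\mapsto\xi_X$ a bundle injection with uniformly bounded left inverse, and the derivative of $\Phi$ uniformly surjective transverse to $K$-orbits; together these yield a uniform gap constant $\gamma>0$ independent of the limit point $x_\infty\in\Phi^{-1}(0)$.

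Third, once $E(r)\le E(0)e^{-\gamma r}$ is in hand, the $\varepsilon$-regularity estimate applied on the unit disc about any point $z=r+\img\theta$ with $r\ge 1$ bounds the pointwise density $e(z)$ by a constant multiple of the energy on that disc, hence by $C'e^{-\gamma r}$, yielding the claimed pointwise exponential bound (shrinking $\gamma$ slightly if needed to absorb constants). The main obstacle is the coercivity step: one must prove a uniform spectral gap, independent of the limit $x_\infty$, for the linearization of the gradient-flow operator on a neighborhood of $\Phi^{-1}(0)$. This is exactly where the free-action hypothesis is indispensable, since without it the Hessian would degenerate along directions tangent to infinitesimal $K$-stabilizers and no uniform lower bound could be extracted.
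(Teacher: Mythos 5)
The paper does not prove this proposition directly: it invokes Theorem 1.3 of Ziltener's \emph{The invariant symplectic action and decay for vortices}, whose proof follows exactly the scheme you describe --- $\varepsilon$-regularity to force the loops $u(r,\cdot)$ into a neighborhood of $\Phi^{-1}(0)$, an equivariant isoperimetric inequality (which is precisely your uniform spectral gap for the Hessian of the invariant symplectic action at $\Phi^{-1}(0)$, nondegenerate because the $K$-action there is free and $\Phi^{-1}(0)$ is compact by properness) yielding $E'+\gamma E\le 0$, and a mean-value inequality to convert integrated into pointwise decay. Your proposal is therefore essentially the same argument as the paper's cited proof, with the one piece of genuine work --- the coercivity/spectral-gap step --- correctly identified and correctly attributed to the free-action hypothesis, though stated as a reduction rather than carried out.
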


\begin{remark} This result is similar to Theorem 1.3 in Ziltener
  \cite{Zilt:invaction} and the proof carries over. It is an
  equivariant version of the isoperimetric inequality. But this result
  is much weaker, because in \cite{Zilt:invaction}, $\gamma$ is
  arbitrarily close to $1$. The hypothesis in \cite{Zilt:invaction}
  place some condition on $X$ in order to make the metric on $\Sig$
  `admissible', which can be dropped if we do not require an optimal
  value of $\gamma$.
\end{remark}

The following result is an easy consequence of Proposition
\ref{prop:decaycyl}. It says that for a finite energy vortex on a semi-infinite cylinder, the connection asymptotically converges to a limit connection defined on $S^1$. Further the map $u$ has a limit at infinity after it is twisted by a gauge transformation determined by the limit connection.
 It is enough to consider trivial principal bundles for this result because on a Riemann surface which is
homotopic to a one-cell, any principal bundle with connected structure group is trivializable.
\begin{proposition} [Removal of singularity for vortices at
  infinity]\label{prop:infvortsing} Suppose $(A,u)$ is a finite
  energy vortex on a trivial $K$-bundle on the half cylinder
  $\{r+\img\theta:r \geq 0, \theta \in S^1\}$ whose image is contained
  in a compact subset of $X$. Then, there exist a point $x_0 \in \Phi^{-1}(0)$
  and a gauge transformation $k_0 \in C^1(S^1,K)$ such that
  $$\lim_{r \to \infty}\max_\theta d(k_0(\theta)x_0,u(r+\img\theta))=0.$$
  Suppose, the restriction of $A$ in radial gauge to the circle
  $\{r=r_0\}\simeq S^1$ is $\d+a_\theta\d \theta$, there exist
  constants $c, \gamma>0$ such that for all $r_0 \geq 0$,
  \begin{equation}\label{eq:conndecay}
    |k_0^{-1}\partial_\theta k_0+a_\theta(r_0,\cdot)|<ce^{-r_0\gamma}.
  \end{equation}
\end{proposition}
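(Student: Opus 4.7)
The plan is to pass to a gauge in which the $r$-component of $A$ vanishes, extract exponential limits of the connection form and the section separately, and identify the two limits using the free action hypothesis.

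Solve $g^{-1}\partial_r g = -a_r$ along each $\theta$-ray with $g|_{r=0}=e$ to bring $A$ into the radial form $A = \d + a_\theta \d\theta$. Then $F_A = \partial_r a_\theta\, \d r \wedge \d\theta$, and the vortex equation collapses to the pointwise ODE $\partial_r a_\theta = -\Phi(u)$. The bound $|\Phi(u(r,\theta))|\le Ce^{-\gamma r/2}$ from Proposition \ref{prop:decaycyl} makes this integrable in $r$ uniformly in $\theta$, giving a limit $a_\theta^\infty \in C^0(S^1,\k)$ with
$$|a_\theta(r,\cdot)-a_\theta^\infty|\le C' e^{-\gamma r/2}.$$
Higher-order regularity of $a_\theta^\infty$ follows by bootstrapping the decay of derivatives of $(A,u)$ from the vortex system.

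In radial gauge, $\delbar_A u = 0$ reads $\partial_r u + J\bigl(\partial_\theta u + (a_\theta)_X(u)\bigr)=0$, so $|\d_A u|^2 = |\partial_r u|^2 + |\partial_\theta u + (a_\theta)_X(u)|^2$. Proposition \ref{prop:decaycyl} therefore yields both summands $\le Ce^{-\gamma r/2}$ pointwise. Integrating $|\partial_r u|$ in the bounded image produces uniform-in-$\theta$ convergence $u(r,\cdot) \to u^\infty\colon S^1 \to X$, and passing to the limit in the second summand together with $a_\theta \to a_\theta^\infty$ gives the asymptotic horizontal transport
$$\partial_\theta u^\infty = -(a_\theta^\infty)_X(u^\infty), \qquad \Phi(u^\infty)\equiv 0.$$

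Because $\partial_\theta u^\infty$ is tangent to the $K$-orbit through $u^\infty(\theta)$, the image of $u^\infty$ lies in a single orbit of $\Phi^{-1}(0)$. Freeness of the $K$-action makes the orbit map $K \to K\cdot u^\infty(0)$ a diffeomorphism; setting $x_0 := u^\infty(0)$, let $k_0(\theta)$ be the unique element with $k_0(\theta) x_0 = u^\infty(\theta)$ and $k_0(0) = e$. Since $u^\infty$ is single-valued on $S^1$, freeness forces $k_0(2\pi) = e$, so $k_0 \in C^1(S^1, K)$. Differentiating $u^\infty = k_0 x_0$ in $\theta$ and invoking the free action converts the limiting ODE into the algebraic identity $k_0^{-1}\partial_\theta k_0 = -a_\theta^\infty$ (with the paper's sign and adjoint conventions in place), and combining this with the earlier exponential bound on $a_\theta - a_\theta^\infty$ delivers \eqref{eq:conndecay}. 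The first assertion is then immediate from the uniform convergence $u(r,\theta) \to k_0(\theta) x_0$.

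The main technical obstacle is promoting the bare $L^\infty$ decay of Proposition \ref{prop:decaycyl} to enough $C^1$-decay for the pointwise limit ODEs to make sense; this is done either by differentiating the vortex equations and bootstrapping or by standard interior elliptic estimates on compact sub-cylinders of fixed width. A subsidiary subtlety is the loop condition $k_0(2\pi)=e$, which is forced by $u^\infty$ being well-defined on $S^1$ together with the freeness of the action.
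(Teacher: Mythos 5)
The paper offers no written proof of this proposition --- it is simply declared an easy consequence of Proposition \ref{prop:decaycyl} --- and your argument is precisely the standard derivation that is intended: pass to radial gauge, integrate $\partial_r a_\theta=-\Phi(u)$ and $|\partial_r u|\le Ce^{-\gamma r/2}$ to extract exponential limits $a_\theta^\infty$ and $u^\infty$, and use freeness of $K$ on $\Phi^{-1}(0)$ to identify the limit loop with a single orbit; your regularity bootstrap is also fine (alternatively, equivariance of $\Phi$ gives $d\Phi\bigl((a_\theta)_X(u)\bigr)=-\ad^*_{a_\theta}\Phi(u)=O(e^{-\gamma r/2})$, which makes $\partial_\theta a_\theta$ converge directly). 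The one point you wave at rather than settle is real but minor: writing $u^\infty=k_0x_0$ forces $\partial_\theta k_0\,k_0^{-1}=-a_\theta^\infty$, which differs from the quantity $k_0^{-1}\partial_\theta k_0$ in \eqref{eq:conndecay} by $\Ad_{k_0}$; this is immaterial for every use the paper makes of the proposition (well-definedness of $Ku(\infty)$ and the holonomy estimate \eqref{eq:hololdbd}, both gauge-invariant), but you should state explicitly which of the two ODEs defines $k_0$ so that the displayed estimate holds as written.
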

The above result ensures that the $K$-orbit $Ku(\infty)$ is
well-defined.

\begin{proposition} \label{prop:connW1p} Let $p>1$. Suppose $(A,u)$ is
  a finite energy vortex on a trivial $K$-bundle on the half cylinder
  $\Sig=\{r+\img\theta:r \geq 0, \theta \in S^1\}$ whose image is
  contained in a compact subset of $X$. There is a gauge
  transformation $k \in W^{2,p}_{loc}(\Sig,K)$, such that if
  $kA=\d+a$, then,
$$\Mod{a}_{W^{1,p}(\{n \leq r \leq n+1\})} \leq ce^{-n\gamma}$$
for some positive constant $c>0$ and $0<\gamma<1$. Hence
$\Mod{a}_{W^{1,p}(\Sig)}<\infty$.
\end{proposition}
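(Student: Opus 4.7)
The plan is to use the pointwise exponential curvature decay from Proposition \ref{prop:decaycyl} as input to Uhlenbeck's local gauge-fixing theorem on a family of enlarged annuli, and then to patch the resulting local Coulomb gauges into a single $W^{2,p}_{\loc}$ gauge transformation on the half-cylinder. The starting observation is that $|F_A(z)| \leq Ce^{-\gamma r}$ pointwise, so that
\[
\Mod{F_A}_{L^p(\{n-1 \leq r \leq n+2\})} \leq Ce^{-\gamma n}.
\]
For $n$ large enough this lies below the smallness threshold required by the $p$-version of Uhlenbeck's local gauge-fixing theorem in the form given in \cite{Weh:Uh}.

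Concretely, on each enlarged annulus $\Sig_n^* := \{n-1 \leq r \leq n+2\}$ Uhlenbeck's theorem supplies a $W^{2,p}$ gauge transformation $h_n$ such that $h_n(d+a) = d + b_n$ satisfies the Coulomb condition $d^*b_n = 0$ together with
\[
\Mod{b_n}_{W^{1,p}(\Sig_n^*)} \leq C\Mod{F_A}_{L^p(\Sig_n^*)} \leq Ce^{-\gamma n}.
\]
Restricted to the smaller annulus $\{n \leq r \leq n+1\}$ this is exactly the target local estimate. Proposition \ref{prop:infvortsing} additionally identifies a continuous gauge in which $a$ tends to a fixed flat limit at pointwise exponential rate; this serves as a common asymptotic frame with respect to which the local Coulomb gauges are close to one another on their overlaps.

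The remaining step is to assemble the family $\{h_n\}$ into a single global $k \in W^{2,p}_{\loc}(\Sig, K)$. On each overlap $\Sig_n^* \cap \Sig_{n+1}^*$, the composition $h_n h_{n+1}^{-1}$ preserves the Coulomb condition for the same connection $a$, so the identity $b_n = \Ad_{h_n h_{n+1}^{-1}}b_{n+1} - d(h_n h_{n+1}^{-1})\,(h_n h_{n+1}^{-1})^{-1}$ forces $h_n h_{n+1}^{-1}$ to differ from a constant element of $K$ by a term whose $W^{2,p}$-size is controlled by $\Mod{b_n}_{W^{1,p}} + \Mod{b_{n+1}}_{W^{1,p}}$, hence is exponentially small in $n$. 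After left-multiplying each $h_n$ by a suitable constant in $K$ (which does not change $b_n$) and interpolating via a partition of unity subordinate to $\{\Sig_n^*\}$, one obtains the desired global gauge transformation, with the annular $W^{1,p}$ bound preserved up to a uniform constant. The main obstacle is exactly this patching: Uhlenbeck's theorem has to be made quantitative uniformly in $n$, and the constant corrections together with the partition-of-unity interpolation must combine to produce a single $W^{2,p}_{\loc}$ object over the entire half-cylinder while respecting the exponential decay rate on each unit annulus.
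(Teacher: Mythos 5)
There is a genuine gap at the very first step: you apply Uhlenbeck's local gauge-fixing theorem directly to the full annuli $\{n-1 \leq r \leq n+2\}\subset \Sig$. These annuli are not simply connected, and on a non-contractible domain the conclusion ``$\Mod{F_A}_{L^p}$ small $\Rightarrow$ there is a gauge with $\Mod{b_n}_{W^{1,p}} \leq C\Mod{F_A}_{L^p}$'' is simply false: the flat connection $\d+\lambda\,\d\theta$ has zero curvature but arbitrary holonomy $e^{2\pi\lambda}$ around the circle, and no gauge transformation can make its connection form small unless that holonomy is close to the identity. The quantitative local theorem you need (Theorem \ref{thm:Uhcpt}, i.e.\ Uhlenbeck's Theorem 2.1) is stated on a ball, and its extensions are to contractible domains; there is a genuine holonomy obstruction on the annulus that your estimate for $b_n$ silently assumes away. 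You gesture at Proposition \ref{prop:infvortsing} as a ``common asymptotic frame,'' but you never actually deploy it at the point where the obstruction bites, namely in justifying the annular Coulomb gauge estimate itself.

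The paper's proof is organized precisely around this difficulty. Each annular region is covered by \emph{two} contractible half-annuli $U_{0,n}$, $U_{1,n}$; Uhlenbeck's theorem is applied on each of those; the two gauges are patched on \emph{one} component of their overlap using the freedom to multiply by a constant (so that they agree at a point, making the transition function $e^{\xi_n}$ small); and this produces a gauge on the ``unrolled'' cover $\tilde U_n$ with $\theta \in [-\eps, 2\pi+\eps]$. The remaining seam --- identifying $\theta$ with $\theta + 2\pi$ --- cannot be handled by a constant correction because that freedom is spent, and it is exactly here that the exponential decay of the holonomy, $d(\on{Hol}_r,\Id) \leq ce^{-\gamma r}$ from Proposition \ref{prop:infvortsing}, is invoked to show the transition across the seam is exponentially close to the identity and can be absorbed. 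Your downstream patching of consecutive gauges $h_n h_{n+1}^{-1}$ by constant normalization is essentially the same as the paper's and is fine, but the argument cannot start where you start it; you must either decompose into contractible pieces and confront the holonomy at the seam, or first prove (using \eqref{eq:conndecay}) that the restriction of $A$ to each circle $\{r=n\}$ is exponentially close to a trivializable flat connection before any annular gauge-fixing estimate is available.
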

\begin{proof} We first describe a cover of $\Sig$, in which all but a finite number of open sets have identical geometry. On these sets, the connection can be uniformly bounded using Uhlenbeck compactness. Fix any
  $\eps<\hh$. Let $U:=\{r+\img\theta: 1-\eps \leq r \leq 2+\eps,
  -\eps<\theta<\pi+\eps\}$. For any integer $n \geq 0$,
  \begin{align*}
    U_{0,n}&:=U+n,\quad U_{1,n}:=U+n+\img\pi,\\
    \tilde U_n&:=\{r+\img\theta:  n +1-\eps\leq r\leq n  +2+\eps, -\eps\leq \theta \leq 2\pi+\eps\},\\
    U_n&:=\tilde U_n/\{\theta \sim \theta+2\pi:-\eps \leq \theta \leq
    \eps\} \subset \Sig.
  \end{align*}
  There is an integer $n_0$ such that
  $\Mod{F_A}_{L^p(U_{j,n})}<\kappa$ for all $j=0,1$, $n \geq n_0$,
  where $\kappa$ is the constant in Uhlenbeck's local Theorem
  (Theorem \ref{thm:Uhcpt}). This bound ensures that the connection
  can be put in Coulomb gauge in each of these sets. By Uhlenbeck
  compactness, there is a gauge transformation $g_{j,n}$ on $U_{j,n}$
  for all $n \geq n_0$, $j=0,1$ such that $g_{j,n}A$ is in Coulomb
  gauge, i.e. denoting $a_{j,n}=g_{j,n}A-d$,
  \begin{equation}\label{eq:uhgauge}
    \Mod{a_{j,n}}_{W^{1,p}(U_{j,n})} \leq c\Mod{F_A}_{L^p(U_{j,n})}, \quad \d^*a_{j,n}=0.
  \end{equation}
  The constants $c$, $\kappa$ are independent of $(j,n)$ because the
  domains $U_{j,n}$ are identical to each other. We remark that since
  the set $U_{j,n}$ has corners, Uhlenbeck's Theorem \ref{thm:Uhcpt}
  will have to be applied to closed sets with smooth boundary that are
  slightly bigger than $U_{j,n}$. From Proposition \ref{prop:decaycyl}, $\Mod{F_A}_{L^p(U_{j,n})} \leq
  ce^{-\gamma n}$ for some $\gamma>0$.

  Next, we patch all these gauge
  transformations to get one on $\cup_{n \leq n_0} \tilde U_n$, which is a cover of $\cup_{n \leq n_0} U_n \subset \Sig$.  First look at one
  of the components of $U_{0,n} \cap U_{1,n}$ given by
  $\pi-\eps<\theta<\pi+\eps$, we call these $V_n$. On $V_n$, let
  $g_{1,n}=e^{\xi_n} g_{0,n}$, where $\xi_n:V_n \to \k$. Let $\psi$ be
  a cut-off function on $[\pi-\eps,\pi+\eps]$ that is $1$ in the
  neighborhood of $\pi-\eps$ and supported away from $\pi+\eps$. Let
  $h_n:=e^{\psi\xi_n}g_{0,n}$. Define a gauge transformation $g_n$ on
  $\tilde U_n$ as being equal to $h_n$ on $V_n$ and equal to $g_{0,n}$
  or $g_{1,n}$ outside $V_n$. Define $a_n:=g_nA-\d$. We have to show
  that
  \begin{equation*}
    \Mod{a_n}_{W^{1,p}(\tilde U_n)} \leq c\Mod{F_A}_{L^p(\tilde U_n)}
  \end{equation*}
  For this it is enough to show that $\Mod{h_n}_{W^{2,p}(V_n)} \leq
  c\Mod{F_A}_{L^p(\tilde U_n)}$, which in turn can be shown by a
  similar bound on $\Mod{\xi_n}_{W^{2,p}(V_n)}$. The inequality
  \eqref{eq:uhgauge} is unchanged if $g_{1,n}$ is multiplied by a
  constant factor. So we can assume there is a point $p \in V_n$ on
  which $g_{0,n}$ and $g_{1,n}$ agree and so $\xi_n(p)=0$. We know $a_{1,n}=\d\xi_n +
  \Ad_{e^{\xi_n}}a_{0,n}$. By a standard argument (see for example
  proof of Lemma 2.4 in \cite{Uh:compactness}), we get
$$\Mod{\xi_n}_{W^{2,p}(V_n)} \leq c(\Mod{a_{0,n}}_{W^{1,p}(U_{0,n})} + \Mod{a_{1,n}}_{W^{1,p}(U_{1,n})}) \leq c\Mod{F_A}_{L^p(U_n)}.$$
By a similar process, we can patch $g_n$ and $g_{n+1}$ for each $n$
and obtain a gauge transformation on all of $\tilde g$ on $\tilde
U=\cup_n \tilde U_n$. Denote $\tilde a:=\tilde g A-d$. We have bounds
\begin{equation*}\label{eq:tildeabd}
  \Mod{\tilde a}_{W^{1,p}(\tilde U_n)} \leq c\Mod{F_A}_{L^p(U_n)} \leq ce^{-\gamma n}.
\end{equation*}

Finally, we produce a gauge transformation on $\Sig$. The gauge transformation $g: \cup_{n \geq n_0}U_n \to K$ is defined by patching $\tilde g$ as above, but we cannot get bounds $g$ using the above technique. This is because for the other component of
$U_{0,n} \cap U_{1,n}$, given by $[-\eps,\eps]$ and
$[2\pi-\eps,2\pi+\eps]$, the gauge
transformations may not agree at any point as we no longer have the
flexibility of modifying anything by a constant. But, we can get a
similar bound if we can show that
\begin{equation}\label{eq:expconv}
  d(\tilde g(r+\img\theta)^{-1}\tilde g(r+\img(\theta+2\pi)),\Id) \leq ce^{-\gamma r}.
\end{equation}
This inequality is proved as follows. Denote by $\on{Hol}_r$ the
holonomy of $A$ around the loop $[0,2\pi]\ni \theta \mapsto r+\img\theta
\in \Sig$. By Proposition \ref{prop:infvortsing},
\begin{equation}\label{eq:hololdbd}
  d(\on{Hol}_r,\Id) \leq ce^{-\gamma r}.
\end{equation}
Now, denote by $\Hol^{new}_r$ the holonomy of $\tilde g A$ along the
path $[0,2\pi] \ni \theta \mapsto r+\img\theta \in \tilde U$ -- note that
the end points are not identified in $\tilde U$, but since we are on a
trivial bundle there is a canonical identification between the fibers
at the end points. So, we have
\begin{equation}\label{eq:holrel}
  \Hol^{new}_r=\tilde g(r+2\pi \img)\Hol_r \tilde g(r)^{-1}.
\end{equation}
Recall $\Hol^{new}_r$ is defined as follows: denote $\tilde a=\tilde
a_r\d r +\tilde a_\theta \d\theta$, let $h_r:[0,2\pi] \to K$ be given
by the ODE
$$h_r^{-1}\frac {\d h_r}{\d\theta}=\tilde a_\theta, \quad h_r(0)=\Id,$$
then $\Hol^{new}_r:=h_r(2\pi)$. Since $\Mod{\tilde
  a_\theta}_{W^{1,p}(\tilde U_n)} \leq ce^{-\gamma n}$, we have
$\sup_{\theta}|a_\theta(r+\img\theta)| \leq ce^{-\gamma r}$, and this
implies $d(\Hol_r^{new},\Id) \leq ce^{-\gamma r}$. Together with
\eqref{eq:hololdbd} and \eqref{eq:holrel}, \eqref{eq:expconv} follows. The resulting gauge transformation $g:\cup_{n \geq n_0}U_n \to K$ can be extended to all of $\Sig$. The choice of extension is not important because we only require an asymptotic bound.
\end{proof}

\begin{corollary} \label{cor:connW1psig} Let $p>1$. Suppose $(A,u) \in
  W^{1,p}_{loc} \times W^{2,p}_{loc}$ is a finite energy vortex on a
  Riemann surface $\Sig$ with cylindrical ends whose image is
  contained in a compact subset of $X$. We assume $\Sig \simeq \ol\Sig\bs \{z_1,\dots,z_n\}$, where $\oSig$ is compact. Further, for any $1 \leq j \leq n$, let $\rho_{z_j}$  be cylindrical coordinates  on the punctured neighborhood $N(z_j)\bs \{z_j\}$. Then, there exists a
  gauge transformation $k \in W^{2,p}_{loc}(\Sig,K)$ and
  $\lambda_1,\dots,\lambda_n \in \k$ so that if
  $kA=\d+\lambda_j\d\theta + a_j$ on $N(z_j)\bs \{z_j\}$, then,
  \begin{equation} \label{eq:twistconnbd} \Mod{a_j}_{W^{1,p}(\rho_{z_j}^{-1}(n
      \leq r \leq n+1))} \leq ce^{-\gamma n}
  \end{equation}
  and hence, $\Mod{a_j}_{W^{1,p}(N(z_j)\bs \{z_j\})}<\infty$.
\end{corollary}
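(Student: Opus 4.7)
The plan is to reduce the corollary to Proposition \ref{prop:connW1p} applied separately on each cylindrical end, followed by a topological patching that assembles the local gauge transformations into a single global gauge on $\Sig$. Concretely, for each $j$, I would first restrict $(A,u)$ to the end $N(z_j)\bs\{z_j\}$, which is isometric via $\rho_{z_j}$ to the standard half cylinder. The restriction is itself a finite-energy vortex with image in a compact subset of $X$, so Proposition \ref{prop:connW1p} applies and yields a $W^{2,p}_{\loc}$ gauge transformation $h_j$ on the end with $h_j A = \d + \tilde a_j$ and $\Mod{\tilde a_j}_{W^{1,p}(\rho_{z_j}^{-1}(\{n\leq r\leq n+1\}))} \leq c e^{-\gamma n}$.

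The residual $\lambda_j\,\d\theta$ in the corollary arises because the local gauges $h_j$ do not in general piece together into a single gauge on $\Sig$. Each boundary restriction $h_j(0,\cdot): S^1 \to K$ is a loop, and the sum of their classes in $\pi_1(K)$ is the sole obstruction to extending over the compact core $\Sig_0 := \ol\Sig \bs \bigcup_j \mathrm{int}\,N(z_j)$. Using the identification $\pi_1(K) \cong \ker(\exp|_{\t})$ for a maximal torus $T \subset K$, pick $\lambda_j \in \t \subset \k$ with $\exp(2\pi\lambda_j) = e$ so that the corrected loops $\theta \mapsto h_j(0,\theta)\exp(-\lambda_j\theta)$ have vanishing total class and extend jointly to a $W^{2,p}_{\loc}$ gauge over $\Sig_0$. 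With a smooth cutoff $\chi(r)$ vanishing near $r = 0$ and equal to $1$ for $r \geq 1$, define $\mu_j(r,\theta) := \exp(\chi(r)\lambda_j\theta)$ (single-valued since $\exp(2\pi\lambda_j)=e$), set $k := \mu_j h_j$ on each end, and extend smoothly across $\Sig_0$ using the gauge produced above.

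On the region $r \geq 1$, $\mu_j = \exp(\lambda_j\theta)$, and the paper's gauge-action formula gives
$$kA = \d + \d\mu_j\,\mu_j^{-1} + \Ad_{\mu_j}\tilde a_j = \d + \lambda_j\,\d\theta + a_j, \qquad a_j := \Ad_{\exp(\lambda_j\theta)}\tilde a_j.$$
Uniform boundedness of $\Ad$ on the compact group $K$ transfers the exponential $W^{1,p}$ decay from $\tilde a_j$ to $a_j$, yielding \eqref{eq:twistconnbd} for $n \geq 1$; the transition region $0 \leq r \leq 1$ adds a bounded constant absorbed into $c$, and summing the geometric series gives $\Mod{a_j}_{W^{1,p}(N(z_j)\bs\{z_j\})} < \infty$. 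The hard part will be the topological patching step: the $\lambda_j$ cannot be chosen independently but must jointly neutralize the $\pi_1(K)$-obstruction to extending over $\Sig_0$. Once that is set up, the $\Ad$-transfer of the decay is immediate.
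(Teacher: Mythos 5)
Your overall strategy is the paper's: apply Proposition \ref{prop:connW1p} on each end, absorb the $\pi_1(K)$-class of the resulting local gauge into a geodesic loop $e^{\lambda_j\theta}$, extend over the compact core, and transfer the exponential decay through $\Ad$-invariance. But the patching step as written does not close up. With your cutoff $\chi$ vanishing near $r=0$, the map $k=\mu_j h_j$ restricts on the boundary circle $\{r=0\}$ of the $j$-th end to $h_j(0,\cdot)$, whereas the map you extend over $\Sig_0$ has boundary value $h_j(0,\cdot)\exp(-\lambda_j\cdot)$; these differ by the loop $\exp(-\lambda_j\cdot)$, which is non-null-homotopic precisely when a correction is needed, so the two pieces neither agree on the overlap nor can be interpolated on a collar. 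If you instead tried to extend the uncorrected boundary loops $h_j(0,\cdot)$ over $\Sig_0$, the obstruction $\sum_j[h_j(0,\cdot)]\in\pi_1(K)$ is in general nonzero --- it is the class of the bundle over $\oSig$ constructed in Corollary \ref{cor:extendoverinfty}, e.g.\ the vortex number when $K=S^1$ --- so that extension does not exist either.

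The fix is to discard the cutoff: set $k:=e^{\lambda_j\theta}h_j$ on the \emph{entire} end, with $\lambda_j\in\frac{1}{2\pi}\exp^{-1}(\Id)$ chosen so that $e^{-\lambda_j\theta}$ represents the class of $h_j(0,\cdot)$ (every loop in a compact connected Lie group is homotopic to such a geodesic loop). Then $kA=\d+\lambda_j\d\theta+\Ad_{e^{\lambda_j\theta}}\tilde a_j$ on all of $N(z_j)\bs\{z_j\}$, which is exactly the form the corollary asserts, the decay transfers as you say, and $k|_{\{r=0\}}$ is now null-homotopic, hence extends over the rest of $\Sig$ one end at a time (e.g.\ as the identity outside a collar). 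In particular the ``joint neutralization'' that you flag as the hard part is a non-issue: no compatibility condition among the $\lambda_j$ for different $j$ is required, since each corrected boundary loop is individually trivial in $\pi_1(K)$. With that repair your argument coincides with the paper's proof.
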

\begin{proof} It is enough to assume that there is only one
  cylindrical end corresponding to $z \in \ol \Sig \bs \Sig$, so we can drop $j$ from the notation. We fix a trivialization of the principal $K$-bundle $P\to \Sig$. Apply Proposition
  \ref{prop:connW1p} to the restriction of $(A,u)$ to the half
  cylinder $N(z) \bs \{z\}$, and call the resultant gauge
  transformation $k_1$. The homotopy equivalence class of the map
  $k_1|_{\{r=0\}}:S^1 \to K$ will contain a geodesic loop
  $\theta \mapsto e^{-\lambda \theta}$, where $\lambda \in \frac 1 {2\pi} \exp^{-1}(\Id) \subset \k$. Now, $e^{\lambda \theta}k_1A=\d+\lambda
  \d\theta + a$ and $a$ satisfies \eqref{eq:twistconnbd}. The gauge
  transformation $k:=e^{\lambda \theta}k_1:[0,\infty) \times S^1
  \to K$ is homotopic to the constant identity map, so it is possible to choose an extension of $k$ to all of $\Sig$.
\end{proof}

\begin{corollary}\label{cor:extendoverinfty} Let $(A,u)$ be a finite
  energy vortex as in Corollary \ref{cor:connW1psig}. Assume $p$
  satisfies $0<1-\frac 2 p<\gamma$. Then, there is a principal
  $K$-bundle over $\oSig$ so that $(A,u)$ extends to a $L^p_{loc}
  \times W^{1,p}_{loc}$ gauged holomorphic map on $P$.
\end{corollary}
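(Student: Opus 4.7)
The plan is to use the gauge from Corollary \ref{cor:connW1psig} to isolate a flat ``holonomy'' piece $\lam_j\d\theta$ in the connection near each puncture $z_j$, then absorb this piece into a new local trivialization of a $K$-bundle $P\to\oSig$ whose transition function over a punctured disk around $z_j$ is $\exp(\lam_j\arg w)$---well-defined because $\exp(2\pi\lam_j)=\Id$. Concretely, I first apply Corollary \ref{cor:connW1psig} to obtain $\lam_j\in\frac{1}{2\pi}\exp^{-1}(\Id)$ and a gauge in which $kA=\d+\lam_j\d\theta+a_j$ on each cylindrical end $N(z_j)\bs\{z_j\}$, with $a_j$ satisfying \eqref{eq:twistconnbd}. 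I choose a local holomorphic coordinate $w$ on a disk $D_j'\subset\oSig$ around $z_j$, $w(z_j)=0$, so that $r+\img\theta=-\ln w$, set $h_j(w):=\exp(\lam_j\arg w)$, and glue the trivial bundle on $D_j'$ to the trivial bundle on $\Sig$ via the transition $h_j$ on $D_j'\bs\{z_j\}$ to obtain $P\to\oSig$. Using $h_j^{-1}\d h_j=\lam_j\d\theta$ and $\Ad_{h_j}\lam_j=\lam_j$, a direct calculation gives that the connection form in the new trivialization over $D_j'$ is $\d+\Ad_{h_j}a_j$.

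The main step is estimating $\Ad_{h_j}a_j$ in the disk metric. Under $w=e^{-(r+\img\theta)}$ one has $|\d r|_{\on{disk}}=|\d\theta|_{\on{disk}}=1/|w|=e^r$ and $\d x\,\d y=e^{-2r}\,\d r\,\d\theta$, so
\[
\int_{D_j'}|\Ad_{h_j}a_j|_{\on{disk}}^p\,\d x\,\d y=\int_{\on{cyl}}|a_j|_{\on{cyl}}^p\,e^{(p-2)r}\,\d r\,\d\theta.
\]
Splitting the cylindrical end into unit strips and applying \eqref{eq:twistconnbd} dominates this by a geometric series with ratio $e^{p-2-p\gamma}$, convergent precisely when $\gamma>1-2/p$---our hypothesis. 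Hence $\Ad_{h_j}a_j\in L^p(D_j')$ and $A$ extends as an $L^p_{\on{loc}}$ connection on $P$.

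For the section, $u$ has image in a compact subset of $X$, so its representative $u_0\colon D_j'\to X$ in the new trivialization is bounded. The equation $\delbar_A u_0=0$ rewrites as $\delbar u_0=-((\Ad_{h_j}a_j)^{0,1})_{u_0}$, whose right-hand side lies in $L^p_{\on{loc}}(D_j'\bs\{z_j\})$; boundedness of $u_0$ rules out any distributional contribution at the isolated puncture $z_j$, so $\delbar u_0\in L^p_{\on{loc}}(D_j')$. Elliptic regularity for $\delbar$ on the disk then gives $u_0\in W^{1,p}_{\on{loc}}(D_j')$, continuous across $z_j$ since $p>2$. The hard part---and the only step where the full strength of the hypothesis enters---is the connection estimate above: the exponential growth $e^{(p-2)r}$ arising from the cylindrical-to-disk change of variables must be offset by the decay $e^{-p\gamma r}$ of $a_j$, and this balance holds precisely when $1-2/p<\gamma$.
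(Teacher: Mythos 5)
Your proposal is correct and follows essentially the same route as the paper: the bundle over $\oSig$ is built with transition function $e^{-\lambda_j\theta}$ so that the flat piece $\lambda_j\d\theta$ is absorbed, the connection in the new trivialization is $\Ad_{h_j}a_j$, and the conformal weight $e^{(p-2)r}$ from the cylinder-to-disk change of variables is beaten by the decay $e^{-p\gamma r}$ exactly when $1-\tfrac{2}{p}<\gamma$. The only cosmetic difference is in handling the section: the paper first invokes Proposition \ref{prop:infvortsing} for continuity of $ku$ across $z_j$ and then applies elliptic regularity, whereas you get continuity as a byproduct of $W^{1,p}_{loc}$ with $p>2$; both are fine.
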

\begin{proof} The bundle $P$ can be defined as follows. The trivial
  bundle $N(z_j) \times K$ is glued to $\Sig \times K$ using
  transition function
$$k:N(z_j) \bs \{z_j\} \to K, \quad \theta \mapsto e^{-\lambda_j\theta}.$$
On the bundle $\Sig \times K$, $A|_{N(z_j) \bs \{z_j\}}=\d+\lambda_j
\d\theta +a_j$ and $a_j$ satisfies an exponential bound as in
\eqref{eq:twistconnbd}. Then, on the bundle $N(z_j) \times K$,
$A|_{N(z_j) \bs \{z_j\}}=\Ad_{e^{-\lambda_j \theta}}a_j$. By the estimate 
\eqref{eq:twistconnbd}, the one-form $a_j$, 
and hence $\Ad_{e^{-\lambda_j\theta}}a_j$, are both in $L^p(N(z_j))$. This is because the coordinates
in $N(z_j)$ are given by $e^{-\rho_j}$. This operation stretches unit
vectors by a factor of $e^{-r}$, so the 1-form $a_j$ is in
$L^p(N(z_j))$ if $-\gamma + 1 - \frac 2 p<0$. Using Proposition
\ref{prop:infvortsing}, we can see that $ku$ extends continuously over
$z_j$, subsequently $u \in W^{1,p}(N(z_j))$ by elliptic regularity.
\end{proof}

\begin{remark}[Equivariant homology class] Suppose $(A,u)$ is a finite
  energy vortex on a surface $\Sig$ with cylindrical ends whose image
  is pre-compact in $X$, then by the removal of singularity
  Proposition \ref{prop:infvortsing} and Corollary
  \ref{cor:connW1psig}, there is a $K$-bundle $P$ on $\oSig$ such that
  $(A,u)$ extends to a gauged holomorphic map over $\oSig$. So, $[u]$
  represents an equivariant homology class in $H_2^K(X)$. Theorem 3.1 in
  \cite{CGS} is also applicable, which says that vortices are energy
  minimizers in the equivariant homology class and
  \begin{equation}\label{eq:energyhom}
    E(A,u)=\lan [\om -\Phi],[u]\ran, 
  \end{equation}
  where $\om -\Phi \in H^2_K(X)$ and $\lan \cdot, \cdot \ran$ is the
  pairing between equivariant homology and cohomology.
\end{remark}

\section{Gromov convergence for vortices}\label{sec:gromovdef}
Suppose $C$ is a marked nodal curve and $\Sig$ is a Riemann surface with cylindrical ends corresponding to $C$. 
A {\em vortex} on $\Sig$ is a tuple $(A_\alpha,u_\alpha)_{\alpha \in \vertex(C)}$ of finite energy vortices defined on the components of $\Sig$ that satisfy
the {\em connectedness} condition :
\begin{equation}\label{eq:connect}
  Ku_{\inc(w^+)}(w^+)=Ku_{\inc(w^-)}(w^-), \quad \forall w \in \Edge(C).
\end{equation}
To compactify the space of vortices on Riemann surfaces with cylindrical ends, we need to allow breaking of cylinders at marked points and nodes. We illustrate this phenomena in a simplified setting. Let $(A_\nu,u_\nu)$ be a sequence of vortices on $\Sig$ whose energy is bounded. The limiting object $(A_\infty,u_\infty)$ will be a {\em stable vortex}, consisting of a vortex defined on each component $\Sig_\alpha$ in $\Sig$. In addition, 
\begin{enumerate}
\item\label{part:markbubble} at any marked point $z \in \Edge_\infty(C)$, there may be a path of cylindrical
  vortices (vortices on $S^1 \times \R$).
\item\label{part:nodebubble} Any edge $w \in \Edge(C)$ may be
  replaced by a path of cylindrical vortices joining $w^+$ to $w^-$.
\end{enumerate}
\begin{assumption} In Sections \ref{sec:gromovdef}, \ref{sec:convpf}
  and \ref{sec:hausdorff}, $(X,\om,J)$ is an aspherical
  $K$-Hamiltonian symplectic manifold with a $K$-invariant almost
  complex structure $J$. Refer to Remark \ref{rem:asphere} for the
  case that $X$ has spheres.
\end{assumption}
%
%
We now rigorously define stable vortices. A {\it pre-stable curve} $C$ is a
nodal curve with $n$-marked points such that any unstable component of
$C$ is a sphere with two special points. Suppose $C$ is modeled on the
graph $\Gamma$. Denote by $\Gamma^{cyl}$ the graph induced by the
unstable components $\vertex(\Gamma)\bs
\vertex(\st(\Gamma))$. $\Gamma^{cyl}$ is a collection of paths, each
path is of one of the following form (corresponding to
\eqref{part:markbubble} and \eqref{part:nodebubble} above).
\begin{enumerate}
\item A path may be attached to the graph $\Gamma\bs \Gamma^{cyl}$ at
  one end and the other end of the path has a marked point $z$. We denote
  this path by $\Gamma^{cyl}_z$.
\item A path may be attached to $\Gamma \bs \Gamma^{cyl}$ at both
  ends. Contracting the path produces a node $w$ in $\st(\Gamma)$. We
  denote this path by $\Gamma^{cyl}_w$.
\end{enumerate}
For our convenience, we label the vertices in these paths as
$\alpha_{x,1},\dots,\alpha_{x,|C(x)|}$ for any $x \in
\Edge(\st(\Gamma)) \cup \Edge_\infty(\st(\Gamma))$ satisfying the
following convention. If $x$ is a marked point in $\st(\Gamma)$, then $x$ lies on
the vertex $\alpha_{x,|C(x)|}$ and the vertex $\alpha_{x,1}$ has an edge
to $\Gamma\bs \Gamma^{cyl}$. If $x=x^+x^-$ is an edge in $\st(\Gamma)$,
then $\alpha_{x,1}$ (resp. $\alpha_{x,|C(x)|}$) is attached to
$\Gamma\bs \Gamma^{cyl}$ at $x^+$ (resp. $x^-$).

Given a pre-stable curve $C$, a Riemann surface $\Sig$ with
cylindrical ends can be associated to it. As earlier, as a complex
curve $\Sig$ is just $C$ with all special points deleted. On the
stable components of $C$, we use the neck-stretching metric
corresponding to $\st(C)$. For any unstable component $C_1$ with marked
points $z_0$, $z_1$, we use the pull-back metric $\rho^*(\d r^2 +
\d\theta^2)$, where $\rho$ is a bi-holomorphism
$$\rho:C_1 \bs \{z_0,z_1\} \to \{(r,\theta):r \in \R, \theta \in S^1\}$$
such that $\lim_{r \to -\infty}\rho^{-1}(r,\theta)=z_0$ and $\lim_{r
  \to \infty}\rho^{-1}(r,\theta)=z_1$. The map $\rho$ is unique up to
translation by a constant $(r_0,\theta_0)$, hence the pull-back metric
is uniquely determined.

\begin{definition}[Stable vortex] 
  Let $C$ be a pre-stable $n$-pointed curve of genus $g$. A {\em
    stable vortex} on $C$ has domain $\Sig$, which is a Riemann
  surface with cylindrical ends corresponding to $C$. A stable vortex
  on $C$ is a tuple of finite energy vortices
  $(A_\alpha,u_\alpha)_{\alpha \in \vertex(\Gamma(C))}$ defined on the
  principal $K$-bundles $\Sig_\alpha\times K$ that satisfy the
  connectedness condition :
  \begin{equation*}
    Ku_{\inc(w^+)}(w^+)=Ku_{\inc(w^-)}(w^-) \quad \forall w \in \Edge(\Gamma(C)).
  \end{equation*}
  In addition, for an unstable component $\alpha \in \Gamma^{cyl}$,
  the vortex $(A_\alpha,u_\alpha)$ has non-zero energy.
\end{definition}
To define convergence on a sequence of Riemann surfaces, these
surfaces must be identified to each other. For a Riemann surface
$\Sig$ with cylindrical ends, we denote by $\Cyl(\Sig)$ the parts that
are isometric to parts of a cylinder.

\begin{lemma}[Identification of curves in a neighborhood of a stable
  curve $C$] \label{lem:convprep} Suppose $C$ is an $n$-pointed stable
  nodal genus $g$ curve and $\Def(C)$ be a universal deformation
  parametrized by $(S,0)$, where $S$ is a neighborhood of the origin
  in $C^N$.  Let $\Sig_s$ be the Riemann surface with cylindrical ends
  corresponding to the curve $C_s$, $s \in S$. For a small enough
  neighborhood $S$, for any $\alpha \in \vertex(C)$, there is a family
  of connected open subsets $\tSig_{s,\alpha} \subset \Sig_\alpha$ and
  diffeomorphisms
  \begin{equation}\label{eq:curveid}
    h_s:\bigl(\bigsqcup_\alpha \tSig_{s,\alpha}\bigr)/\sim \to \Sig_s
  \end{equation}
  where $\sim$ is an isometric equivalence relation defined on
  $\sqcup_\alpha\Cyl(\tSig_{s,\alpha})$. The map
  \eqref{eq:curveid} is an isometry when restricted to
  $\Cyl(\tSig_{s,\alpha})$ on the l.h.s. and $\Cyl(\Sig_s)$ on the
  r.h.s. For $s=0$, $\tSig_{0,\alpha}=\Sig_{0,\alpha}$ and $h_0=\Id$.
\end{lemma}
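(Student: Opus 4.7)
The plan is to use the explicit construction of the universal deformation of $C$ as a two-step process: a universal deformation of type $\Gamma$ parametrized by $(S_\Gamma,0)$, followed by the gluing Construction~\ref{const:glue} with gluing parameters $\delta \in I_{\Gamma,0}$. After shrinking $S$, any $s \in S$ decomposes as a pair $(s_\Gamma,\delta)$, and I would treat the two factors separately. The key technical input is the compatible family $\kappa$ of holomorphic coordinates at marked points and nodal lifts used to define the neck-stretching metric in Proposition~\ref{prop:neckstretch}: by compatibility, the cylindrical coordinates $\rho_{w^\pm}$, and hence the metric on a fixed neighborhood of each special point, depend only on $\kappa$ and not on $s_\Gamma$.

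For the type-preserving direction, the family $\mC_\Gamma \to S_\Gamma$ varies the complex structure on the smooth components of $C$ while keeping $\kappa$ (and the neighborhoods of the special points) fixed. A standard Ehresmann-type argument produces a smoothly varying family of diffeomorphisms $\phi_{s_\Gamma,\alpha}:\Sig_\alpha \to \Sig_{s_\Gamma,\alpha}$ that are the identity on a fixed open neighborhood of each special point. Since those neighborhoods carry the cylindrical metric dictated by $\kappa$ on both source and target, $\phi_{s_\Gamma,\alpha}$ is automatically an isometry on $\Cyl(\Sig_\alpha)$.

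For the gluing direction, given small $\delta$, set $l_w = L_w + \img t_w := -\ln \delta_w$ for each smoothed node $w$ and define
\[
  \tSig_{s,\alpha} := \Sig_\alpha \,\setminus\, \bigcup_{\substack{w:\, \delta_w \neq 0 \\ \inc(w^\pm)=\alpha}} \rho_{w^\pm}^{-1}\bigl(\{r > L_w\}\bigr),
\]
so that the end at each smoothed node is truncated just past the identification strip. Define $\sim$ on $\sqcup_\alpha \Cyl(\tSig_{s,\alpha})$ by the gluing rule \eqref{eq:lid}, i.e. $\rho_{w^+}^{-1}(r+\img\theta) \sim \rho_{w^-}^{-1}(l_w + r + \img\theta)$ wherever both sides are defined. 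Because $\sim$ identifies two isometric cylindrical strips carrying the standard $\d r^2 + \d\theta^2$ metric, the quotient $(\sqcup_\alpha \tSig_{s,\alpha})/\!\sim$ inherits a smooth complex structure and a well-defined metric that coincides, by construction of the neck-stretching metric, with that of $\Sig_s$. Composing the tautological quotient map with the diffeomorphisms $\phi_{s_\Gamma,\alpha}$ yields the required $h_s$, and the isometry property on cylinders and the base case $h_0 = \Id$ are immediate.

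The main difficulty is the joint control in $(s_\Gamma,\delta)$: one must shrink $S$ enough that, for every $w$ with $\delta_w \neq 0$, the length $L_w = -\Re\ln\delta_w$ is large enough that the truncation locus $\{r = L_w\}$ lies deep inside the region where the metric is exactly $\d r^2 + \d\theta^2$, and so that $\tSig_{s,\alpha}$ remains connected. Once $S$ is small enough for these purposes, smooth dependence on $s$ of the truncations, of the equivalence $\sim$, and of $h_s$ follows from the smooth dependence of $\kappa$, of $\phi_{s_\Gamma,\alpha}$, and of $\delta \mapsto l_w$ on their respective parameters.
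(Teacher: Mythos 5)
Your proposal is correct and follows essentially the same route as the paper: decompose the deformation into the type-preserving direction over $S_\Gamma$ (handled by a family of diffeomorphisms that are isometries on the cylindrical neighborhoods of special points) and the gluing direction (handled by truncating the cylindrical ends at $\on{Re}(l_w)=-\on{Re}\ln\delta_w$ and identifying via \eqref{eq:lid}), then compose. Your added remarks on shrinking $S$ and on the Ehresmann-type argument only make explicit what the paper leaves implicit.
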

\begin{proof} Suppose the nodal curve $C$ is of type $\Gamma$ and
  $\Def_\Gamma(C)$ be a universal deformation of type $\Gamma$
  parametrized by $(S_\Gamma,0)$. Then, there is a smooth family of
  diffeomorphisms
  \begin{equation}\label{eq:defdiffeo}
    h_s:C_s \to C_0, \quad s \in S_\Gamma,\quad \quad h_0=\Id
  \end{equation}
  satisfying the condition that $h_s$ preserves special
  points. Further, $h_s$ can be chosen such that in a punctured
  neighborhood $N(z)\bs\{z\}$ of any special point $z=z_j,w_j^\pm$,
  $h_s$ is an isometry under the neck-stretching metric. Set
  $\tSig_{s,\alpha}:=\Sig_\alpha$ and $\sim$ to be trivial. This
  proves the lemma for $s \in S_\Gamma$.

  Next, consider $s \in S$ such that $C_s$ can be obtained from
  $C_0\simeq C$ by the gluing procedure.  Suppose $\delta(s)$ is the
  gluing parameter used to produce $C_s$ from $C$. We write
  $\delta(s)=(\delta_w(s))_{w \in \Edge(C)}$, where $\delta_w \in
  T_{w^+}\tilde C_{\inc(w^+)}\tensor T_{w^-}\tilde C_{\inc(w^-)}$ and
  $\tilde C$ denotes the normalization of $C$. Let
  \begin{equation}\label{eq:breakglue}
    \tSig_{s,\alpha}=\Sig_\alpha \bs\bigl( \bigcup_{w^\pm:\inc(w^\pm)=\alpha} (\rho_{w^\pm})^{-1}\{r+\img\theta:\pm r > \on{Re}(l_w(s)), \theta \in S^1\}\bigr),
  \end{equation}
  where $l_w(s)=-\ln \delta_w(s)$. Then, there is an isometry
  $h_s:\Sig_s \simeq (\bigsqcup_{\alpha \in
    \vertex(C)}\tSig_{s,\alpha})/\sim$, where $\sim$ is an equivalence
  relation like in \eqref{eq:lid} corresponding to every edge in
  $\Gamma(C)$. Note that $l_w(s) \to \infty$ as $s \to 0$.

  Finally, we consider a general $s \in S$. There is an $s' \in
  S_\Gamma$ such that $C_s$ is obtained from $C_{s'}$ by the gluing
  procedure. Apply the discussion is the previous paragraph, with
  $C_0$ replaced by $C_{s'}$. The resulting isometry can be composed
  with $h_{s'}$ in \eqref{eq:defdiffeo} to obtain the result of the
  lemma.
\end{proof}
The benefit of the construction in Lemma \ref{lem:convprep} is that,
now $\tSig_{s,\alpha}$ is isometrically embedded in  $(\Sig,\ulg_s)$.
A gauged holomorphic map $w$ defined on $\Sig_s$ can be pulled back to
one on $\tSig_{s,1}\cup \dots \cup \tSig_{s,k}$ and restricted to
$\tSig_{s,\alpha}$ -- this object is also denoted by $w$. So, it is
possible to talk about convergence of a sequence of vortices defined
on $\Sig_{s(\nu)}$, where the sequence $s(\nu) \to 0$ as $\nu \to
\infty$. The limit will be a stable vortex on $\Sig$. For simplicity
of exposition, we assume the curves $C_{s(\nu)}$ are smooth in Definition \ref{def:gromconv}.
\begin{definition}[Gromov Convergence]\label{def:gromconv} Suppose $\{C_\nu\}_{\nu \in
    \N}$ be a sequence of smooth $n$-pointed curves of genus $g$ and
  $C$ is a pre-stable nodal curve such that the sequence $[C_\nu]$ converges to  $[\st(C)]$ in
  $\ol M_{g,n}$. Let $\Sig_\nu$, $\Sig$ be Riemann surfaces with
  cylindrical ends corresponding to $C_\nu$, $C$. Let
  $v_\nu:=(A_\nu,u_\nu)$ be a sequence of vortices on the trivial
  bundle $\Sig_\nu \times G$. We say the sequence {\em Gromov
    converges} to a stable vortex $v:=((v_\alpha)_{\alpha \in
    \vertex(C)},(z_j)_{1 \leq j \leq n})$ on $\Sig$ if on a deformation of the
  curve $\st(C)$ parametrized by $(S,0)$ one can find a sequence in
  $S$, $s(\nu) \to 0$, such that there are isomorphisms $C_\nu \simeq
  C_{s(\nu)}$ and the following are satisfied
  \begin{enumerate}
  \item {\rm{(Convergence on stable domain
        components)}} \label{part:1gc} For each $\alpha \in
    \vertex(\st(C))$, there exist a sequence of gauge transformations
    $k_{\nu,\alpha}$ on $\tSig_{\nu,\alpha}$ so that $k_{\nu,\alpha}
    (A_{\nu,\alpha},u_{\nu,\alpha})$ converges in $C^\infty$ to
    $(A_\alpha,u_\alpha)$ on compact subsets of $\Sig_\alpha$. Here,
    $\tSig_{\nu,\alpha}:=\tSig_{s(\nu),\alpha}$ defined in Lemma
    \ref{lem:convprep}.
  \item \label{part:2gc} {\rm{(Convergence on cylindrical bubbles at
        marked points)}} For every $\alpha \in \Gamma_z^{cyl}$ for
    some $z \in \Edge_\infty(\st(C))$, there exist
    \begin{itemize}
    \item a sequence of numbers $s_{\nu,\alpha} \to \infty$ which
      define a sequence of translations
      \begin{align*}
        \phi_{\nu,\alpha} : ((-s_{\nu,\alpha},\infty) \times S^1)_\alpha &\to (S^1 \times \R_{\geq 0})_z,\\
        z&\mapsto z+s_{\nu,\alpha}
      \end{align*}
      mapping a part of $\Sig_\alpha$ to the cylindrical end
      corresponding to $z$.
    \item a sequence of gauge transformations $k_{\nu,\alpha}$ so that
      $k_{\nu,\alpha}(\phi_{\nu,\alpha}^*(A_\nu,u_\nu))$ converges to
      $(A_\alpha,u_\alpha)$ on compact subsets of $(S^1\times
      \R)_\alpha$.
    \end{itemize}
  \item \label{part:3gc} {\rm{(Convergence on cylindrical bubbles at
        nodes)}} For every $\alpha \in \Gamma_w^{cyl}$ for some $w \in
    \Edge(\st(C))$, there exist
    \begin{itemize}
    \item a sequence of numbers $0<s_{\nu,\alpha}<l_w(\nu)$ such that
      $s_{\nu,\alpha}$, $l_w(\nu)-s_{\nu,\alpha}\to \infty$ as $\nu
      \to \infty$, which define a sequence of translations
      \begin{align*}
        \phi_{\nu,\alpha} : ((-s_{\nu,\alpha},l_w(\nu)-s_{\nu,\alpha}) \times S^1)_\alpha&\to  ((0,l_w(\nu)) \times S^1)_{w^+}\\
        z&\mapsto z+s_{\nu,\alpha}
      \end{align*}
      mapping a part of $\Sig_\alpha$ to the cylindrical end
      corresponding to $w^+$. The notation $l_w(\nu)$ is same as that in the proof of Lemma \ref{lem:convprep}.
    \item a sequence of gauge transformations $k_{\nu,\alpha}$ so that
      $k_{\nu,\alpha}(\phi_{\nu,\alpha}^*(A_\nu,u_\nu))$ converges to
      $(A_\alpha,u_\alpha)$ on compact subsets of $(S^1\times
      \R)_\alpha$.
    \end{itemize}
  \item{\rm{(Rescaling)}} \label{part:4gc} Let $\alpha$, $\beta \in
    \Gamma_\gamma^{cyl}$ be distinct unstable component connected to $\gamma$, which is either a node or marked point in $\st(C)$. Suppose $\alpha <
    \beta$, then $s_{\nu,\beta}-s_{\nu,\alpha} \to \infty$ as $\nu \to
    \infty$.
  \item{\rm{(Energy)}} \label{part:5gc} $\lim_{\nu \to
      \infty}E(A_\nu,u_\nu)=\sum_{\alpha \in
      \on{Vert}(\Gamma)}E((A_\alpha,u_\alpha))$.
  \end{enumerate}
\end{definition}

\begin{remark} The definition of Gromov convergence is independent of
  the choice of $h$ in \eqref{eq:defdiffeo}.
\end{remark}

The main Theorem is
\begin{theorem}[Gromov compactness]\label{thm:gromovcpt} Suppose
  $\{C_\nu\}_{\nu \in \N}$ are smooth $n$-pointed genus $g$ curves and $C'$ is
  a stable nodal curve such that $[C_\nu] \to [C']$ in $\ol
  M_{g,n}$. Suppose $v_\nu=(A_\nu,u_\nu)$ is a sequence of vortices defined
  on $\Sig_\nu$, the Riemann surface with cylindrical ends
  corresponding to $C_\nu$, that satisfy
  $$\sup_\nu E(v_\nu)<\infty,$$
  and there is a compact subset of $X$ containing the images of
  $u_\nu$.  Then, after passing to a subsequence, the sequence
  Gromov-converges to a stable vortex $v=(A,u)$ on $\Sig$, which is a Riemann surface with cylindrical ends corresponding to a pre-stable curve $C$, such that $\st(C)=C'$.
\end{theorem}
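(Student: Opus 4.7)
The plan is to first reduce to a common geometric setting, then extract a limit on the stable components by a standard Uhlenbeck--bootstrapping argument, and finally perform a bubbling analysis on the lengthening necks to capture the cylindrical components of the limit. Since $[C_\nu] \to [C']$ in $\ol M_{g,n}$, after choosing a universal deformation of $C'$ parametrized by $(S,0)$, I can find $s(\nu) \to 0$ in $S$ so that $C_\nu \simeq C_{s(\nu)}$. Applying Lemma \ref{lem:convprep}, each $\Sig_\nu$ is identified with the gluing of open subsets $\tSig_{\nu,\alpha} \subset \Sig_{C',\alpha}$ along necks of length $l_w(\nu) \to \infty$ at the nodes $w \in \Edge(C')$. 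With this identification, restrictions of $v_\nu = (A_\nu,u_\nu)$ pull back to gauged holomorphic maps on growing subdomains of each $\Sig_{C',\alpha}$, and the rest of the data lives on the necks and on the original cylindrical ends.

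Next I would prove $C^\infty_{\loc}$ convergence (up to gauge) on each stable component $\Sig_{C',\alpha}$. The vortex equation $F_A = -\Phi(u)\om_\Sig$ and the hypothesis that the images lie in a fixed compact subset of $X$ give a uniform pointwise bound on $F_{A_\nu}$, hence uniform $L^p$ bounds on small balls. Covering $\tSig_{\nu,\alpha}$ by small balls and invoking Uhlenbeck's local compactness (Theorem \ref{thm:Uhcpt}) I obtain local Coulomb gauges in which $\|a_\nu\|_{W^{1,p}}$ is controlled. These local gauges can be patched as in the proof of Proposition \ref{prop:connW1p}, with correction cocycles controlled in $W^{2,p}$. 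After passing to a subsequence by a diagonal argument on an exhaustion of $\Sig_{C',\alpha}$, a combination of the Cauchy--Riemann equation $\delbar_{A_\nu}u_\nu=0$, the vortex equation, and elliptic bootstrapping on both $A$ and $u$ gives $C^\infty_{\loc}$ convergence to a gauged holomorphic map which is itself a finite energy vortex by lower semicontinuity of energy; the removal of singularities Proposition \ref{prop:infvortsing} and Corollary \ref{cor:connW1psig} then ensure the limit extends as a finite energy vortex on all of $\Sig_{C',\alpha}$.

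The main work is the bubble analysis on necks and at the ends, which I would handle by induction on the total energy. Fix a neck of length $l_w(\nu)$ (the argument for the semi-infinite cylindrical ends of $\Sig$ is analogous with $l_w(\nu)=\infty$) and consider the energy function $e_\nu(r) = E(v_\nu|_{[r,r+1]\times S^1})$ on the neck. If $\sup_r e_\nu(r) \to 0$ then the asymptotic decay Proposition \ref{prop:decaycyl} combined with Corollary \ref{cor:connW1psig} forces $u_\nu$ to converge uniformly to a single $K$-orbit on the neck, matching the limits at $w^\pm$ on the adjacent stable components -- so no bubble is needed and the connectedness condition \eqref{eq:connect} holds. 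Otherwise choose $s_\nu \in [0,l_w(\nu)]$ maximizing (a smoothed version of) $e_\nu$; translating $v_\nu$ by $s_\nu$ and applying the same $C^\infty_{\loc}$ extraction as above on the straight cylinder $\R\times S^1$ produces a limit vortex $(A_\alpha,u_\alpha)$ of positive energy. Removal of singularity at $\pm\infty$ yields two $K$-orbits to which $(A_\alpha,u_\alpha)$ is asymptotic, and the standard soft rescaling argument (choose translations maximally separated to enforce the rescaling condition \ref{part:4gc}) then inserts the bubble into the chain $\Gamma_w^{\cyl}$. I then repeat the procedure on the remaining sub-necks on either side of the extracted bubble, with strictly less remaining energy.

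The hard part is to show that this bubbling process terminates, i.e., that only finitely many cylindrical bubbles arise and that no energy is lost in the limit (condition \ref{part:5gc}). Termination rests on an energy quantization statement: there is $\hbar>0$ such that any non-constant cylindrical vortex with limits in $\Phi^{-1}(0)/K$ has energy $\geq \hbar$. This follows from Proposition \ref{prop:decaycyl} together with asphericity of $X$ (which forbids vanishing-energy limits from absorbing area into sphere bubbles in the fibers) and the free $K$-action on $\Phi^{-1}(0)$. Given $\hbar$, the supremum of bubble energies extracted at each step is bounded below while the total energy is bounded above, so the induction terminates after finitely many rescalings. Finally, matching the limits at adjacent rescaled components via the exponential decay estimate yields the connectedness condition at each new node, and combining the energy identity \eqref{eq:energyhom} on each piece with the finiteness of bubbles gives the energy conservation \ref{part:5gc}. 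The resulting limit is a stable vortex on a pre-stable curve $C$ with $\st(C)=C'$, completing the Gromov convergence.
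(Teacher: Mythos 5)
Your overall strategy coincides with the paper's: identify the domains via Lemma \ref{lem:convprep}, extract $C^\infty_{\loc}$ limits on the stable components by Uhlenbeck compactness and elliptic bootstrapping (this is Proposition \ref{prop:convmodbreak}), bubble off cylindrical vortices on the lengthening necks by translation, and terminate by energy quantization (Propositions \ref{prop:markedbreak} and \ref{prop:nodebreak}, via Lemma \ref{lem:break}). The one place where your argument does not go through as written is the neck analysis: you invoke Proposition \ref{prop:decaycyl} both to rule out bubbling when $\sup_r e_\nu(r)\to 0$ and to derive energy quantization, but the constant $C$ in that proposition is chosen after fixing a single vortex, so it gives no uniform control over the sequence $v_\nu$ on necks of growing length. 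The tool you actually need, and the one the paper uses, is the annulus lemma, Proposition \ref{prop:energyquantvort} part \eqref{part:ann}, whose constants depend only on the compact subset of $X$ containing the images; quantization is its direct consequence (part \eqref{part:quant}), and asphericity plays no role in it --- asphericity is only used to exclude sphere bubbles in the fibers. With that substitution your argument matches the paper's. The remaining differences are cosmetic: you choose the translation $s_\nu$ by maximizing the local energy density, whereas the paper fixes $s_\nu$ by capturing $m_0-\delta/2$ of the escaping energy, which makes the bookkeeping $m_0=E(v^1)+m_1$ and the non-constancy of the bubble immediate; and the energy identity in condition \eqref{part:5gc} should be obtained by tracking energy through the induction as in Lemma \ref{lem:break}, not by appealing to \eqref{eq:energyhom}, since preservation of the homology class is itself only established afterwards (Proposition \ref{prop:evcont}).
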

Theorem \ref{thm:gromovcpt} follows from Proposition
\ref{prop:convmodbreak}, Proposition \ref{prop:markedbreak} and
Proposition \ref{prop:nodebreak}, they respectively prove conditions
\eqref{part:1gc}, \eqref{part:2gc} and \eqref{part:3gc} in Gromov
Convergence. The other two conditions are ensured along the way.  The
next Proposition proves that evaluation maps are continuous under the
Gromov topology, and the equivariant homology class $[u_\nu]$ is
preserved in the limit.
\begin{proposition}[Continuity of $\ev_j$]\label{prop:evcont} Assume
  the setting of Theorem \ref{thm:gromovcpt}.
  \begin{enumerate}
  \item \label{part:evlt} For each of the marked points
    $z_1,\dots,z_n$, $\lim_{\nu \to \infty}\ev_j(A_\nu,u_\nu)=\ev_j(A,u).$
\item \label{part:homology} If $[u_\nu]=\beta \in H_2^K(X)$ for all
  $\nu$, then $[u]=\beta$.
\end{enumerate}
\end{proposition}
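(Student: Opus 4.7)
The plan is to establish both parts by unpacking the Gromov convergence definition and leaning on the exponential-decay estimates from Section \ref{sec:prelim}. I would handle part \ref{part:evlt} first, as it is the more delicate item, and then part \ref{part:homology}.

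For part \ref{part:evlt}, fix a marked point $z_j$. If $\Gamma^{cyl}_{z_j}$ is empty then $z_j$ lies on a stable component $\alpha^*$ and the argument is essentially a continuity statement: item \ref{part:1gc} of Definition \ref{def:gromconv} gives $k_{\nu,\alpha^*}(A_{\nu,\alpha^*},u_{\nu,\alpha^*}) \to (A_{\alpha^*},u_{\alpha^*})$ in $C^\infty_{\loc}$, and combining this with the uniform exponential decay of Proposition \ref{prop:decaycyl} (uniform because the images of $u_\nu$ are precompact and the energies are bounded) plus removal of singularity at $z_j$ (Proposition \ref{prop:infvortsing}) yields $\lim_\nu \ev_j(A_\nu,u_\nu) = \ev_j(A,u)$ in $X\qu G$. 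If $\Gamma^{cyl}_{z_j}$ is nonempty, let $\alpha^* = \alpha_{z_j,|C(z_j)|}$ be its terminal vertex, on which the marker $z_j$ sits at $+\infty$. Given $\eps>0$, I would choose a radius $r_0$ on the cylindrical end of $\Sig_\nu$ at $z_j$ so that, simultaneously, (i) $r_0$ is large enough that the uniform decay of Proposition \ref{prop:decaycyl} forces $d(Ku_\nu(r_0+\img\theta),\ev_j(A_\nu,u_\nu))<\eps/3$ for all $\nu$, and (ii) after translation by $\phi_{\nu,\alpha^*}$ the circle $\{r=r_0\}$ lands far to the right in the terminal bubble $\Sig_{\alpha^*}$, in the region where the decay on $(A_{\alpha^*},u_{\alpha^*})$ forces $d(Ku_{\alpha^*}(\cdot),\ev_j(A,u))<\eps/3$. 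The rescaling axiom \ref{part:4gc} (that $s_{\nu,\alpha^*}$ grows faster than the $s_{\nu,\alpha}$ for bubbles above $\alpha^*$) is what makes these two choices mutually consistent. On such a circle, compact convergence in item \ref{part:2gc} bounds the remaining $\eps/3$, closing the argument.

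For part \ref{part:homology}, I would exploit the rigidity of the equivariant homology class under Gromov convergence. By Corollary \ref{cor:extendoverinfty} each $(A_\nu,u_\nu)$ extends to a principal $K$-bundle $P_\nu \to \oSig_\nu$ with a continuous section of $P_\nu(X)$, hence defines a classifying map $\oSig_\nu \to X \times_K EK$ representing $\beta$. Similarly each $(A_\alpha,u_\alpha)$ extends to $\oSig_\alpha$ representing $[u_\alpha]\in H_2^K(X)$, and the connectedness condition together with part \ref{part:evlt} applied at every node of the pre-stable curve $C$ (including the cylindrical-bubble nodes) shows these classifying maps agree at the identifications, so the assembled map on the nodal curve is well defined and represents $\sum_\alpha [u_\alpha]$. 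The decomposition $\Sig_\nu \simeq \bigsqcup_\alpha \tSig_{\nu,\alpha}/\sim$ of Lemma \ref{lem:convprep}, together with the three convergence statements \ref{part:1gc}--\ref{part:3gc}, means that for $\nu$ large the classifying map on each piece is homotopic rel boundary to the classifying map of the corresponding limit component. Patching these homotopies along the necks (where on the $\tSig_{\nu,\alpha}$ side the map is close to the stable-component limit and on the reparametrized bubble side it is close to a bubble limit) yields $\beta = [u_\nu] = \sum_\alpha [u_\alpha] = [u]$ for large $\nu$.

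The main obstacle I anticipate is in part \ref{part:evlt} when $z_j$ carries a long chain of cylindrical bubbles: one must control the behavior of $u_\nu$ uniformly across a cylindrical region of unbounded diameter that is only partially captured by each rescaling $\phi_{\nu,\alpha}$, and the exponential decay of Proposition \ref{prop:decaycyl} is indispensable for bridging the ``gaps'' between the compact sets on which successive bubbles converge. The strengthened $W^{1,p}$-decay of the connection in Proposition \ref{prop:connW1p} is likely needed to guarantee that the $K$-orbits $Ku_\nu$ (rather than merely the points $u_\nu$) converge up to the end at infinity uniformly in $\nu$.
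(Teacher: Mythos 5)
Your part \eqref{part:evlt} has a genuine gap at your step (i). Proposition \ref{prop:decaycyl} is not uniform over the sequence $v_\nu$: the exponent $\gamma$ is universal, but the constant $C$ depends on the individual vortex, and no uniform constant can exist precisely in the case you are treating, namely $\Gamma^{cyl}_{z_j}\neq\emptyset$. If a non-constant cylindrical bubble forms at $r=s_{\nu,\alpha}\to\infty$, the energy density of $v_\nu$ near the circles $\{r=s_{\nu,\alpha}\}$ is bounded below, which is incompatible with a bound $Ce^{-\gamma r}$ with $C$ independent of $\nu$. Concretely, for any \emph{fixed} $r_0$ the orbits $Ku_\nu(r_0+\img\theta)$ converge to orbits near $Ku_{\alpha_{z_j,1}}(-\infty)$ (the bottom of the bubble chain), whereas $\ev_j(A_\nu,u_\nu)$ should converge to $Ku_{\alpha_{z_j,|C(z_j)|}}(+\infty)$ (the top); these differ whenever the chain is non-trivial, so the bound $d(Ku_\nu(r_0+\img\theta),\ev_j(A_\nu,u_\nu))<\eps/3$ for all $\nu$ is simply false, and your requirement (ii) then forces $r_0$ to depend on $\nu$, undoing (i). The paper's fix is to first translate to the terminal bubble by $\phi_{\nu,\alpha^*}$: for the translated sequence the no-energy-escape condition $\lim_{R\to\infty}\lim_{\nu\to\infty}E(\phi_{\nu,\alpha^*}^*v_\nu,[R,\infty)\times S^1)=0$ holds, and it is the annulus lemma (Proposition \ref{prop:energyquantvort} \eqref{part:ann}) --- not Proposition \ref{prop:decaycyl} --- that converts the uniformly small tail energy into uniform-in-$\nu$ convergence $Ku_\nu(r+\img\theta)\to\ev_j(A_\nu,u_\nu)$ as $r\to\infty$; combined with $C^\infty_{loc}$ convergence on the terminal component this closes the argument.

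For part \eqref{part:homology} you take a genuinely different route from the paper. The paper does not assemble classifying maps over the nodal curve: it notes that such a direct argument would follow Chapter 5 of Ziltener's thesis, and instead argues indirectly --- the map $\Psi$ from quasimaps is shown in Section \ref{sec:homeo} to be continuous, bijective and class-preserving without using compactness of $MV^K(X,\beta)$, and properness of $Qmap_{g,n}(X\qu G,\beta)$ then forces $[u]=\beta$. Your direct patching argument is the standard one and can be made to work, but the step where you glue homotopies ``along the necks'' is asserted rather than proved: on a neck of unbounded length the map $u_\nu$ is close to neither limit component, and what actually makes the gluing possible is that the uniformly small energy on the neck forces its image (together with the holonomy of $A_\nu$, controlled via Proposition \ref{prop:connW1p}) into a contractible subset of the Borel construction --- again an application of the annulus lemma rather than of the convergence statements \eqref{part:1gc}--\eqref{part:3gc} alone.
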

The proof of this Proposition appears in Section \ref{sec:convpf}.

\begin{remark} We have defined Gromov convergence and stated the
  Gromov compactness theorem in the case when the sequence of vortices
  $(A_\nu, u_\nu)$ is defined on smooth curves and the limit stable
  vortex is defined on a possibly nodal curve. The definition and
  result can both be generalized to the case when
  $(A_\nu,u_\nu)$ are stable vortices on possibly nodal curves by applying the same ideas component-wise. After
  proving Theorems \ref{thm:gromovcpt} and \ref{prop:evcont}, we will
  have a compact space $\ol {MV}_{g,n}^K(X,\beta)$ of stable vortices
  modulo gauge transformations.
\end{remark}

\section{Proof of Gromov compactness}\label{sec:convpf}
\subsection{Convergence modulo breaking of cylinders}
\begin{proposition}[Convergence of vortices modulo breaking of
  cylinders]\label{prop:convmodbreak} Let $\Sig_\nu$, $\Sig'$ be Riemann surfaces with cylindrical ends corresponding to curves $C_\nu$, $C'$ respectively in Theorem \ref{thm:gromovcpt}. Let
  $v_\nu:=(A_\nu,u_\nu)$ be a sequence of vortices on the trivial
  bundle $\Sig_\nu\times G$ that satisfy
  $$\sup_\nu E(v_\nu)<\infty.$$ 
  The images of $u_\nu$ are contained in a compact subset of $X$.
  
  After passing to a subsequence, there exists a vortex $v:=(A,u)$ on
  $\Sig'$ (possibly not satisfying the connectedness condition
  \eqref{eq:connect}) and a sequence of gauge transformations $k_\nu
  \in \K(P)$ such that $k_\nu(A_\nu,u_\nu)$ converges to $(A,u)$ in
  $C^\infty$ on compact subsets of $\Sig'$. Further,
$$\lim_{\nu \to \infty}E(v_\nu)=E(v) + \sum_{z \in \Edge_\infty(\Sig')}E_z + \sum_{w \in \Edge(\Sig')}^m E_w,$$
where $E_z$ and $E_w$ are defined as
\begin{align*}
  E_z&:=\lim_{R \to \infty}\lim_{\nu \to \infty} E((A_\nu,u_\nu),\rho_z^{-1}\{s>R\}), \\
  E_w&:=\lim_{R \to \infty}\lim_{\nu \to \infty}
  E((A_\nu,u_\nu),\rho_w^{-1}\{R<s<l_w(\nu)\}).
\end{align*}
\end{proposition}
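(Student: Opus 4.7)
The strategy is to work one component $\alpha \in \vertex(C')$ at a time. By Lemma \ref{lem:convprep}, for each compact $K \subset \Sig'_\alpha$ there exists $\nu_K$ such that $K \subset \tSig_{s(\nu),\alpha}$ for all $\nu \geq \nu_K$, so the pull-back $h_{s(\nu)}^{-1\ast}(A_\nu,u_\nu)$ defines a sequence of vortices on a neighborhood of $K$. I would then exhaust $\Sig'_\alpha$ by compact sets $K_1 \subset K_2 \subset \cdots$, extract a convergent subsequence on each $K_m$, and produce $(A_\alpha,u_\alpha)$ by a diagonal argument in $m$, combined with a further diagonal argument over the finitely many components $\alpha$.

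To extract a convergent subsequence on a fixed $K$, I would first establish uniform $C^0$ bounds on $|F_{A_\nu}|$ and $|\d_{A_\nu} u_\nu|$. The vortex equation controls $|F_{A_\nu}|$ by $|\Phi\circ u_\nu|$, which is bounded because the $u_\nu$ lie in a fixed compact subset of $X$. To control $|\d_{A_\nu} u_\nu|$ and rule out energy concentration, a standard $\eps$-regularity argument applies: a rescaling at a hypothetical concentration point would produce a non-constant $J$-holomorphic sphere in $X$, contradicting the standing asphericity assumption. Uhlenbeck's local theorem then places each $A_\nu$ in Coulomb gauge on a finite cover of $K$ by small discs, and the patching technique used in the proof of Proposition \ref{prop:connW1p} (multiplying by constants to match at a point on each overlap, then interpolating via a cut-off) yields a single gauge $g_{\nu,K}$ on a neighborhood of $K$ with $\Mod{g_{\nu,K}A_\nu - \d}_{W^{1,p}} \leq c$. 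Since $u_\nu$ is $L^\infty$-bounded, the equation $\delbar_{A_\nu} u_\nu=0$ combined with elliptic regularity gives $W^{2,p}$ bounds on $g_{\nu,K} u_\nu$, and iterated bootstrapping using the vortex equation and $\delbar_A$-equation yields uniform $C^k$ bounds for every $k$. Arzel\`a-Ascoli delivers $C^\infty$ convergence on $K$; a further gauge patching across the exhaustion, again in the style of Proposition \ref{prop:connW1p}, produces the desired $k_\nu$ on $\Sig'_\alpha$ with $k_\nu(A_\nu,u_\nu) \to (A_\alpha,u_\alpha)$ in $C^\infty_{\loc}$. Passing to the limit in the vortex and $\delbar_A$-equations shows $(A_\alpha,u_\alpha)$ is a vortex, and finite energy follows from the uniform bound $\sup_\nu E(v_\nu)<\infty$ together with lower semicontinuity on each $K_m$.

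For the energy identity, $C^\infty_{\loc}$ convergence gives $\lim_\nu E((A_\nu,u_\nu),h_{s(\nu)}(K)) = E((A,u),K)$ for any compact $K \subset \Sig'$. Decomposing $\Sig_\nu$ as the image under $h_{s(\nu)}$ of an exhausting compact $K_m \subset \Sig'$ together with thin cylindrical neighborhoods of the marked points $z \in \Edge_\infty(\st(C'))$ and nodes $w \in \Edge(\st(C'))$, additivity of energy and monotonicity of the nested tail energies in the cylindrical necks (which makes the limits $E_z$ and $E_w$ well-defined) produces the claimed identity upon letting $m \to \infty$. The main technical obstacle will be the global gauge patching: the local Coulomb gauges produced by Uhlenbeck's theorem must be assembled into a single gauge transformation on each $\Sig'_\alpha$ without losing the uniform $W^{1,p}$ control. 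I expect the patching to go through essentially as in Proposition \ref{prop:connW1p}, using the exponential curvature decay of Proposition \ref{prop:decaycyl} in the cylindrical tails and the uniform $\eps$-regularity bound on the bulk.
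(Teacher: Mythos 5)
Your proposal is correct and follows essentially the same route as the paper, which proves this proposition by citing the analogue of Ziltener's Proposition 37: Uhlenbeck compactness plus elliptic bootstrapping for $u_\nu$, with asphericity precluding energy concentration, and the observation that the varying neck-stretching metrics converge on compact subsets so nothing changes. The only cosmetic difference is that you carry out the local-Coulomb-gauge patching by hand in the style of Proposition \ref{prop:connW1p}, whereas the paper invokes the global Uhlenbeck compactness theorem on non-compact domains (Theorem A$'$ in \cite{Weh:Uh}), which packages that patching internally.
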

The proof of this Theorem is analogous to proof of Proposition 37 in
\cite{Zilt:QK}. It uses a combination of Uhlenbeck compactness on
non-compact domains (Theorem A' in \cite{Weh:Uh}) and elliptic
regularity on $u_\nu$. The only difference here is that the metric on
$\Sig$ is not fixed -- but this does not create any changes because
Uhlenbeck compactness (\cite{Weh:Uh}) and elliptic regularity for
pseudoholomorphic curves (Proposition B.4.2 in \cite{MS}) are valid
when we have a sequence of converging metrics.

\subsection{Breaking of cylinders}

In Proposition \ref{prop:convmodbreak}, $E_z$ and $E_w$ represent
energy that escapes to infinity on cylindrical ends. This leads to
breaking of cylinders. To analyze that, we need quantization of energy
for vortices on a cylinder and that the ends of the cylinders connect
in the image.
\begin{proposition}\label{prop:energyquantvort} For any compact set $S
  \subset X$, there exists constants $C$, $\delta$, $E_C>0$ that
  satisfy the following:
  \begin{enumerate}
  \item \label{part:ann} {\rm (Annulus Lemma)} For a finite energy
    vortex $(A,u)$ on the cylinder $\R \times S^1$ whose image is
    contained in $S$ and any $s_0, s_1 \in \R$ such that
    $E:=E((A,u),[s_0,s_1] \times S^1)<E_C$, then for any $T>0$
    \begin{align*}
      E((A,u), [s_0+T,s_1-T]\times S^1) &\leq Ce^{-\delta T}E\\
      \sup_{z,z' \in [s_0+T,s_1-T]\times S^1 }d(Ku(z),Ku(z')) &\leq
      Ce^{-\delta T}\sqrt E
    \end{align*}
  \item \label{part:quant} {\rm (Quantization of energy for vortices)}
    If $E((A,u), \R \times S^1)<E_C$. Then, $E(A,u)=0$.
  \end{enumerate}
\end{proposition}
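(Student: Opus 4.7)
The plan is to establish part \ref{part:ann}, the annulus lemma, by the standard energy-decay argument for symplectic vortices on long cylinders, adapted to the cylindrical setting as in Ziltener \cite{Zilt:invaction}; part \ref{part:quant} will then follow as an immediate consequence by letting the annulus exhaust $\R\times S^1$.

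First I would prove a local pointwise bound on the energy density $e(z):=|F_A|^2+|\d_Au|^2+|\Phi(u)|^2$: using the Bochner-type subharmonicity inequality for symplectic vortices together with a standard mean-value argument (cf.\ \cite{CGMS}), there exist constants $\hbar>0$ and $C_0>0$, depending only on the compact set $S$, such that $e(z)\leq C_0 E((A,u),B_1(z))$ whenever the right-hand side is below $\hbar$. I would then choose $E_C<\hbar$, which in particular forces $|\Phi(u)|$ to be uniformly small wherever the local energy is small, so the image of $u$ lies in an arbitrarily small tubular neighborhood of $\Phi^{-1}(0)$, where the $K$-action is free by Assumption \ref{ass:freeaction}.

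Next, I would set $\varepsilon(\tau):=E((A,u),[s_0+\tau,s_1-\tau]\times S^1)$ and derive a differential inequality of the form $\varepsilon(\tau)\leq -c\,\varepsilon'(\tau)$. This rests on two ingredients. The first is the symplectic-vortex energy identity, which uses the vortex equations to rewrite $e\,\om_\Sig$ as $\d\alpha$ for a one-form $\alpha=\alpha(A,u,\Phi)$ on the cylinder; by Stokes, $\varepsilon(\tau)$ becomes the difference of the boundary integrals of $\alpha$ along $\{s_0+\tau\}\times S^1$ and $\{s_1-\tau\}\times S^1$. The second is an equivariant isoperimetric inequality (cf.\ \cite{CGMS}) showing that each such boundary integral is bounded by a constant multiple of the squared $L^2$-norm of $(F_A,\d_Au,\Phi(u))$ on the bounding circle, which is exactly $-\varepsilon'(\tau)$. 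Integrating the resulting ODE inward from both ends yields the exponential decay $\varepsilon(T)\leq Ce^{-\delta T}E$. The distance bound follows by applying the pointwise bound from the previous step on a slightly smaller sub-annulus (using the exponentially decayed energy as input) to conclude $|\d_Au|\leq Ce^{-\delta T/2}\sqrt E$ pointwise, and then integrating along any path between $z$ and $z'$ in $[s_0+T,s_1-T]\times S^1$; the distance is measured on $X/K$, which is well-defined near $\Phi^{-1}(0)$ by the free action.

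For part \ref{part:quant}, assuming $E:=E((A,u),\R\times S^1)<E_C$, I would apply part \ref{part:ann} with $[s_0,s_1]=[-2N,2N]$ and $T=N$ to obtain $E((A,u),[-N,N]\times S^1)\leq Ce^{-\delta N}E$; letting $N\to\infty$ forces the energy on every compact subset of $\R\times S^1$ to vanish, hence $E(A,u)=0$. The main obstacle is verifying the equivariant isoperimetric inequality in step two with constants independent of the loop: one must place $A$ in radial/Coulomb gauge along each circle, express the loop $\theta\mapsto u(s,\theta)$ within a local slice for the free $K$-action in a neighborhood of $\Phi^{-1}(0)$, and bound the resulting action integral by the length-squared of the gauge-corrected loop. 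This is precisely where the free action hypothesis and the pointwise smallness of $|\Phi(u)|$ established in the first step combine to make the argument go through.
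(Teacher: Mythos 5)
Your proposal is correct and follows essentially the same route as the paper: the paper proves part (a) by citing that the argument for vortices on $\C$ in Ziltener's memoir carries over verbatim, and your sketch (mean-value estimate for the energy density, the action/Stokes identity plus the equivariant isoperimetric inequality giving $\eps(\tau)\leq -c\,\eps'(\tau)$, hence exponential decay) is precisely the content of that cited proof. Your deduction of part (b) from part (a) by letting the outer annulus grow is the same as the paper's, up to a trivial reparametrization of the annuli.
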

\begin{proof}
  The proof of \eqref{part:ann} is identical to the proof of the
  corresponding result for vortices in $\C$ in \cite{Zilt:QK}.  To
  prove \eqref{part:quant} consider a vortex $(A,u)$ on $\R \times
  S^1$ satisfying $E(A,u) \leq E_C$. For any $s_0<s_1$ and $T>0$, by
  part \eqref{part:ann},
  $$E((A,u), [s_0,s_1]\times S^1) \leq Ce^{-\delta T} E((A,u), [s_0-T,s_1+T]\times S^1) \leq Ce^{-\delta T} E_C.$$
  It follows that $E(A,u,[s_0,s_1])=0$. Since the choice of $s_0$,
  $s_1$ is arbitrary, $E(A,u)=0$.
\end{proof}

\begin{lemma} [Breaking of cylinders]\label{lem:break} Suppose
  $v_\nu:=(A_\nu,u_\nu)$ is a sequence of vortices on
  $\Sig_0=[0,\infty)\times S^1$ whose images are contained in a
  compact subset of $X$. Assume further that the sequence $v_\nu$
  $C^\infty$-converges to $v:=(A,u)$ in compact subsets of
  $[0,\infty)\times S^1$. Suppose
  \begin{equation}\label{eq:m0def}
    m_0:=\lim_{R \to \infty}\on{limsup}_{\nu \to \infty}E((A_\nu,u_\nu),[R,\infty) \times S^1)>0.
  \end{equation}
  Then, after passing to a sub-sequence, there exists a finite energy
  non-constant vortex $(A_1,u_1)$ on a cylinder $\Sig_1=(\R \times
  S^1)$, a sequence of translations $\phi_\nu:[0,\infty)\times S^1 \to
  \R \times S^1$ given by $r+\img\theta \mapsto r+s_\nu+\img \theta$
  with $\lim_\nu s_\nu = \infty$ such that the following are
  satisfied:
  \begin{enumerate}
  \item \label{part:conv} Let
    $(A^1_\nu,u^1_\nu):=\phi^*_\nu(A_\nu,u_\nu)$. There is a sequence
    of gauge transformations $k_\nu$ so that $k_\nu (A^1_\nu,u_\nu^1)$
    converges to a non-constant vortex $(A^1,u^1)$ in $C^\infty$ on
    compact subsets of $\R \times S^1$.
  \item \label{part:energy} Let $m_1:=\lim_{R \to
      \infty}\on{limsup}_{\nu \to
      \infty}E((A^1_\nu,u^1_\nu),[R,\infty) \times S^1)$. Then,
   $$m_0=E(A^1,u^1)+m_1.$$
 \item \label{part:connect} $Ku(\infty)=Ku^1(-\infty)$.
 \end{enumerate}
Suppose $s_\nu' \to \infty$ be another sequence such that \eqref{part:conv}-\eqref{part:connect} are satisfied when $s_\nu$ is replaced by $s_\nu'$. Then, the sequence $|s_\nu-s'_\nu|$ is bounded.
\end{lemma}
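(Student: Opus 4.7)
The plan is to choose translation parameters $s_\nu \to \infty$ that capture the energy escaping to infinity, extract a subsequential limit on $\R \times S^1$, and then verify the three numbered conditions together with the uniqueness statement.

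By the quantization of energy (Proposition \ref{prop:energyquantvort}\eqref{part:quant}), one has $m_0 \geq E_C$. Fix $c \in (0, E_C)$ and set
$$s_\nu := \inf\{s > 0 : E((A_\nu, u_\nu), [s,\infty) \times S^1) \leq c\}.$$
The $C^\infty$-convergence $v_\nu \to v$ on compact subsets, together with \eqref{eq:m0def}, forces $s_\nu \to \infty$: for each fixed $R_0$, $\limsup_\nu E(v_\nu, [R_0, \infty) \times S^1) \geq m_0 > c$, so $s_\nu \geq R_0$ eventually. Put $\phi_\nu(z) := z + s_\nu$; then $v^1_\nu := \phi_\nu^* v_\nu$ is defined on $[-s_\nu, \infty) \times S^1$, satisfies $E(v^1_\nu, [0,\infty) \times S^1) = c$, and has uniformly bounded energy on each compact annulus. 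Applying Proposition \ref{prop:convmodbreak} to $v^1_\nu$ restricted to $[-R,R] \times S^1$ for each $R$, and diagonalizing in $R$, we extract gauge transformations $k_\nu$ and a subsequence so that $k_\nu v^1_\nu \to (A^1, u^1)$ in $C^\infty_\loc(\R \times S^1)$, with $(A^1,u^1)$ a finite-energy vortex.

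Non-constancy of $(A^1,u^1)$ is the crux. By $C^\infty_\loc$ convergence and $E(v^1_\nu, [0,\infty) \times S^1) = c$, letting $R \to \infty$ yields $E(v^1, [0,\infty) \times S^1) = c - m_1$, so if $c > m_1$ the limit has positive energy and is non-constant by quantization. To guarantee this in a single step, I would refine the choice of $s_\nu$: take $s_\nu$ to be the largest $s$ for which a fixed-length window $[s - T_0, s + T_0] \times S^1$ contains energy at least $E_C/2$. Such an $s$ exists for $\nu$ large because $m_0 \geq E_C$, while the annulus lemma (Proposition \ref{prop:energyquantvort}\eqref{part:ann}) rules out mass $m_0$ sitting in arbitrarily dilute windows; with this refinement, $E(v^1, [-T_0, T_0] \times S^1) \geq E_C/2$. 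The identity $m_0 = E(A^1, u^1) + m_1$ then follows from additivity of energy together with $C^\infty_\loc$-convergence on compact sets.

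For the connectedness $Ku(\infty) = Ku^1(-\infty)$, apply the annulus lemma to $v_\nu$ on the neck $[R, s_\nu - R] \times S^1$. The energy decomposition forces the energy on this annulus to tend to $0$ as $R \to \infty$, uniformly in large $\nu$, so for $R$ large it lies below $E_C$ and the annulus lemma yields
$$d\bigl(Ku_\nu(R+T + i\theta),\, Ku_\nu(s_\nu - R - T + i\theta')\bigr) \leq C e^{-\delta T}\sqrt{E_C}$$
for any $T>0$. Passing $\nu \to \infty$, then $T \to \infty$, then $R \to \infty$, and invoking removal of singularity at cylindrical ends (Proposition \ref{prop:infvortsing}) to make sense of $Ku(\infty)$ and $Ku^1(-\infty)$, gives the required identification. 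For uniqueness, if $|s_\nu - s'_\nu| \to \infty$ along a subsequence, the two translations would capture bubbles at well-separated scales, inserting an additional vortex of energy at least $E_C$ into the identity $m_0 = E(A^1, u^1) + m_1$ and contradicting it. The main obstacle is the non-constancy step; the "largest window" choice above is what converts quantization into a guaranteed bubble in a single pass.
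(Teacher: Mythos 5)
Your choice of the translation sequence is the problem, and it breaks parts \eqref{part:energy} and \eqref{part:connect} of the lemma whenever more than one cylinder breaks off. Both of your choices --- $s_\nu:=\inf\{s: E(v_\nu,[s,\infty)\times S^1)\leq c\}$ with $c<E_C$, and the refinement ``the largest $s$ whose window carries energy $\geq E_C/2$'' --- locate $s_\nu$ at the \emph{outermost} energy concentration. Suppose two bubbles of energy $E_C$ each form at scales $t_\nu$ and $t_\nu'$ with $t_\nu'-t_\nu\to\infty$, so $m_0=2E_C$. Your $s_\nu$ sits at the second bubble; the first bubble's energy escapes to $-\infty$ of the translated cylinder, where it is counted neither in $E(A^1,u^1)$ nor in $m_1$ (which only measures escape to $+\infty$, and in fact your choice forces $m_1\leq c<E_C$, hence $m_1=0$ by the annulus-lemma argument). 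So $E(A^1,u^1)+m_1=E_C\neq m_0$, and $Ku^1(-\infty)$ connects to the first bubble rather than to $Ku(\infty)$. The lemma requires capturing the \emph{first} bubble; the paper does this by prescribing $E(v_\nu,[s_\nu,\infty)\times S^1)=m_0-\delta/2$ with $\delta<\min\{E_C,m_0\}$, so that only the small amount $\delta/2$ of escaping energy lies below $s_\nu$.

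The second, related gap is your assertion that the identity $m_0=E(A^1,u^1)+m_1$ ``follows from additivity of energy together with $C^\infty_{\on{loc}}$-convergence.'' It does not: one must rule out energy concentrating at an intermediate scale between the fixed compact part and $s_\nu$ (which would be lost to $-\infty$ of the translated cylinder). This is Step 1 of the paper's proof --- showing $\lim_{R\to\infty}\lim_{\nu\to\infty}E(v_\nu,[s_\nu-R,\infty)\times S^1)=m_0$ --- and it is the technical heart of the lemma. The paper proves it by introducing a sequence $\sig_\nu$ realizing the limsup in \eqref{eq:m0def}, splitting $[\sig_\nu,s_\nu]\times S^1$ into three pieces, killing the middle by the annulus lemma and the $\sig_\nu$-end by quantization applied to the rescaled sequence $v_\nu(\cdot+\sig_\nu)$. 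Nothing in your proposal plays this role, and your uniqueness argument inherits the same vagueness (``inserting an additional vortex \ldots contradicting it'' presupposes the energy identity you have not established). Your preliminary observations --- that $m_0\geq E_C$, the extraction of a subsequential limit by Uhlenbeck compactness and elliptic regularity, and the use of the annulus lemma on the neck $[R,s_\nu-R]\times S^1$ for part \eqref{part:connect} --- are correct and match the paper, but the proof as written fails at its central step.
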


\begin{proof} The proof runs parallel to the proof of Proposition
  4.7.1 in \cite{MS}. Pick $0< \delta < \min\{E_C, m_0\}$ (from
  Proposition \ref{prop:energyquantvort}). Define the sequence $s_\nu$
  as
  \begin{equation}\label{eq:snudef}
    E(v_\nu, [s_\nu,\infty) \times S^1)=m_0-\frac \delta 2.
  \end{equation}
  {\sc Step 1}: {\em $\lim_{R \to \infty}\lim_{\nu \to \infty}E(v_\nu,[s_\nu-R,\infty)\times S^1)=m_0.$ }\\
  We observe that \eqref{eq:m0def} implies that for any sequence
  $R_\nu \to \infty$,
  \begin{equation*}
    \lim_{\nu \to \infty}E(v_\nu,[R_\nu,\infty)\times S^1) \leq m_0
  \end{equation*}
  and there exists a sequence $\sig_\nu \to \infty$ for which
  equality is attained. i.e.
  \begin{equation}\label{eq:signudef}
    \lim_{\nu \to \infty}E(v_\nu,[\sig_\nu,\infty)\times S^1)= m_0.
  \end{equation}
  This implies for every $T \geq 0$,
  \begin{equation}\label{eq:signusphere}
    \lim_{\nu \to \infty} E(v_\nu,[\sig_\nu-T,\infty)\times S^1)= m_0.
  \end{equation}
  The result of step 1 would be true if $s_\nu-\sig_\nu$ is bounded
  above. So, we assume $\lim_\nu(s_\nu-\sig_\nu)=\infty$. From
  \eqref{eq:snudef} and
  \eqref{eq:signudef},\begin{equation}\label{eq:signusnu} \lim_{\nu
      \to \infty}E(v_\nu,[\sig_\nu,s_\nu)\times S^1)= \frac \delta 2.
  \end{equation}
  We split up the cylinders $[\sig_\nu,s_\nu]\times S^1$ into 3 parts
  and show that in the limit all the energy is focused in the part at
  the $s_\nu$ end. First, we handle the middle part. By the annulus
  lemma (Proposition \ref{prop:energyquantvort} \eqref{part:ann}),
  there exist constants $c$, $\delta >0$ so that for any $T > 0$,
  \begin{equation}\label{eq:neckenergybd}
    \lim_{\nu \to \infty}E(v_\nu,[\sig_\nu+T,s_\nu-T]\times S^1) \leq \frac \delta 2 ce^{-\delta T}
  \end{equation} 
  For the part at the $\sig_\nu$-end, we prove the following.
  \begin{claim} $$\lim_{\nu \to
      \infty}E(v_\nu,[\sig_\nu,\sig_\nu+T]\times S^1)=0$$ for all $T
    \geq 0$.
  \end{claim}
  \begin{proof} Consider the sequence of re-scaled vortices $\tilde
    v_\nu(z):=v_\nu(z+\sig_\nu)$. Then, for all $T \geq 0$,
    \begin{align*} \lim_{\nu \to \infty}E(\tilde v_\nu,[-T,T]\times S^1)&=\lim_{\nu \to \infty} E(v_\nu,[\sig_\nu-T,\sig_\nu+T]\times S^1)\\
      &\leq \lim_{\nu \to \infty} E(v_\nu,[\sig_\nu-T,s_\nu]) \\
      &\leq \frac \delta 2,
    \end{align*}
    where the last inequality follows from \eqref{eq:snudef} and
    \eqref{eq:signusphere}. By quantization of energy (Proposition
    \ref{prop:energyquantvort}), the above limit is $0$. So, the claim
    follows.
  \end{proof}
  Together with \eqref{eq:signusnu} and \eqref{eq:neckenergybd}, this
  implies, for all $T \geq 0$
  \begin{equation}
    \lim_{\nu \to \infty}E(v_\nu,[s_\nu-T,s_\nu]) \geq \frac \delta 2(1-ce^{-\delta T}).
  \end{equation}
  Step 1 is proved because, for all $T \geq 0$
  \begin{equation*}
    m_0 \geq \lim_{\nu \to \infty}E(v_\nu,[s_\nu-T,\infty)\times S^1) \geq m_0 -\frac {\delta ce^{-\delta T}} 2.
  \end{equation*}
 
  {\sc Step 2}: {\em Proof of \eqref{part:conv}-\eqref{part:connect}.}\\
  Let $v^1_\nu(z):=v_\nu(z+s_\nu)$, then Step 1 implies that
 $$\lim_{R \to \infty}\lim_{\nu \to \infty}E(v^1_\nu, (-R,\infty)\times S^1)=m_0.$$
 Repeating the steps in the proof of Proposition
 \ref{prop:convmodbreak}, parts \eqref{part:conv} and
 \eqref{part:energy} of the Proposition follow. $v^1$ is non-constant,
 because $E(v^1,(-\infty,0])=\lim_\nu E(v^1_\nu,(-\infty,0]\times
 S^1)=\frac \delta 2>0$. Now we come to \eqref{part:connect}. Define
   $$E(R)=\lim_{\nu \to \infty}E(u_\nu,[R,s_\nu-R]\times S^1).$$
   Then,
   $$E(R)=E(u,(R,\infty)\times S^1) + E(v(-\infty,-R)).$$
   So, $\lim_{R\to \infty}E(R)=0$.  For large enough $R$, apply the
   annulus lemma on $u_\nu$ on the cylinder $[R-1,s_\nu-R+1]$ with
   $T=1$. Then,
   \begin{align*}
     \sup_{z,z' \in [R,s_\nu-R]\times S^1}d(Ku_\nu(z),Ku_\nu(z')) \leq
     Ce^{-\delta}\sqrt {E_\nu(R-1)}
   \end{align*}
   Taking limit $\nu \to \infty$,
   $$\sup_{\on{Re}(z)\geq R,\on{Re}(z') \leq -R}d(Ku(z),Ku^1(z')) \leq Ce^{-\delta}\sqrt {E(R-1)}.$$
   Letting $R \to \infty$, we get $Ku(\infty)=Ku^1(-\infty)$.

   {\sc Step 3:} Proof of boundedness of $|s_\nu-s_\nu'|$.\\
   It is not possible to have a subsequence for which $\lim_\nu
   (s_\nu'-s_\nu)=\infty$, because then the energy of the bubble
   $(A_1,u_1)$ (which is non-zero) would be lost. So, lets assume
   there is a subsequence such that $\lim_\nu (s_\nu-s_\nu')=\infty$. In
   Step 1, we showed $\lim_\nu(s_\nu-\sig_\nu)$ is finite, so
   $s'_\nu<\sig_\nu$. Therefore,
 $$\lim_{\nu \to \infty}E(v_\nu,[s'_\nu,\infty)\times S^1)= m_0.$$ 
 By the same arguments as were used for $\sig_\nu$, we can say
 $\lim_\nu (s_\nu-s'_\nu)$ is finite. 
 \end{proof}

 Lemma \ref{lem:break} captures the first cylindrical bubble
 $(A_1,u_1)$. If $m_1$ is positive, then the process can be repeated
 to capture the next bubble. Each cylindrical bubble has energy at
 least $E_C$, by Proposition \ref{prop:energyquantvort}. So, after a
 finite number of steps the process terminates. Thus we have proved
 the following.
 \begin{proposition}[Breaking of cylinders at marked
   points]\label{prop:markedbreak} Suppose $v_\nu=(A_\nu,u_\nu)$ is a
   sequence of vortices on the trivial $K$ bundle on the cylinder
   $\Sig:=[0,\infty)\times S^1$ whose images are contained in a
   compact subset of $X$ and whose energies are bounded. 
Then, there is a stable vortex $(A,u)$ modeled on the graph $\{\Sig\}
\cup \Gamma^{cyl}$ so that a subsequence of $(A_\nu,u_\nu)$
Gromov-converges to $(A,u)$, except there is no stability requirement
on $\Sig$. Here $\Gamma^{cyl}$ consists of a path of $|\Gamma^{cyl}|$
cylindrical vertices $\{\alpha_1,\dots,\alpha_{\Gamma^{cyl}}\}$ and
$\alpha_1$ is connected to $\Sig$.
\end{proposition}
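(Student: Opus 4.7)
The plan is to iterate Lemma~\ref{lem:break} to peel off cylindrical bubbles one at a time, using quantization of energy (Proposition~\ref{prop:energyquantvort}) to guarantee that the process terminates after finitely many steps. First I would apply Proposition~\ref{prop:convmodbreak}, treating $\Sig = [0,\infty) \times S^1$ as a fixed surface with one cylindrical end at $+\infty$. This yields, after passing to a subsequence and gauge transformations, a limit finite-energy vortex $(A_0,u_0)$ on $\Sig$ with $C^\infty_{\loc}$ convergence, and an ``escape energy''
\[
  m_0 := \lim_{R \to \infty} \lim_{\nu \to \infty} E(v_\nu, [R,\infty) \times S^1).
\]
If $m_0 = 0$, we set $\Gamma^{cyl} = \emptyset$ and are done. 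If $m_0 > 0$, apply Lemma~\ref{lem:break} to $v_\nu$: this produces a sequence $s_{\nu,1} \to \infty$, a non-constant vortex $(A_1,u_1)$ on $\R \times S^1$ obtained as a $C^\infty_{\loc}$ limit of $\phi_{\nu,1}^\ast v_\nu$, with $E(A_1,u_1) \geq E_C > 0$, and the matching condition $Ku_0(\infty) = Ku_1(-\infty)$.

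Next, I would iterate. Given bubbles $(A_1,u_1),\dots,(A_j,u_j)$ produced by rescalings $s_{\nu,1} < \dots < s_{\nu,j}$, set
\[
  m_j := \lim_{R \to \infty} \lim_{\nu \to \infty} E(\phi_{\nu,j}^\ast v_\nu, [R,\infty) \times S^1),
\]
which by part~\eqref{part:energy} of Lemma~\ref{lem:break} equals $m_{j-1} - E(A_j,u_j)$. If $m_j = 0$ the process stops. If $m_j > 0$, apply Lemma~\ref{lem:break} to the translated sequence $\phi_{\nu,j}^\ast v_\nu$ (restricted to $[0,\infty) \times S^1$, which is valid since $s_{\nu,j} \to \infty$), obtaining a new shift $t_{\nu,j+1} \to \infty$, hence a composite shift $s_{\nu,j+1} := s_{\nu,j} + t_{\nu,j+1}$, and a non-constant bubble $(A_{j+1},u_{j+1})$ with $Ku_j(\infty) = Ku_{j+1}(-\infty)$. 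Since every extracted bubble carries energy at least $E_C$ and the total energies $E(v_\nu)$ are uniformly bounded, the iteration must terminate after some $N$ steps with $m_N = 0$. Setting $\Gamma^{cyl}$ equal to a path of $N$ vertices $\alpha_1,\dots,\alpha_N$ attached at $\alpha_1$ to $\Sig$ then gives the required limiting stable vortex configuration.

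Finally I would check that the data $(A_0,u_0),(A_j,u_j,s_{\nu,j})$ satisfies all five clauses of Definition~\ref{def:gromconv}. Clauses~\eqref{part:1gc}--\eqref{part:3gc} follow directly from Proposition~\ref{prop:convmodbreak} applied to $\Sig$ and Lemma~\ref{lem:break}\eqref{part:conv} applied at each bubble. The rescaling condition~\eqref{part:4gc} is automatic from the construction: $s_{\nu,j+1} - s_{\nu,j} = t_{\nu,j+1} \to \infty$. The connectedness condition in the stable vortex follows by stringing together the matching equalities $Ku_j(\infty) = Ku_{j+1}(-\infty)$ from Lemma~\ref{lem:break}\eqref{part:connect}. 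The energy identity~\eqref{part:5gc} is the telescoping identity
\[
  \lim_{\nu \to \infty} E(v_\nu) = E(A_0,u_0) + m_0 = E(A_0,u_0) + \sum_{j=1}^{N} E(A_j,u_j),
\]
obtained by summing the relations $m_{j-1} = E(A_j,u_j) + m_j$ and using $m_N = 0$.

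The main obstacle I anticipate is a bookkeeping issue rather than a deep analytic one: at each iteration I must be sure that applying Lemma~\ref{lem:break} to the translated sequence $\phi_{\nu,j}^\ast v_\nu$ in place of the original $v_\nu$ is legitimate, i.e.\ that the hypotheses (bounded images, $C^\infty_{\loc}$ convergence to $(A_j, u_j)$ on $\R \times S^1$, and strictly positive escape energy $m_j$) are all preserved through the rescaling. This is built into Lemma~\ref{lem:break}\eqref{part:conv}--\eqref{part:energy}, but must be checked carefully when composing shifts. A secondary point is the uniqueness-up-to-bounded-shift in Lemma~\ref{lem:break}, which is what guarantees that the rescaling condition~\eqref{part:4gc} is genuinely recovered rather than being an artifact of choices; this ensures that even if at some step several candidate sequences $s_{\nu,j+1}$ are available, any two differ by a bounded amount and the resulting bubble $(A_{j+1}, u_{j+1})$ is well-defined up to translation, which is the ambiguity built into the definition of Gromov convergence on a cylindrical component.
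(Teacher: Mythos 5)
Your proposal is correct and follows essentially the same route as the paper: the paper's proof is precisely the iteration of Lemma \ref{lem:break} (after Proposition \ref{prop:convmodbreak} extracts the limit on $\Sig$), with termination guaranteed because each non-constant cylindrical bubble carries energy at least $E_C$ by Proposition \ref{prop:energyquantvort}. The paper states this in three sentences and leaves the bookkeeping (composite shifts, telescoping energy identity, verification of the clauses of Definition \ref{def:gromconv}) implicit; your write-up simply makes those details explicit.
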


For the set-up of the next Proposition, suppose $l_\nu=L_\nu+it_\nu
\in \R_{\geq 0} \times S^1$ be a sequence such that $\lim_{\nu \to
  \infty}L_\nu=\infty$. Let $\Sig_\nu^+=[0,L_\nu]\times S^1$ and
$\Sig_\nu^-=[-L_\nu,0]\times S^1$ identified to each other by the
isomorphism
$$r_\nu:\Sig_\nu^- \to \Sig_\nu^+,\quad z\mapsto z+l_\nu.$$
Let $\Sig^+$ and $\Sig^-$ denote $\R_{\geq 0} \times S^1$ and $\R_{\leq 0}
\times S^1$ respectively.

\begin{proposition} [Breaking of cylinders at nodal
  points] \label{prop:nodebreak} Suppose $v_\nu:=(A_\nu,u_\nu)$ be a
  sequence of vortices on the trivial bundle $\Sig_\nu^+\times K$
  whose images are contained in a compact subset of $X$. 
  Then, there is a stable vortex $(A,u)$ modeled on the tree
  $\{\Sig^+,\Sig^-\} \cup \Gamma^{cyl}$ so that a subsequence of
  $(A_\nu,u_\nu)$ Gromov-converges to $(A,u)$, except there is no
  stability requirement on $\Sig^+$ and $\Sig^-$. Here, $\Gamma^{cyl}$
  is a path connecting $\Sig^+$ and $\Sig^-$ with nodes
  $\{\alpha_1,\dots,\alpha_{|\Gamma^{cyl}|}\}$.
\end{proposition}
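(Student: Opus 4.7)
The plan is to imitate the proof of Proposition \ref{prop:markedbreak}, but keeping track of both ends of the finite cylinder $\Sig_\nu^+$ simultaneously, and using the given identification $r_\nu : \Sig_\nu^- \to \Sig_\nu^+$ to view the ``minus'' end as a sequence on a half-cylinder as well.

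First I would extract limits at the two ends. Viewing $v_\nu$ restricted to $[0,R]\times S^1$ (for any fixed $R$, valid for $\nu$ large) as a sequence on $\Sig^+ = [0,\infty)\times S^1$, Proposition \ref{prop:convmodbreak} produces, after a subsequence and gauge transformations $k_\nu^+$, a finite-energy vortex $(A^+,u^+)$ on $\Sig^+$ such that $k_\nu^+ v_\nu \to (A^+,u^+)$ in $C^\infty_{loc}$. Repeating the argument for the pulled-back sequence $r_\nu^* v_\nu$, which lives on $[-R,0]\times S^1 \subset \Sig^-$ for any fixed $R$ and $\nu$ large, yields a finite-energy vortex $(A^-,u^-)$ on $\Sig^-$. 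Set
$$m_0 := \lim_{R \to \infty}\limsup_{\nu \to \infty} E(v_\nu,[R,L_\nu - R]\times S^1),$$
the ``middle'' energy lost to neither end.

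If $m_0 = 0$, I would conclude that $\Gamma^{cyl}$ is empty and only the connectedness $K u^+(\infty) = K u^-(-\infty)$ remains to be checked; this follows from the second part of the Annulus Lemma (Proposition \ref{prop:energyquantvort}\eqref{part:ann}) applied to the long middle cylinder once $\nu$ is so large that the total energy over $[R-1,L_\nu-R+1]\times S^1$ is below $E_C$, together with Proposition \ref{prop:infvortsing} which provides the limits $K u^\pm(\pm\infty)$.

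If $m_0 > 0$, I would bubble off the first intermediate cylindrical vortex by adapting Lemma \ref{lem:break} to the two-sided setting. Fix $0<\delta<\min(E_C,m_0)$ and choose $s_\nu\in(0,L_\nu)$ by the balancing condition
$$E(v_\nu,[s_\nu,L_\nu]\times S^1) \;=\; E(A^-,u^-) + m_0 - \tfrac{\delta}{2}.$$
The key point is that both $s_\nu\to\infty$ and $L_\nu-s_\nu\to\infty$: the first because the ``$+$'' end carries off only the energy of $(A^+,u^+)$ (plus $o(1)$) while $s_\nu$ must lie past a region of energy close to $E(A^+,u^+)$, and the second by the mirror reasoning for the ``$-$'' end. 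Setting $\phi_\nu(z):=z+s_\nu$ and applying Proposition \ref{prop:convmodbreak} to $\phi_\nu^* v_\nu$ on compact subsets of $\R\times S^1$, a subsequence converges in $C^\infty_{loc}$ after gauge to a vortex $(A_1,u_1)$. That $(A_1,u_1)$ is non-constant follows from the choice of $s_\nu$ exactly as in Step 2 of the proof of Lemma \ref{lem:break}, combined with energy quantization (Proposition \ref{prop:energyquantvort}\eqref{part:quant}).

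Finally I would iterate. Writing $m_1$ for the middle energy that still escapes, after subtracting $E(A_1,u_1)$, one has $m_1 \le m_0 - E_C$, so the process terminates in finitely many steps and produces the chain $\Gamma^{cyl} = \{\alpha_1,\dots,\alpha_{|\Gamma^{cyl}|}\}$ of cylindrical bubbles. The rescaling condition $s_{\nu,\beta}-s_{\nu,\alpha}\to\infty$ for $\alpha<\beta$ is again the last assertion of Lemma \ref{lem:break}, applied iteratively to the residual sequences. The connectedness condition at each transition in the chain is established by applying the Annulus Lemma to the ``empty'' part of the middle cylinder between consecutive bubbles, just as in the $m_0=0$ case above. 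The main obstacle I anticipate is exactly Step 3: verifying simultaneously that $s_\nu \to \infty$ and $L_\nu - s_\nu\to\infty$ so that the limit is a vortex on the full cylinder $\R\times S^1$ rather than on a half-cylinder; this requires that the energies $E(A^\pm,u^\pm)$ together with the middle-energy definition of $m_0$ be accounted for precisely, and is where the two-ended geometry makes the argument genuinely different from the one-ended case of Lemma \ref{lem:break}.
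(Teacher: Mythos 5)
Your proposal is correct and follows essentially the same route as the paper: convergence on the two half-cylinders via Proposition \ref{prop:convmodbreak}, a balancing choice of $s_\nu$, bubbling via the argument of Lemma \ref{lem:break}, and induction on the quantized escaping energy. The only organizational difference is that you define $m_0$ as the middle energy lost to neither end, whereas the paper takes $m_0$ to be all the energy escaping the $+$ end (so it includes $E(A^-,u^-)$) and must then treat the case $L_\nu - s_\nu \to L < \infty$ separately (its Case 2, where $\Gamma^{cyl}=\emptyset$ and the escaping energy is absorbed by $v^-$); your normalization makes $L_\nu - s_\nu \to \infty$ automatic when $m_0>0$ and folds that situation into the $m_0=0$ case.
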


\begin{proof} 
The first step is to show convergence modulo bubbling on the surfaces $\Sig^\pm$.
By Proposition \ref{prop:convmodbreak}, there are sequences of gauge transformations $k_\nu^\pm$ such that $k_\nu^+ v_\nu$ (resp. $k_\nu^-(r_\nu^*v_\nu$)) converges to a finite energy vortex $v^+$ (resp. $v^-$) smoothly on compact subsets
of $\Sig^+$ (resp. $\Sig^-$). The following limit 
  $$m_0:=\lim_{R \to \infty}\on{limsup}_{\nu \to \infty}E((A_\nu,u_\nu),[R,L_\nu) \times S^1).$$
exists and satisfies
  \begin{equation}\label{eq:energylt}
    \lim_{\nu \to \infty}E(v_\nu,[0,L_\nu])=E(v_{\Sig^+}) + m_0.
  \end{equation}
  The rest of the proof of the Proposition is by induction on the integer $\lfloor \frac {m_0} {E_C}
  \rfloor$, where $E_C$ is from the Proposition
  \ref{prop:energyquantvort}. The number $\lfloor \frac {m_0} {E_C}
  \rfloor$ is an upper bound on the number of vertices in $\Gamma^{cyl}$. If $m_0>0$, as in the proof of Lemma
  \ref{lem:break}, pick $0< \delta < \min\{E_C, m_0\}$ and define a
  sequence $0<s_\nu<L_\nu$ that satisfies
$$E((A_\nu,u_\nu), [s_\nu,\infty) \times S^1)=m_0-\frac \delta 2.$$ 

We divide our analysis into three cases. In the first case there are cylindrical bubbles, i.e. $\Gamma^{cyl} \neq \emptyset$. In the second case, there are no cylindrical bubbles, and the energy $m_0$ is contained in the limit vortex $v^-$ defined on $\Sig^-$. In the third case, where $m_0=0$, there are no cylindrical bubbles and the limit vortex on $\Sig^-$ is a constant vortex.

{\em Case 1: $m_0>0$ and $\lim_{\nu \to \infty}(L_\nu-s_\nu)=\infty$.}\\
In this case, the proof of Lemma \ref{lem:break} is applicable. So,
the conclusions carry over: Modulo gauge, the sequence
$v^1_\nu:=v_\nu(\cdot + s_\nu)$ converges to a vortex $v^1:=(A^1,u^1)$
defined on $S^1 \times \R$.
\begin{equation}\label{eq:breakupm0}
  m_0=E(v^1)+m_1, \quad \text{where }m_1:=\lim_{R \to \infty}\on{limsup}_{\nu \to \infty}E(v^1_\nu,[R,L_\nu-s_\nu) \times S^1).
\end{equation}
The cylindrical vortex $v^1$ is non-constant and satisfies $Ku^+(\infty)=u^1(-\infty)$. It is the first cylindrical vortex in the path
connecting $\Sig^+$ to $\Sig^-$ and is represented by the vertex $\alpha_1 \in \Gamma^{cyl}$.  We set
$s_{\nu,\alpha_1}:=s_\nu$. Pick $R_0'$ so that
$E(v^1,\{s<R_0'\})>E_C$. By the induction hypothesis, the Proposition
is true for the sequence of vortices $v^1_\nu$ defined on
$[R_0',L_\nu-s_\nu]$. This would give us a path of cylindrical
vertices connecting $\Sig_{\alpha_1}$ to $\Sig^-$ -- we label the
vertices $\{\alpha_2,\dots,\alpha_{|\Gamma^{cyl}|}\}$. We have a
sequence of translation maps $\Sig_{\alpha_i} \to \Sig_{\alpha_1}$ for
$i>1$. These translation maps can be composed with the sequence
$$\phi_{\nu,\alpha_1}:((-s_{\nu,\alpha_1},L_\nu-s_{\nu,\alpha_1}) \times S^1)_{\alpha_1} \to ((0,L_\nu)\times S^1)_{\Sig^+}, \quad z \mapsto z+s_{\nu,\alpha_1}$$
to yield $\phi_{\nu,\alpha_i}$ for $i=2,\dots, |\Gamma^{cyl}|$. From the induction hypothesis, we obtain
$$m_1=\sum_{2 \leq i \leq |\Gamma^{cyl}|}E(v_i)+ E(v_{\Sig^-}).$$
Combined with \eqref{eq:breakupm0} and \eqref{eq:energylt}, this
establishes the energy equality for Gromov convergence.

{\em Case 2: $m_0>0$ and $\lim_{\nu \to \infty}(L_\nu-s_\nu)=L<\infty$.}\\
In this case $\Gamma^{cyl}=\emptyset$. The sequences
$v_\nu(\cdot+s_\nu)$ and $v_\nu(\cdot+l_\nu)$ (recall
$l_\nu=L_\nu+it_\nu$) will have the same limit up to
reparametrization. But $v_\nu(\cdot+l_\nu)=r_\nu^*v_\nu$. Imitating
the proof of Lemma \ref{lem:break}, we get $m_0=E(v^-)$ and
$Ku^+(\infty)=Ku^-(-\infty)$.

{\em Case 3: $m_0=0$}\\
If $m_0=0$, we have
$$\lim_{\nu \to \infty}E(r_\nu^*v_\nu,[0,1]\times S^1)=\lim_{\nu \to \infty}E(v_\nu,[L_\nu-1,L_\nu]\times S^1)=0.$$
So, $E(v^-)=0$, and $u^-$ maps to $Ku^+(\infty)$.
\end{proof}

We next prove Proposition \ref{prop:evcont} : this is about the
continuity of the evaluation map and preservation of homology class
under Gromov convergence.
\begin{proof}[Proof of Proposition \ref{prop:evcont}
  \eqref{part:evlt}]
  We focus on a marked point $z_0$ and a sequence of vortices
  $v_\nu=(A_\nu,u_\nu)$ defined on trivial $K$-bundles on cylinders
  $[0,\infty)\times S^1$. The Proposition is equivalent to showing --
  `Given $v_\nu$ converges to $v=(A,u)$ in $C^\infty$ on compact
  subsets of $[0,\infty)\times S^1$ and
  \begin{equation}\label{eq:noenergyescape}
    \lim_{R \to \infty}\lim_{\nu \to \infty}E(v_\nu, [R,\infty) \times S^1)=0,
  \end{equation}
  then $\lim_{\nu \to \infty}Gu_\nu(\infty)=Gu(\infty)$.'  First, we
  observe that the limits $Ku_\nu(\infty)$, $Ku(\infty)$ exist using
  the removal of singularity theorem (Proposition
  \ref{prop:infvortsing}). Next, using \eqref{eq:noenergyescape} and
  the annulus lemma (Proposition \ref{prop:energyquantvort}
  \eqref{part:ann}), the convergence $Ku_\nu(re^{\img\theta})\to
  Gu_\nu(\infty)$ as $r \to \infty$ is uniform for all $\nu$. This
  proves the result.
\end{proof}

\begin{proof}[Proof of Proposition \ref{prop:evcont}
  \eqref{part:homology}] The conservation of equivariant homology
  class can be proved in a way similar to chapter 5 in Ziltener's
  thesis \cite{Zilt:thesis}. But, the proof is quite transparent if
  the sequence of vortices $(A_\nu,u_\nu)$ is part of a continuous
  family as in the quasimap case. We use that to give an indirect
  proof in the cases when $X$ is affine. In the proof of Theorem \ref{thm:main} in section
  \ref{sec:homeo}, we show that
$$\Psi:\bigsqcup_{\beta \in H_2^G(X)}Qmap(X\qu G, \beta) \to \bigsqcup_{\beta \in H_2^K(X)}MV^K(X, \beta)$$
is continuous, bijective and preserves $\beta$, without using the compactness of  $MV^K(X, \beta)$. The compactness of $MV^K(X, \beta)$ is now implied by the compactness of $Qmap(X\qu G, \beta)$, which is proved in \cite{CKM:quasimap} when $X$ is affine.
\end{proof}

We finally show that the Gromov limit is unique.
\begin{lemma}[Uniqueness of Gromov limit] \label{lem:cylunique} Assume
  the setting in Theorem \ref{thm:gromovcpt}. Suppose the sequence of
  vortices $(A_\nu,u_\nu)$ converges to a stable vortex $(A,u)$. Then
  this limit is unique up to 1) gauge transformations, 2) translation
  on cylindrical components i.e. $\alpha \in \Gamma^{cyl}$, 3)
  action of the finite automorphism group on $C$.
\end{lemma}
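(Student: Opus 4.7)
The plan is to assume that $(A_\nu, u_\nu)$ Gromov-converges to two stable vortices $(A,u)$ and $(A',u')$, and then identify these up to the listed symmetries by analyzing stable and cylindrical components separately. First, I would fix a universal deformation of $\st(C)$ in $\ol M_{g,n}$; such a choice is unique up to the finite group $\Aut(\st(C))$, producing the third ambiguity. Having fixed the deformation and the identification maps $h_s$ from Lemma \ref{lem:convprep}, on each stable component $\Sig_\alpha$ both limit vortices arise as $C^\infty$-limits on compact subsets of gauge transforms of the common sequence $(A_\nu, u_\nu)|_{\tSig_{\nu,\alpha}}$. Hence on compact sets the two limits are related by a sequence of gauge transformations $k_{\nu,\alpha}^{-1} k'_{\nu,\alpha}$. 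A standard Uhlenbeck compactness / diagonal extraction argument, combined with the $W^{1,p}$ decay of the connection at the cylindrical ends (Corollary \ref{cor:connW1psig}) and the free action of $K$ on $\Phi^{-1}(0)$, produces a global gauge transformation on $\Sig_\alpha$ relating them.

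Next, for a cylindrical component $\alpha \in \Gamma^{cyl}$ appearing in both limit trees, let $s_{\nu,\alpha}, s'_{\nu,\alpha}$ denote the two translation sequences. The boundedness argument at the end of the proof of Lemma \ref{lem:break} applies verbatim: if $|s_{\nu,\alpha} - s'_{\nu,\alpha}|$ were unbounded, the energy of the bubble at $\alpha$ would either be lost or double-counted, contradicting the energy identity in condition \eqref{part:5gc}. After passing to a subsequence, $s_{\nu,\alpha} - s'_{\nu,\alpha} \to t_\alpha \in \R$, and this yields the translation $z \mapsto z + t_\alpha$ relating the two cylindrical vortices (up to a gauge transformation supplied by the same $C^\infty$-convergence argument as on stable components).

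The core obstacle is showing that the combinatorial tree $\Gamma^{cyl}$ is the same for both limits. For this I would use quantization of energy (Proposition \ref{prop:energyquantvort} \eqref{part:quant}) so that each cylindrical bubble contributes at least $E_C$, and the rescaling condition \eqref{part:4gc} so that distinct vertices in a single limit tree correspond to translation scales that diverge from each other. Proceeding inductively at each special point $x \in \Edge(\st(\Gamma)) \cup \Edge_\infty(\st(\Gamma))$: the first cylindrical bubble in either limit captures energy concentrated at the fastest-diverging translation scale along the end labelled by $x$; energy conservation and the connectedness relation $Ku_{\inc(w^+)}(w^+) = Ku_{\inc(w^-)}(w^-)$ force the two leading translation sequences $s_{\nu,\alpha_{x,1}}, s'_{\nu,\alpha_{x,1}}$ to differ by a bounded amount, hence to produce the same first bubble (up to translation and gauge). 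Quotienting this bubble out and iterating along the chain shows $|\Gamma^{cyl}_x|$ and the bubbles themselves agree.

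Finally, the connectedness condition at every edge of $\Gamma(C)$ glues the component-wise gauge transformations and translations into a single equivalence between $(A,u)$ and $(A',u')$ in the three specified senses. The hardest step is the inductive matching of the bubble trees and their translation parameters; once that is in place, the remaining identification on each component is routine gauge-theoretic bookkeeping.
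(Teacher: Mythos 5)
Your proposal is correct and follows essentially the same route as the paper's proof: uniqueness up to gauge on stable components (the paper cites Hausdorffness of $\A/\K$ from Donaldson--Kronheimer, which is the same Uhlenbeck-compactness argument you sketch), uniqueness of the translation sequences up to bounded difference via the final step of Lemma \ref{lem:break} applied iteratively along each cylindrical chain, and the $\Aut(\st(C))$-ambiguity arising from replacing $s(\nu)$ by $\gamma\cdot s(\nu)$ in the universal deformation. You spell out the inductive matching of the bubble trees more explicitly than the paper, which treats it as an immediate consequence of Lemma \ref{lem:break}, but the substance is the same.
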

\begin{proof} In Proposition \ref{prop:convmodbreak}, suppose $(A_\nu,
  u_\nu)$ is a converging sub-sequence. The limit $(A,u)$ is unique up
  to gauge transformation because: $A$ is unique up to gauge since the
  space $\A/\K$ is Hausdorff (see Lemma 4.2.4 in \cite{DoKr}). Once we
  know gauge transformations $k_\nu$ such that $k_\nu A_\nu$ converges
  to $A$, then the limit $k_\nu u_\nu$, if it exists, is obviously
  unique. Suppose $k_\nu'$ is another sequence of gauge
  transformations for which $k_\nu'A_\nu$ converges to $A_\infty$. By
  arguments in the proof of Lemma 4.2.4 in \cite{DoKr}, after passing to
  a subsequence, $k_\nu'k_\nu^{-1}$ converge to a limit
  $k_\infty$. Then, the sequence $k_\nu'(A_\nu,u_\nu)$ converges to
  $k_\infty(A,u)$, i.e. the limit is still in the same gauge equivalence class. We remark that in such a situation $k_\infty$ is a
  stabilizer of the connection $A_\infty$.

  Next, consider the formation of cylindrical bubbles. By Lemma
  \ref{lem:break}, the choice of the sequence $\{s_\nu\}_\nu$ is
  unique in the following sense. If there is another sequence $s_\nu'$
  for which part \eqref{part:conv}-\eqref{part:connect} of Lemma
  \ref{lem:break} are satisfied, then after passing to a subsequence
  $\lim_\nu s_\nu' - s_\nu=L<\infty$. In that case, the limit $v_1'$
  will just be a re-reparametrization of $v$ :
  $v_1(\cdot)=v(L+\cdot)$. The rest of cylinder-breaking proof is an
  application of this Lemma, and hence the limit stable vortex is
  unique up to translation on cylindrical components.

  Finally, we consider the action of the finite group
  $\Aut(\st(C))$. Let $C_S \to (S,0)$ be a deformation of the curve
  $\st(C)$, with an isomorphism $\st(C) \simeq C_0$. Recall that the
  action of $\Aut(\st(C))$ extends to an equivariant action on $C_S
  \to S$ (after possibly shrinking $S$). The sequence $s(\nu) \in S$
  can be replaced by $\gamma\cdot s(\nu)$, where $\gamma \in
  \Aut(C)$. Then the Gromov limit is pulled back by the action of
  $\gamma$ on the stable components.
\end{proof}

\section{$\ol {MV}^K_{g,n}(X)$ is Hausdorff}\label{sec:hausdorff}
On the space ${MV}^K_{g,n}(X)$, {\em Gromov topology} is defined as: a
set $S \subset \ol {MV}^K_{g,n}(X)$ is closed if for any Gromov
convergent sequence in $S$, the limit also lies in $S$. In this
section we show
\begin{proposition}\label{prop:Haus}
  On $\ol {MV}^K_{g,n}(X)$, convergence in the Gromov topology
  coincides with Gromov convergence. $\ol {MV}^K_{g,n}(X)$ is
  Hausdorff under this topology.
\end{proposition}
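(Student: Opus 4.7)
The plan is to reduce both assertions to the Gromov compactness theorem (Theorem \ref{thm:gromovcpt}) and to the uniqueness of the Gromov limit (Lemma \ref{lem:cylunique}), by first establishing that the Gromov topology on $\ol{MV}^K_{g,n}(X)$ is first countable.

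I would construct a countable local base $\{U_n([v_0])\}_{n\in\N}$ at each point $[v_0]$ directly from Definition \ref{def:gromconv}. Let $C$ denote the pre-stable domain of $v_0$. Fix a universal deformation $(S,0)$ of $\st(C)$ with the identifications from Lemma \ref{lem:convprep}, and an exhaustion $K_\alpha^{(n)}$ of each $\Sig_\alpha$ by compact sets, taking $K_\alpha^{(n)}:=[-n,n]\times S^1$ on cylindrical components. Declare $[v']\in U_n([v_0])$ if there exist a parameter $s\in S$ with $|s|<1/n$, rescalings $s_{\nu,\alpha}$ and gauge transformations $k_\alpha$ as in Definition \ref{def:gromconv} such that, after performing the translations $\phi_{\nu,\alpha}$ and the gauge transformations, a representative of $v'$ lies within $C^n$-distance $1/n$ of $v_0$ on $\bigsqcup_\alpha K_\alpha^{(n)}$, while the energy carried by $v'$ off these compact sets is below $1/n$. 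By construction, $[v_\nu]$ Gromov-converges to $[v_0]$ if and only if for each $n$, $[v_\nu]\in U_n([v_0])$ for all sufficiently large $\nu$. A direct check shows each $U_n([v_0])$ is Gromov-open: if a sequence in the complement Gromov-converges to a point of $U_n([v_0])$, the matching data can be composed to place the tail of the sequence inside $U_n([v_0])$, a contradiction.

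With first countability in place, the first assertion follows from the standard subsequence principle. One direction is immediate from the definition of the Gromov closure. For the converse, if $[v_\nu]\to[v]$ in the Gromov topology but does not Gromov-converge to $[v]$, some subsequence stays outside a fixed $U_{n_0}([v])$, which contradicts topological convergence once we know $U_{n_0}([v])$ is Gromov-open. For the Hausdorff property, suppose $[v_1]\neq[v_2]$ admit no disjoint Gromov-open neighborhoods. Then $U_n([v_1])\cap U_n([v_2])\neq\emptyset$ for every $n$; picking $[w_n]$ in the intersection yields a sequence Gromov-converging simultaneously to $[v_1]$ and $[v_2]$, contradicting Lemma \ref{lem:cylunique}.

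The main obstacle will be the Gromov-openness of $U_n([v_0])$, because this demands composing the convergence data of a Gromov-convergent sequence $[w_j]\to[w_\infty]\in U_n([v_0])$ with the approximation data witnessing $[w_\infty]\in U_n([v_0])$ to produce approximation data for each $[w_j]$, consistently across the three regimes of Definition \ref{def:gromconv} (stable components, cylindrical bubbles at marked points, cylindrical bubbles at nodes) and across the rescaling and energy-balance clauses. None of this is conceptually hard given the estimates of Section \ref{sec:gromovdef}, but it requires careful bookkeeping and is where the bulk of the technical work sits.
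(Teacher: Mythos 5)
Your proposal is correct and takes essentially the same route as the paper: the paper packages your countable local base $U_n([v_0])$ as sublevel sets of distance-like functions $d_v$ satisfying properties D1--D3 (your ``Gromov-openness of $U_n([v_0])$'' is exactly property D3), handles the unstable cylindrical components by adding auxiliary marked points and measuring distance in $\ol M_{g,n+k}$, and then invokes Proposition 5.6.5 of \cite{MS} together with uniqueness of Gromov limits (Lemma \ref{lem:cylunique}) -- the same two ingredients you reduce to. The only difference is cosmetic: you build the neighborhood basis directly from the convergence data, while the paper encodes it in the functions $d_v$.
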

At a first glance, the first statement may appear obvious. But it is
true only if we know beforehand that $\ol {MV}_{g,n}^K(X)$ is
Hausdorff and first countable -- see discussion in Section 5.6 of
\cite{MS}. We follow the approach in \cite{MS}, borrowing some ideas
from \cite{IP:VFC}. For any stable vortex $v:=(A,u)$ on a Riemann
surface with cylindrical ends $\Sig$ (corresponding to pre-stable
nodal curve $C$) modeled on a graph $\Gamma$, we define a function
$$d_v: \ol {MV}^K_{g,n}(X) \to [0,\infty]$$
that approximates `distance from $v$'. The function $d_v$ satisfies
the following:
\begin{description}
\item [D1] $d_v(v)=0$.
\item [D2] A sequence $v_\nu:=(A_\nu,u_\nu)$ in $\ol {MV}^K_{g,n}(X)$
  Gromov converges to $v$ if and only if $\lim_{\nu \to
    \infty}d_v(v_\nu)=0.$
\item [D3] If a sequence $v_\nu$ in $\ol {MV}^K_{g,n}(X)$ Gromov
  converges to $v':=(A',u')$, then \\$\limsup_{\nu \to \infty}
  d_v(v_\nu) \leq d_v(v')$.
\end{description}
In addition, if we know that Gromov limits are unique, then
Proposition \ref{prop:Haus} follows using Proposition 5.6.5 in
\cite{MS}.

Suppose $v_0:=(A_0,u_0)$ is a stable vortex defined on $\Sig_0$, a
Riemann surface with cylindrical ends corresponding to a pre-stable
curve $C_0$ and modular graph $\Gamma_0$. The forgetful map $\ft:\ol
{MV}^K_{g,n}(X) \to \ol M_{g,n}$ is continuous. Since the properties
D1-D3 are regarding a small neighborhood of $v$, $d_v$ can be defined
so that $d_v(v_0)$ is finite only if $\ft(v_0)$ is in an open neighborhood
$S_{\st(C)} \subset \ol M_{g,n}$ of $[\st(C)]$. We assume $S_{\st(C)}$ is
small enough that there is a morphism of
modular graphs $\st(\Gamma) \overset{f}{\to} \st(\Gamma_0)$ and 
Lemma \ref{lem:convprep} applies on $S_{\st(C)}$.
We can write
$\st(\Sig_0)=\bigsqcup_{\alpha \in
  \vertex(\st(\Gamma))}\tSig_{0,\alpha}/\sim$,
so that $\tSig_{0,\alpha} \subset (\st(\Sig),\ulg_{\Sig_0})$. Notice that $\tSig_0:=\cup_{\alpha \in \vertex(\st(\Gamma))}\tSig_{0,\alpha} \cup \cup_{\alpha \in \Gamma_0^{cyl}}(\R
\times S^1)_\alpha$ is a cover of $\Sig_0$.

To compute $d_v(v_0)$, we stabilize the domains of $v$ and $v_0$ by
adding some marked points.  On $C$, the domain curve of $v$, we add
$k$ marked points $\ol x=(x_\alpha)_{\alpha \in \Gamma^{cyl}}$ -- one
on each unstable cylindrical component $\Sig_\alpha$.
Now, $[(C,\ol x)] \in \ol M_{g,n+k}$. Next, we choose $k$ additional marked
points $\ol x_0$ on $\Sig_0$, calculate $d_v(v_0;\ol x, \ol x_0)$ and
then take infimum over all choices. The marked points $\ol x_0 \in
C_0$ are chosen in a way that
\begin{itemize}
\item the $n+k$-marked nodal curve $(C_0,\ol x_0)$ is stable.
\item The morphism $f$ can be extended to $\Gamma' \overset{f}{\to}
  \Gamma'_0$, where $\Gamma'$ (resp. $\Gamma_0'$) is the graph $\Gamma$
  (resp. $\Gamma_0$) with the additional marked points.
\item All the marked points $\ol x_0$ lie in $\Cyl(\Sig_0)$, the
  cylindrical part of $\Sig_0$.
\end{itemize}
Given a choice of marked points $\ol x_0$ on the domain of
$v_0$, we construct an exhausting sequence of increasing pre-compact subsets of $\tSig_0$, denoted by $\Sig_{L,\ol x, \ol x_0}$, where $L\in \Z_{\geq 0}$. The
Riemann surface $\Sig_{L,\ol x, \ol x_0}$ is the disjoint union of the
following:
\begin{equation*}
  \Sig_{L,\ol x, \ol x_0,\alpha}=\begin{cases} \tSig_{0,\alpha} \bs (\cup_{z
      \in \Edge_\infty(\st(\Gamma))} \rho_z^{-1}\{r > L\} \cup\\
    \quad \cup_{w \in \Edge(\st(\Gamma))}\rho_{w^\pm}^{-1}\{\pm r > L\}) &\alpha \in
    \on{vert}(\st(\Gamma)) \\ 
    \{x_{0,\alpha}+z:|\on{Re}(z)| \leq L\} \cap \on{Cyl}(\Sig_0) &\alpha \in \Gamma^{cyl}.
  \end{cases}
\end{equation*}
The Riemann surface $\Sig_{L,\ol x, \ol x_0}$ can be embedded in $\Sig$.
The embedding has been discussed above for the components $\alpha\in \vertex(\st(\Gamma))$. For $\alpha \in \Gamma^{cyl}$, the embedding is isometrically induced
by identifying the marked point $x_{0,\alpha}$ with $x_\alpha$. Any
principal $K$-bundle over $\Sig_{L,\ol x, \ol x_0}$ is trivial.  One
can now consider the restriction of the vortices $v$ and $v_0$ to the
bundle $\Sig_{L,\ol x, \ol x_0}\times K$, this involves making a
choice of gauge both for $v$ and $v_0$. Define
\begin{align*}
  d_v(v_0;\ol x, \ol x_0, L)&:= \Mod{A_0-A}_{C^1(\Sig_{L,\ol x, \ol
      x_0})} + \Mod{d_X(u_0,u)}_{C^0(\Sig_{L,\ol x, \ol x_0})},\\
  d_v(v_0;\ol x, \ol x_0)&:=\inf_{k \in \K(\Sig)}\sum_{L \in \Z_{\geq
      0}}2^{-L} \frac {d_{kv}(v_0;\ol x, \ol x_0, L)}
  {1+d_{kv}(v_0;\ol x, \ol x_0, L)}.
\end{align*}
Finally, we define $d_v:\ol {MV}^K_{g,n}(X) \to [0,\infty]$ as
\begin{equation}\label{eq:dw}
  d_v(v_0):=\inf_{\ol x, \ol x_0}(d_v(v_0;\ol x, \ol x_0) + d_{\ol
    M_{g,n+k}}([(C,\ol x)],[(C_0,\ol x_0)])).
\end{equation}

\begin{proof}[Proof of Proposition \ref{prop:Haus}]: By the definition
  of $d_v$, it is easy to see that D1-D3 are satisfied. By Lemma
  \ref{lem:cylunique}, Gromov limits are unique up to gauge
  transformation. The proof follows using Proposition 5.6.5 in
  \cite{MS}.
\end{proof}

\begin{proof}[Proof of Theorem \ref{thm:maincpt}] Theorem \marginpar{*****}
  \ref{thm:maincpt} follows from Theorem \ref{thm:gromovcpt},
  Proposition \ref{prop:evcont} and Proposition \ref{prop:Haus}.  We
  examine the hypothesis of the compactness result (Theorem
  \ref{thm:gromovcpt}). The energy bound follows from the fact that
  for vortices, energy is constant on an equivariant homology class
  (see \eqref{eq:energyhom}). The images of the sequence of vortices
  is contained in a compact subset of $X$, since $X$ is equivariantly
  convex and the moment map is proper (see Lemma 2.7 in \cite{CGMS}).
\end{proof}

\begin{remark}\label{rem:asphere} 
  In case $X$ has spheres, i.e. $\pi_2(X)$ is non-trivial, then the
  limit of a sequence of vortices may have sphere bubbling in the
  $X$-fibers of the bundle $P(X)$. In order to incorporate this into
  our set-up, we first have to expand the definition of a pre-stable
  modular graph. In addition to allowing unstable cylindrical
  vertices, we also require unstable spherical vertices, we call the
  subgraph of spherical vertices $\Gamma^{sphere}$. The modular graph
  obtained by deleting the spherical vertices $\Gamma \bs
  \Gamma^{sphere}$ is connected. Each connected component of
  $\Gamma^{sphere}_x$ is a tree that is attached to a unique point $x$
  on the curve corresponding to $\Gamma \bs \Gamma^{sphere}$.  The
  graph $\Gamma^{sphere}$ does not contain any marked points. A stable
  vortex now has a sphere bubble tree in the fiber $P(X)_x$ modeled on
  $\Gamma^{sphere}_x$. The details of the convergence are proved by
  Ott \cite{Ott} using the following technique: a vortex
  $(A_\nu,u_\nu)$ can be viewed as a $J_{A_\nu}$-holomorphic curve
  from $C$ to $P(X)$. Here $J_{A_\nu}$ is the complex structure on
  $P(X)$ corresponding to the Dolbeault operator $\delbar_{A_\nu}$ and
  the sequence $J_{A_\nu}$ converges to $J_A$. Now, one can apply the
  result of Gromov convergence of $J$-holomorphic curves. In this
  expanded setting with sphere bubbles, one can still prove that the
  Gromov limit is unique and the space of stable vortices is
  Hausdorff. The proof is similar to the corresponding proof for
  $J$-holomorphic curves carried out in \cite{MS}.
\end{remark}

\section{Quasimaps}\label{sec:quasi}
In this section, we define quasimaps and recall why the moduli space
of stable quasimaps is proper. For details, refer to the paper by
Ciocan-Fontanine, Kim and Maulik \cite{CKM:quasimap}. Suppose $X$ is
an affine variety and $G$ a connected reductive complex algebraic
group acting on $X$. Let $\theta:G \to \C^\times$ be a character. It gives
a one-dimensional representation $\C_\theta$ of $G$, and hence a
$G$-equivariant polarization line bundle $L_\theta=X \times
\C_\theta$. The GIT quotient is defined as in King \cite{King}. Let
$S(L_\theta):=\oplus_{n\geq 0}\Gamma(X,L_\theta^n)^G$, then
$$X\qu G:=X\qu_\theta G:= \on{Proj}(S(L_\theta))$$
is a quasi-projective variety. Suppose $A$ is a ring such that
$X=\Spec(A)$, and $A^G \subset A$ be the ring of $G$-invariant
elements. Then the affine quotient is
$X/_{\on{aff}}G:=\Spec(A^G)$. Since $A^G$ is the zeroth graded piece
of $S(L_\theta)$, we have a projective morphism $X \qu G \to
X/_{\on{aff}} G$. The quotient $X \qu G$ is projective exactly when
$X/_{\on{aff}}G$ is a point, i.e. $A^G$ only consists of constants.

The semistable locus $X^\ss(\theta)$ is defined as the set of points
$x \in X$ for which there exists $f \in \Gamma(X,L_\theta^n)^G$, $n
\geq 1$ satisfying $f(x) \neq 0$. The stable locus $X^s(\theta)$
consists of points $x \in X^\ss(\theta)$ for which the orbit $Gx
\subset X^\ss(\theta)$ is closed and the stabilizer group of $x$ is
finite.
  
\begin{assumption}\label{ass:67} In Sections \ref{sec:quasi} and \ref{sec:homeo},
  $X$ is an affine variety with an action of a reductive group $G$ and
  a character $\theta:G \to \C^\times$. Further,
  \begin{enumerate}
  \item\label{part:assfree} $X^\ss(\theta)=X^\s(\theta)$, $G$ acts
    freely on $X^\ss$.
  \item \label{part:assproj} $X \qu G$ is projective.
  \end{enumerate}
\end{assumption}
In Assumption \ref{ass:67}, part \eqref{part:assfree} is same as the
assumptions in \cite{CKM:quasimap}. Part \eqref{part:assproj} is put
in place to simplify presentation, as this case captures all the
technical aspects. Section \ref{sec:noncpt} discusses how this
assumption can be removed. Part \eqref{part:assfree} of the above
assumption implies that $X \qu G$ is the orbit space of the $G$ action
on $X^\ss$ and it is a smooth manifold.
\begin{definition} Given integers $g$, $n \geq 0$ and a class $\beta
  \in H_2^G(X)$, an $n$-pointed genus $g$ {\em quasimap} of class
  $\beta$ to $X \qu G$ consists of the data $(C,p_1,\dots,p_n, P, u)$,
  where
  \begin{itemize}
  \item $(C,p_1,\dots,p_n)$ is a nodal curve of genus $g$ with $n$
    marked points,
  \item $P$ is a principal $G$-bundle on $C$,
  \item $u \in \Gamma(C,P \times_G X)$ such that $(P,u)$ is of class
    $\beta$,
  \end{itemize}
  satisfying: there is a finite set $B$ so that $u(C \bs B) \subset
  P \times_G X^\ss$. The points in $B$ are called the {\em base points} of the
  quasimap.  For a component $C' \subset C$, $u$ is {\em constant} if it does not have base points and $\pi_G \circ u:C' \to X\qu G$ is
  a constant. Here $\pi_G:X^\ss \to X \qu G$ is the projection
  map. The quasimap $(C,p_1,\dots,p_n, P, u)$ is {\em stable} if the
  base points are disjoint from the nodes and markings on $C$ and
  \begin{itemize}
  \item every genus $0$ component $C'$ of $C$ has at least 2 marked or
    nodal points. If it has exactly 2 special points, then $u$ is
    non-constant on $C'$,
  \item if for a genus $1$ component $C'$ of $C$, $u$ is constant on
    $C'$, then $C$ has at least one marked or nodal point.
  \end{itemize}
  An {\em isomorphism} between two quasimaps $(C,\ul p, P, u)$ and
  $(C',\ul p', P', u')$ consist of isomorphisms $f:(C,\ul p) \to
  (C',\ul p')$ and $\sig: P \to f^*P'$ such that under the induced
  isomorphism $\sig_X:P \times_G X \to f^*(P' \times_G X)$, $u$ maps
  to $u'$.
\end{definition}

\begin{definition} A {\em family of quasimaps} over a base scheme $S$
  consists of the data $(\pi:\mC \to S,\{p_j:S \to
  \mC\}_{j=1,\dots,k},\mP,u)$ where $\mC \to S$ is a proper flat
  morphism such that each geometric fiber $\mC_s$, $s \in S$ is a
  connected curve, $p_j$ are sections of $\pi$, $\mP \to \mC$ is a
  principal $G$-bundle and $u:\mC \to \mP \times_G X$ is a
  section. For each $s \in S$, $(\mC_s, \ul p(s), \mP_s, u|_{\mC_s})$
  is a quasimap.
\end{definition}

We describe the compactification of $Qmap_{g,n}(X \qu G, \beta)$ from
\cite{CKM:quasimap}. The proof is by the valuative criterion. Let
$(S,0)$ be a pointed curve and let $S^0=S \bs \{0\}$. Let
$((\mC,p_j),\mP,u)$ be a $S^0$-family of stable quasimaps. After possibly
shrinking $S$ and making an \'etale base change, the base points can be
regarded as additional sections $y_i:S^0\to \mC$. The family gives a
rational map $[u]:\mC \to X \qu G$ which is smooth on $\mC \bs B$. Since
$X \qu G$ is projective, after shrinking $S$ (removing closed sets
where the extension requires blowing up), $[u]$ extends to a map on
all of $\mC$, denoted by $[u_{reg}]$. Stability of the quasimaps implies
that the family $(\mC,(\ul p, \ul y),[u_{reg}])$ is a family of Kontsevich stable
maps to $X \qu G$. By compactness of the
moduli space of stable maps, after an \'etale base change possibly
ramified at $0$, there is a family of stable maps
$$(\hat \mC, (\ul p, \ul y), [\hat u]) \to S, \quad [\hat u]:\hat \mC \to X \qu G$$  
extending $(\mC,[u_{reg}])$. The surface $\hat \mC$ has at most nodal
singularities in the central fiber $\hat \mC_0$. Pull back the principal
$G$-bundle $X^s \to X\qu G$ via $[\hat u]$ to obtain a principal
$G$-bundle $\hat \mP$ and an induced section $\hat u: \hat \mC \to \hat \mP
\times_G X$.

Next, consider all maximal sub-trees of sphere bubbles (rational
tails) $\Gamma_1,\dots,\Gamma_N$ in $\hat \mC_0$ that have none of the
marked points $p_j$ and meets the rest of the curve $\ol{(\hat \mC_0
  \bs \Gamma_l)}$ in a single point $z_l$. Contract these subtrees in
$\hat \mC$ and call the resulting surface $\ol \mC$ and let $(\ol
\mP,\ol u):=(\hat \mP, \hat u)|_{\ol \mC}$. The gauged map $(\ol \mP,
\ol u)$ is well-defined on $\mC\bs \{z_1,\dots,z_N\}$. Base points
$y_i$ may come together at the points $z_1,\dots z_N$, but these
points are away from the markings $p_j$. The families of quasimaps
$(\ol \mP, \ol u)$ and $(\mP, u)$ are defined over $\mC \bs
\{y_j,z_l\}$ and $\mC \bs \mC_0$ respectively and they agree on the
intersection. So, they can be patched to yield a family $(\mP, u)$
that is defined on $\mC \bs \{y_j(0),z_l\}$.  By Lemma 4.3.2 in
\cite{CKM:quasimap}, $\mP$ extends to a principal $G$-bundle on
$\mC$. By Hartog's Theorem, since $X$ is affine, $u$ extends to
all of $\mC$.  The central fiber $u_0$ will have base points at which
rational tails were contracted. The following two steps affect the
equivariant homology of $u$: (1) replacing $u$ by $u_{reg}$, (2)
contracting rational tails in the central fiber and letting the
central fiber acquire base points. But it can be shown that the
equivariant homology class $\beta$ is preserved in the limit. This is
done in Section 7 in \cite{CKM:quasimap} by analyzing the contribution
of base points to equivariant homology. The properness of
$Qmap_{g,n}(X \qu G,\beta)$ now follows by the valuative criterion.
\section{A homeomorphism between quasimaps and
  vortices}\label{sec:homeo}
The proof of Theorem \ref{thm:main} is carried out in this section.

\subsection{From a $G$-variety to a Hamiltonian $K$-manifold}\label{sec:KempfNess}
Given an affine $G$-variety $X$ as in Section \ref{sec:quasi}, one can
give it the structure of a Hamiltonian $K$-manifold in a way that the
GIT and symplectic quotients are homeomorphic.  By Proposition 2.5.2
in \cite{CKM:quasimap}, there is a complex vector space $V$ with a linear
$G$-action and a $G$-equivariant closed embedding $X \hra V$. Let $(\cdot,\cdot)$ be a Hermitian product on $V$ and $K \subset G$ be a maximal compact subgroup whose action preserves the Hermitian product. The imaginary part of the Hermitian product gives a symplectic form on $V$:
$$\om_v(v_1,v_2):=\on{Im}(v_1,v_2), \quad v, v_1, v_2 \in V.$$
The $K$-action on $V$ has moment map
$$\Phi_0:V \to \k^\dual, \quad \lan\Phi_0(v),\xi \ran:=\hh\on{Im}(v,\xi v), \quad \xi \in \k.$$
The moment map on the polarized affine variety $X$ is 
$$\Phi:=\Phi_0 + i\d\theta,$$
where $\d\theta:\k \to i\R$ is the restriction of the derivative of
the character $\theta:G \to \C^\times$ at the identity:
$\d\theta:\g^\dual \to \C$.  King (\cite{King}, Theorem 6.1) proves a
generalization of the Kempf-Ness theorem which says that any
$\theta$-semistable $G$ orbit that is closed in $X^\ss(\theta)$ meets
$\Phi^{-1}(0)$ in a unique $K$-orbit. So, there is a homeomorphism
$\Phi^{-1}(0)/K \to X \qu G$. By part \eqref{part:assfree} of
Assumption \ref{ass:67}, we get $X^\ss =G\Phi^{-1}(0)$, and part
\eqref{part:assproj} ensures that $\Mod{\Phi}^2$ is proper on $X$. The Hamiltonian manifold $X$ is equivariantly proper. In particular, if $V=\C^r$ with the standard Hermitian product, then equivariant
convexity is satisfied by taking $f=\Sigma_{i=1}^r|z_i|^2$.

\subsection{Correspondence between vortices and stable quasimaps}
In this section, we produce the bijection $\Psi$ in Theorem
\ref{thm:main} -- see Corollary \ref{cor:biject}. This is done by
applying the Hitchin-Kobayashi correspondence (Theorem \ref{thm:HK})
on each component of $C$.

For a principal $G$-bundle $P_\C \to C$, a reduction of structure group can be
achieved by choosing a section $\sig:C \to P_\C/K$, this produces a
principal $K$-bundle $P \subset P_\C$ satisfying $P_\C=P \times_K
G$. By Remark \ref{rem:conntohol}, holomorphic structures on $P_\C$
correspond to connections on $P$. Therefore after a choice of
reduction, a $G$-gauged map $(P_\C,u)$ can be viewed as a $K$-gauged
map $(A,u)$ on $P$.

\begin{definition}[Complex gauge transformations] A {\em complex gauge
    transformation} $g$ is an automorphism of $P_\C$, it is a
  $G$-equivariant bundle map $P_\C \to P_\C$. Under a local
  trivialization $g$ is given by a map from the Riemann surface to the
  group $G$. Recall that the Cartan decomposition (see Helgason
  \cite{Helg2} VI.1.1)
  \begin{equation}\label{eq:complexgroupiso}
    K \times \k \to G, \quad (k,s) \mapsto ke^{is}
  \end{equation}
  is a diffeomorphism. So, a complex gauge transformation $g$ can be
  written as $g=ke^{i\xi}$, where $k \in \K(P)$ and $\xi \in
  \on{Lie}(\K(P)) = \Gamma(\Sig, P(\k))$.
\end{definition}
We denote by $\G(P)$ the group of complex gauge transformations associated to the $G$-bundle $P_\C:=P \times_K G$. This
group acts on the space of holomorphic structures on $P_\C$ via
pull-back. This corresponds to an action of $\G(P)$ on the space of
connections that extends the action of $\K(P)$. Let $\Sig$ be a
Riemann surface with cylindrical ends and $P=\Sig \times K$ be a
trivial $K$-bundle. For $k \in \Z_{\geq 0}$, $p>1$ such that
$(k+1)p>2$, we define the space of $W^{k+1,p}$ complex gauge
transformations as $\G^{k+1,p}(P):=\{ke^{i\xi}: k \in \K^{k+1,p}, \xi
\in W^{k+1,p}(\Sig,\k)\}$. By Lemma \ref{lem:gcactona},
$\G^{k+1,p}(P)$ acts smoothly on $\A^{k,p}$.

\begin{definition} [$p$-bounded gauge transformations and gauged
    holomorphic maps] Let $p>2$. Let $C$ be a pre-stable curve with
  $n$-marked points, let $P$ be a principal $K$-bundle on $C$. We call
  a gauged holomorphic map $(A,u)$ on $P \to C$ {\em $p$-bounded} if it
  is smooth on $C \bs \{z_1,\dots,z_n\}$ and for $1\leq j \leq n$, on
  a neighborhood of $z_j$, $N(z_j) \subset C$, $(A,u)|_{N(z_j)} \in
  L^{p} \times W^{1,p}$. A {\em complex gauge transformation} $g$ on
  $P$ is {\em $p$-bounded} if it is smooth on $C \bs \{z_1,\dots,z_n\}$
  and $g|_{N(z_j)} \in W^{1,p}(N(z_j),G)$ for $j=1,\dots,n$.  Denote
  by $\G(P)^p_{\on{bd}}$ the group of $p$-bounded gauge transformations.
\end{definition}

Let $C$ be a pre-stable smooth genus $g$ curve with $n$ marked points
($n \geq 1$), we denote by $Qmap(C,X \qu G, \beta)$ the subset of
stable quasimaps $Qmap_{g,n}(X \qu G,\beta)$ whose domain curve is
$C$. Analogously, if $\Sig$ be the Riemann surface with cylindrical
ends corresponding to $C$, then $MV^K(\Sig,X,\beta) \subset
\ol{MV}_{g,n}^K(X,\beta)$ be the subset of stable vortices with domain
$\Sig$.

\begin{theorem}[Variant of Theorem 3.1 in \cite{VW:affine}]\label{thm:HK} 
  Suppose $X$, $G$, $K$ be as above and $0<\gamma<1$ be as in
  Proposition \ref{prop:decaycyl}. Let $C$ be a pre-stable smooth
  genus $g$ curve with $n$ marked points $z_1,\dots,z_n$ ($n \geq 1$)
  and $\Sig$ be the corresponding Riemann surface with cylindrical
  ends.

  Let $(A,u)$ be a gauged holomorphic map from a principal $K$-bundle
  $P \to C$ to $X$. Suppose $u(z_i) \in X^{\ss}$ for all the marked
  points $z_1,\dots,z_n$. There exists a complex gauge transformation $g$
  on $P$ such that $g(A,u)|_\Sig$ is a smooth finite energy symplectic
  vortex with bounded image.  For any $2<p<\frac 2 {1-\gamma}$, the complex gauge transformation $g$ is $p$-bounded and it is unique up to left multiplication by $p$-bounded unitary gauge transformations $k \in \K(P)^p_{\on{bd}}$. 
  The resulting map
  \begin{equation}\label{eq:smoothbiject}
    Qmap(C,X \qu G, \beta) \to MV^K(\Sig,X,\beta), \quad (A,u) \mapsto g(A,u)|_\Sig
  \end{equation}
  is a bijection.
\end{theorem}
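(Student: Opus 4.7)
The plan is to reduce the existence of $g$ to the main Hitchin--Kobayashi correspondence of \cite{VW:affine} and then upgrade the regularity of $g$ at the marked points using the exponential decay results of Section \ref{sec:prelim}. Since $u(z_j) \in X^\ss$ at each marked point $z_j$, the stability hypothesis needed in \cite{VW:affine} is satisfied, so there exists a smooth complex gauge transformation $g$ on $P|_\Sig$ with $g(A,u)|_\Sig$ a smooth solution of the vortex equation on the trivial bundle $\Sig \times K$. Finite energy follows from the equivariant cohomology identity \eqref{eq:energyhom}, and the image is contained in a compact subset of $X$ by equivariant convexity and properness of $\Phi$.

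To promote $g$ to a $p$-bounded complex gauge transformation, fix a marked point $z_j$ and work on the cylindrical-end neighborhood $N(z_j)\bs\{z_j\}$. Applying Corollary \ref{cor:connW1psig} to the finite-energy vortex $g(A,u)$ yields a unitary gauge transformation $k_j$ and a constant $\lambda_j \in \k$ such that $k_j g A = \d + \lambda_j \d\theta + a_j$ with $\Mod{a_j}_{W^{1,p}(N(z_j)\bs\{z_j\})} < \infty$. Corollary \ref{cor:extendoverinfty} then extends $k_j g(A,u)$ to an $L^p_{\loc} \times W^{1,p}_{\loc}$ gauged holomorphic map on a principal $K$-bundle $\tilde P \to N(z_j)$ whose transition function to the trivial bundle over $N(z_j)\bs\{z_j\}$ is $\theta \mapsto e^{-\lambda_j\theta}$. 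The original $(A,u)$ is already a smooth gauged holomorphic map on $P \to N(z_j)$, and the identification $(k_j g)(A,u) = (A,u)$ on $N(z_j)\bs\{z_j\}$ extends across $z_j$ as a biholomorphism of complexified bundles $\tilde P_\C \to P_\C$. Reading $g = k_j^{-1}(k_j g)$ in any local trivialization of $P_\C$ over $N(z_j)$ then gives $g|_{N(z_j)} \in W^{1,p}$, so $g \in \G(P)^p_{\bd}$.

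Uniqueness is a Kempf--Ness argument: if $g_1$ is another $p$-bounded complex gauge transformation giving a smooth vortex and we write $g_1 g^{-1} = k e^{i\xi}$ with $k \in \K(P)^p_{\bd}$ and $\xi \in W^{1,p}(\Sig, P(\k))$, then convexity of the Kempf--Ness integral along $t \mapsto e^{it\xi}$ together with the vortex equation at both endpoints forces $\xi \equiv 0$, as in \cite{VW:affine}. For surjectivity of the map \eqref{eq:smoothbiject}, start from a stable vortex $v'$ on $\Sig$: by Proposition \ref{prop:infvortsing} and Corollary \ref{cor:extendoverinfty}, $v'$ extends to a $p$-bounded gauged holomorphic map on a $K$-bundle over $C$ whose value at each $z_j$ lies in a $K$-orbit in $\Phi^{-1}(0) \subset X^\ss$, hence defines a stable quasimap representing $\beta$ whose image under the correspondence (with $g = \Id$) is $v'$. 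Injectivity follows from the uniqueness just established, together with the fact that complex-gauge-equivalent quasimaps produce unitary-gauge-equivalent vortices.

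The principal difficulty is the $p$-boundedness of $g$ at the marked points: one must match the algebraic extension of $P_\C$ inherited from the original quasimap with the analytic extension of Corollary \ref{cor:extendoverinfty}, verifying in particular that the holonomy element $\lambda_j \in \k$ produced by Proposition \ref{prop:infvortsing} lies in the lattice $\tfrac{1}{2\pi}\exp^{-1}(\Id)$ so that $\theta \mapsto e^{-\lambda_j\theta}$ genuinely defines a $K$-bundle transition function. The range $2 < p < 2/(1-\gamma)$ is the exact regime in which Corollary \ref{cor:extendoverinfty} applies and in which $W^{1,p}$ embeds continuously into $C^0$ on the two-dimensional disc $N(z_j)$, so that the two extensions can be compared pointwise and the resulting change-of-trivialization is a genuine $W^{1,p}$ map into $G$.
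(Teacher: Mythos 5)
Your proposal is correct and follows essentially the same route as the paper, which simply observes that the proof of Theorem 3.1 in \cite{VW:affine} carries over verbatim, with the asymptotic decay results (Proposition \ref{prop:decaycyl}, Corollary \ref{cor:connW1psig}, Corollary \ref{cor:extendoverinfty}) supplying the $p$-boundedness at the cylindrical ends and the construction of the inverse, and with the range $2<p<2/(1-\gamma)$ entering exactly where you place it. Your expanded account of the existence, $p$-boundedness, uniqueness-via-Kempf--Ness, and surjectivity steps matches the intended argument, including the key point about reconciling the algebraic extension of $P_\C$ with the analytic extension furnished by Corollary \ref{cor:extendoverinfty}.
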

Theorem 3.1 in \cite{VW:affine} states the above result with $\Sig$
replaced by $\C$ with the Euclidean metric $\d x \wedge \d y$ and $C$ is
$\P^1$ with a marked point at infinity. The proof of that result
entirely carries over for the above variant. In fact some parts of the
proof simplify, the construction of the inverse of
\eqref{eq:smoothbiject} uses an asymptotic decay result for
vortices. The corresponding result -- Proposition \ref{prop:decaycyl} above -- is much
simpler for a base space with cylindrical ends than the affine
line. The condition $p<\frac 2 {1-\gamma}$ is required to extend a
vortex over $\Sig$ to a $L^p_{loc} \times W^{1,p}_{loc}$ gauged map
over $\ol \Sig$ (see Corollary \ref{cor:extendoverinfty}). This
condition is absent in the affine case in \cite{VW:affine}, as there
$\gamma$ can be chosen arbitrarily close to $1$. Also note that for
$p_1<p$, $\G(P)^p_{\on{bd}} \subset \G(P)^{p_1}_{\on{bd}}$. We define
a set $\G(P)_{\on{bd}}:=\cap_{2<p<(2/(1-\gamma))} \G(P)_{\on{bd}}^p$.
\begin{corollary}[Bijection between $Qmap_{g,n}(X \qu G,\beta)$ and
  $\ol {MV}^K_{g,n}(X,\beta)$]\label{cor:biject}
  For any equivariant homology class $\beta \in H_2^K(X)$, Theorem
  \ref{thm:HK} produces a bijection
$$\Psi:Qmap_{g,n}(X \qu G,\beta) \to \ol {MV}^K_{g,n}(X,\beta).$$
\end{corollary}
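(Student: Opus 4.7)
The plan is to promote the per-curve Hitchin--Kobayashi bijection of Theorem \ref{thm:HK} to a bijection of moduli spaces by applying it component-by-component. Given a stable quasimap $(C,\ul z,P,u)$ of class $\beta$, on each irreducible component $C_\alpha$ the restriction $(A_\alpha,u_\alpha)$ is a gauged holomorphic map, and quasimap stability guarantees that base points are disjoint from marked points and nodes, so every special point of $C_\alpha$ maps into $X^{\ss}$. Theorem \ref{thm:HK} then yields a $p$-bounded complex gauge transformation $g_\alpha$, unique up to $p$-bounded unitary gauge, for which $g_\alpha(A_\alpha,u_\alpha)|_{\Sig_\alpha}$ is a smooth finite-energy vortex on the cylindrical-end surface $\Sig_\alpha$. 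On unstable rational components (two special points, with quasimap stability forcing $u$ to be non-constant), the resulting cylindrical vortex is non-constant, hence of strictly positive energy, which is exactly the condition imposed on cylindrical-bubble vertices in a stable vortex.

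The next step is to check that the tuple $(g_\alpha(A_\alpha,u_\alpha))_\alpha$ assembles into a stable vortex. At each node $w$, the pair $u(w^+)$, $u(w^-)$ lies in a common $G$-orbit in $X^{\ss}$, and by Proposition \ref{prop:infvortsing} the twisted vortices have well-defined $K$-orbit limits at the corresponding cylindrical ends; since the Kempf--Ness correspondence (Section \ref{sec:KempfNess}) identifies each $G$-orbit in $X^{\ss}$ with a unique $K$-orbit in $\Phi^{-1}(0)$, the connectedness condition $Ku_{\inc(w^+)}(w^+)=Ku_{\inc(w^-)}(w^-)$ follows. Equivariant homology is preserved because complex gauge transformations act trivially on $H_2^K(X)$, and summing over components recovers $\beta$. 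The resulting map $\Psi$ descends to equivalence classes: the uniqueness clause of Theorem \ref{thm:HK} handles the gauge ambiguity, automorphisms of the quasimap domain pull back to automorphisms of the vortex domain, and the translation freedom on unstable rational components matches the translation symmetry on cylindrical bubble components catalogued in Lemma \ref{lem:cylunique}.

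For bijectivity, I would construct the inverse directly. Given a stable vortex, Corollary \ref{cor:extendoverinfty} extends each component across its cylindrical ends to an $L^p_{\loc}\times W^{1,p}_{\loc}$ gauged holomorphic map on the compact nodal curve underlying $C$; the connectedness condition allows the $K$-bundles to be glued across nodes and complexified to a $G$-bundle, Proposition \ref{prop:infvortsing} supplies asymptotic values in $X^{\ss}$ at all special points (ruling out base points there), and non-constant cylindrical bubbles translate into non-constant unstable rational components of the quasimap, producing a stable quasimap. Component-wise bijectivity in Theorem \ref{thm:HK} then gives the two-sided inverse of $\Psi$. The main obstacle, and the item requiring the most care, is matching the equivalence relations precisely: one must check that the $\K(P)^p_{\on{bd}}$-ambiguity in Theorem \ref{thm:HK}, the asymptotic holonomy twist $e^{\lambda\theta}$ introduced in Corollary \ref{cor:connW1psig}, and the cylindrical translations align correctly with quasimap isomorphisms so that $\Psi$ passes to a genuine bijection between the quotient moduli spaces; this is essentially bookkeeping but interacts nontrivially with the choice of representative connection at cylindrical ends.
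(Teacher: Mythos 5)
Your proposal is correct and follows essentially the same route as the paper: apply Theorem \ref{thm:HK} component-wise (using that $n\geq 1$ and quasimap stability put all special points in $X^{\ss}$), verify the connectedness condition at nodes via the Kempf--Ness identification of semistable $G$-orbits with $K$-orbits in $\Phi^{-1}(0)$ together with Proposition \ref{prop:infvortsing}, match the stability conditions on two-pointed rational components, and deduce bijectivity from that of \eqref{eq:smoothbiject}. The extra detail you give on the inverse direction and on aligning the equivalence relations is an elaboration of what the paper compresses into its final sentence, not a different argument.
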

\begin{proof} Consider a stable quasimap $(C,\ol z, P_\C, u)$. Recall
  that $C$ is pre-stable, let $\Sig$ be a Riemann surface with
  cylindrical ends corresponding to $C$. By assumption, we have $n
  \geq 1$, so each component of $C$ has at least one special point. We
  can then apply Theorem \ref{thm:HK} to the gauged map
  $(P_\C,u)|_{C_i}$ defined on each component $C_i$ of $C$. The result
  is a finite energy vortex $(\tilde A_i, \tilde u_i)$ on each
  component $\Sig_i\simeq C_i \bs \{\text{special points}\}$. The
  connectedness condition \eqref{eq:connect} holds because : for the
  quasimap $(P_\C,u)$, we have for any $w \in \Edge(C)$,
  $Gu_{\inc(w^+)}(w^+)=Gu_{\inc(w^-)}(w^-)$. This $G$-orbit is a
  semi-stable orbit, so its intersection with $\Phi^{-1}(0)$ is a
  unique $K$-orbit, lets call it $\chi$. So, on the vortex side, we
  have $K\tilde u_{\inc(w^\pm)}(w^\pm)=\chi$ (see Proposition
  \ref{prop:infvortsing}). Stability of the quasimap, together with $n
  \geq 1$, implies that an unstable domain component $C'$ is a sphere
  with 2 special points and $u|_{C'}$ is non-constant. This implies
  that the vortex $(\tilde A,\tilde u)|_{C'}$ is also
  non-constant. So, $(\tilde A, \tilde u)$ is a stable vortex that is
  uniquely determined up to unitary gauge equivalence. Further $p$-bounded
  complex gauge transformations preserve the equivariant homology
  class. So we define $\Psi:(C,\ul x, P_\C, u) \mapsto [(\tilde A,
  \tilde u)] \in \ol {MV}^K_{g,n}(X,\beta)$. This is a bijection since
  \eqref{eq:smoothbiject} is a bijection.
\end{proof}
 
\begin{remark}[Base points on the vortex side] Base points do not play
  a significant role in convergence of vortices. This is because in
  general the theory of vortices does not require the target manifold
  $X$ to be K\"ahler, so there is no action of the complexified group
  $G$, and so `base points' cannot be defined. For the homeomorphism
  result we rely on two features of quasimaps: first the combinatorial
  nature of domains of quasimaps happens to be same as that of
  vortices with cylindrical ends. Secondly, a continuous family of
  quasimaps is mapped by $\Psi$ to a family of vortices continuous in
  the sense of Gromov topology.
\end{remark}

\subsection{Continuity for smooth curves}
In this and the following section, we prove the continuity of the map
$\Psi$ in Theorem \ref{thm:main}. Let $Q_S$ be a family of quasimaps
parametrized by $S$, $Q_S=(\mC \to S,\{z_j:S \to
\mC\}_{j=1,\dots,k},\mP_\C,u)$, We assume $S$ is a neighborhood of the
origin in $\C^N$. We prove for any sequence $s_\nu \to 0$ in $S$, the
sequence of vortices $\Psi(Q_{s_\nu})$ Gromov converges to
$\Psi(Q_0)$. In this Section we restrict our attention to families of
smooth curves, i.e. the curves $\mC_s$ do not have nodal
singularities, and prove the following Proposition.
\begin{proposition}[Continuity of $\Psi$ for smooth
  curves] \label{prop:contsmooth} Given a family of quasimaps
  $Q_S=(\mC \to S,\{z_j:S \to \mC\}_{j=1,\dots,k}, \mP_\C,u)$
  parametrized by $S$, where $S \subset \C$ is a neighborhood of
  $0$. Suppose $\mC_s$, $s \in S$ are smooth curves, then for any
  sequence $s_\nu \to 0$ as $\nu \to \infty$, $\Psi(Q_{s_\nu})$ Gromov
  converges to $\Psi(Q_0)$.
\end{proposition}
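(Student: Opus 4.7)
The plan is to combine Gromov compactness for vortices with the bijection $\Psi$ of Corollary \ref{cor:biject} to identify the Gromov limit. Setting $v_\nu := \Psi(Q_{s_\nu})$, a vortex on the Riemann surface $\Sig_{s_\nu}$ corresponding to $C_{s_\nu}$, I would first verify the hypotheses of Theorem \ref{thm:gromovcpt}: since $\Psi$ preserves the equivariant homology class $\beta$, the energies $E(v_\nu)$ are all equal to $\lan \om - \Phi, \beta \ran$ by \eqref{eq:energyhom}, while equivariant convexity of $X$ together with properness of $\Phi$ confines the images of $v_\nu$ to a fixed compact subset. Passing to a subsequence then yields Gromov convergence $v_{\nu_k} \to v^\infty$ to a stable vortex on a Riemann surface associated to a pre-stable curve $C^\infty$ with $\st(C^\infty) = C_0$. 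Because $C_0$ is smooth, its combinatorial type has no internal edges, so the unstable components of $C^\infty$ can only be cylindrical bubble chains $\Gamma^{cyl}_{z_j}$ attached at marked points.

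Next I would set $Q^\infty := \Psi^{-1}(v^\infty) \in Qmap_{g,n}(X \qu G, \beta)$ using the bijection, reducing the proposition to the assertion $Q^\infty \simeq Q_0$ (the standard subsequence argument then upgrades this to convergence of the whole sequence to $\Psi(Q_0)$, using also the uniqueness Lemma \ref{lem:cylunique}). Let $B_0 \subset C_0$ be the finite base-point set of $Q_0$, disjoint from the marked points by quasimap stability. On any compact $K \subset C_0 \bs (B_0 \cup \{z_1,\dots,z_n\})$, the quasimaps $Q_{s_\nu}$ converge in $C^\infty$ to $Q_0$ as gauged holomorphic maps, since no base points interfere. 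Writing $v_\nu = g_\nu \cdot Q_{s_\nu}$ for the complex gauge transformations of Theorem \ref{thm:HK}, the uniqueness of $g_\nu$ modulo $p$-bounded unitary gauge, combined with the smooth convergence on both the quasimap and vortex sides, forces $g_{\nu_k}$ to converge on $K$ modulo unitary gauge to some $g_\infty$ with $g_\infty \cdot Q_0|_K = v^\infty|_K$. Hence $v^\infty$ and $\Psi(Q_0)$ agree modulo unitary gauge on $\Sig_0 \bs B_0$, and since $X$ is an affine variety the underlying gauged holomorphic data extend uniquely across the isolated points of $B_0$ by Hartogs, giving $Q^\infty = Q_0$ on $C_0$.

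The hard part will be ruling out cylindrical bubble chains $\Gamma^{cyl}_{z_j}$ at the marked points, i.e., showing that no positive fraction of the total energy escapes to the cylindrical ends of $\Sig_0$. The key input is that the evaluation $u_{s_\nu}(z_j(s_\nu)) \in X^{\ss}$ varies continuously with $\nu$, so under the Kempf-Ness identification $\Phi^{-1}(0)/K \simeq X \qu G$ of Section \ref{sec:KempfNess}, the asymptotic $K$-orbits $K \tilde u_{\nu_k}(\infty)$ at the cylindrical end over $z_j$ all lie in the prescribed $K$-orbit determined by $Q_0$, which is contained in a compact subset of $\Phi^{-1}(0)$. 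Combined with the uniform exponential decay estimate of Proposition \ref{prop:decaycyl} (whose constants depend only on the fixed compact target set and the cylindrical model geometry) and the quantization of energy Proposition \ref{prop:energyquantvort}\eqref{part:quant}, this prevents any positive energy from accumulating in $\rho_{z_j}^{-1}\{r > R\}$ as $R \to \infty$ uniformly in $\nu$. Therefore $\Gamma^{cyl}_{z_j} = \emptyset$ for each $j$, so $C^\infty = C_0$ and $Q^\infty = Q_0$, completing the proof.
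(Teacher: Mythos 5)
Your overall strategy (extract a Gromov limit via Theorem \ref{thm:gromovcpt}, then identify it with $\Psi(Q_0)$) is genuinely different from the paper's, but it has a gap at its most delicate point: ruling out cylindrical bubbling at the marked points. You invoke Proposition \ref{prop:decaycyl} as if its constants ``depend only on the fixed compact target set and the cylindrical model geometry.'' That is not what the proposition says: the exponent $\gamma$ is uniform, but the constant $C$ is produced separately for each finite energy vortex and is not uniform over a sequence. If such a uniform pointwise decay estimate held for every bounded-energy sequence with precompact image, no energy could ever escape along a cylindrical end and the entire stable-vortex compactification (Propositions \ref{prop:markedbreak} and \ref{prop:nodebreak}) would be vacuous. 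Nor does continuity of the evaluation orbit $Ku_{s_\nu}(z_j)$ save you: a nontrivial cylindrical bubble can have both asymptotic $K$-orbits equal, so knowing that the far end of a putative bubble chain lands on $Ku_0(z_j)$ does not force the chain to be empty. The exclusion of bubbling really does require input from the fact that the vortices come from a \emph{continuous family} of quasimaps; in the paper this is delivered by Lemmas \ref{lem:stdform}, \ref{lem:xiexpbd} and \ref{lem:vortcont}, which produce the vortices $\Psi(Q_{s_\nu})$ as $e^{i\xi_{s_\nu}}e^{i\xi_0}(A_{s_\nu},u_{s_\nu})$ with $\xi_{s_\nu}\to 0$ in $H^2(\Sig)$, so that convergence holds in the \emph{global} norm $H^1(\Sig)\times H^2(\Sig)$ on the noncompact surface and no energy can concentrate near the ends.

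There is a second, smaller gap in the identification step. From $C^\infty$-convergence of $Q_{s_\nu}$ and of $v_\nu=g_\nu\cdot Q_{s_\nu}$ on compact subsets of $\Sig_0\bs B_0$ you assert that $g_\nu$ converges modulo unitary gauge to some $g_\infty$ with $g_\infty\cdot Q_0=v^\infty$; this needs an argument (e.g.\ via the elliptic equation $\delbar_{A}g=-a^{0,1}g$ together with a normalization), and more importantly you must show that $g_\infty$ lies in $\G(P)^p_{\on{bd}}$ -- including control at the cylindrical ends and across the base points -- before the uniqueness clause of Theorem \ref{thm:HK} lets you conclude $v^\infty\in\Psi(Q_0)$. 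The paper sidesteps both issues by constructing the comparison gauge transformations explicitly and verifying their $p$-boundedness (via Lemma \ref{lem:xiexpbd} and the argument of Theorem 3.1(a) in \cite{VW:affine}), and it deliberately avoids presupposing compactness of $\ol{MV}^K_{g,n}(X,\beta)$, which is ultimately \emph{deduced} from the continuity of $\Psi$ together with properness of $Qmap_{g,n}(X\qu G,\beta)$. To repair your proof you would need to replace the uniform-decay claim with a quantitative statement tying the energy of $\Psi(Q_{s_\nu})$ near the ends to that of $\Psi(Q_0)$, which is essentially the content of the paper's implicit function theorem argument.
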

\begin{proof}

  We first outline the proof. The first step is to make a preliminary
  choice of reduction $\sig:\mC \to P_\C/K$ (Lemma
  \ref{lem:stdform}). This produces a principal $K$-bundle $\mP \to
  \mC$ and a family of gauged holomorphic maps $(A_s,u_s)$ defined on
  $\mP_s$. Suppose the complex gauge transformation $g_0 \in
  \G(\mP_0)$ transforms the central element $(A_0,u_0)$ to a
  vortex. Then the gauged holomorphic maps $g_0(A_{s_\nu},u_{s_\nu})$
  converges to $g_0(A_0,u_0)$. In this situation, we can find a
  sequence of complex gauge transformations $e^{i\xi_\nu}$ converging
  to the identity that transform the elements in the sequence
  $g_0(A_{s_\nu},u_{s_\nu})$ to vortices. Then, we will conclude that the vortices
  $e^{i\xi_\nu}g_0v_{s_\nu}$ Gromov converge to $g_0(A_0,u_0)$. The
  details are as follows.

  By Lemma \ref{lem:convprep} and by the fact that the curves $\mC_s$
  do not contain nodes, there is a family of diffeomorphisms
  $h_s:\mC_s \to \mC_0$, that are isometries on the cylindrical ends
  $\mC_s \bs \{z_s\}$ with respect to the neck-stretching metric. Via
  these diffeomorphisms, we identify the Riemann surfaces with
  cylindrical ends $\Sig_s$ to $\Sig :=\Sig_0$.  Using Lemma
  \ref{lem:stdform}, choose a reduction $\sig:\mC \to \mP_\C/K$ so
  that the gauged holomorphic maps $(A_s,u_s)$ are in {\em standard
    form close to marked points} -- i.e. they satisfy conditions (a)
  and (b) in Lemma \ref{lem:stdform}. We fix a trivialization
  $\mP_s|_{\Sig_s}$ so that $A_s=\d+\lambda_j\d\theta$ on $N(z_j)_s\bs
  z_j(s)$ for all $s$. We observe that under this trivialization,
  \begin{equation}\label{eq:quasiconv}
    \cup_{s \in S}\on{Im}(u_s) \subset X \text{ is pre-compact}, \; A_s \xrightarrow{H^1(\Sig)} A_0, \; u_s \xrightarrow{H^2(\Sig)}u_0 \quad \text{as $s \to 0$.}
  \end{equation}
  By Theorem \ref{thm:HK}, there is a $p$-bounded complex gauge
  transformation $e^{i\xi_0}$ on $\mP_0$ that transforms $(A_0,u_0)$
  to a finite energy vortex on $\Sig_0$. Under the above
  trivialization of $\mP_0|_{\Sig_0}$, by Lemma \ref{lem:xiexpbd}, for
  any $0<\gamma_1<\gamma$,
$$\Mod{\xi_0}_{H^2(\rho_{z_j}\{n \leq r \leq n+1\})} \leq ce^{-\gamma_1 n}.$$
Here $\gamma$ is as in Proposition \ref{prop:decaycyl}. Then, the
family of gauged holomorphic maps $e^{i\xi_0}(A_s,u_s)$ continues to
satisfy \eqref{eq:quasiconv}. Pick a sequence of points $s_\nu$ in $S$
converging to $0$. To the converging sequence
$e^{i\xi_0}(A_{s_\nu},u_{s_\nu}) \to e^{i\xi_0}(A_0,u_0)$, apply Lemma
\ref{lem:vortcont}. This will give a sequence $\xi_{s_\nu}:\Sig \to
\k$ converging to $0$ in $H^2(\Sig)$ such that
$e^{i\xi_{s_\nu}}e^{i\xi_0}(A_{s_\nu},u_{s_\nu})$ is a vortex on
$\Sig$. So, the vortices
$e^{i\xi_{s_\nu}}e^{i\xi_0}(A_{s_\nu},u_{s_\nu})$ converge to
$e^{i\xi_0}(A_0,u_0)$ in $H^1(\Sig) \times H^2(\Sig)$. Finally, by
elliptic regularity for vortices, modulo gauge transformations, we can
say the sequence $e^{i\xi_{s_\nu}}e^{i\xi_0}(A_{s_\nu},u_{s_\nu})$
converges to $e^{i\xi_0}(A_0,u_0)$ smoothly on compact subsets of
$\Sig$ (see Lemma 3.8, 3.9 in \cite{VW:affine} and Theorem 3.2 in
\cite{CGMS}). Convergence of energy values $E(A_{s_\nu},u_{s_\nu})$ to
$E(A_0,u_0)$ follows from convergence of $(A_{s_\nu},u_{s_\nu})$ in
$H^1(\Sig) \times H^2(\Sig)$ and from the fact that the images of
$u_s$ are contained in a compact subset of $X$. Finally it remains to
show that the vortex $e^{i\xi_{s_\nu}}e^{i\xi_0}(A_{s_\nu},u_{s_\nu})$ is in the
gauge orbit $\Psi(Q_{s_\nu})$. The complex gauge transformation
$e^{i\xi_{s_\nu}}e^{i\xi_0}$ extends continuously over the bundle
$\mP_{s_\nu} \to \mC_{s_\nu}$.  By arguments in the proof of Theorem
3.1 (a) in \cite{VW:affine}, $e^{i\xi_{s_\nu}}e^{i\xi_0}$ is in the
class $\G(\mP_{s_\nu})_{\on{bd}}$. Finally, Proposition 4.5 in
\cite{VW:affine} says that if two vortices are related by a complex
gauge transformation that is $p$-bounded, then the vortices are
related by a $p$-bounded unitary gauge transformation.

All the lemmas used in this proof assume there is only one marked
point, i.e. $n=1$. This case captures all the technical details and
lets us drop the subscript $j$ for marked points. We also ignore the
variation in metric on curves in the family $\mC \to S$. Since the
metrics agree on cylindrical ends, and they vary smoothly with $s$,
they do not affect the proofs much.
\end{proof}

\begin{lemma}[Standard form around marked points] \label{lem:stdform}
  Let $p>1$. Suppose $Q_S$ is a family of quasimaps such that all the
  fibers in $\mC \to S$ are diffeomorphic via $h_s:\mC_s \to \mC_0$
  (as in Lemma \ref{lem:convprep}). Assume there is a single marked
  point $z:S \to \mC$. There is a reduction $\sig:\mC \to \mP_\C/K$
  and $\lambda \in \frac 1 {2\pi}\exp^{-1}(\Id) \subset \k$ such that
  the following are satisfied. Let $\mP \to \mC$ be the principal
  $K$-bundle, and $\{(A_s,u_s)\}_{s \in S}$ be the family of
  $K$-gauged holomorphic maps induced by the reduction $\sig$.
  \begin{enumerate}
  \item \label{part:conn} There is a trivialization $\tau$ of the
    restriction $\mP|_{\mC \bs z(S)}$ so that on the cylindrical end
    $N(z)\bs \{z\}$, for all $s\in S$, $A_s=\d+\lambda \d\theta$ and
  \item \label{part:map} $u(z(s)) \in \Phi^{-1}(0)$ for all $s \in S$.
  \end{enumerate}
  $\sig$, $\tau$ are smooth in every fiber and vary smoothly with $s$.
\end{lemma}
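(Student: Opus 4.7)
The plan is to construct $\sigma$ and $\tau$ in three stages: first fix the value of the reduction at $z(s)$ via Kempf--Ness to achieve (b); then extend $\sigma$ to a neighborhood of $z(S)$ so that $A_s$ becomes trivial on that neighborhood in a natural frame; finally glue with a twist by $e^{\lambda\theta}$ on the cylindrical end to produce the standard form in (a).

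\emph{Step 1 (condition (b)).} Under Assumption \ref{ass:freeaction}, $K$ acts freely on $\Phi^{-1}(0)$. The Kempf--Ness theorem (cf.\ Section~\ref{sec:KempfNess}) says every closed semistable $G$-orbit meets $\Phi^{-1}(0)$ in a unique $K$-orbit, which depends smoothly on the semistable point because $X^\ss \to X^\ss/G \simeq \Phi^{-1}(0)/K$ is a principal bundle. I use this to smoothly prescribe $\sigma(z(s)) \in (\mP_\C/K)|_{z(s)}$ so that $u(z(s)) \in \Phi^{-1}(0)$ for all $s$.

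\emph{Step 2 (trivial connection near $z(S)$).} Let $\mN = \bigsqcup_s N(z(s))$. Each fiber is a disk and $\mN$ retracts onto $z(S)$, so $\mP_\C|_\mN$ is holomorphically trivial. I will choose a smooth family of holomorphic frames $\tau_\C$ in which $u(s,z(s))$ equals a single point $x_0 \in \Phi^{-1}(0)$ independent of $s$, and define $\sigma|_\mN$ to be the constant section $[e]\in G/K$ in this frame, consistently with Step~1. After smoothly interpolating $\sigma$ with any reduction away from $\mN$ to obtain a global reduction on $\mC$, the induced $K$-frame $\tau_0 = \tau_\C$ of $\mP|_\mN$ carries the Chern connection of a constant Hermitian structure on a trivial holomorphic $G$-bundle, which is the trivial connection; thus $A_s = \d$ in $\tau_0$ on $\mN$.

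\emph{Step 3 (global trivialization with twist).} For each $s$, the bundle $\mP|_{\mC_s \bs z(s)}$ is topologically trivial, since $K$ is connected and $H^2$ of the punctured curve vanishes. Comparing any trivialization of it with $\tau_0$ on the boundary circle $\{r=0\}$ of $N(z(s))$ gives a loop in $K$ whose free homotopy class in $\pi_1(K)$ is determined by the topology of $\mP$, hence constant in $s$ on the connected base $S$. I pick $\lambda \in \frac{1}{2\pi}\exp^{-1}(\Id)$ so that $\theta \mapsto e^{\lambda\theta}$ realizes this class, fix a smooth global trivialization $\tau$ of $\mP|_{\mC \bs z(S)}$ matching this boundary homotopy class, and by a gauge deformation supported in a collar neighborhood adjust $\tau$ so that $\tau = \tau_0 \cdot e^{\lambda\theta}$ \emph{exactly} on the cylindrical end. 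Then $A_s = (e^{\lambda\theta})^{-1}\d(e^{\lambda\theta}) = \lambda\,\d\theta$ in $\tau$, establishing (a). Smoothness in $s$ is obtained throughout by choosing all frames, interpolations, and gauge deformations smoothly in $s$.

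The main obstacle is Step~3: ensuring that a single $\lambda$ works uniformly for all $s$, and that the collar adjustment can be done smoothly in $s$ while preserving the exact form $\tau = \tau_0 \cdot e^{\lambda\theta}$ on the cylindrical end. This relies on the locally constant nature of the $\pi_1(K)$-invariant on connected $S$ together with standard density/approximation arguments in the smooth gauge group.
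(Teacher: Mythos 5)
Your overall strategy -- arrange $u(z(s))\in\Phi^{-1}(0)$ via Kempf--Ness, make the connection trivial near the marked point by choosing the reduction to be constant in a holomorphic frame, and then twist the outer trivialization by a geodesic loop $e^{\lambda\theta}$ whose class is fixed by homotopy invariance -- is the same construction as the paper's, just phrased dually (the paper fixes an arbitrary reduction first and then solves for the complex gauge transformation taking $A_s$ to the trivial connection, which is equivalent to your choice of holomorphic frame). However, Step 2 contains a false claim: you cannot in general choose frames so that $u(z(s))$ equals a single point $x_0\in X$ independent of $s$. Changing the frame moves $u(z(s))$ within its $G$-orbit, so the class $[u(z(s))]\in X\qu G$ -- which is exactly the evaluation of the quasimap at the marked point -- is frame-independent, and there is no reason for $s\mapsto \ev(Q_s)$ to be constant on $S$. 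Fortunately the lemma only needs $u(z(s))\in\Phi^{-1}(0)$, which your Step 1 already delivers; you should drop the constancy of $x_0$ and only require the frame at the single point $z(s)$ to lie in the $K$-coset prescribed by Kempf--Ness.

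The substantive gap is the assertion ``I will choose a smooth family of holomorphic frames $\tau_\C$'' together with ``choosing all frames, interpolations, and gauge deformations smoothly in $s$.'' This is precisely where the paper's proof does all of its work: it invokes the theorem of Donaldson that a connection on a compact surface with boundary can be complex gauge transformed to a flat one by a \emph{unique} $e^{i\xi_s}$ with $\xi_s|_{\partial N(z)}=0$, uses that uniqueness together with the quantitative estimate of Lemma \ref{lem:toflat} to prove $s\mapsto \xi_s$ is continuous in $W^{2,p}$, and then upgrades to smoothness via the identity $a_s^{0,1}=(g_s')^{-1}\delbar g_s'$ and elliptic regularity. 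Without a uniqueness statement of this kind, a fiberwise holomorphic trivialization is only determined up to a fiberwise holomorphic change of frame, and there is no a priori reason a measurable choice can be promoted to a smooth family in $s$; some normalization plus an implicit-function or elliptic-regularity argument is required. As written, your proof asserts the conclusion of the hardest part of the lemma rather than proving it.
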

\begin{proof}[Proof of Lemma \ref{lem:stdform}]
  We make a preliminary choice of reduction $\sig_0:\mC \to \mP_\C/K$
  and obtain a principal $K$-bundle $\mP \to \mC$ and a smooth family
  of $K$-gauged holomorphic maps $(A_s,u_s)$. To prove the lemma, we
  need to find a family of complex gauge transformations $g_s$ on
  $\mP_s \to \mC_s$ so that $g_s(A_s,u_s)$ are in standard form near
  the marked point $z$.

  First, we find a family of complex gauge transformations that make
  the connections flat on $N(z)$. Note that the sets $N(z)_s \subset
  \mC_s$ are compact with boundary and are isometric for all $s$. We
  work on a trivialization of the bundle $\mP|_{\cup_s
    N(z)_s}$. Recall the fact that on a compact surface $\Om$ with
  boundary, a connection can be complex gauge transformed to a flat
  connection by $e^{i\xi}$, where $\xi \in \Gamma(\Om,P(\k))$ and
  $\xi|_{\partial \Om}=0$, and the choice of $\xi$ is unique (see for
  example Theorem 1 in \cite{Do:bdry}). Thus there are complex gauge
  transformations $e^{i\xi_s}:N(z)_s \to G$ such that $e^{i\xi_s}A_s$
  is flat. We now prove that $\xi_s$ varies continuously with $s$ in the
  $W^{2,p}(N(z))$-topology. The argument used for this is the
  prototype of the argument for continuity used in this
  section. Denoting $A_s':=e^{i\xi_0}A_s$, we have
  $\Mod{F_{A'_s}}_{L^p(N(z(s)))} \to 0$ as $s \to 0$.  By Lemma
  \ref{lem:toflat} for small enough $s$, we can find $\xi'_s \in
  W^{2,p}$ that satisfies $\Mod{\xi'_s}_{W^{2,p}} \leq
  c\Mod{F_{A_s'}}_{L^p}$, $\xi_s|_{\partial N(z)}=0$ and
  $e^{i\xi'_s}A_s$ is flat on $N(z)_s$. By the uniqueness of $\xi_s$,
  the complex gauge transformations $e^{i\xi_s}$ and
  $e^{i\xi_s'}e^{i\xi_0}$ differ by a unitary gauge
  transformation. That is, there exist $h_s:N(z) \to K$ such that
  $e^{i\xi_s'}e^{i\xi_0}=h_se^{i\xi_s}$. Since
  \eqref{eq:complexgroupiso} is a diffeomorphism, $\xi_s$ varies
  continuously with $s$ in $W^{2,p}(\mC_0)$.

  Next, we show that the complex gauge transformations vary smoothly
  with $s$.  Since the connections $e^{i\xi_s}A_s$ are flat on
  $N(z)_s$, there exists a unique family of unitary gauge
  transformations $k_s \in W^{2,p}(N(z),K)$ such that $k_s(z)=\Id$ and
  the connection $k_se^{i\xi_s}A_s$ is the trivial connection. The
  continuity of $s \mapsto e^{i\xi_s}A_s$ in $W^{1,p}(N(z))$ implies
  the continuity of $k_s$ in $W^{2,p}(N(z))$. Let
  $g_s':=k_se^{i\xi_s}$. The complex gauge transformations $g_s':N(z)
  \to G$ are smooth and vary smoothly with $s$. This can be seen as
  follows: Let $A_s=\d+a_s$. We know $g_s'A_s=\d$. So,
  $$\delbar_{A_s}=(g_s)^{-1} \circ \delbar \circ g_s'=\delbar + (g_s)^{-1} \delbar g_s' \implies a_s^{0,1}=(g_s')^{-1}(\delbar g_s').$$
  The quantity $a_s^{0,1}$ varies smoothly with $s$, therefore by an
  elliptic regularity argument, $g_s'$ is smooth and varies smoothly
  with $s$.

  Next, we consider the second condition \eqref{part:map}. Since
  $g_s'u_s(z) \in X^\ss$, there is a unique $\psi_s \in \k$ such that
  $e^{i\psi_s}(g_s'u_s(z)) \in \Phi^{-1}(0)$. Further, the map $s
  \mapsto \psi_s$ is smooth. Since trivial connections are preserved
  by constant complex gauge transformations, so $e^{i\psi_s}g_s'A_s$
  is the trivial connection. For all $s$, the map
  $e^{i\psi_s}g_s':N(z) \to G$ is homotopic to identity. So, there is
  a smooth family of complex gauge transformations $g_s \in \G(\mP_s)$
  that agrees with $e^{i\psi_s}g_s'$ in a smaller neighborhood of $z$
  and is equal to $\Id$ on $\mC_s \bs N(z)_s$. The neighborhoods
  $N(z)_s$ can be replaced by smaller neighborhoods, and this proves
  the Lemma, so we have the connections $g_sA_s$ are trivial on
  $N(z)$.

  It remains to construct the trivializations $\tau_s$. The bundles
  $\mP_s \to \mC_s\bs \{z(s)\}$ can be trivialized. The connections
  $g_sA_s$ are flat over $N(z)_s\bs \{z(s)\}$, so the holonomy is
  constant on the cylinder. Hence the trivialization of $\mP_s$ can be
  chosen such that $\hat g_s A_s=\d+\lambda(s) \d\theta$ for some
  $\lambda(s) \in \frac 1 {2\pi}\exp^{-1}(\Id) \subset \k$. The
  holonomies $\lambda(s)$ can be chosen to be independent of $s$ since
  it is a homotopy invariant.
\end{proof}
If $(A,u)$ is a gauged holomorphic map on $\Sig$ that is in standard
form, we have an asymptotic bound on the complex gauge transformation
that makes $(A,u)$ a vortex on $\Sig$.
\begin{lemma}[Asymptotic bound on a complex gauge
  transformation] \label{lem:xiexpbd} Let $0<\gamma_1<\gamma$, where
  $\gamma$ is the constant from Proposition
  \ref{prop:decaycyl}. Suppose $C$ is a smooth curve with a single
  marked point $z$ and $\Sig$ is the corresponding Riemann surface
  with cylindrical ends. Suppose $(A_0,u_0)$ is a smooth gauged
  holomorphic map on a $K$-bundle $P \to C$ that is in standard form
  close to marked points, and the $p$-bounded complex gauge transformation $
  e^{i\xi}\in \G(P)_{\on{bd}}$ transforms $(A_0,u_0)$ to a finite
  energy vortex on $\Sig$. For any $q>1$, there is a constant $c$ such
  that in the trivialization of $P|_\Sig$ found in Lemma
  \ref{lem:stdform},
  \begin{equation}\label{eq:xiexpbd}
    \Mod{\xi}_{W^{2,q}(\{n \leq r \leq n+1\})} \leq ce^{-\gamma_1 n}.
  \end{equation}
  Hence, $\xi \in W^{2,q}(\Sig)$.
\end{lemma}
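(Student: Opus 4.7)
The plan is to view $\xi$ as the solution of a quasilinear elliptic system on the cylindrical end whose linearization at $0$ is asymptotic to a constant-coefficient operator $L_\infty$ on $\R\times S^1$ with strictly positive zeroth-order term; solutions that tend to zero at infinity must then decay exponentially. Set up: in the trivialization from Lemma \ref{lem:stdform}, on $N(z)\setminus\{z\}\simeq[0,\infty)\times S^1$ one has $A_0=\d+\lambda\,\d\theta$ (so $F_{A_0}=0$) and $u_0$ extends smoothly over $z$ with $u_0(z)\in\Phi^{-1}(0)$. Because the cylindrical coordinate relates to the holomorphic coordinate at $z$ by $z_{\on{orig}}=e^{-r-i\theta}$, smoothness of $u_0$ at $z$ gives $\|u_0(r,\cdot)-u_0(z)\|_{C^k(S^1)}\le c_ke^{-r}$ for every $k$, and in particular $\|\Phi\circ u_0\|_{C^k(\{n\le r\le n+1\})}\le c_k'e^{-n}$. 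The condition that $v:=e^{i\xi}(A_0,u_0)$ be a vortex is the quasilinear equation
\begin{equation*}
\mu(\xi):=*F_{e^{i\xi}A_0}+\Phi(e^{i\xi}u_0)=0,
\end{equation*}
whose inhomogeneous value $\mu(0)=\Phi(u_0)$ already has the required exponential decay.

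A standard computation yields $D\mu(0)\eta=\Delta\eta+H(r,\theta)\eta+(\text{first-order terms decaying in }r)$, where $H(r,\theta):\k\to\k$ is the moment-map Hessian endomorphism $\eta\mapsto \mathcal{L}_{u_0(r,\theta)}^*J\mathcal{L}_{u_0(r,\theta)}\eta$. Assumption \ref{ass:freeaction} forces $H_\infty:=\mathcal{L}_{u_0(z)}^*J\mathcal{L}_{u_0(z)}$ to be symmetric and strictly positive-definite on $\k$, and the exponential convergence $u_0(r,\cdot)\to u_0(z)$ gives $H(r,\theta)\to H_\infty$ exponentially. The model operator $L_\infty:=\Delta+H_\infty$ on $\R\times S^1$ therefore has positive spectral gap $\mu_0^2=\lambda_{\min}(H_\infty)$, and its bounded solutions decaying at $+\infty$ decay at rate at least $\mu_0$. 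To initialize the bootstrap I also need $\xi\to 0$ uniformly: by Proposition \ref{prop:infvortsing} the asymptotic $K$-orbit of $v$ lies in $\Phi^{-1}(0)$, so $\lim_r e^{i\xi(r,\theta)}\in G$ sends $u_0(z)$ to a point of $\Phi^{-1}(0)$; since $G$ acts freely on $\Phi^{-1}(0)$ (as the complexification of the freely-acting $K$) and $\exp(i\k)\cap K=\{e\}$ by uniqueness of the polar decomposition $G=K\cdot\exp(i\k)$, this limit is $e$, giving $\xi(r,\cdot)\to 0$ uniformly.

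Rewriting the vortex equation as
\begin{equation*}
L_\infty\xi=-\mu(0)-(D\mu(0)-L_\infty)\xi-N(\xi),
\end{equation*}
where $N(\xi):=\mu(\xi)-\mu(0)-D\mu(0)\xi$ is at least quadratic in $\xi$, every source term is exponentially small: $\mu(0)$ by the first paragraph, $(D\mu(0)-L_\infty)\xi$ because its coefficients (differences $H-H_\infty$ and $A_0-\d=\lambda\,\d\theta$) decay exponentially in $r$, and $N(\xi)$ quadratically in $\xi$, which is itself pointwise small for large $r$. Applying translation-invariant interior $W^{2,q}$ elliptic estimates for $L_\infty$ on overlapping strips $[n-1,n+2]\times S^1$ in the exponentially weighted norms $\|e^{\gamma_1 r}\cdot\|_{W^{2,q}}$ produces a recurrence in $n$ that closes for any $\gamma_1<\min(\mu_0,\gamma)$; choosing $\gamma$ in Proposition \ref{prop:decaycyl} small enough that $\gamma<\mu_0$ (which is allowed) yields \eqref{eq:xiexpbd} for every $\gamma_1<\gamma$, and summing the strips gives $\xi\in W^{2,q}(\Sigma)$.

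The principal difficulty is this bootstrap: the weighted elliptic estimate for $L_\infty$ must have constants uniform in $n$ (ensured by translation invariance), the exponentially decaying coefficient errors must be absorbed by the weight $e^{\gamma_1 r}$, and the quadratic nonlinearity $N(\xi)$ must be dominated by the smallness of $\xi$ uniformly in $r$ rather than merely at $r=\infty$. Matching the spectral constraint $\gamma_1<\mu_0$ with the hypothesis $\gamma_1<\gamma$ is automatic after shrinking $\gamma$, since only a positive decay rate below both was ever asserted by Proposition \ref{prop:decaycyl}.
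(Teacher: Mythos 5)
Your strategy---treating the vortex condition as a quasilinear elliptic equation $\mu(\xi)=*F_{e^{i\xi}A_0}+\Phi(e^{i\xi}u_0)=0$ for $\xi$ and extracting exponential decay from the spectral gap of the asymptotic linearization---is genuinely different from the paper's, and it has a gap that the argument as written does not close. The remainder $N(\xi)=\mu(\xi)-\mu(0)-D\mu(0)\xi$ is \emph{not} lower order: since $F_{e^{i\xi}A_0}$ involves $\d$ of $e^{i\xi}A_0-A_0$, the nonlinearity contains terms of the schematic form $O(\xi)\cdot\nabla^2\xi$ and $\nabla\xi\otimes\nabla\xi$. The only a priori information you establish is uniform $C^0$ convergence $\xi(r,\cdot)\to 0$; that is enough to absorb $O(\xi)\nabla^2\xi$ into the elliptic estimate, but the $(\nabla\xi)^2$ term requires smallness of $\Mod{\xi}_{W^{2,q}}$ (or at least $C^1$) on the strips $[n,n+1]\times S^1$ as $n\to\infty$ before the weighted recurrence can start---and that is essentially the statement being proved. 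You name this as ``the principal difficulty'' but do not supply the missing a priori estimate. A secondary problem is the rate: your argument yields decay at rate at most $\min(\mu_0,1)$ with $\mu_0^2=\lambda_{\min}(H_\infty)$, while the lemma is quantified over \emph{all} $\gamma_1<\gamma$ for the fixed constant $\gamma$ of Proposition \ref{prop:decaycyl}; ``shrinking $\gamma$'' is not free, since $\gamma$ is fixed once for the whole paper (it determines the admissible range $p<2/(1-\gamma)$ in Theorem \ref{thm:HK}), and no inequality $\gamma\le\mu_0$ is established. (Also, when $\lambda\neq 0$ the translation-invariant model operator is $\d_{A_\infty}^*\d_{A_\infty}+H_\infty$ with $A_\infty=\d+\lambda\,\d\theta$, not $\Delta+H_\infty$; the commutator terms are bounded but do not decay in $r$.)

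The paper's proof sidesteps both issues by using an input your proposal never invokes: Proposition \ref{prop:connW1p} and Corollary \ref{cor:connW1psig}, which provide a unitary gauge in which the vortex connection is $\d+\lambda\,\d\theta+a$ with $\Mod{a}_{W^{1,p}(\{n\le r\le n+1\})}\le ce^{-\gamma n}$. Setting $g=ke^{i\xi}$ for a suitable unitary $k$ with $k(\infty)=\Id$, the identity $\delbar_{A_0}g=a^{0,1}g$ becomes a \emph{linear} first-order equation for $g$ with exponentially decaying coefficient, so no quasilinear absorption is needed; and the $C^0$ seed $d(\Id,g(r,\theta))\le ce^{-\gamma_1 r}$ is obtained from the H\"older embedding $W^{1,p}(N(z))\hra C^{0,\gamma_1}(N(z))$ on the compactified disc together with $g(z)=\Id$, which is precisely what ties the admissible rates $\gamma_1\le 1-2/p$ to $\gamma$ through the hypothesis $p<2/(1-\gamma)$. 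To salvage your route you would need, at minimum, to first import the $W^{1,p}$-decay of $e^{i\xi}A_0-A_0$ from Proposition \ref{prop:connW1p} (or otherwise obtain $C^1$-smallness of $\xi$ on far strips) to initialize the bootstrap, and either prove $\gamma\le\mu_0$ or settle for a weaker decay rate than the one asserted.
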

\begin{proof}[Proof of Lemma \ref{lem:xiexpbd}] On the cylindrical end
  $N(z) \bs \{z\}$, we use the co-ordinates $r$, $\theta$ given by
  $\rho_z$. The gauged holomorphic map $(A_0,u_0)$ being in standard
  form means $\Phi(u_0(z))=0$ and there is a trivialization of the
  bundle $P|_\Sig$ so that $A_0|_{N(z)\bs \{z\}}=\d+\lambda
  \d\theta$. The transition function between $P|_\Sig$ and $P|_{N(z)}
  \simeq N(z) \times K$ is $\theta \mapsto e^{-\lambda \theta}$. We
  fix these trivializations for the rest of the proof. Under these
  trivializations, a gauge transformations $k \in \K(P)$ can be
  expressed by functions $k:\Sig \to K$ and $\hat k:N(z) \to K$
  satisfying $\hat k=e^{-\lambda \theta}ke^{\lambda \theta}$ on $N(z)
  \bs \{z\}$.

  {\sc Step 1}: {\em For any $2<p<\frac 2 {1-\gamma}$, there is a $p$-bounded gauge transformation $k \in \K(P)_{\on{bd}}^p$ such that if $g:=ke^{i\xi}$, then $(gA_0)|_{\Sig}=\d+\lambda \d\theta+a$ where $a$ satisfies the exponential bound \eqref{eq:twistconnbd}.}\\
  We first define a gauge transformation on $\Sig \simeq C \bs \{z\}$
  that transforms the connection to the above form.  Denote
  $(A_1,u_1):=e^{i\xi}(A_0,u_0)$. By Corollary \ref{cor:connW1psig},
  there is a gauge transformation defined on $\Sig$, $k:\Sig \to K$
  such that $kA_1=\d+\lambda_1 \d\theta + a_1$, where $\lambda_1 \in
  \frac 1 {2\pi}\exp^{-1}(\Id) \subset \k$ and $a_1$ satisfies
  \eqref{eq:twistconnbd}. We modify the map $k:\Sig \to K$ so as to
  achieve $\lambda_1=\lambda$.  By left multiplying $k$ by a constant
  element in $K$, we may assume $\lim_{r \to \infty}k(r,0)=\Id$. In
  the first place, this limit exists because the corresponding limits
  exist both for $u_1$ and $ku_1$ and because $K$ acts freely in a
  neighborhood of $\Phi^{-1}(0)$. The connection $kA_1$ extends to a
  connection on a principal bundle $P'$ over $C$, the transition
  function of $P'$ is $(r,\theta) \mapsto e^{-\lambda_1\theta}$
  defined on $N(z) \bs \{z\}$. By the invertibility of the
  Hitchin-Kobayashi map in Theorem \ref{thm:HK}, the bundles $P$ and
  $P'$ are isomorphic. Therefore the geodesic loops $\theta \mapsto
  e^{\lambda \theta}$ and $\theta \mapsto e^{\lambda_1 \theta}$ are
  homotopic, and hence the loop $\R/2\pi\Z \ni \theta \mapsto
  e^{\lambda \theta}e^{-\lambda_1\theta}$ is homotopic to the
  identity. It follows that one can define a map $k_1:\Sig \to K$ that
  restricts to $(r,\theta) \mapsto e^{\lambda
    \theta}e^{-\lambda_1\theta}$ on the cylindrical end.  By replacing
  the gauge transformation $k$ by $k_1k$, we obtain
  $\lambda_1=\lambda$.

  We now show that $k$ extends to a $p$-bounded gauge transformation
  on $P$.  For any map $u$ with domain $\Sig$, we denote
  $u(\infty,\theta):=\lim_{r \to \infty}u(r,\theta)$ (if it
  exists). Since $\xi(\infty)\equiv \Id$, we have
  $u_0(\infty,\cdot)=u_1(\infty,\cdot)$. Next, observe that
  $(ku_1)(\infty,\theta)=e^{-\lambda \theta}((ku_1)(\infty,0))$, and
  that $u_0$ also satisfies the same relation. Further, since
  $k(\infty,0)=\Id$, we conclude $k(\infty,\theta)\equiv \Id$.
  Therefore, the gauge transformation $k:\Sig \to K$ extends
  continuously over $z$. In particular $k$ extends to $k \in \K(P)$
  obtained by defining $\hat k=e^{-\lambda \theta}ke^{\lambda \theta}$
  on $N(z)\bs \{z\}$, and $\Id$ at $z$. Therefore $\hat k \in
  C^0(N(z)) \hra L^p(N(z))$. The complex gauge transformation
  $e^{i\xi}$ is in $\G(P)_{\bd}^p$, and it follows that $e^{-\lambda
    \theta}e^{i\xi}e^{\lambda \theta} \in W^{1,p}(N(z))$. Then by the
  continuous action of complex gauge transformations on connections
  (Lemma \ref{lem:gcactona}), $\hat a_1 \in L^p(N(z))$, where
  $A_1=\d+\hat a_1$ under the trivialization of $P|_{N(z)}$. By
  Corollary \ref{cor:extendoverinfty}, $kA_1$ extends to $p$-bounded
  connection over $P$. We express $kA_1$ as $kA_1=\d+\hat a$ under
  the trivialization of $P|_{N(z)}$. Then, $\hat a \in L^p(N(z))$
  and $\hat a=\d\hat k \hat k^{-1} + \Ad_{\hat k}\hat
  a_1$. Re-arranging, $\d\hat k = \hat a\hat k + \hat k \hat
  a_1$. The left hand side is in $L^p(N(z))$, so $\hat k \in
  W^{1,p}(N(z))$, and hence $k \in \K(P)_{\on{bd}}^p$.

  {\sc Step 2}: {\em For any positive constant $\gamma_1<\gamma$ there
    exists $c>0$, so that on $N(z) \bs \{z\}$,
    \begin{align}\label{eq:xiC0bd}
      d(\Id,g(r,\theta)) \leq ce^{-\gamma_1 r}.
    \end{align}
  } In Step 1, $p$ can be chosen such that $\gamma_1 \leq 1- \frac 2
  p$. We know $g(z)=\Id$. We obtain the result in Step 2 by showing
  that $\hat g$ is in the H\"older space $C^{0,\gamma_1}$ via the
  following embedding:
$$W^{1,p}(N(z)) \hra C^{0,\gamma_1}(N(z))$$
(see Theorem B.1.11 in \cite{MS}). Here $C^{0,\gamma_1}(N(z))$ is the
space of $C^0(N(z))$ functions with finite H\"older norm
$$\Mod{f}_{C^0,\gamma_1}:=\sup_{x \in U}|f(x)| + \sup_{x,y \in U} \frac {|f(x)-f(y)|}{d(x,y)^{\gamma_1}}.$$
Then,
$$d(\Id,\hat g(r,\theta)) \leq ce^{-\gamma_1 r}.$$
The same holds for $g(r,\theta)=e^{\lambda \theta} \hat g(r,\theta)
e^{-\lambda \theta}$ on $N(z) \bs \{z\}$.

{\sc Step 3}: {\em Finishing the proof.}\\
A connection $A$ on $P$ defines an operator $\delbar_A$ on the
$G$-bundle $P_\C$. A complex gauge transformation $g$ acts on
$\delbar_A$ as: $\delbar_{gA}=g \circ \delbar_A \circ g^{-1}$. In our
context $a:=gA-A$ and
\begin{align}\label{eq:strap}
  \delbar_{gA}-\delbar_A=a^{0,1}=g \delbar_A (g^{-1})=-(\delbar_A
  g)g^{-1} \implies a^{0,1}g=\delbar_A g.
\end{align}
For any $q>1$, \eqref{eq:xiC0bd} implies
$$\Mod{g}_{L^q(\{n-\eps \leq r \leq n+1+\eps\})} \leq ce^{-\gamma_1 n}$$
where $0<\eps<0.1$ is a small constant. Recall $a$ satisfies
$$\Mod{a}_{W^{1,q}(\{n-\eps \leq r \leq n+1+\eps)\}} \leq ce^{-\gamma n} < ce^{-\gamma_1 n}$$
By elliptic bootstrapping using the equation \eqref{eq:strap}, we get
\begin{align*}
  \Mod{g}_{W^{2,q}(\{n \leq r \leq n+1\})} \leq
  c(\Mod{a}_{W^{1,q}(\{n-\eps \leq r \leq n+1+\eps\})} +
  \Mod{g}_{L^q(\{n-\eps \leq r \leq n+1+\eps\})}) \leq ce^{-\gamma_1
    n}.
\end{align*}
Elliptic regularity is applied to domains of the form $\{n \leq r \leq
n+1\})$. Since all the domains are isometric to each other, the
constants $c$ are independent of $n$. By \eqref{eq:complexgroupiso},
we get similar asymptotic bounds on $k$ and $\xi$ also, which proves
the Lemma.
\end{proof}
\begin{lemma}[Gauged maps near a vortex can be complex gauge
  transformed to vortices]\label{lem:vortcont} Suppose $\Sig$ is a
  Riemann surface with a cylindrical end $N(z) \bs \{z\}$
  corresponding to a point $z \in \oSig \bs \Sig$. Suppose there is a
  sequence of gauged holomorphic maps $v_\nu=(A_\nu,u_\nu)$ on $\Sig$
  that converges to a finite energy vortex
  $v_\infty=(A_\infty,u_\infty)$ in the following sense:
$$\cup_{\nu}\on{Im}(u_\nu) \subset X \text{ is pre-compact}, \; A_\nu \xrightarrow{H^1(\Sig)} A_\infty, \; u_\nu \xrightarrow{H^2(\Sig)}u_\infty.$$
The norms are taken in a trivialization $\Sig \times K$ in which
$A_\infty=\d+\lambda \d\theta + a_\infty$ on the cylindrical end $N(z)
\bs \{z\}$ and $a_\infty$ satisfies the exponential bound
\eqref{eq:twistconnbd}.  Then we can find complex gauge
transformations $e^{i\xi_\nu}$ so that $\xi_\nu \to 0$ in $H^2(\Sig)$
and $e^{i\xi_\nu}v_\nu$ is a vortex.
\end{lemma}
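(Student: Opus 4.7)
The plan is to solve the vortex equation near $v_\infty$ by an implicit function theorem in the complex-gauge parameter. Since $\delbar$-holomorphicity of $u_\nu$ with respect to $A_\nu$ is preserved by any complex gauge transformation, the only condition to restore is the moment-map equation $F_A + \Phi(u)\om_\Sig = 0$. Work in the affine Sobolev slices $A \in A_\infty + H^1(\Sig, T^*\Sig \otimes \k)$ and $u \in u_\infty + H^2(\Sig, u_\infty^* T^{\text{vert}}P(X))$; the hypothesis is precisely that $v_\nu$ lies in this slice and converges to $v_\infty$. Define
\[
\mathcal{F}(\xi, v) := *F_{e^{i\xi}A} + \Phi(e^{i\xi}u), \qquad \xi \in H^2(\Sig,\k),\ v=(A,u),
\]
which is smooth as a map into $L^2(\Sig,\k)$ (using Sobolev multiplication and compactness of $\bigcup_\nu \mathrm{Im}(u_\nu)$). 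By hypothesis $\mathcal{F}(0, v_\infty) = 0$ and $\mathcal{F}(0, v_\nu) \to 0$ in $L^2$.

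Compute the partial derivative at $(0, v_\infty)$ from the infinitesimal complex gauge action $\delta A = -J d_{A_\infty}\xi$, $\delta u = J\xi_X(u_\infty)$, giving
\[
L_{v_\infty}\eta \;:=\; D_\xi \mathcal{F}(0, v_\infty)\eta \;=\; \Delta_{A_\infty}\eta + \rho_{u_\infty}\eta,
\]
where $\Delta_{A_\infty} = d_{A_\infty}^* d_{A_\infty}$ and $\rho_{u_\infty}$ is the symmetric zeroth-order operator characterized by $\lan \rho_{u_\infty}\eta,\eta\ran_\k = g(\eta_X(u_\infty),\eta_X(u_\infty))$, via compatibility of $\om$, $J$ and the Riemannian metric $g$. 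Thus $L_{v_\infty}$ is formally nonnegative self-adjoint.

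The core analytic step is to show $L_{v_\infty}: H^2(\Sig,\k) \to L^2(\Sig,\k)$ is an isomorphism. For injectivity, if $L_{v_\infty}\eta = 0$ with $\eta \in H^2$ then $\eta$ decays on each cylindrical end and integration by parts gives $\int_\Sig |d_{A_\infty}\eta|^2 + |\eta_X(u_\infty)|^2\, \om_\Sig = 0$. On each end $u_\infty$ converges to a point of $\Phi^{-1}(0)$, where $K$ acts freely, so $\eta_X(u_\infty) = 0$ forces $\eta \equiv 0$ asymptotically; combined with $d_{A_\infty}\eta = 0$ this propagates to $\eta \equiv 0$ on $\Sig$. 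For the Fredholm property of index zero, use the exponential decay estimates $\Mod{a_\infty}_{W^{1,p}(\{n \leq r \leq n+1\})} \leq c e^{-\gamma n}$ and $\Phi(u_\infty) \to 0$ from Corollary~\ref{cor:connW1psig} and Proposition~\ref{prop:decaycyl}: $L_{v_\infty}$ is then a relatively compact perturbation on each end of the translation-invariant model operator
\[
L_\infty \eta \;=\; -\partial_r^2 \eta - (\partial_\theta - [\lam,\cdot])^2 \eta + \rho_{x_0}\eta,
\]
where $x_0 = \lim u_\infty \in \Phi^{-1}(0)$. Since the stabilizer $K_{x_0}$ is trivial, $\rho_{x_0}$ is strictly positive on $\k$, so $L_\infty$ is invertible on $L^2(\R \times S^1,\k)$; standard Lockhart--McOwen cylindrical-end Fredholm theory then yields index zero for $L_{v_\infty}$. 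Together with injectivity, $L_{v_\infty}$ is an isomorphism.

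Finally, apply the implicit function theorem. Because $v \mapsto D_\xi \mathcal{F}(0,v)$ is continuous from the $H^1 \times H^2$ neighborhood into bounded operators $H^2 \to L^2$, for $\nu$ large $D_\xi \mathcal{F}(0, v_\nu)$ remains an isomorphism with uniformly bounded inverse and $\Mod{\mathcal{F}(0, v_\nu)}_{L^2}$ is arbitrarily small. A quantitative Newton/contraction-mapping argument produces unique small $\xi_\nu \in H^2(\Sig,\k)$ with $\mathcal{F}(\xi_\nu, v_\nu) = 0$ and $\Mod{\xi_\nu}_{H^2(\Sig)} \leq C\Mod{\mathcal{F}(0, v_\nu)}_{L^2} \to 0$. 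Holomorphicity being preserved, $e^{i\xi_\nu} v_\nu$ is a vortex, as required. The principal obstacle is the Fredholm analysis of $L_{v_\infty}$ on the non-compact domain: it is only the combination of the exponential asymptotic control established in Section~\ref{sec:prelim} with the free $K$-action on $\Phi^{-1}(0)$ that makes the asymptotic model $L_\infty$ invertible, and hence $L_{v_\infty}$ Fredholm.
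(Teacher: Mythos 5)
Your proposal is correct and follows essentially the same route as the paper: the same functional $\F_v(\xi)=*F_{e^{i\xi}A}+\Phi(e^{i\xi}u)$, the same linearization $\d_{A_\infty}^*\d_{A_\infty}+L_{u_\infty}$, injectivity from positivity plus the free action on $\Phi^{-1}(0)$, Fredholm index zero by comparison with a translation-invariant model on the ends, and then a uniform inverse for $\nu$ large feeding into a quantitative implicit function theorem. The only cosmetic difference is that you invoke Lockhart--McOwen theory for the index-zero step, whereas the paper keeps this self-contained by proving invertibility of the deformed model operator via Lax--Milgram (Proposition \ref{prop:lapiso}) and treating the decaying difference as a compact perturbation (Proposition \ref{prop:deccomp}).
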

\begin{proof} The proof is by an Implicit Function Theorem argument on
  the functional $\F_v$ defined below. For a gauged holomorphic map
  $v:=(A,u)$, define
  \begin{align*}
    \F_v:\Gamma(\Sig,\k) \to \Gamma(\Sig,\k) \quad \xi \mapsto
    *F_{e^{i\xi}A} + \Phi(e^{i\xi}u).
  \end{align*}
  Its linearization at $\xi=0$ is
  \begin{equation}\label{eq:diffsmooth}
    \D\F_v(0):\xi \mapsto \d_A^*\d_A \xi + u^*\d\Phi(J\xi_X).
  \end{equation}

  We first show that $\D\F_v(0)$ extends to an operator between
  Sobolev completions.  The operator $\d_{A_\infty}$ extends to a
  continuous operator $\d_{A_\infty}: H^{k+1}(\Sig,\k) \to
  H^k(\Sig,\k)$ for $k=0,1$. This is because on the cylinder $N(z) \bs
  \{z\}$, $A_\infty =\d+\lambda \d\theta + a_{\infty}$, $a_\infty$ is
  bounded in $H^1(N(z)\bs \{z\})$, so by Sobolev multiplication
  (Proposition \ref{prop:sobmult}) the operator $\xi \mapsto \d\xi +
  [\lambda ,\xi]\d\theta + [a_{\infty},\xi]$ is continuous from
  $H^{k+1} \to H^k$, $k=0,1$. Since $\Mod{A_\infty -
    A_\nu}_{H^1(\Sig)}$ is bounded, the same is true for the operators
  $\d_{A_\nu}$ also. Writing $\d_A^*=*\d_A*$, we see that
  $\d_{A_\nu}^*:H^{k+1}(\Sig,\k) \to H^k(\Sig,\k)$ is also a bounded
  operator for $k=0,1$ and for all $\nu$ including $\nu=\infty$. For
  $x \in X$, let
$$L_x \in \End(\k) := \xi \mapsto \d\Phi(J\xi_x).$$
The image of each $u_\nu$ is compact, so $\sup_{x \in
  \on{Im}(u_\nu)}|L_x|$ is bounded. So, the operator
$u_\nu^*\d\Phi(J\xi_X):H^2 \to L^2$ is bounded because it is
multiplication by a $C^0$ bounded function.  Therefore, for all $\nu$,
$$\D\F_{v_\nu}:H^2(\Sig,\k) \to L^2(\Sig,\k)$$
is a bounded operator.

{\sc Step 1}: {\em For all $\nu$, including $\nu=\infty$, $\D\F_{v_\nu}:H^2 \to L^2$ is invertible and the norm of the inverses are uniformly bounded. That is, there exists a constant $C$ such that $\Mod{\D\F_{v_\nu}^{-1}} \leq c$ for all $\nu$.} \\
The operator $\D\F_{v_\nu}:H^2 \to L^2$ is injective because for $\xi
\in H^2(\Sig,\k)$,
\begin{equation*}
  \int_\Sig\langle \d_{A_\nu}^*\d_{A_\nu}\xi+u_\nu^*\d\Phi(J(\xi)_{u_\nu}),\xi\rangle_\k=\Mod{\d_{A_\nu}\xi}_{L^2}^2 + \int_\Sig\omega_{u_\nu}((\xi)_{u_\nu},J(\xi)_{u_\nu}).
\end{equation*}
The right hand side is positive if $\xi \neq 0$.

We first prove invertibility of the operator in the case that $v$ is
equal to $v_\infty$, which is a vortex on $\Sig$. Recall that
$A_\infty=\d+\lambda \d\theta +a_\infty$ on $N(z)\bs \{z\}$. Next, we
observe that for $x \in X$, if the infinitesimal action of $\k$ is
free, the map $L_x:\k \to \k$ is positive and symmetric for $x \in
X^\ss$ because $\lan L_x \xi,\xi\ran=\om_x(\xi_x,J\xi_x)$. On the
cylindrical end $N(z) \bs \{z\}$, we use the co-ordinates $r$,
$\theta$ given by $\rho_z$. By Proposition \ref{prop:infvortsing},
$u(\infty,\theta):=\lim_{r \to \infty}u(r,\theta)$ is well-defined and
$u(\infty,\theta)=e^{\lambda \theta}u(\infty,0)$. By deforming $L_u$,
one can produce a smooth section $\L:\Sig \to \Aut(\k)$ such that on
the cylindrical end, $\L(r,\theta)=L(u(\infty,\theta))$, and $\L(x)$
is positive and symmetric for all $x \in \Sig$. The connection
$A_\infty$ can be deformed to a smooth connection $A_\infty'$ such
that $A_\infty'=\d+\lambda \d\theta$ on the cylindrical end $N(z) \bs
\{z\}$. By Proposition \ref{prop:lapiso},
$\d_{A'_\infty}^*\d_{A'_\infty}+\L:H^2(\Sig,\k) \to L^2(\Sig,\k)$ is
an isomorphism. The operators
$\d_{A'_\infty}^*\d_{A'_\infty}-\d_{A_\infty}^*\d_{A_\infty}$ and $\L
- L_u$ are lower order operators that decay to zero on the cylindrical
end, therefore they are compact by Proposition \ref{prop:deccomp}. So,
$\D\F_{v_\infty}(0)=\d_{A_\infty}^*\d_{A_\infty}+L_u$ has the same
Fredholm index as $\d_{A'_\infty}^*\d_{A'_\infty}+\L$, which is
$0$. The operator is invertible because it is injective.

Next, we consider $v_\nu$ in the sequence. For large $\nu$,
\begin{equation}\label{eq:diffdiff}
  \begin{split}
    \Mod{\d_{A_\infty}^*\d_{A_\infty} - \d_{A_\nu}^*\d_{A_\nu}} &\leq c\Mod{A_\infty-A_\nu}_{H^1(\Sig)},\\
    \Mod{L_{u_\infty}-L_{u_\nu}} &\leq c\sup_{p \in
      \Sig}d_X(u_\infty(p),u_\nu(p)).
  \end{split}
\end{equation}
This means $\Mod{\D\F_{v_\infty}(0)-\D\F_{v_\nu}} \leq
\hh\Mod{(\D\F_{v_\infty})^{-1}}^{-1}$ for large enough $\nu$. For such
$\nu$, $\D\F_{v_\nu}(0)$ is invertible and $\Mod{\D\F_{v_\nu}(0)^{-1}}
\leq 2\Mod{\D\F_{v_\infty}(0)^{-1}}$. The constant $C$ is
$2\Mod{\D\F_{v_\infty}(0)^{-1}}$.

{\sc Step 2:} {\em There is a constant $\delta>0$ such that if $\Mod{\xi}_{H^2(\Sig,\k)}<\delta$, then $\Mod{\D\F_\nu(\xi)-\D\F_\nu(0)} \leq \frac 1 {2C}$ for large enough $\nu$.}\\
This follows from inequalities as in \eqref{eq:diffdiff} and the fact
that there exist constants $c$, $\eps>0$ such that if a connection $A$
satisfies $\Mod{A-A_\infty}_{H^1(\Sig)} \leq \eps$, then
$\Mod{e^{i\xi}A-A}_{H^1} \leq c\Mod{\xi}_{H^2(\Sig)}$ (see Lemma
\ref{lem:gcactona}).

{\sc Step 3:} {\em Completing the proof.} By the implicit function
theorem (Proposition \ref{prop:impfn}), if $\Mod{*F_{A_\nu} +
  \Phi(u_\nu)}_{L^2}<\frac \delta {4C}$, then there exists $\xi_\nu$
such that $e^{i\xi_\nu}v_\nu$ is a vortex and $\Mod{\xi_\nu}_{H^2}
\leq C\Mod{*F_{A_\nu}+\Phi(u_\nu)}_{L^2}$. Since
$*F_{A_\nu}+\Phi(u_\nu) \to 0$ in $L^2$, the Lemma is proved.
\end{proof}

The following result is used in the proof of Lemma
\ref{lem:vortcont}. It proves a certain differential operator is
invertible on a surface with cylindrical ends.
\begin{proposition}[Invertibility of an elliptic
  operator]\label{prop:lapiso} Let $\Sig$ be a surface with a
  cylindrical end and $\gamma>0$ be a constant. Let $A$ be a smooth
  connection on the trivial $K$-bundle $\Sig \times K$, such that on
  the cylindrical end $N \simeq S^1 \times (0,\infty)$, the connection
  is $A=\d + \lambda \d\theta$, where $\lambda \in \k$. Further, let
  $L:\Sig \to \Aut(\k)$ be a smooth map that is invariant under translation by $\R$ on the
  cylindrical ends. Suppose $L(x)$ is positive and symmetric for all $x
  \in \Sig$.  Then, the operator
  \begin{equation}\label{eq:idlap}
    L + \d_A^*\d_A: H^{k+1}(\Sig,\k) \to H^{k-1}(\Sig,\k)
  \end{equation}
  is an isomorphism for all integers $k \geq 0$.
\end{proposition}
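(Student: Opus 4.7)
I will establish injectivity and Fredholm property of index zero; combined, they give the claimed isomorphism.

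\emph{Injectivity.} Given $\xi \in H^{k+1}(\Sig,\k)$ with $(L + \d_A^*\d_A)\xi = 0$, pair with $\xi$ in $L^2$ and integrate by parts. This is legitimate on the complete surface $\Sig$ for $H^1$ sections via a cut-off approximation (since $\xi, \d_A\xi \in L^2$), yielding
\begin{equation*}
\Mod{\d_A \xi}_{L^2}^2 + \int_\Sig \langle L\xi, \xi\rangle = 0.
\end{equation*}
The pointwise positivity and symmetry of $L$, continuity on the compact part of $\Sig$, and translation invariance on the cylindrical end together produce a uniform pointwise lower bound $\langle L(x)v, v\rangle \geq c_0 |v|^2$ with $c_0 > 0$. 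Both terms in the identity are non-negative and the second dominates $c_0 \Mod{\xi}_{L^2}^2$, forcing $\xi \equiv 0$.

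\emph{Fredholm property via a parametrix.} The key input is the translation-invariant structure of the operator on the cylindrical end $N \simeq S^1 \times (0,\infty)$, where both $L(\theta)$ and $A = \d + \lambda\d\theta$ are independent of $r$. Extend to a translation-invariant operator $P$ on the full cylinder $S^1 \times \R$. A partial Fourier transform in $r$ converts $P$, in each frequency $\tau \in \R$, to the self-adjoint operator $L(\theta) - (\partial_\theta + \ad_\lambda)^2 + \tau^2$ on $L^2(S^1,\k)$, whose spectrum lies in $[c_0 + \tau^2, \infty)$. Hence $P : H^{k+1}(S^1 \times \R,\k) \to H^{k-1}(S^1 \times \R,\k)$ is an isomorphism. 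Choose cutoffs $\chi_0, \chi_1$ with $\chi_0 + \chi_1 = 1$, $\chi_0$ compactly supported in $\Sig$, and $\chi_1$ supported on $N$, and define a parametrix
\begin{equation*}
Q := Q_0 \chi_0 + P^{-1}\chi_1,
\end{equation*}
where $Q_0$ is a local parametrix for $L + \d_A^*\d_A$ on the compact piece. Commutators of the second-order operator with the cutoffs $\chi_i$ are first-order and supported in the bounded overlap region, so both $\Id - (L + \d_A^*\d_A)Q$ and $\Id - Q(L + \d_A^*\d_A)$ are operators on $H^{k-1}(\Sig,\k)$ with Schwartz kernel supported in a bounded region; they are compact by Rellich–Kondrachov. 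Therefore $L + \d_A^*\d_A$ is Fredholm. Formal self-adjointness forces the index to be zero, and combined with injectivity gives the isomorphism.

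\emph{Main obstacle.} The principal difficulty is the non-compactness of $\Sig$: the Sobolev embedding $H^{k+1}(\Sig) \hookrightarrow H^k(\Sig)$ is not compact globally, so standard elliptic theory does not directly yield the Fredholm property. The entire argument hinges on the translation-invariant model $P$ on $S^1 \times \R$ being explicitly invertible via Fourier analysis, which in turn depends on the uniform positive lower bound of $L$ coming from translation invariance on the end. Once $P^{-1}$ is in hand, the glueing confines all error terms to the bounded overlap region where Rellich–Kondrachov is available. A secondary technical point is bootstrapping to arbitrary $k \geq 0$, which follows from local elliptic regularity together with the translation-invariant a priori estimate $\Mod{\xi}_{H^{k+1}(N)} \leq c(\Mod{(L + \d_A^*\d_A)\xi}_{H^{k-1}(N)} + \Mod{\xi}_{L^2(N)})$ on the cylindrical end.
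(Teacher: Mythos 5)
Your argument is correct in substance, but it follows a genuinely different route from the paper. The paper proves the $k=0$ case in one stroke via the Lax--Milgram theorem: the bilinear form $B(\xi,\xi')=\int_\Sig\lan (L+\d_A^*\d_A)\xi,\xi'\ran$ is bounded and coercive on $H^1$ (coercivity using exactly the uniform lower bound on $L$ that you also invoke), which yields bijectivity $H^1\to H^{-1}$ directly, with no Fredholm theory and no index computation; higher $k$ is then handled by local elliptic regularity with constants made uniform by covering the end with isometric translated annuli $S_n$ -- essentially the same bootstrapping you describe in your final remark. Your route instead runs through the Lockhart--McOwen-style machinery: explicit inversion of the translation-invariant model on $S^1\times\R$ by partial Fourier transform, a patched parametrix, Fredholmness, and index zero by self-adjointness, with injectivity supplied separately by the coercivity identity. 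What your approach buys is the general framework for elliptic operators on cylindrical ends (it would survive even if $L$ were not positive, so long as the model operator on the cylinder stayed invertible); what the paper's approach buys is economy -- coercivity gives existence and uniqueness simultaneously, so the delicate steps in your argument are simply not needed. Two points in your write-up deserve tightening: the parametrix should be $Q=\tilde\chi_0 Q_0\chi_0+\tilde\chi_1 P^{-1}\chi_1$ with outer cutoffs $\tilde\chi_i\equiv 1$ on $\on{supp}\chi_i$ and supported where the true operator agrees with the respective model, since $P^{-1}\chi_1 f$ lives on the full cylinder and must be transplanted back to $\Sig$; and the assertion that formal self-adjointness forces index zero hides a real step -- one must identify the cokernel with distributional solutions of the same equation in $H^{1-k}$ and then upgrade their regularity and decay on the end (using the invertible model) before the injectivity argument applies to kill them. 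Both gaps are standard to fill, and given that you have injectivity independently, you could also bypass the index computation entirely by running the cokernel-vanishing argument directly.
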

\begin{proof} We first show that \eqref{eq:idlap} is a bounded
  operator. For any $l \in \Z_{\geq 0}$, the map $L$ is bounded in
  $C^l$ because $\Sig \bs N$ is compact and on the cylindrical end
  $N$, $L$ is $\R$-invariant.  The norm of the operator $\d_A^*\d_A$ in
  \eqref{eq:idlap} is bounded because of the form of the connection
  $A$ and the multiplication Theorem (Proposition
  \ref{prop:sobmult}). Together, these imply that
  \eqref{eq:idlap} is a bounded operator.

  We prove the proposition for $k=0$ by the Lax-Milgram theorem (See
  Section 6.2, Theorem 1 in \cite{Evans}). Recall that $H^{-1}(\Sig)$
  is the dual of $H^1(\Sig)$ under the $L^2$-pairing. The bilinear
  operator
  $$B:H^1(\Sig,\k) \times H^1(\Sig,\k) \to \R, \quad (\xi,\xi') \mapsto \int_\Sig\lan (L+\d_A^*\d_A)\xi,\xi'\ran$$
  satisfies $B(\xi,\xi') \leq \Mod{\xi}_{H^1}\Mod{\xi'}_{H^1}$. Next,
  observe
  $$B(\xi,\xi) \geq c_1^{-1}(\Mod{\d_A\xi}^2_{L^2}+ \Mod{\xi}^2_{L^2}) \geq c_2^{-1}(\Mod{\xi}^2_{L^2} + \Mod{d\xi}^2_{L^2}) = c_2^{-1}\Mod{\xi}^2_{H^1},$$
  where $c_1$ and $c_2$ are constants independent of $\xi$. The first
  inequality is based on the fact that $L(x)^{-1} \in \Aut(\k)$ is
  uniformly bounded for all $x \in \Sig$ because the image of
  $L^{-1}:\Sig \to \Aut(\k)$ is bounded in $\Aut(\k)$.  By the Lax
  Milgram Theorem, for any $f \in H^{-1}=(H^1)^*$, there is a unique
  $\sig \in H^1$ such that $B(\sig,\sig')=\int_\Sig \lan
  f,\sig'\ran$. So, $(L+\d_A^*\d_A)\sig=f$. This proves surjectivity and
  injectivity for \eqref{eq:idlap}. By the open mapping theorem, the
  inverse map is bounded.

  Now, we consider $k>0$. By the $k=0$ result, the operator is
  injective for all $k$. Given $f \in H^{k-1}(\Sig,\k) \hra
  H^{-1}(\Sig,\k)$, there is a $\sig \in H^1(\Sig,\k)$ satisfying $(L
  + \d_A^*\d_A)\sig=f$. By elliptic regularity for compact sets $\sig
  \in H^{k+1}_{loc}$. It remains to bound $\sig$ in $H^{k+1}$.  For
  this, write $\Sig= \Sig_0 \cup N$, where $\Sig_0 \subset \Sig$ is
  pre-compact, the cylindrical end $N$ is equipped with an isometry
  $\rho:N \to [0,\infty) \times S^1$. Let $\Sig_0'$ be a pre-compact
  set containing $\Sig_0$. We also define the sets
  $$S_n:=\rho^{-1}((n,n+1)\times S^1), \quad S_n':=\rho^{-1}((n-\eps,n+1+\eps)\times S^1) \subset N,$$
  where $0<\eps<0.1$ is a fixed constant. For any set $S=\Sig_0$ or
  $S_n$ and correspondingly $S'=\Sig'_0$ or $S'_n$, there is a
  constant $c_S$ such that
  $$\Mod{\sig}_{H^{k+1}(S)} \leq c_S(\Mod{\sig}_{H^{k-1}(S')} + \Mod{f}_{H^{k-1}(S')})$$
  The constants $c_{S_n}$ are identical because the geometry of the
  sets and the connection on them is independent of $n$. So, the set
  $\{c_{\Sig_0}, c_{S_n}\}$ is finite and has a finite maximum. So, we
  have $\Mod{\sig}_{H^{k+1}(\Sig)} \leq c\Mod{f}_{H^{k-1}(\Sig)}$ for
  some constant $c$. This finishes the proof.
\end{proof}

\subsection{Continuity: Nodal degeneration} \label{sec:nodecont}

In this section we prove the following Proposition.
\begin{proposition} [Continuity of $\Psi$ at a nodal
  curve]\label{prop:nodal} Suppose $Q_S=(\mC \to
  S,\{z_j:S \to \mC\}_{j=1,\dots,k}, \mP_\C,u)$ is a family of quasimaps parametrized by $S$,
  where $S \subset \C$ is a neighborhood of $0$. We assume that
  $\mC_0$ is a curve with a single node $w$ and $\mC_s$ are smooth
  curves obtained from $\mC_0$ by the gluing procedure. Then for any
  sequence $s_\nu \to 0$ as $\nu \to \infty$, the sequence of vortices
  $\Psi(Q_{s_\nu})$ Gromov converges to $\Psi(Q_0)$. ($\Psi$ is as
  described in Corollary \ref{cor:biject}.)
\end{proposition}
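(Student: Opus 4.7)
The plan is to extend the argument of Proposition \ref{prop:contsmooth} to a family with nodal degeneration, working component-wise on $\Sig_0$ and exploiting the exponential decay of the Hitchin--Kobayashi complex gauge transformation (Lemma \ref{lem:xiexpbd}) to control the stretching neck of $\Sig_{s_\nu}$. First I would prove a nodal analogue of Lemma \ref{lem:stdform}: choose a reduction $\sig:\mC \to \mP_\C/K$ and trivializations of $\mP|_{\mC \bs \{\text{special points}\}}$ in which, for every $s \in S$, the connection $A_s$ takes the standard form $\d + \lambda_z \d\theta$ on each cylindrical end corresponding to a marked point or to a lift $w^\pm$ of the node. I additionally arrange $u_0(w^+) = u_0(w^-) \in \Phi^{-1}(0)$, which is possible because $Gu_0(w^+) = Gu_0(w^-)$ by the quasimap connectedness condition and this single $G$-orbit meets $\Phi^{-1}(0)$ in a unique $K$-orbit. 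Using Lemma \ref{lem:convprep}, I identify $\Sig_{s_\nu} \simeq (\tSig_{s_\nu,1} \sqcup \tSig_{s_\nu,2})/\sim$ where $\tSig_{s_\nu,\alpha}$ exhaust the components $\Sig_{0,\alpha}$ of $\Sig_0$ and the identification on the neck is the one described by \eqref{eq:lid}.

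Second, by Theorem \ref{thm:HK} applied componentwise, there is a $p$-bounded complex gauge transformation $e^{i\xi_0}$ on $\Sig_0$ taking $(A_0,u_0)$ to a finite-energy vortex, and Lemma \ref{lem:xiexpbd} yields
$\Mod{\xi_0}_{W^{2,q}(\{n \leq r \leq n+1\})} \leq ce^{-\gamma_1 n}$
on every cylindrical end, including the branches $w^\pm$. Using a cut-off function supported in the middle of the neck, I transplant $\xi_0$ to a complex gauge transformation $\tilde\xi_\nu$ on $\Sig_{s_\nu}$ that agrees with $\xi_0$ on each $\tSig_{s_\nu,\alpha}$ and is tapered to zero at the midpoint. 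The decay estimate ensures $\tilde\xi_\nu \to \xi_0$ in $H^2$ on compact subsets of $\Sig_0$ and $\Mod{{*F_{e^{i\tilde\xi_\nu}A_{s_\nu}} + \Phi(e^{i\tilde\xi_\nu}u_{s_\nu})}}_{L^2(\Sig_{s_\nu})} \to 0$.

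Third, I apply the implicit function theorem argument of Lemma \ref{lem:vortcont} to produce small $\eta_\nu \in H^2(\Sig_{s_\nu},\k)$ such that $e^{i\eta_\nu}e^{i\tilde\xi_\nu}(A_{s_\nu}, u_{s_\nu})$ is a vortex with $\Mod{\eta_\nu}_{H^2(\Sig_{s_\nu})} \to 0$. The crux, and the main technical obstacle, is a bound on $(\D\F)^{-1}$ on the stretching domain $\Sig_{s_\nu}$ that is uniform in $\nu$. On each limit piece $\Sig_{0,\alpha}$ the operator $\d_{A}^*\d_{A} + L_u$ is an isomorphism by Proposition \ref{prop:lapiso}, and on the translation-invariant neck model it is also an isomorphism because the linearization equals $-\partial_r^2 + (\text{compact self-adjoint elliptic on } S^1)$ with strictly positive asymptotic operator. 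A standard neck-stretching/Fredholm gluing, splitting elements of $H^2(\Sig_{s_\nu})$ into contributions on the two halves and the neck and estimating the mismatch via the exponential decay of the asymptotic model, combines these into a uniform bound on $\Sig_{s_\nu}$.

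Finally, the resulting vortices $e^{i\eta_\nu}e^{i\tilde\xi_\nu}(A_{s_\nu}, u_{s_\nu})$ converge to $e^{i\xi_0}(A_0, u_0)$ in $H^1 \times H^2$ on compact subsets of each $\Sig_{0,\alpha}$, and smoothly after gauge fixing by the elliptic regularity argument at the end of the proof of Proposition \ref{prop:contsmooth}. Conditions \eqref{part:1gc}, \eqref{part:4gc} and \eqref{part:5gc} of Gromov convergence follow directly; the energy equality is enforced by \eqref{eq:energyhom}, which shows that no energy escapes into the neck, hence no cylindrical bubbles form and condition \eqref{part:3gc} is satisfied with an empty path $\Gamma^{cyl}_w$. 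Proposition 4.5 in \cite{VW:affine} places $e^{i\eta_\nu}e^{i\tilde\xi_\nu}$ in $\G(\mP_{s_\nu})_{\on{bd}}$, so the Gromov limit coincides with $\Psi(Q_0)$ as a gauge equivalence class.
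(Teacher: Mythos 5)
Your proposal is correct and follows essentially the same route as the paper: a nodal version of the standard-form lemma, transplanting the Hitchin--Kobayashi complex gauge transformation $e^{i\xi_0}$ from the central fiber to $\Sig_{s_\nu}$ via a cut-off on the neck using the exponential decay of Lemma \ref{lem:xiexpbd}, and then a quantitative implicit function theorem whose uniform bound on $\D\F_{v_\nu}(0)^{-1}$ is obtained by the patching/approximate-inverse gluing argument with the sleeve length taken large. The paper builds the approximate inverse directly from $Q_\infty$ on the nodal central fiber rather than invoking a separate translation-invariant neck-model operator, and rules out energy loss in the neck from the global $H^1\times H^2$ convergence over the covers rather than from \eqref{eq:energyhom}, but these are equivalent in substance.
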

\begin{remark}\label{rem:extendnodal} The proof of the above
  Proposition carries over word for word in the case when the curve
  $\mC_0$ has more than one nodal singularities. The hypotheses of the
  Proposition also require that away from cylindrical parts, the
  curves are isometric under the neck-stretching metric. The proof
  carries over if instead there were a smooth family of metrics
  instead. In that case $S$ can be a neighborhood of the origin in
  $\C^N$ rather than in $\C$.
\end{remark}
With this result in hand we can prove Theorem \ref{thm:main}.
\begin{proof}[Proof of Theorem \ref{thm:main}] The map $\Psi$ is
  constructed by Corollary \ref{cor:biject}. It is a bijection and the
  spaces $\ol{MV}^K_{g,n}(X,\beta)$ and $Qmap(X \qu G, \beta)$ are
  compact and Hausdorff. So, to show that $\Psi$ is a homeomorphism,
  it is enough to prove that it is continuous.

  For any family of quasimaps $Q_S$ parametrized by $S$ a neighborhood
  of the origin in $\C^N$, we prove that for a sequence $s_\nu \to 0$
  in $S$, the gauge equivalence classes of stable vortices
  $\Psi(Q_{s_\nu})$ Gromov converge to $\Psi(Q_0)$. If the domain
  curve $\mC_0$ (and hence $\mC_{s_\nu}$) does not have nodal
  singularities, then this is proved by Proposition
  \ref{prop:contsmooth}. This will also prove the result if the
  $\mC_0$ has the same combinatorial type as $\mC_{s_\nu}$, in that
  case Proposition \ref{prop:contsmooth} can be applied
  component-wise. Next, suppose that $\mC_0$ is in a higher stratum
  than $\mC_{s_\nu}$, i.e. $[\mC_{s_\nu}] \preceq [\mC_0]$. Again, we
  can assume the curves $\mC_{s_\nu}$ to be smooth, otherwise the same
  arguments can be carried out component-wise. In that case, the
  result is proved by Proposition \ref{prop:nodal} and Remark
  \ref{rem:extendnodal}.
\end{proof}

Suppose $\{\Sig_s\}_{s \in S}$ are the Riemann surfaces with
cylindrical ends corresponding to the family $\mC \to S$ in
Proposition \ref{prop:nodal}. By Lemma \ref{lem:convprep},
there is a cover $\pi_s: \tSig_s \to \Sig_s$ such that $\tSig_s$ can be diffeomorphically\marginpar{*****}
identified to an open subspace of the central Riemann surface
$\Sig_0$. We recall the precise definition of the covers
$\tSig_s$. Let $\tilde \mC_0$ be the normalization of $\mC_0$ with
marked points $w^+$ and $w^-$ corresponding to the node. For $s \in
S\bs \{0\}$, let 
\begin{equation}
  \label{eq:lsts}
  l_s=L_s +it_s:=-\ln s.
\end{equation}
 The Riemann surface with
metric $\Sig_s$ is obtained by gluing:
\begin{equation}\label{eq:gluedfamily}
  \Sig_s=(\Sig_0 \bs (\rho^{-1}_{w^+}\{r>L_s\} \cup \rho^{-1}_{w^-}\{r<-L_s\}))/\sim, \quad \rho^{-1}_{w^-}(z) \sim \rho^{-1}_{w^+}(z+l_s).
\end{equation}
We denote
\begin{equation}\label{eq:deftsig}
  \tSig_s:=\Sig_0 \bs (\rho^{-1}_{w^+}\{r>L_s\} \cup \rho^{-1}_{w^-}\{r<-L_s\}).
\end{equation}
For $s=0$, $\tSig_0:=\Sig_0$. The construction of $\tSig_s$ can be performed in a family to produce a four-dimensional manifold $\tSig$ with a map $\tSig \to S$, such that the fiber over $s \in S$ is $\tSig_s$. 

We need some notation for the proof of Proposition \ref{prop:nodal}.
\begin{notation}\label{note:bundleglue} Let $\mC \to S$ be a family of
  curves as in Proposition \ref{prop:nodal}. For any $s \in S$, let $\Sig_s$
  be the Riemann surface with cylindrical ends corresponding to the curve $\mC_s$. Suppose
  $\mP \to \mC$ is a principal $K$ bundle.
  \begin{Notes}
  \item {\rm(Cylindrical part $N_s$)} Let
    $N_s$ be the cylindrical part in $\Sig_s$ corresponding to the
    node $w$. By the definition of $\Sig_s$ in \eqref{eq:deftsig}, the restrictions of $\rho_{w^\pm}$ give two co-ordinates on $N_s$.
    The co-ordinates are related as: $\rho_{w^+}-l_s=\rho_{w^-}$.

  \item {\rm (Core $\Om_s$, sleeve length $\Delta$, sleeves
      $\Delta_s^\pm$, map relating sleeves $r:\Delta_s^+ \to
      \Delta_s^-$)} The {\em sleeve length} $\Delta$ is a large
    positive constant that will be determined in the proof of Lemma \ref{lem:vortcontnodal}. The
    {\em core} $\Om_s$ and {\em sleeves} $\Delta_s^\pm$ are subspaces
    of $\tSig_s$:
    \begin{align*}
      \Om_s&:=\tSig_s \bs (\rho_{w^+}^{-1}\{r> (L_s-\Delta)/2 \} \cup \rho_{w^-}^{-1}\{r< (-L_s+\Delta)/2 \}),\\
      \Delta^+_s&:=\rho^{-1}_{w^+}\{ (L_s -\Delta)/2 \leq r \leq (L_s+\Delta)/2\},\\
      \Delta^-_s&:=\rho^{-1}_{w^-}\{ (-L_s -\Delta)/2 \leq r \leq
      (-L_s+\Delta)/2\}.
    \end{align*}
    There is a diffeomorphism between the sleeves $r:\Delta_s^+ \to
    \Delta_s^-:= \rho^{-1}_{w^-} \circ (\cdot-l_s) \circ\rho_{w^+}$,
    such that $\pi_s \circ r=\pi_s$ on $\Delta_s^+$. We remark that
    the projection $\pi_s$ restricted to the core $\Om_s$ is
    injective. Let $\Delta_s$ be the image $\pi_s(\Delta_s^+)$ (or
    $\pi_s(\Delta_s^-)$, which is the same). Then, $\Sig_s=\Om_s \cup
    \Delta_s$. See Figure \ref{fig:sleef}.

  \item{\rm (A cover $\Om_s'$)}\label{notecover} We define $\Om_s':=\Om_s \cup
    \Delta^+_s \cup \Delta^+_s \subset \tSig_s$, which is a cover of
    $\Sig_s$ and a subset of the central nodal curve $\Sig_0$.
  \item {\rm(Lifts $\pi_s^{-1}$, $\pi_{s,\pm}^{-1}$)} Recall that the
    covering map $\pi_s :\Om_s' \to \Sig_s$ is 1-1 everywhere except
    on the sleeves $\Delta_s^\pm$ where it is 2-1. So, a lift
    $\pi_s^{-1}$ is well-defined on $\Sig_s\bs \Delta_s$. On
    $\Delta_s$, there are two right inverses denoted by
    $\pi_{s,\pm}^{-1}$ mapping to $\Delta_s^\pm$.

  \item{\rm (Families of cores $\Om$ and covers $\Om'$)} The
    construction of the core $\Om_s$ and the cover $\Om_s'$ can be
    performed in a family to produce four-dimensional manifolds $\Om$
    and $\Om'$ respectively with maps $\Om,\Om' \to S$, such that the
    fibers over $s \in S$ are $\Om_s$ and $\Om_s'$ respectively.
  \item {\rm (Trivialization of bundles $\mP_s$, transition function
      $\kappa_s:\Delta_s \to K$)} \label{note2} The bundle over the
    family of covers $\pi^*\mP \to \Om'$ is trivializable because
    $\Om'$ is homotopic to a one-skeleton. A choice of trivialization
    gives trivializations of the bundles $\pi_s^*\mP_s \to
    \Om_s'$. Then, for any $s$, using the embedding $\Om'_s \to
    \Sig_0$ gauged holomorphic maps defined over the bundle $\mP_s \to
    \Sig_s$ can be pulled back to the cover $\Om_s'$ and can be
    compared to gauged holomorphic maps defined over the trivial
    bundle $\mP_0 \to \Sig_0$.  Given a trivialization of the bundle
    $\pi_s^*\mP_s \to \Om_s'$, the bundle $\mP_s \to \Sig_s$ can be recovered
    by gluing the bundle on sleeves $\pi_s^*\mP_s|_{\Delta_s^\pm}$ via a
    transition function $\kappa_s:\Delta_s \to K$.
  \item {\rm(Cut-off function $\phi_s$ on the cover $\Om_s'$)} We
    define a cut-off function $\phi_s:\Om_s' \to [0,1]$ that is a
    partition of unity of the cover $\pi_s:\Om_s' \to \Sig_s$. That
    is, for any $x \in \Sig_s$, $\sum_{\tilde x \in
      \Om_s':\pi_s(\tilde x)=x}\phi_s(\tilde x)=1$.  Suppose $\phi:\R
    \times S^1 \to [0,1]$ is a cut-off function invariant in the
    second coordinate, $0$ on $(-\infty,-\Delta/2]$ and $1$ on
    $[\Delta/2,\infty)$, and that satisfies $\phi(r,\theta)+\phi(-r,\theta)=1$ for all $(r,\theta)$. Let $\phi_z(x):=\phi(x-z)$ for any $z\in
    \R$. Define
    \begin{equation*}
      \phi_s:=\begin{cases}
        1 &\text{ on $\Om_s$}\\
        (1-\phi_{L_s/2}) \circ \rho_{w^+}& \text{ on $\Delta_s^+$}\\
        \phi_{L_s/2} \circ \rho_{w^-} &\text{ on $\Delta_s^-$}
      \end{cases}
    \end{equation*}
  \item {\rm(Norm on the space $W^{k,p}(\Sig_s, \mP_s(\k))$)}\label{notenorm} Let $k
    \geq 0$ be an integer. There are many equivalent ways of defining
    a norm on $W^{k,p}(\Sig_s,\mP_s(\k))$, of which we choose the
    following.  Fix a trivialization of the family $\pi^*\mP \to
    \Om'$.  The resulting trivialization of the bundle $\pi_s^*\mP_s
    \to \Om_s'$ produces a trivialization of the associated bundle
    $\pi_s^*\mP_s(\k) \simeq \Om_s' \times \k$. We can now define
$$\Mod{\sig}_{W^{k,p}(\Sig_s,\mP_s(\k))}:=\Mod{\pi_s^*\sig}_{W^{k,p}(\Om'_s,\k)}.$$
\item {\rm(Push-forward)} For a section $\sig \in \Gamma(\Om'_s, \k)$ that is supported away from the boundary $\partial \Om_s'$, the push-forward $(\pi_s)_*\sig \in \Gamma(\Sig_s,\mP(\k))$ is

 \begin{equation*}
    (\pi_s)_* \sig :=\begin{cases} 
      (\pi_s^{-1})^*\sig &\text{ on $\Om_s$}\\
      (\pi_{s,+}^{-1})^*\sig + (\pi_{s,-}^{-1})^*\sig &\text{ on $\Delta_s$.}
    \end{cases}
  \end{equation*}

\end{Notes}
\qed \end{notation}
\begin{figure}
  \centering \scalebox{.75}{ \input{sleef.pstex_t}}
  \caption{The covering map $\pi_s$ glues the sleeves $\Delta_s^\pm$.}
  \label{fig:sleef}
\end{figure}
\begin{remark} We perform a calculation related to Notation \ref{note:bundleglue} that will be useful in the proof of Lemma \ref{lem:vortcontnodal}. Let $s \in S \bs \{0\}$ and $\tilde \xi \in \Gamma(\Om_s',\k)$ be a section that is supported away from the boundary. Let $\xi:=(\pi_s)_*\tilde \xi$. Then, we claim
  \begin{equation}
    \label{eq:pilu}
    \Mod{\xi}_{L^2(\Sig_s,\mP_s(\k))} \leq 2\Mod{\tilde \xi}_{L^2(\Om_s',\k)}.
  \end{equation}
The proof is as follows. By the definition in Notation 
\ref{note:bundleglue} \eqref{notenorm}, we have 
\begin{equation}
  \label{eq:split3}
\Mod{\xi}_{L^2(\Sig_s,\mP_s(\k))}:=\Mod{\pi_s^*\xi}_{L^2(\Om_s',\k)} \leq \Mod{\tilde \xi}_{L^2(\Om_s,\k)} + \Mod{\pi_s^*\xi}_{L^2(\Delta_s^+,\k)} + \Mod{\pi_s^*\xi}_{L^2(\Delta_s^-,\k)}.
\end{equation}
On $\Delta_s^+$, we have $\pi_s^*\xi=\tilde \xi + r^*(\Ad_{\kappa_s}\tilde
\xi|_{\Delta_s^-})$. Unitary gauge transformations preserve the
$L^2$-norm, so we get $\Mod{r^*(\Ad_{\kappa_s}\tilde
  \xi|_{\Delta_s^-})}_{L^2(\Delta_s,\k)}=\Mod{\tilde
  \xi}_{L^2(\Delta_s^-)}$. So, we conclude
$\Mod{\pi_s^*\xi}_{L^2(\Delta_s^+,\k)} \leq \Mod{\tilde
  \xi}_{L^2(\Delta_s^+)} + \Mod{\tilde
  \xi}_{L^2(\Delta_s^-)}$. Writing down an analogous bound for
$\Mod{\pi_s^*\xi}_{L^2(\Delta_s^-,\k)}$ and using \eqref{eq:split3},
we get the result \eqref{eq:pilu}.
\end{remark}

\begin{remark}[Model for $\mC \to S$ near the node
  $w$]\label{rem:localmodel} Let $\mC \to S$ be the family of curves
  in Proposition \ref{prop:nodal}. A neighborhood of the node $w$ in $\mC$ can be mapped bi-holomorphically to a neighborhood of the origin in $\C^2$ in a way that $w$ is mapped to the origin and the curve $\mC_s$ is mapped to $\{xy=s\}$ for all $s \in
  S$ (see Section 3, Chapter 11 in \cite{Arb}). We call this map $\Upsilon$ and describe it explicitly. For smooth curves, i.e. for $s \neq 0$,
  \begin{align*}
    \Upsilon: N_s \to \{xy=s\} \subset \C^2 \quad z \mapsto
    (e^{\rho_{w^+}(z)-l_s},e^{-\rho_{w^-}(z)}).
  \end{align*}
  For $s=0$,
$$ \Upsilon:\begin{cases}N(w^+) \to \{x=0\} \subset \C^2 & z \mapsto (0,e^{-\rho_{w^+}(z)})\\
  N(w^-) \to \{y=0\} \subset \C^2 & z \mapsto (e^{\rho_{w^-}(z)},0)
\end{cases} $$ Denote by $\mN_w \subset \mC$ the open neighborhood
$\{|x|,|y|<1\}$. Then, we see that $\cup_{s \in \C, |s|<1}N_s \cup
N(w^\pm) \cup \{w\}$ is identified to $\mN_w$ via $\Upsilon$. Figure \ref{fig:Om's} is related to this construction.
\end{remark}
The next Lemma is the analog of Lemma \ref{lem:stdform} in the presence
of a node. Given a family of quasimaps where the central curve has a node and the other curves are smooth, there is a reduction of the structure group such that the resulting $K$-gauged maps are in standard form at all infinite cylindrical ends. In addition, there is a trivialization of the family of bundles $\mP$ pulled back to the family of covers $\Om'$ such that the $K$-gauged maps have desirable convergence properties and the transition functions $\kappa_s$ have a uniform $C^2$ bound.
\begin{lemma}[Standard form near a node $w$]\label{lem:stdform_node}
  Let $p\geq 2$. Given a family of quasimaps $Q_S=(\mC \to S,\{z_j:S
  \to \mC\}_{j=1,\dots,k},\mP_\C,u)$ where $\mC \to S$ as in
  Proposition \ref{prop:nodal}, there is a reduction $\sig:S \to
  \mP_\C/K$ such that the following are satisfied. Let $\mP \to \mC$
  be the principal $K$-bundle, and $\{(A_s,u_s)\}_{s \in S}$ be the
  family of $K$-gauged holomorphic maps induced by the reduction
  $\sig$. There is a trivialization of $\pi^*\mP \to \Om'$ such that
  \begin{enumerate}
  \item \label{part:nodea} $(A_s,u_s)$ is in standard form about
    marked points (as in Lemma \ref{lem:stdform}).
  \item \label{part:nodeb} On the cylindrical ends $N(w^\pm)\bs
    \{w^\pm\}$, $A_0=\d+\lambda^\pm \d\theta$ and $\Phi(u(w))=0$.
  \item \label{part:nodec} Under this trivialization, as $s \to 0$,
    \begin{equation}\label{eq:convtau0}
      \begin{split}
        &\cup_{s \in S} u_s(\Om'_s) \subset X \text{ is pre-compact},\\
        &\Mod{A_s - A_0}_{W^{1,p}(\Om'_s )}, d_{W^{2,p}(\Om'_s
          )}(u_s,u_0) \to 0.
      \end{split}
    \end{equation}
  \item \label{part:noded} the functions $\kappa_s:\Delta_s \to K$ are
    uniformly bounded in $C^2$. ($\kappa_s$ is defined in Notation
    \ref{note:bundleglue}, \eqref{note2})
  \end{enumerate}
\end{lemma}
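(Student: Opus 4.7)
The plan is to construct the reduction first on the central fiber $\mC_0$, taking care to match the data arising from the two preimages of the node, and then to extend the construction smoothly to the family via the local biholomorphic model of Remark \ref{rem:localmodel}.

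First I would apply Lemma \ref{lem:stdform} on the normalization $\tilde \mC_0$, treating each marked point $z_j(0)$ as well as each of the two preimages $w^\pm$ of the node as a marked point. This yields a reduction of $\mP_\C|_{\tilde \mC_0}$ together with trivializations of the resulting $K$-bundles on cylindrical neighborhoods of the special points in which $A_0 = \d + \lambda^\pm \d\theta$ near $w^\pm$ and $u_0(w^\pm) \in \Phi^{-1}(0)$. The quasimap stability condition gives $G u_0(w^+) = G u_0(w^-)$, and by the Kempf--Ness theorem this closed $G$-orbit in $X^\ss$ meets $\Phi^{-1}(0)$ in a single $K$-orbit, so the two trivializations of the fiber $\mP_{0,w}$ differ by a constant element of $K$. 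Composing the trivialization near $w^-$ with this constant aligns $u_0(w^+) = u_0(w^-)$ without disturbing the standard form. Using the opposite orientations of $\rho_{w^\pm}$ and the bundle-matching condition at the node, one can arrange the holonomies so that the connections extend consistently across the vanishing cycle in the nearby smooth curves $\mC_s$.

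Next I would extend this reduction from $\mC_0$ to the total space $\mC$. Away from the node, the smooth-family arguments in the proof of Lemma \ref{lem:stdform} apply directly. Near the node, the biholomorphic local model $\Upsilon$ realizes $\mN_w$ as a neighborhood of the origin in $\C^2$, over which $\mP_\C/K \to \mC$ is a smooth fiber bundle with contractible base, so the reduction on $\mC_0 \cap \mN_w$ extends smoothly. This gives a family of $K$-gauged holomorphic maps $(A_s, u_s)$ satisfying (a) and (b). For the trivialization of $\pi^*\mP \to \Om'$, the total space $\Om'$ is homotopy equivalent to a one-skeleton so the bundle is trivializable; I would fix a trivialization whose restriction to the central fiber $\Om'_0 = \Sig_0$ realizes the standard forms at $w^\pm$ and which agrees with a pullback trivialization from the family on the compact part.

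For (c), the Sobolev convergence $A_s \to A_0$ in $W^{1,p}$ and $u_s \to u_0$ in $W^{2,p}$ on $\Om'_s$ follows from the fact that, in the chosen trivialization, these objects are pulled back from the smooth family $(A_s, u_s)$ on $\mC$ via the smooth family of embeddings $\Om'_s \hookrightarrow \Sig_0$ of Lemma \ref{lem:convprep}, and the reduction depends smoothly on $s$. For (d), the transition function $\kappa_s : \Delta_s \to K$ is determined by the discrepancy of the two trivializations of $\pi_s^* \mP_s$ on the sleeves $\Delta_s^\pm$, which for the standard-form connection $A_0 = \d \pm \lambda \d\theta$ on $N(w^\pm)$ differs from a constant element of $K$ of the form $e^{-l_s \lambda}$ by a smooth term vanishing as $s \to 0$; hence $\kappa_s$ is uniformly $C^2$-bounded. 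The main obstacle is part (c): the sleeves $\Delta_s^\pm \subset \Om'_s$ migrate to infinity in $\Sig_0$ as $s \to 0$, so the pulled-back connection must approximate $A_0$ uniformly even on these receding cylindrical pieces. This uniform approximation is ultimately supplied by the exponential decay estimate of Proposition \ref{prop:decaycyl} applied to the limit vortex at $w^\pm$, combined with smooth dependence of the whole family of gauged maps on the gluing parameter $s$ and the standard-form matching at the node achieved in the first step.
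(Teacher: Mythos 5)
Your overall skeleton (standard form at the preimages $w^\pm$ via Lemma \ref{lem:stdform}, Kempf--Ness matching of the two $K$-orbits at the node, extension of the reduction using contractibility of the fibers of $\mP_\C/K$, and an explicit computation of $\kappa_s$ from the discrepancy of the two trivializations) matches the paper's Step 2. But there is a genuine gap in your treatment of part (c), which you yourself identify as the main obstacle. You propose to control $A_s-A_0$ and $u_s-u_0$ on the receding sleeves by ``the exponential decay estimate of Proposition \ref{prop:decaycyl} applied to the limit vortex at $w^\pm$.'' That proposition applies only to finite energy \emph{vortices}, and at this stage of the argument $(A_0,u_0)$ is not a vortex: it is merely the gauged holomorphic map obtained from the quasimap by a smooth reduction of structure group. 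The Hitchin--Kobayashi correspondence (Theorem \ref{thm:HK}) is applied only later, in Lemma \ref{lem:centralvortex}, to produce a vortex from $(A_0,u_0)$. Moreover, even granting decay of some limit object, that would not by itself control the \emph{difference} $A_s - A_0$ on the sleeves; and ``smooth dependence on $s$'' only gives pointwise bounds of order $|s|$ in the total space, which must still be converted into cylinder norms on a region whose conformal factor degenerates as $s\to 0$.

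The paper's mechanism is entirely different and purely holomorphic: it works in the local model $\Upsilon(\mN_w)\subset\C^2$ of Remark \ref{rem:localmodel}, where $\mC_s$ is the curve $\{xy=s\}$. The region $\Om'_s\cap N(w^+)$ corresponds in the $y$-coordinate to the annulus $e^{(-L_s-\Delta)/2}\leq |y|\leq 1$, on which the distance from $\{xy=s\}$ to the branch $\{x=0\}$ is $|x|=|s|/|y|\leq e^{(-L_s+\Delta)/2}$, since $|s|=e^{-L_s}$. Because the reduction and the trivialization $\tau_1$ are chosen smoothly on the two-complex-dimensional neighborhood $\mN(w)$, this yields the bound $c\,e^{2\Delta}e^{(-L_s+\Delta)/2}$ for $\hat A_s-\hat A_0$ and $\nabla\hat u_s-\nabla\hat u_0$ in the disk coordinates, and these bounds survive the conformal change to cylinder coordinates because $\Mod{a}_{L^p}$, $\Mod{\nabla a}_{L^p}$, $\Mod{\nabla u}_{L^p}$, $\Mod{\nabla^2 u}_{L^p}$ only decrease under a volume-increasing conformal change; the remaining term $\Mod{u_s-u_0}_{L^p}$, which goes the wrong way conformally, is handled separately by the $L^\infty$ bound times the volume factor $L_s^{1/p}$, which still tends to zero. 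Without this (or an equivalent) argument your proof of (c) does not close; the rest of your outline, including the uniform $C^2$ bound on $\kappa_s$ (which in the paper comes out explicitly as $e^{\lambda^+\theta}e^{-\lambda^-(\theta+t_s)}$, involving only the bounded angular part $t_s$ of $l_s$, not $e^{-l_s\lambda}$ as you write), is essentially correct.
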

\begin{proof} 
{\sc Step 1}: {\em On a neighborhood of $w$ in $\mN(w)
    \subset \mC$ there is a reduction $\sig:\mN(w) \to \mP_\C/K$ and a
    trivialization $\tau_1$ of $\mP \to \mN(w)$ such that
    \begin{equation}\label{eq:convtau1}
      \Mod{A_s - A_0}_{W^{1,p}(\Om'_s \cap N(w^\pm))}, d_{W^{2,p}(\Om'_s \cap N(w^\pm))}(u_s,u_0) \quad \text{as $s \to 0$.}
    \end{equation}}
  We will prove the statement for the positive ends $N(w^+)$, that for the negative end will follow analogously. We focus on a neighborhood of $w$ in $\mC$ and work with the local model described in Remark \ref{rem:localmodel}. Pick a reduction $\sig:\mC \to \mP_\C/K$, so we have a principal $K$-bundle $\mP \to \mC$. For each $s$, we have a $K$-gauged holomorphic map $(A_s,u_s)$ defined on $\mP_s \to \mC_s$. Pulling back, $(\hat A_s, \hat u_s):=(\Upsilon^{-1})^*(A_s,u_s)$ is defined on a $K$-bundle over $\{xy=s\} \cap \mN(w)$. The reduction $\sig:\mC \to \mP_\C/K$ can be chosen such that $u(w) \in \Phi^{-1}(0)$ and in a neighborhood of $(0,0)$, $F_{\hat A_0}|_{x=0}$, $F_{\hat A_0}|_{y=0}=0$. We deduce the convergence properties of $A_s$, $u_s$ working in the local model -- these will continue to hold on $\Sig$ because for a 1-form $a$, the norms $\Mod{a}_{L^p}$ and $\Mod{\nabla a}_{L^p}$ decrease when there is a conformal change of coordinates that increases the volume. The same is true for $\Mod{\nabla u}_{L^p}$ and $\Mod{\nabla^2 u}_{L^p}$. The $L^p$ convergence of $u_s$ is worked out separately. 

  Pick a smooth trivialization $\tau_1$ of the bundle $\mP|_{\mN(w)}$
  so that $\hat A_0|_{x=0}$, $\hat A_0|_{y=0}$ are the trivial
  connections.  In order to compare $\hat A_s$ and $\hat A_0$, recall
  points in $\{xy=s\}$ are identified to those in $\{x=0\}$ as $(x,y)
  \mapsto (0,y)$. The connection $\hat A_s$ is pulled back to a connection on
  $\{x=0\}$ and this is also called $\hat A_s$. By smoothness of the
  reduction $\sig$ and trivialization $\tau_1$, we have
$$\Mod{A_s - A_0}_{W^{1,p}(\Om'_s \cap N(w^+))} \leq \Mod{\hat A_s- \hat A_0}_{W^{1,p}(\{x=0, e^{(-L_s-\Delta)/2} \leq |y| \leq 1\})} \leq c_1e^{2\Delta}e^{(-L_s+\Delta)/2}.$$ 
(See Figure \ref{fig:Om's}.) This shows $\Mod{A_s -
  A_0}_{W^{1,p}(\Om'_s \cap N(w^+))} \to 0$. To study $u_s$, by
possibly shrinking the neighborhood $\mN(w)$, we can assume that the
image $u(\mN(w))$ is contained in a single chart of $X$, which is
identified to a neighborhood in $\C^N$.
$$\Mod{\nabla u_s - \nabla u_0}_{W^{1,p}(\Om'_s \cap N(w^+))} \leq \Mod{\nabla \hat u_s- \nabla \hat u_0}_{W^{1,p}(\{x=0, e^{(-L_s-\Delta)/2} \leq |y| \leq 1\})} \leq c_1e^{2\Delta}e^{(-L_s+\Delta)/2}.$$ 
Further,
$$\Mod{d(u_s,u_0)}_{L^\infty(\Om'_s \cap N(w^+))} = \Mod{d(\hat u_s,\hat u_0)}_{L^\infty(\{x=0, e^{(-L_s-\Delta)/2} \leq |y| \leq 1\})} \leq c_1 e^{(-L_s+\Delta)/2}.$$
This implies
\begin{align*}
  \Mod{u_s-u_0}_{L^p(\Om'_s \cap N(w^+))} \leq
  c_1L_s^{1/p}e^{(-L_s+\Delta)/2}.
\end{align*}
As $s \to 0$, $L_s \to \infty$ and so, $\Mod{u_s - u_0}_{L^p(\Om'_s
  \cap N(w^+))} \to 0$.

\begin{center}
  \begin{figure}
    \includegraphics{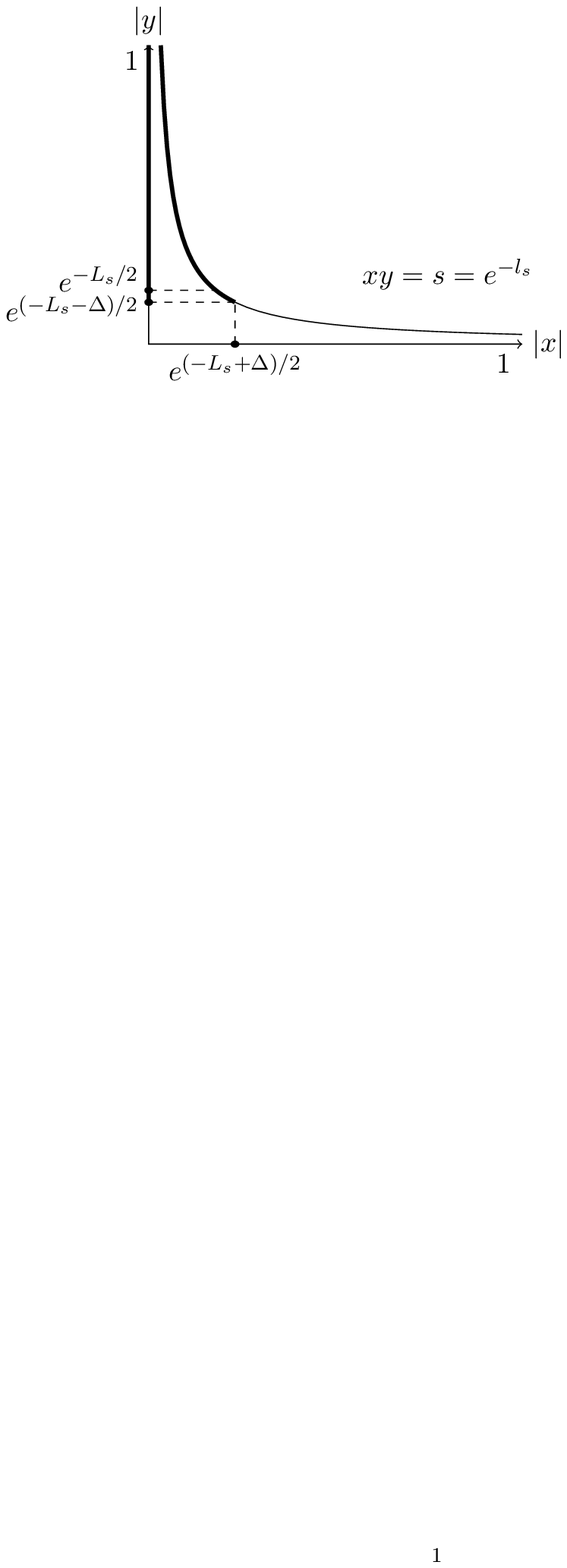}
    \caption{The region $\Om_s'$ on the curves $\mC_s$ and
      $\mC_0$. $L_s=\on{Re}(l_s)$.}
    \label{fig:Om's}
  \end{figure}
\end{center}

{\sc Step 2}: {\em The reduction in Step 1 can be extended to $\sig:\mC \to \mP_\C/K$. There is a trivialization $\tau_2$ of $\pi^*\mP \to \Om'$ such that \eqref{part:nodea}-\eqref{part:noded} are satisfied.}\\
We remark that after making a choice of reduction, the bundle
$\pi^*\mP \to \Om'$ is trivializable because $\Om'$ is homotopic to a
one-skeleton. On the other hand, $\mC$ is made up of a 4-cell $\mN(w)$
glued to $\Om$ ($\Om$ is a deformation retract of $\Om'$), so $\mP \to
\mC$ is not trivializable. In Step 1, we found a trivialization
$\tau_1$ over $\mN(w)$. By comparing $\tau_1$ and $\tau_2$ in the
intersections, we obtain the functions $\kappa_s:\Delta_s \to K$.

First, we describe $\sig$ and $\tau_2$, and list the properties they
automatically satisfy. By Lemma \ref{lem:stdform}, there is a
reduction in the neighborhood of marked points, $\sig : \cup_{s,j}
N(z_j)_s \to \mP_\C/K$ such that the connections $F_{A_s}$ are flat in
these neighborhoods and $\Phi(u(z_j(s)))=0$. Since the fibers
$\mP_\C/K$ are contractible, the reduction chosen here extends to
$\sig:\mC \to \mP_\C/K$, while agreeing with $\sig|_{\mN(w)}$ chosen
in Step 1. As in Lemma \ref{lem:stdform}, there is a trivialization
$\tau_2$ of the bundle $\pi^*\mP \to \Om'$ such that
\eqref{part:nodea} and \eqref{part:nodeb} are satisfied. The reduction 
$\sig$ and trivialization $\tau_2$ are fiber-wise smooth and vary smoothly with $s$. Therefore the convergence of the gauged maps $(A_s,u_s)$ as in 
\eqref{eq:convtau0} holds on the regions $\Om'_s \bs \on{domain}(\rho_{w^\pm})$, i.e. away
from the neighborhood of the node $w$.

We now compare $\tau_1$ and $\tau_2$ to finish the proof. Suppose
$\tau_1$ and $\tau_2$ are related by the gauge transformation
$k_\pm:\Om' \cap \on{domain}(\rho_{w^\pm}) \to K$. We focus on
the $+$ side. For all $s \in
S$ and any $R \geq 0$ let $\gamma_{s,R}$ denote the loop
$\rho_{w^+}^{-1}\{r=R, 0 \leq \theta \leq
2\pi\}$. The homotopy class of $k_+|_{\gamma_{s,R}}$ is constant for
all $s \in S$ and $R \geq 0$. Suppose this class contains the geodesic
loop $[0,2\pi] \ni \theta \mapsto e^{\lambda^+ \theta}$ for some
$\lambda^+ \in \frac 1 {2\pi}\exp^{-1}(\Id) \subset \k$. Then, as in the proof of
Corollary \ref{cor:connW1psig}, $\tau_2$ can be homotopically altered so that
$k_+=e^{\lambda^+ \theta}\circ \rho_{w^+}$. Similarly, we ensure $k_-=e^{\lambda^- \theta}\circ \rho_{w^-}$ for some $\lambda^- \in \frac 1 {2\pi}\exp^{-1}(\Id) \subset \k$.
Then, we see that the
convergence in \eqref{eq:convtau1} holds in the trivialization
$\tau_2$ also, therefore \eqref{part:nodec} is satisfied. Finally, under the trivialization $\tau_2$,
$\kappa_s=e^{\lambda^+\theta}e^{-\lambda^-(\theta+t_s)} \circ \rho_{w^+} \circ \pi_{s,+}^{-1}:\Delta_s \to K$ (see
\eqref{eq:lsts} for definition of $t_s$), which proves
\eqref{part:noded}.
\end{proof}
The following Lemma produces a family of gauged holomorphic maps over
$(S,0)$, such that the central element, defined on a nodal curve, is a
vortex. The family is continuous at $0 \in S$ in the sense of
\eqref{eq:convnearnode_prepare}.
\begin{lemma}[Making the central element a
  vortex]\label{lem:centralvortex} Let $p\geq 2$ and $(A_s,u_s)_{s \in
    S}$ be the family of gauged holomorphic maps defined over the
  principal $K$-bundle $\mP \to \mC$ produced by Lemma
  \ref{lem:stdform_node}. 
  There is a family of
  complex gauge transformations $e^{i\xi_s} \in \G(\mP_s)$
  such that $e^{i\xi_0}(A_0,u_0)$ is a finite energy vortex on $P_0 \to
  \Sig_0$ and for any sequence $s_\nu \to 0$ in
  $S$,
  \begin{equation}\label{eq:convnearnode_prepare}
    \begin{split}
      &\cup_{s_\nu} u_{s_\nu}(\Om'_{s_\nu}) \subset X \text{ is pre-compact},\\
      &\Mod{e^{i\xi_{s_\nu}}A_{s_\nu} - e^{i\xi_{s_0}}A_0}_{W^{1,p}(\Om'_{s_\nu})}
      \to 0 \quad
      d_{W^{2,p}(\Om'_{s_\nu})}(e^{i\xi_{s_\nu}}u_{s_\nu},e^{i\xi_{s_0}}u_0)\to 0,
    \end{split}
  \end{equation}   
  where the norms are defined under the trivialization of the bundle
  $\mP \to \Om'$ from Lemma \ref{lem:stdform_node}.
\end{lemma}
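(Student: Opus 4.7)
The plan is to combine Theorem \ref{thm:HK} with an implicit function theorem argument analogous to Lemma \ref{lem:vortcont}, adapted to the nodal degeneration via the cover $\Om'_s$. First, apply Theorem \ref{thm:HK} component-wise to the gauged holomorphic map $(A_0,u_0)$ on $\mC_0$ (the hypotheses hold because $(A_0,u_0)$ is induced by a quasimap, so special points map to $X^\ss$, and by Lemma \ref{lem:stdform_node} \eqref{part:nodeb} we actually have $\Phi(u_0(w))=0$). This yields $\xi_0 \in \Gamma(\Sig_0,\mP_0(\k))$ such that $e^{i\xi_0}(A_0,u_0)$ is a finite energy vortex. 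By Lemma \ref{lem:xiexpbd}, under the trivialization fixed in Lemma \ref{lem:stdform_node}, $\xi_0$ decays exponentially at every infinite cylindrical end, in particular $\Mod{\xi_0}_{W^{2,p}(\rho_{w^\pm}^{-1}\{n \le \pm r \le n+1\})} \le c e^{-\gamma_1 n}$ for any $0<\gamma_1<\gamma$.

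Next I would construct an approximate solution for $s$ near $0$ by transplanting $\xi_0$ to $\Sig_s$. Using the partition-of-unity cutoff $\phi_s$ on $\Om'_s$ and the pushforward from Notation \ref{note:bundleglue}, set
\begin{equation*}
\tilde \xi_s := (\pi_s)_*(\phi_s\, \pi_s^*\xi_0) \in \Gamma(\Sig_s,\mP_s(\k)).
\end{equation*}
Let $(A_s', u_s') := e^{i\tilde \xi_s}(A_s,u_s)$. Because $\xi_0$ is an exact vortex solution on $\Sig_0$, because $\phi_s \equiv 1$ outside the sleeves $\Delta_s^\pm$ and because $\phi_s \xi_0$ is already exponentially small on the sleeves (of size $O(e^{-\gamma_1(L_s-\Delta)/2})$ thanks to Lemma \ref{lem:xiexpbd}), the convergence in \eqref{eq:convtau0} together with Lemma \ref{lem:gcactona} yields
\begin{equation*}
\Mod{*F_{A_s'} + \Phi(u_s')}_{L^p(\Sig_s)} \longrightarrow 0 \quad \text{as } s\to 0,
\end{equation*}
while $(A_s',u_s')$ still satisfies the bounded-image and $W^{1,p}\times W^{2,p}$ convergence hypotheses on $\Om'_s$.

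Now I would correct $(A_s',u_s')$ to an exact vortex by an implicit function theorem argument on the functional
\begin{equation*}
\F_{v_s}:\eta \mapsto *F_{e^{i\eta}A_s'} + \Phi(e^{i\eta}u_s'), \qquad \D\F_{v_s}(0)\eta = \d_{A_s'}^*\d_{A_s'}\eta + L_{u_s'}\eta,
\end{equation*}
acting between $H^2(\Sig_s,\mP_s(\k)) \to L^2(\Sig_s,\mP_s(\k))$. The main obstacle is a uniform $s$-independent bound on $\Mod{\D\F_{v_s}(0)^{-1}}$, because the geometry of $\Sig_s$ degenerates as the neck elongates. To handle this I would argue as in Proposition \ref{prop:lapiso} via Lax--Milgram, decomposing $\Sig_s = \Om_s \cup N_s$: on $\Om_s$ the pushed-forward connection $A_s'$ converges uniformly in $W^{1,p}$ to $e^{i\xi_0}A_0$ so Proposition \ref{prop:lapiso} supplies a uniform coercive bound; on the neck $N_s$, where $u_s'$ stays close to the $K$-orbit $Ku_0(w)\subset \Phi^{-1}(0)$, the free $K$-action ensures $L_{u_s'}$ is uniformly positive definite, so the bilinear form $(\eta,\eta)\mapsto \Mod{\d_{A_s'}\eta}_{L^2}^2 + \int \langle L_{u_s'}\eta,\eta\rangle$ is uniformly coercive, and the two bounds patch using $\phi_s$ and the pushforward identity \eqref{eq:pilu}.

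With uniform invertibility of $\D\F_{v_s}(0)$ established, Proposition \ref{prop:impfn} applied to the small quantity $\Mod{*F_{A_s'}+\Phi(u_s')}_{L^2}$ produces $\eta_s \in H^2(\Sig_s,\mP_s(\k))$ with $\Mod{\eta_s}_{H^2}\to 0$ such that $e^{i\eta_s}(A_s',u_s')$ is a vortex. Define $\xi_s$ by $e^{i\xi_s} = e^{i\eta_s}e^{i\tilde \xi_s}$ (using the Cartan decomposition \eqref{eq:complexgroupiso}, this determines $\xi_s$ modulo a unitary gauge transformation, which may be absorbed). Elliptic regularity for vortices (as in Lemma 3.8--3.9 of \cite{VW:affine}) upgrades the $H^2$ convergence of $\eta_s$ to $W^{2,p}$ convergence on $\Om'_s$, and this together with the $W^{1,p}\times W^{2,p}$ convergence of $(A_s,u_s)\to(A_0,u_0)$ on $\Om'_s$ from Lemma \ref{lem:stdform_node}\eqref{part:nodec} and the continuity of the complex gauge action (Lemma \ref{lem:gcactona}) yields \eqref{eq:convnearnode_prepare}.
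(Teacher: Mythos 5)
Your first half is exactly the paper's proof: apply Theorem \ref{thm:HK} to $(A_0,u_0)$ to obtain $\xi_0$, invoke Lemma \ref{lem:xiexpbd} for exponential decay at all the cylindrical ends, and define $\xi_s$ by cutting off with $\phi_s$ and pushing forward via $(\pi_s)_*$. The convergence \eqref{eq:convnearnode_prepare} then follows because $\xi_s=\xi_0$ on the core $\Om_s$ (so \eqref{eq:convtau0} applies directly) and on the sleeves both $\phi_s\xi_0$ and its reflected copy $\Ad_{\kappa_s}(r^*\tilde\xi_s)$ tend to zero in $W^{2,p}$ by the decay of $\xi_0$ and the uniform $C^2$ bound on $\kappa_s$ from Lemma \ref{lem:stdform_node}\eqref{part:noded}. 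That is all this lemma asks for: only the central element $e^{i\xi_0}(A_0,u_0)$ is required to be a vortex, and the paper's proof stops there.

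Everything after that --- the implicit function theorem correction $\eta_s$ making $e^{i\eta_s}(A_s',u_s')$ an exact vortex for $s\neq 0$ --- is the content of the subsequent Lemma \ref{lem:vortcontnodal}, not of this statement, and the sketch of its key step has a genuine problem. The uniform invertibility of $\D\F_{v_s}(0):H^2\to L^2$ over the degenerating family does not follow from coercivity as you describe: $\lan L_x\xi,\xi\ran=\om_x(\xi_x,J\xi_x)$ is positive definite only where the infinitesimal $K$-action is free, and on the core $\Om_s$ the section $u_s$ may pass through base points (for quasimaps it generally does), so $L_{u_s'}$ is not uniformly positive there and the bilinear form is not coercive. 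The paper gets invertibility on the fixed central fiber by applying Proposition \ref{prop:lapiso} to a deformed operator and then a compactness/Fredholm-index argument plus injectivity; uniformity in $s$ is obtained not by Lax--Milgram but by building an approximate inverse $\hat Q_\nu$ (restrict $Q_\infty$ to the cover, cut off, push forward) and estimating the commutator with $\phi_\nu$, which is small only after the sleeve length $\Delta$ is chosen large. A smaller wrinkle: $e^{i\eta_s}e^{i\tilde\xi_s}$ is of the form $ke^{i\xi_s}$ with a unitary factor that must itself be controlled before it can be ``absorbed.'' None of this affects the lemma as stated, but if your argument is meant to also deliver Lemma \ref{lem:vortcontnodal}, the uniform bound on the inverse needs the patched-approximate-inverse argument rather than coercivity.
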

\begin{proof}
  Using Theorem \ref{thm:HK}, there is a complex gauge transformation
  $e^{i\xi_0}$ on $\mP_0 \to \Sig_0$ that makes $(A_0,u_0)$ a
  vortex. We remark that in the analogous stage of proof of the smooth
  case (see Proposition \ref{prop:contsmooth}), we just applied to
  $e^{i\xi_0}$ to all elements of the family -- that is not possible in
  the nodal case. By Lemma \ref{lem:xiexpbd}, $\xi_0:\Sig_0 \to \k$ decays
  exponentially on the cylindrical ends
  corresponding to $z_1,\dots,z_n$ and $w^\pm$: for any $\gamma_1 <\gamma$ there is a constant $c$ such that
\begin{equation}\label{eq:xi0bd}
    \Mod{\xi_0}_{W^{2,p}(\rho_z^{-1}(\{n \leq r \leq n+1\}))} \leq ce^{-\gamma_1 n}.
  \end{equation}
  for $z=z_1,\dots,z_n$, $w^\pm$.  The element $\xi_s \in
  \Gamma(\Sig_s,P_s(\k))$ is defined as follows: Let $\tilde \xi_s=\phi_s (\xi_0|_{\Om_s'}):\Om'_s \to \k$ and $\xi_s:=(\pi_s)_*\tilde \xi_s$.
  For the estimates in \eqref{eq:convnearnode_prepare}, we work in the
  trivialization of $\pi_s^*P_s \to \tSig_s$ from Lemma \ref{lem:stdform_node}. So, we write down the
  pull-back $\pi_s^*\xi_s$ on $\Om_s' \subset \tSig_s$ as
  \begin{equation*}
    \pi_s^* \xi_s=\begin{cases} \tilde \xi_s& \text{ on $\Om_s$,}\\
      \tilde \xi_s + \Ad_{\kappa_s}(r^*\tilde \xi_s)& \text{ on $\Delta_s^+$,}\\
      \tilde \xi_s + \Ad_{\kappa_s}^{-1}((r^{-1})^*\tilde \xi_s)& \text{ on $\Delta_s^-$.}
    \end{cases}
  \end{equation*}
  On $\Om_s$, $\xi_s=\xi_0$, therefore the gauged maps
  $e^{i\xi_s}(A_s,u_s)$ converge in the sense of
  \eqref{eq:convnearnode_prepare} by the convergence of $(A_s,u_s)$ as
  in \eqref{eq:convtau0} from Lemma \ref{lem:stdform_node}. Next, we
  look at the regions $\Delta_s^\pm$.  We know $\kappa_s$ is uniformly bounded in $C^2(\Delta_s,K)$. By the exponential decay \eqref{eq:xi0bd} of
  $\xi_0$, both the terms $\xi_s$ and $\Ad_{\kappa_s}(r^*\tilde \xi_s)$ (resp. $\Ad_{\kappa_s}^{-1}((r^{-1})^*\tilde \xi_s)$) decay to zero in $W^{2,p}(\Delta_s^+)$ (resp. $W^{2,p}(\Delta_s^-)$) as $s \to 0$. So, the convergence of $(A_s,u_s)$ in \eqref{eq:convtau0} implies  \eqref{eq:convnearnode_prepare}.
\end{proof}

The family of $K$-gauged maps that is produced by Lemma
\ref{lem:centralvortex} can be transformed into a family of vortices
via small complex gauge transformations.  This is carried out in the
following Lemma. The control over the size of the complex gauge
transformations  is used in the proof of Proposition \ref{prop:nodal} to show that the family of vortices is continuous
in the sense of Gromov topology.
\begin{lemma} [Gauged maps near a nodal vortex can be complex gauge
  transformed to vortices] \label{lem:vortcontnodal} Let $\mC \to S$
  be a family of curves as in Proposition \ref{prop:nodal} and let
  $(A_s,u_s)_{s \in S}$ be a smooth family of gauged holomorphic maps
  defined on the $K$-bundle $\mP \to \mC$ in which the central element
  $(A_0,u_0)$ is a vortex. Further, there is a trivialization of the
  pullback bundle $\pi^*\mP \to \Om'$ (see Notation
  \ref{note:bundleglue}) such that the corresponding transition
  functions have a $C^2$ bound: $\Mod{\kappa_s}_{C^2(\Delta_s)}<c$ and
  such that the following is satisfied. For any sequence
  $\{s_\nu\}_{\nu \in \Z_{\geq 1}}$ converging to $0$ in $S$, let
  $\Sig_\nu:=\Sig_{s_\nu}$, $\Sig:=\Sig_0$ be Riemann surfaces with
  cylindrical ends corresponding to the curves $\mC_{s_\nu}$ and the
  central curve $\mC_0$ respectively. Further, 
  \begin{equation}\label{eq:convnearnode}
    \begin{split}
      &\cup_\nu u_\nu(\Om'_\nu) \subset X \text{ is pre-compact,}\\
      &\Mod{A_\nu - A_\infty}_{H^1(\Om'_\nu)} \to 0, \quad
      d_{H^2(\Om'_\nu)}(u_\nu,u_0) \to 0
    \end{split}
  \end{equation}
\marginpar{*****}
  as $\nu \to \infty$.  If the sleeve length $\Delta$ (see Notation \ref{note:bundleglue}) is large enough, we can find complex gauge transformations
  $e^{i\xi_\nu}$ on the bundles $\mP_\nu \to \Sig_\nu$ so that
  $\Mod{\xi_\nu}_{H^2(\Sig_\nu,P_\nu(\k))}\to 0$ and
  $e^{i\xi_\nu}v_\nu$ is a vortex on $\Sig_\nu$.
\end{lemma}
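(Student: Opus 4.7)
The plan is to mimic the strategy of Lemma \ref{lem:vortcont}, applying a quantitative implicit function theorem to
\begin{equation*}
\F_{v_\nu}: H^2(\Sig_\nu, \mP_\nu(\k)) \to L^2(\Sig_\nu, \mP_\nu(\k)), \quad \xi \mapsto *F_{e^{i\xi}A_\nu} + \Phi(e^{i\xi} u_\nu),
\end{equation*}
whose zeros are precisely the $\xi$ that make $e^{i\xi}v_\nu$ a vortex. The linearization at $0$ is $\D\F_{v_\nu}(0) = \d_{A_\nu}^*\d_{A_\nu} + L_{u_\nu}$ with $L_x(\xi) := \d\Phi_x(J\xi_X)$; as in the proof of Lemma \ref{lem:vortcont} it is bounded and injective. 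To run the IFT I need three ingredients: (a) a uniform bound $\Mod{\D\F_{v_\nu}(0)^{-1}} \leq C$ independent of $\nu$; (b) a Lipschitz estimate $\Mod{\D\F_{v_\nu}(\xi) - \D\F_{v_\nu}(0)} \leq c\Mod{\xi}_{H^2}$ on an $H^2$ ball of fixed radius, which follows from Lemma \ref{lem:gcactona} and the precompactness of $\cup_\nu u_\nu(\Om'_\nu)$ exactly as in Step 2 of Lemma \ref{lem:vortcont}; and (c) $\Mod{\F_{v_\nu}(0)}_{L^2(\Sig_\nu)} \to 0$, which follows from the hypothesis that $\F_{v_0}(0) \equiv 0$ on $\Sig_0$, combined with the $W^{1,p} \times W^{2,p}$ convergence \eqref{eq:convnearnode} on the cover $\Om'_\nu$ (applied with $p=2$) and the comparison of norms \eqref{eq:pilu}. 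Granting (a)--(c), Proposition \ref{prop:impfn} delivers $\xi_\nu$ with $\F_{v_\nu}(\xi_\nu) = 0$ and $\Mod{\xi_\nu}_{H^2} \leq 2C\Mod{\F_{v_\nu}(0)}_{L^2} \to 0$.

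The main difficulty is (a), uniform invertibility across the degeneration. On $\Sig_0$ the operator $\D\F_{v_0}(0)$ decomposes as a direct sum across the components of $\Sig_0$ and is invertible on each by Proposition \ref{prop:lapiso}, after the compact perturbation from Step 1 of Lemma \ref{lem:vortcont}; denote its inverse by $Q_0$. To build a uniformly bounded inverse of $\D\F_{v_\nu}(0)$ I construct an approximate right inverse $Q_\nu$ by a pregluing recipe using the covering data of Notation \ref{note:bundleglue}. Given $f \in L^2(\Sig_\nu)$, set $\tilde f := \phi_\nu \cdot \pi_\nu^* f$, viewed as an element of $L^2(\Sig_0)$ by extension by zero; let $\tilde \sig := Q_0 \tilde f$; and set $Q_\nu f := (\pi_\nu)_*(\chi_\nu \tilde \sig)$, where $\chi_\nu$ is a cut-off supported in the interior of $\Om'_\nu$ equal to $1$ on the set where $\phi_\nu \in \{0,1\}$. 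The error $\D\F_{v_\nu}(Q_\nu f) - f$ is supported on the sleeves $\Delta_\nu^\pm$ and decomposes into commutator contributions from $\d\chi_\nu$ and $\d\phi_\nu$ together with the discrepancy $\D\F_{v_\nu} - \D\F_{v_0}$ on those sleeves. All three are controlled by the exponential decay of the vortex $(A_0,u_0)$ on cylindrical ends at the node (Proposition \ref{prop:decaycyl}), yielding $\Mod{\D\F_{v_\nu} Q_\nu - \Id}_{L^2 \to L^2} \leq c e^{-\gamma \Delta/2}$. Fixing $\Delta$ large enough, independently of $\nu$, that this quantity is $\leq 1/2$, a Neumann series converts $Q_\nu$ into a genuine inverse with a uniform bound, establishing (a).

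The sharpest point to verify is the exponential error bound on the sleeves: the cut-off commutators $[\D\F_{v_\nu}, \chi_\nu]$ and $[\D\F_{v_\nu}, \phi_\nu]$ applied to $\tilde \sig = Q_0 \tilde f$ must be shown to decay like $e^{-\gamma \Delta/2}$ in $L^2(\Sig_\nu)$. This is where the asymptotic exponential decay of $(A_0, u_0)$ on the cylindrical ends produced by the node (Proposition \ref{prop:decaycyl}), together with the induced decay of $\tilde \sig$ supplied by the elliptic estimate for $Q_0$, combine decisively and dictate that the sleeve length $\Delta$ must be taken sufficiently large. Once (a) is in hand, the rest of the argument is the same routine IFT used in Lemma \ref{lem:vortcont}, and one obtains complex gauge transformations $e^{i\xi_\nu}$ with $\Mod{\xi_\nu}_{H^2(\Sig_\nu)} \to 0$ and $e^{i\xi_\nu} v_\nu$ a vortex on $\Sig_\nu$.
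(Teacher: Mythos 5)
Your proposal follows the same architecture as the paper's proof: a quantitative implicit function theorem for $\F_{v_\nu}$, with the whole weight resting on a uniform bound for $\D\F_{v_\nu}(0)^{-1}$, obtained by pregluing the inverse $Q_0$ on the central fiber through the cover $\Om'_\nu$ and the partition $\phi_\nu$ (your variant, cutting off the input by $\phi_\nu$ and the output by $\chi_\nu$, is an equivalent recipe to the paper's, which lifts the input without a cut-off and multiplies the output by $\phi_\nu$). Ingredients (b) and (c) and the final IFT step are exactly the paper's Steps 2--3.

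The one place where your argument does not go through as written is the claimed error bound $\Mod{\D\F_{v_\nu}Q_\nu - \Id}_{L^2 \to L^2} \leq c e^{-\gamma\Delta/2}$. The dominant sleeve terms are the cut-off commutators, of the form $(\d^*\d \phi_\nu)\,\tilde\sig + 2\nabla\phi_\nu\cdot\nabla\tilde\sig$ (plus the analogous terms for $\chi_\nu$), where $\tilde\sig = Q_0\tilde f$ and $\tilde f$ comes from an \emph{arbitrary} $f \in L^2(\Sig_\nu)$ whose support reaches into the sleeves. There is no exponential decay of $\tilde\sig$ on the neck to invoke: Proposition \ref{prop:decaycyl} controls the connection coefficient $a^+$ of $A_0$ on the cylindrical end, which enters only the lower-order term $*[(\lambda^+\d\theta + a^+)\wedge(\d\phi_\nu\tensor\tilde\sig)]$, not $\tilde\sig$ itself, and the elliptic estimate for $Q_0$ gives only $\Mod{\tilde\sig}_{H^2} \leq c\Mod{f}_{L^2}$ with no localization. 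What actually makes the commutator terms small is that the cut-off transitions over an interval of length $\Delta$, so $\Mod{\nabla\phi_\nu}_{C^0}$ and $\Mod{\d^*\d\phi_\nu}_{C^0}$ are of order $1/\Delta$; this is precisely how the paper closes Step 1B, fixing $\Delta$ so that $c_2(\Mod{\nabla\phi}_{C^0}+\Mod{\d^*\d\phi}_{C^0}) \leq 1/8$. Relatedly, you should keep the two smallness mechanisms separate: the discrepancy $\D\F_{v_\nu}-\D\F_{v_0}$ on the core and sleeves tends to $0$ as $\nu\to\infty$ by \eqref{eq:convnearnode} with $\Delta$ fixed, while the commutator error is made small by the choice of $\Delta$ uniformly in $\nu$; folding both into a single $e^{-\gamma\Delta/2}$ conflates them. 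With the commutator estimate corrected to the $O(1/\Delta)$ bound, your proof is complete and agrees with the paper's.
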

\begin{proof} The proof is similar to the proof of Lemma
  \ref{lem:vortcont}. The proofs of invertibility of 
$$\D\F_{v_\nu}(0):H^2(\Sig_\nu,P_\nu(\k)) \to L^2(\Sig_\nu,P_\nu(\k)).$$
  and the existence of a uniform bound on the inverse are more complicated and are carried out in steps 1A and 1B.

  {\sc Step 1A}: {\em There is an approximate inverse $\hat Q_\nu$ of $\D\F_\nu(0)$ satisfying $\Mod{\hat Q_\nu} \leq C$ for all $\nu$.}\\
By arguments in Lemma \ref{lem:vortcont}, the operator $D\F_\nu(0)$ is invertible for all $\nu$, including $\nu=\infty$. \marginpar{*****} The challenge lies in proving a uniform bound on the inverse.
The operators $D\F_\nu(0)$ and $D\F_\infty(0)$ are close to each other, so the inverse of $D\F_\infty$ is restricted to a cover of $\Sig_\nu$, which is then patched to produce an approximate inverse of $D\F_\nu$, which can be bounded uniformly.
Suppose $Q_\infty$ is the inverse of the operator
  $$\D\F_{v_\infty}(0):H^2(\Sig,\k) \to L^2(\Sig,\k).$$
Given $\eta \in L^2(\Sig_\nu,P_\nu(\k))$, it lifts to $\pi_\nu^*\eta \in L^2(\Om'_\nu,\k)$. Recall there is an embedding $\Om_\nu' \subset \Sig$. The section $\pi_\nu^*\eta$ can be extended by zero to get $\eta_\infty \in L^2(\Sig,\k)$. Now, we apply the inverse on the central fiber to get $\zeta_\infty:=Q_\infty \eta_\infty$. Using a cut-off function, we next produce a section on $\Om_\nu'$, namely $\tilde \zeta :=\phi_\nu \zeta_\infty|_{\Om_\nu'} \in H^2(\Om_\nu',\k)$, which is supported away from the boundary $\partial \Om_\nu'$. Finally, we define the approximate inverse of $\eta$ as the push-forward $\zeta=\hat Q_\nu \eta:=(\pi_\nu)_*\tilde \zeta$.

Next we show that $\Mod{\hat Q_\nu}$ is uniformly bounded. It is
easy to see that the first 3 steps -- $\eta \mapsto \pi_\nu^*\eta$,
$\pi_\nu^*\eta \mapsto \zeta_\infty$ and $\zeta_\infty \mapsto \tilde
\zeta$ have norm bounds independent of $\nu$. For the last step
$\tilde \zeta \mapsto \zeta:H^2(\tSig,\k) \to
H^2(\Sig_\nu,P_\nu(\k))$, we need to bound $\Mod{\pi_\nu^*
  \zeta}_{H^2(\Om'_\nu,\k)}$ in terms of $\tilde \zeta$. We can write
$$\pi_\nu^* \zeta=\begin{cases} \tilde \zeta& \text{ on $\Om_\nu$,}\\
  \tilde \zeta + k_\nu(r^*\tilde \zeta)& \text{ on $\Delta_\nu^+$,}\\
  \tilde \zeta + k_\nu^{-1}((r^{-1})^*\tilde \zeta)& \text{ on
    $\Delta_\nu^-$.}
\end{cases}$$ Since we have uniform $C^2$ bounds on $k_\nu$, the last
step $\tilde \zeta \mapsto \zeta$ also has a uniform bound.

{\sc Step 1B}: {\em $\Mod{\D\F_\nu(0)\hat Q_\nu - \Id} \leq \hh$ for large enough $\nu$.}\\
We work on the cover $\Om_\nu'$ of $\Sig_\nu$ and first split up the
required bound to bounds on the core $\Om_\nu$ and the sleeves
$\Delta_\nu^\pm$.  Consider any $\eta \in L^2(\Sig_\nu,P_\nu(\k))$. We observe that $\D\F_\nu(0)\hat Q_\nu\eta=\D\F_\nu(0)(\pi_\nu)_*\tilde \zeta=(\pi_\nu)_*(\D\F_\nu(0)\tilde \zeta)$, and $\eta=(\pi_\nu)_*(\phi_\nu\pi_\nu^* \eta)$. Using \eqref{eq:pilu}, we get
\begin{align*}
  \Mod{\D\F_\nu(0)\hat Q_\nu\eta - \eta}_{L^2(\Sig_\nu,P_\nu(\k))} &\leq 2(\Mod{\D\F_\nu(0)\hat Q_\nu\eta - \eta}_{L^2(\Om_\nu,\k)} + \Mod{\D\F_\nu(0)\tilde \zeta - \phi_\nu\pi_\nu^* \eta}_{L^2(\Delta^+_\nu,\k)} \\
&+ \Mod{\D\F_\nu(0)\tilde \zeta -\phi_\nu\pi_\nu^*\eta}_{L^2(\Delta_\nu^-,\k)})=2(T1+T2+T3).
\end{align*}

To bound $T1$, on $\Om_\nu$, we can write $\eta=\D\F_\infty (\hat Q_\nu\eta)|_{\Om_\nu}$. Then,
$$T1 \leq \Mod{(\D\F_\nu(0) - \D\F_\infty(0))|_{\Om_\nu}}\cdot\Mod{\hat Q_\nu}\cdot \Mod{\eta}_{L^2(\Sig_\nu)} \leq c_\nu\Mod{\hat Q_\nu}\Mod{\eta}_{L^2(\Sig_\nu)},$$
where $c_\nu \to 0$ as $\nu \to \infty$. This uses \eqref{eq:diffdiff}
and the convergence \eqref{eq:convnearnode}.  $T2$ and $T3$ are
bounded in a similar way to each other. We show the case of $T2$:
\begin{equation}\label{eq:Qbd2}
  \begin{split}
    \Mod{\D\F_\nu(0)\tilde \zeta - \phi_\nu\pi_\nu^* \eta}_{L^2(\Delta^+_\nu,\k)} &\leq \Mod{\D\F_\nu(0)\tilde \zeta - \D\F_\infty(0)\tilde \zeta}_{L^2(\Delta^+_\nu,\k)}\\
    & + \Mod{\D\F_\infty(0)\tilde \zeta - \phi_\nu\pi_\nu^*
      \eta}_{L^2(\Delta^+_\nu,\k)} = T2A + T2B.
  \end{split}
\end{equation}
$T2A$ is bounded in a similar way to $T1$ and gives $T2A \leq
c_\nu\Mod{\hat Q_\nu}\Mod{\eta}_{L^2(\Sig_\nu)}$, where $\lim_{\nu \to
  \infty}c_\nu=0$. To work on $T2B$, we observe that on
$\Delta_\nu^+$, $\pi_\nu^*\eta=\D\F_\infty(0)\zeta_\infty$ and so,
\begin{equation*}
  T2B=\Mod{\D\F_\infty(0) \phi_\nu \zeta_\infty - \phi_\nu \D\F_\infty(0)\zeta_\infty}_{L^2}=\Mod{\d^*_{A_\infty}\d_{A_\infty}(\phi_\nu \zeta_\infty) - \phi_\nu \d^*_{A_\infty}\d_{A_\infty}\zeta_\infty}_{L^2}
\end{equation*}
Recall $A_\infty=\d+\lambda^+ \d\theta + a^+$, where $\lambda^+ \in \k$
and $\Mod{a^+}_{W^{1,p}(N(w^+))}<\infty$. Then,
\begin{equation*}
  \begin{split}
    \d^*_{A_\infty}\d_{A_\infty}(\phi_\nu \zeta_\infty) &- \phi_\nu \d^*_{A_\infty}\d_{A_\infty}\zeta_\infty\\
    &=(\d^*\d \phi_\nu)\zeta_\infty + 2(\nabla \phi_\nu)\cdot (\nabla
    \zeta_\infty) + *[(\lambda^+\d\theta + a^+) \wedge (\d\phi_\nu
    \tensor \zeta_\infty)]
  \end{split}
\end{equation*}
The bounds on $a^+$, $\lambda^+$ are independent of $\eta$, $\nu$. So,
\begin{equation*}
  T2B \leq c\Mod{\zeta_\infty}_{H^2(\Delta_\nu^+,\k)}(\Mod{\nabla \phi}_{C^0} + \Mod{\d^*\d \phi}_{C^0}) \leq c_2\Mod{\eta}_{L^2(\Sig_\nu)}(\Mod{\nabla \phi}_{C^0} + \Mod{\d^*\d \phi}_{C^0}).
\end{equation*}
where the norms of $\nabla \phi$ and $\d^*\d \phi$ are taken on
$[-\Delta/2,\Delta/2] \times S^1$. These norms can be made arbitrarily small by
enlarging $\Delta$, which has the effect of stretching out $\phi$. We fix $\Delta$ such
that
$$\Mod{\nabla \phi}_{C^0([-\Delta/2,\Delta/2] \times S^1)} + \Mod{\d^*\d \phi}_{C^0([-\Delta/2,\Delta/2] \times S^1)}) \leq \frac 1 {8c_2}.$$
Putting things together in the above discussion, we get
$$\Mod{\D\F_\nu \hat Q_\nu \eta -\eta}_{L^2(\Sig_\nu)} \leq (\qq + c_\nu)\Mod{\eta}_{L^2(\Sig_\nu)},$$
where the constants $c_\nu$ are such that $\lim_{\nu \to
  \infty}c_\nu=0$. So, by taking $\nu$ large enough, Step 1B is
proved.

Step 1B shows that $\D\F_\nu(0) \hat Q_\nu$ is invertible, so
$\D\F_\nu(0)^{-1}= \hat Q_\nu (\D\F_\nu(0) \hat Q_\nu)^{-1}$ and hence
$\Mod{\D\F_\nu(0)^{-1}} \leq 2\Mod{\hat Q_\nu}$. The rest of the proof
-- steps 2 and 3 -- are identical to the proof of Lemma
\ref{lem:vortcont}.
\end{proof}

\begin{proof}[Proof of Proposition \ref{prop:nodal}] The proof follows
  by applying Lemma \ref{lem:stdform_node}, followed by Lemma
  \ref{lem:centralvortex}, and then followed by Lemma
  \ref{lem:vortcontnodal} to the family of quasimaps $Q_S$.
\end{proof}

\subsection{Non-compact GIT quotient}\label{sec:noncpt}
We recall from Section \ref{sec:quasi}, there is a projective morphism
from the GIT quotient $X\qu G$ to the affine variety $X/_{\on{aff}}
G=\Spec(A^G)$. Also, there is a $G$-invariant map $X \to
X/_{\on{aff}}G$. So, a gauged map $(C,P,u:P \to X)$ descends to a map
$C \to X/_{\on{aff}}G$ which must be constant since $C$ is
affine. So, there is a map $Qmap_{g,n}(X \qu G,
\beta) \to X/_{\on{aff}}G$, which is proper by Theorem 4.3.1. in \cite{CKM:quasimap}.

A similar fact is true for vortices with target $X$. A finite energy
vortex $(A,u)$ descends to a holomorphic map $C \to X/_{\on{aff}}G$
which is necessarily a constant, so the image of $u$ is contained in a
single fiber of the map $X \qu G \to X/_{\on{aff}} G$. In this case,
Theorem \ref{thm:maincpt} has to be modified,
$\ol{MV}^K_{g,n}(X,\beta)$ is no longer compact. Instead, we get a
continuous proper map $\pi_{\on{aff}}:\ol{MV}^K_{g,n}(X,\beta) \to
X/_{\on{aff}}G$. The homeomorphism in Theorem \ref{thm:main} still
holds. The proofs carry over with minor modifications.

\appendix
\section{Some analytic results}

\begin{proposition}[Implicit Function Theorem, Proposition A.3.4 in \cite{MS}] \label{prop:impfn}
   Let $F: X \to Y$ be a
  differentiable map between Banach spaces. The operator $DF(0)$ has an inverse
  $Q$, with $\Mod{Q} \leq c$. Suppose for all $x \in B_\delta$,
  $\Mod{DF(x)-DF(0)}<\frac 1 {2c}$ and $\Mod{F(0)}<\frac \delta {4c}$.
  Then, there is a unique point $x \in B_\delta$ for which $F(x)=0$. 
\end{proposition}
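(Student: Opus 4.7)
The plan is the standard contraction mapping argument for the Newton-type operator associated with $F$. Define
\[
T \colon B_\delta \to X, \qquad T(x) := x - Q F(x).
\]
A point $x$ is a fixed point of $T$ if and only if $F(x) = 0$, so it suffices to produce a unique fixed point of $T$ in $B_\delta$.

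First I would show that $T$ is a $\tfrac12$-contraction on $B_\delta$. For $x,y \in B_\delta$, write
\[
T(x) - T(y) = Q\bigl(DF(0)(x-y) - (F(x) - F(y))\bigr) = Q \int_0^1 \bigl(DF(0) - DF(y + t(x-y))\bigr)(x-y)\,dt,
\]
using convexity of $B_\delta$ and the fundamental theorem of calculus for $F$. Applying the bounds $\Mod{Q} \leq c$ and $\Mod{DF(\cdot) - DF(0)} < 1/(2c)$ on $B_\delta$ yields $\Mod{T(x) - T(y)} \leq \tfrac12 \Mod{x-y}$.

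Next I would check that $T$ preserves the closed ball $\overline{B_{\delta/2}}$. Since $T(0) = -QF(0)$, the hypothesis $\Mod{F(0)} < \delta/(4c)$ gives $\Mod{T(0)} < \delta/4$. For $x \in \overline{B_{\delta/2}}$, combining this with the contraction estimate,
\[
\Mod{T(x)} \leq \Mod{T(x) - T(0)} + \Mod{T(0)} \leq \tfrac12 \Mod{x} + \tfrac{\delta}{4} \leq \tfrac{\delta}{2}.
\]
By the Banach fixed point theorem applied to the complete metric space $\overline{B_{\delta/2}}$, the map $T$ has a unique fixed point $x_\ast \in \overline{B_{\delta/2}} \subset B_\delta$, giving $F(x_\ast) = 0$. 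Uniqueness in the full ball $B_\delta$ follows immediately: if $x,y \in B_\delta$ both satisfy $F=0$, then $x = T(x)$ and $y = T(y)$, so $\Mod{x-y} = \Mod{T(x)-T(y)} \leq \tfrac12\Mod{x-y}$, forcing $x = y$.

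There is no genuine obstacle here; the only thing to be careful about is ensuring the contraction estimate is valid on the whole of $B_\delta$ (which requires the hypothesis on $DF$ to hold at every point of $B_\delta$, as stated), and that the self-map property of $T$ is checked on a smaller ball where the constant $\delta/4$ from $\Mod{T(0)}$ combines with the $\tfrac12$-contraction to close up.
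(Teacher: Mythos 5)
Your proof is correct, and since the paper gives no proof of this proposition (it is quoted from McDuff--Salamon, Proposition A.3.4), your contraction-mapping argument for the Newton operator $T(x)=x-QF(x)$ is exactly the standard argument used in that reference. The only point worth noting is that the fundamental-theorem-of-calculus step implicitly uses that $F$ is $C^1$ (or at least the mean value inequality for differentiable maps between Banach spaces), which is the intended reading of ``differentiable'' here.
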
 

\begin{proposition}[Sobolev multiplication]\label{prop:sobmult} Let $\Om \subseteq \R^n$ be a domain that is
  not necessarily compact and has smooth boundary. Suppose $\Om$ satisfies the cone condition with a cone $C$ (refer to \cite{Adams:sobspace} for definition).
\begin{enumerate}
\item {\rm{(Theorem 4.39 in \cite{Adams:sobspace})}} Suppose $p>1$ and $k \geq 0$ is an integer such that $kp
  \geq n$. Then there is a constant $c(k,p,n,C)$ such that 
  $$\Mod{uv} \leq c \Mod{u} \cdot \Mod{v},$$
  where $\Mod{\cdot}=\Mod{\cdot}_{W^{k,p}(\Om)}$. So, $W^{m,p}(\Om)$ is
  a Banach algebra. 
\item \label{part:Hk_mult} {\rm{(Multiplication in $H^k$)}} Suppose $k_1$, $k_2$ and $k_3 \in \Z$ are such that $k_3 \leq \min\{k_1,k_2\}$, $k_3<k_1+k_2 - \frac n 2$ and $k_1 + k_2>0$, then there is a constant $c(k_1,k_2,k_3,n,C)$, such that $\Mod{uv}_{H^{k_3}(\Om)}\leq c \Mod{u}_{H^{k_1}(\Om)}\Mod{v}_{H^{k_2}(\Om)}$.
\end{enumerate}
\end{proposition}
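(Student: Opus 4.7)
The plan is to reduce both statements to the classical Sobolev embedding theorem combined with the Leibniz rule and Hölder's inequality. The cone condition is what makes the Sobolev embeddings available uniformly on $\Omega$, so we may freely use $W^{m,q}(\Omega) \hookrightarrow L^r(\Omega)$ with the standard scaling relation $\frac{1}{r} = \frac{1}{q} - \frac{m}{n}$ when $mq < n$, and $W^{m,q}(\Omega) \hookrightarrow L^\infty(\Omega)$ when $mq > n$ (with the endpoint $mq = n$ giving embedding into every $L^r$, $r < \infty$). For part (a), write out the Leibniz formula
\begin{equation*}
\partial^\alpha(uv) = \sum_{\beta \leq \alpha} \binom{\alpha}{\beta} \partial^\beta u \cdot \partial^{\alpha-\beta}v, \qquad |\alpha| \leq k,
\end{equation*}
and estimate each summand in $L^p$ by Hölder, placing whichever of $\partial^\beta u$, $\partial^{\alpha-\beta}v$ has the smaller remaining derivative count into $L^\infty$ (or a very large $L^r$) via Sobolev embedding, and the other into $L^p$. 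The hypothesis $kp \geq n$ is exactly what makes at least one of the two factors embed into a sufficiently large $L^r$ to close Hölder with exponent $p$.

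For part (b), the strategy is the same after reducing to the case $k_1, k_2 \geq 0$. If both indices are non-negative and $k_3 \geq 0$, one applies Leibniz to $\partial^\alpha(uv)$ for $|\alpha| \leq k_3$ and bounds each $\partial^\beta u \cdot \partial^{\alpha-\beta}v$ in $L^2$ by Hölder with exponents $p_1, p_2$ satisfying $\frac{1}{p_1}+\frac{1}{p_2} = \frac{1}{2}$; the Sobolev embeddings $H^{k_1-|\beta|} \hookrightarrow L^{p_1}$ and $H^{k_2-|\alpha-\beta|} \hookrightarrow L^{p_2}$ are compatible precisely when $|\beta| + |\alpha-\beta| = |\alpha| \leq k_1 + k_2 - \frac{n}{2}$, which is granted by the strict inequality $k_3 < k_1 + k_2 - \frac{n}{2}$ (the strictness absorbs the loss at the Sobolev endpoint). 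If one of the indices, say $k_1$, is negative, use duality:
\begin{equation*}
\Mod{uv}_{H^{k_3}} = \sup_{\Mod{\phi}_{H^{-k_3}} = 1} |\langle uv,\phi\rangle| = \sup_\phi |\langle u, v\phi\rangle|,
\end{equation*}
and apply the already proved estimate to the pair $(v,\phi)$ in $H^{k_2} \times H^{-k_3}$ landing in $H^{-k_1}$; the indices $k_2, -k_3, -k_1$ satisfy the same type of inequality, so this closes the argument.

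The main obstacle is bookkeeping rather than ideas. The endpoint case $kp = n$ of part (a) is the first trap: there one must replace $L^\infty$ by $L^r$ for $r$ large but finite, and check that the Hölder exponents can still be balanced for every multi-index $\beta$. In part (b), the subtle point is verifying that the strict inequality $k_3 < k_1 + k_2 - \frac{n}{2}$ survives every intermediate Sobolev embedding across all Leibniz summands, and that the hypothesis $k_3 \leq \min(k_1,k_2)$ prevents trivial counterexamples (e.g., differentiating a factor more times than it possesses derivatives). Once these are checked, the constant $c$ depends only on $k_1, k_2, k_3, n$ and the cone $C$, as claimed. Since both statements are classical and are quoted verbatim from Adams \cite{Adams:sobspace}, the cleanest presentation is simply to invoke Theorem 4.39 and the standard multiplication lemma there, which is what the paper does.
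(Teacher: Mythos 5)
Your handling of part (a) and of the non-negative cases of part (b) is essentially the paper's: the paper simply cites Adams for (a), and for (b) with $k_3=0$ it runs exactly your H\"older--Sobolev balancing (choosing $\tfrac{n}{p_i}=-k_i+\tfrac n2+\tfrac \eps2$ with $\eps=k_1+k_2-\tfrac n2$, so that $\tfrac1{p_1}+\tfrac1{p_2}=\tfrac12$), then gets $k_3>0$ by induction, which is your Leibniz expansion. The uniformity of the constants via the cone $C$ is also the paper's justification.

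There is, however, a gap in your duality step. You dualize only when an \emph{input} index is negative, reducing to the bilinear estimate $H^{k_2}\times H^{-k_3}\to H^{-k_1}$, and you assert that the new triple is "already proved." This leaves uncovered the case $k_1,k_2\ge 0>k_3$. When $k_1+k_2\ge n/2$ you can route through $L^2\hookrightarrow H^{k_3}$, and when one of $k_1,k_2$ vanishes a single dualization lands in the all-non-negative case; but in general the reduction need not terminate. Concretely, for $n=10$ and $(k_1,k_2,k_3)=(2,2,-2)$ the hypotheses hold ($-2\le 2$, $-2<4-5$, $4>0$), the product of two $H^2(\R^{10})\hookrightarrow L^{10/3}$ functions only lies in $L^{5/3}$, and pairing $\phi\in H^{2}$ with either factor returns the triple $(2,2,-2)$ again -- the recursion is a fixed point, so nothing has been proved. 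The paper avoids this by dualizing symmetrically: for $k_3<0$ it reduces to continuity of the \emph{trilinear} map $H^{k_1}\times H^{k_2}\times H^{-k_3}\to L^1$, a three-factor H\"older/Sobolev estimate that only uses $k_1+k_2+(-k_3)>n/2$ and covers all negative-$k_3$ cases at once (in the example above, three factors in $L^{10/3},L^{10/3},L^{5/2}$ multiply into $L^1$). Replacing your bilinear reduction by this trilinear one closes the gap; note also that this negative-index statement is not in Adams, so it cannot simply be cited away. For the paper's actual applications ($n=2$) your scheme happens to suffice, but the proposition is stated for general $n$.
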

\begin{proof} [Proof of \eqref{part:Hk_mult}] First, we focus on $k_3=0$ and assume $k_1 \geq k_2$. If $k_1 -\frac n 2>0$, the result follows from the embedding $H^{k_1} \hra L^\infty$. Otherwise, let $\eps:=k_1+k_2 - \frac n 2>0$ and pick $p_i$ for $i=1,2$ such that $\frac n {p_i}=-k_i + \frac n 2+ \frac \eps 2$. We can embed $H^{k_i} \hra H^{k_i-\frac \eps 2} \hra L^{p_i}$. We have $\frac 1 {p_1} + \frac 1 {p_2}=\hh$, so the result for $k_3=0$ follows by H\"older's inequality. For $k_3>0$, the result can be obtained by induction. For $k_3<0$, recall that $H^{k_3}$ is the dual space $(H_0^{-k_3})^*$, so to prove the result, we need to prove the triple multiplication $H^{k_1} \times H^{k_2} \times H^{-k_3} \to L^1$ is continuous. The constants in Sobolev embedding depend only on the Sobolev indices and the cone $C$.
\end{proof}

\begin{remark}\label{rem:multcyl} The multiplication theorem holds for a surface $\Sig$ with cylindrical ends. In general the constants $c$ are not independent of $\Sig$. However, if $\Sig_1 \subset \Sig_2 \subset \dots \subset \Sig$ be a sequence of subsets exhausting $\Sig$ such that the boundaries $\partial \Sig_i$ are smooth, lie in the cylindrical part of $\Sig$ and are $\R$-translates of each other, then the constants $c(\Sig_i)$ can be chosen to be independent of $i$. This is because $\Sig$ can be covered by a finite number of Euclidean charts, we need two charts for every cylindrical end. The charts used for the Euclidean end are isometric. On these charts, we can find a cone $C$ such that the cone condition is satisfied with the same $C$ for all $\Sig_i$. 
\end{remark}
\begin{lemma}[Norm bound for $\G_\C$ action on $\A$]\label{lem:gcactona} Let $\Sig$ be a Riemann surface with cylindrical ends. Let $k$,
  $p\geq 0$ be such that $(k+1)p>2$. Let $P:=\Sig \times K$ be a trivial $K$-bundle on $\Sig$. Complex gauge transformations in
  $\G^{k+1,p}(P)$ act smoothly on the space of connections $\A^{k,p}(P)$.

  Let $A_0 \in \A^{k,p}$ be a connection on $P$. For
  any $\eps>0$, there is a constant $C$ so that the following is
  satisfied. For any $W^{k,p}$ connection $A=A_0+a$ which satisfies
  $\Mod{a}_{W^{k,p}(\Sig)}<\eps$ and any $\xi \in W^{k+1,p}(\Sig, \k)$
  that satisfies $\Mod{\xi}_{W^{k+1,p}}<1$, 
\begin{equation}\label{eq:actbd}
  \Mod{(\exp i\xi)A - A}_{W^{k,p}(\Sig)} \leq C\Mod{\xi}_{W^{k+1,p}(\Sig)}.
\end{equation}
Suppose $\Sig_1 \subset \Sig_2 \subset \dots \subset \Sig$ be a
sequence of subsets exhausting $\Sig$, whose boundaries $\partial
\Sig_i$ are smooth, lie in the cylindrical region of $\Sig$ and are
$\R$-translates of each other. Then the constant $C$ can be picked so
that it satisfies \eqref{eq:actbd} for all $\Sig_i$.
\end{lemma}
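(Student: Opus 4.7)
The plan is to derive an explicit power-series formula for $(\exp i\xi)A - A$ in terms of $\xi$, $\delbar_A\xi$ and $a := A - A_0$, then estimate it term by term using the Banach-algebra property of $W^{k+1,p}$. Recall that a complex gauge transformation $g$ acts on connections through the associated $\delbar$-operator, $\delbar_{gA} = g\circ\delbar_A\circ g^{-1}$, which gives
\[
(gA - A)^{0,1} = -(\delbar_A g)g^{-1}, \qquad (gA - A)^{1,0} = -\bigl((gA - A)^{0,1}\bigr)^{*}.
\]
Expanding $g = \exp(i\xi)$ as a power series, both $g$ and $\delbar_A g = \delbar_{A_0} g + [a^{0,1}, g]$ become polynomial expressions in $\xi$ and $\delbar_{A_0}\xi$, with the $a$-dependence appearing linearly through the commutator term.

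Since $(k+1)p > 2 = \dim\Sig$, Proposition \ref{prop:sobmult} says that $W^{k+1,p}(\Sig)$ is a Banach algebra with some constant $c_0$ and that multiplication $W^{k+1,p} \times W^{k,p} \to W^{k,p}$ is continuous bilinear (hence smooth). Each term of the expansion is therefore a smooth polynomial in $(\xi, a)$; the inductive estimate $\Mod{\xi^n}_{W^{k+1,p}} \leq c_0^{n-1}\Mod{\xi}_{W^{k+1,p}}^n$ combined with the factorials $1/n!$ gives absolute convergence of the series on any fixed ball of $W^{k+1,p}$, so the action $\G^{k+1,p}(P)\times \A^{k,p}(P)\to \A^{k,p}(P)$ is smooth.

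For the linear bound \eqref{eq:actbd}, the crucial observation is that every term of the series is linear in $\delbar_A\xi$, so the entire expression $(\exp i\xi)A - A$ vanishes when $\xi = 0$. Combining the Banach-algebra estimate with $\Mod{\delbar_A\xi}_{W^{k,p}} \leq \Mod{\delbar_{A_0}\xi}_{W^{k,p}} + c\Mod{a}_{W^{k,p}}\Mod{\xi}_{W^{k+1,p}}$ yields
\[
\Mod{(\exp i\xi)A - A}_{W^{k,p}(\Sig)} \leq C_1(A_0)\bigl(1 + \Mod{a}_{W^{k,p}}\bigr)\Mod{\xi}_{W^{k+1,p}} \exp\bigl(c_0\Mod{\xi}_{W^{k+1,p}}\bigr),
\]
where $C_1(A_0)$ depends only on $\Mod{A_0}_{W^{k,p}}$ and the multiplication constants. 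Under the hypotheses $\Mod{\xi}_{W^{k+1,p}} < 1$ and $\Mod{a}_{W^{k,p}} < \eps$, the right-hand side is bounded by $C\Mod{\xi}_{W^{k+1,p}}$ with $C := C_1(1+\eps)e^{c_0}$, proving \eqref{eq:actbd}.

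Finally, for uniformity over an exhaustion $\Sig_1 \subset \Sig_2 \subset \cdots$, the only $\Sig$-dependence entering the estimates above is through $c_0$ and the multiplication constants of Proposition \ref{prop:sobmult}. Remark \ref{rem:multcyl} asserts that these may be chosen independent of $i$ because the boundaries $\partial\Sig_i$ lie in the cylindrical part of $\Sig$ and are mutual $\R$-translates, so a single cone satisfies the cone condition for all $\Sig_i$ simultaneously. The main obstacle in the whole argument is bookkeeping the series expansion cleanly; once the formula is in hand, the estimates reduce to iterated application of the Banach-algebra inequality.
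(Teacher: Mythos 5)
Your proof is correct and is essentially the paper's: the paper gives no computation of its own, deferring to Lemma 6.4 of \cite{VW:affine} (whose content is exactly this power-series expansion of $(\delbar_A e^{i\xi})e^{-i\xi}$ estimated term by term via the Banach-algebra property, with each term carrying one factor of $\delbar_A\xi$) and to Remark \ref{rem:multcyl} for the uniformity of the constants over the exhaustion $\Sig_i$. The one small imprecision is that the mixed multiplication $W^{k+1,p}\times W^{k,p}\to W^{k,p}$ is not literally stated in Proposition \ref{prop:sobmult}, but it follows from the embedding $W^{k+1,p}\hookrightarrow C^0$, with constants again uniform over the $\Sig_i$ by the same cone-condition argument.
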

Lemma 6.4 in \cite{VW:affine} is a version of the above Lemma when $\Sig$ is compact. The same proof carries over in this case. The uniform constants for the sequence $\Sig_i$ can be obtained using Remark \ref{rem:multcyl}.
The following Lemma says that on a trivial principal bundle, a
$W^{k,p}$-small connection can be transformed to a flat connection via a
$W^{k+1,p}$-small complex gauge transformation.
\begin{lemma}
  \label{lem:toflat} {\rm (Lemma 4.3, Remark 4.4 in \cite{VW:affine})} Let  $k \in \Z_{\geq 0}$ and $p>1$ be such that $(k+1)p>2$ and let $\Sig$ be a compact connected Riemann surface with metric with non-empty boundary. Let $P:=\Sig \times K$ be the trivial principal $K$-bundle
  on $\Sig$. There are
  constants $c_1$, $c_2$ and $c_2'$ so that the following holds. Let
  $A=\d +a$ be a connection on $P$ so that $a \in
  \Om^1(\Sig,\k)_{W^{k,p}}$. If $\Mod{a}_{W^{k,p}(\Sig)}<c_1$, there is a
  unique $\xi \in W^{k+1,p}(\Sig,\k)$ satisfying $\xi|_{\partial \Sig}=0$,
  $F_{e^{i\xi}A}=0$ and $\Mod{\xi}_{W^{k+1,p}} \leq
  c_2\Mod{F_A}_{W^{k-1,p}} \leq c_2'\Mod{a}_{W^{k,p}}$.
\end{lemma}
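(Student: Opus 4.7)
The proof is an application of the Implicit Function Theorem (Proposition \ref{prop:impfn}) to an appropriate map between Sobolev spaces. Set $X := \{\xi \in W^{k+1,p}(\Sig,\k) : \xi|_{\partial\Sig} = 0\}$ (a closed subspace of $W^{k+1,p}$) and $Y := W^{k-1,p}(\Sig,\k)$, and define
$$\mathcal{F}: X \to Y, \qquad \mathcal{F}(\xi) := *F_{e^{i\xi}A}.$$
A zero of $\mathcal{F}$ produces the desired complex gauge transformation flattening $A$. Uniqueness of such a $\xi$ within a small ball will be built into the IFT conclusion.

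I will first verify that $\mathcal{F}$ is smooth with a well-behaved linearization. Since $(k+1)p>2$, the embedding $W^{k+1,p}\hookrightarrow C^0$ holds on $\Sig$ and Sobolev multiplication (Proposition \ref{prop:sobmult}) applies in all the orders we need. A direct Taylor expansion of the action $e^{i\xi}(\d+a)$ yields
$$\mathcal{F}(\xi) = *F_A + \d_A^*\d_A\,\xi + Q(\xi,a),$$
where $Q$ gathers terms at least quadratic in $(\xi,a)$ and is smooth as a map $X\to Y$ with the expected Sobolev-multiplication estimates on $\Mod{Q}_Y$ and its differentials.

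Next I will check that $D\mathcal{F}(0) = \d_A^*\d_A:X\to Y$ is an isomorphism with $\Mod{(D\mathcal{F}(0))^{-1}}$ bounded uniformly once $\Mod{a}_{W^{k,p}}<c_1$ is small. For the reference case $A=\d$, the operator is the scalar Dirichlet Laplacian on $\k$-valued functions on the compact surface $\Sig$ with smooth non-empty boundary; it is an isomorphism $W^{k+1,p}_0\to W^{k-1,p}$ by standard elliptic regularity for the Dirichlet problem, the kernel being trivial by integration by parts together with connectedness of $\Sig$ and non-emptiness of $\partial\Sig$. The difference $\d_A^*\d_A - \d^*\d$ is a first-order operator whose coefficients depend linearly on $a$ and its first derivatives; Sobolev multiplication gives a bound $\Mod{\d_A^*\d_A - \d^*\d}_{X\to Y}\leq C\Mod{a}_{W^{k,p}}$, so a Neumann series keeps $D\mathcal{F}(0)$ invertible with uniform inverse norm provided $c_1$ is chosen small.

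Finally I will apply the Implicit Function Theorem. The initial error satisfies
$$\Mod{\mathcal{F}(0)}_Y = \Mod{*F_A}_Y \leq \Mod{\d a}_Y + \tfrac12\Mod{[a\wedge a]}_Y \leq \bigl(1 + C\Mod{a}_{W^{k,p}}\bigr)\Mod{a}_{W^{k,p}},$$
proving the auxiliary bound $\Mod{F_A}_{W^{k-1,p}}\leq c_2'\Mod{a}_{W^{k,p}}$. The Sobolev estimate on $Q$ gives $\Mod{D\mathcal{F}(\xi)-D\mathcal{F}(0)}\leq C\bigl(\Mod{\xi}_X+\Mod{a}_{W^{k,p}}\bigr)$ on a small ball around $0$. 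After possibly shrinking $c_1$, the hypotheses of Proposition \ref{prop:impfn} are satisfied, producing a unique small $\xi\in X$ with $\mathcal{F}(\xi)=0$ and $\Mod{\xi}_X\leq c_2\Mod{\mathcal{F}(0)}_Y = c_2\Mod{F_A}_{W^{k-1,p}}$, chaining to $c_2'\Mod{a}_{W^{k,p}}$ as claimed. The main technical obstacle is Step 2: obtaining the uniform invertibility of the Dirichlet Laplacian as a map $W^{k+1,p}_0 \to W^{k-1,p}$, in particular in the borderline case $k=0$ where the target is a negative Sobolev space and the boundary trace must be interpreted via duality; once this is secured, the rest of the IFT bookkeeping is a routine application of Sobolev multiplication, since the geometry of $\Sig$ is fixed and compact.
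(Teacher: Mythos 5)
The paper does not actually prove this lemma: it is imported verbatim from \cite{VW:affine} (Lemma 4.3, Remark 4.4), so there is no internal proof to compare against. Judged on its own, your implicit-function-theorem argument is the standard route and is essentially sound; it also mirrors how the paper itself handles the closely analogous statements for the vortex equation (Lemmas \ref{lem:vortcont} and \ref{lem:vortcontnodal}), where the functional is $\xi \mapsto *F_{e^{i\xi}A}+\Phi(e^{i\xi}u)$ with linearization $\d_A^*\d_A + u^*\d\Phi(J(\cdot)_X)$ as in \eqref{eq:diffsmooth}. The structural difference is where invertibility of the linearization comes from: in the vortex setting it is the positive zeroth-order term $L_u$ that kills the kernel on a surface without boundary, whereas here it is the Dirichlet condition on a compact connected surface with non-empty boundary; you identify this correctly, and your Neumann-series perturbation off the flat model $\d^*\d$ is the right way to get constants uniform in $a$.

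Two caveats are worth recording. First, the uniqueness your argument delivers is only uniqueness within the IFT ball $B_\delta$; this does cover the literal statement (any $\xi$ obeying the norm bound $\Mod{\xi}_{W^{k+1,p}}\leq c_2\Mod{F_A}_{W^{k-1,p}}$ lies in $B_\delta$ once $c_1$ is small), but the unconditional uniqueness of the flattening $\xi$ that the paper invokes in the proof of Lemma \ref{lem:stdform} is a global statement and requires the convexity/variational argument of Donaldson \cite{Do:bdry}, not the IFT. Second, the case $k=0$ (where $p>2$ is forced by $(k+1)p>2$) genuinely requires the $L^p$ Dirichlet theory with target $W^{-1,p}$, including a correct duality interpretation of that space and of terms such as $[\d^*a,\xi]$; you flag this as the main technical point rather than resolving it, so the proof is complete only modulo that standard but nontrivial input. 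With those two points supplied, the argument goes through.
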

%
%
%
\begin{proposition}\label{prop:deccomp} Suppose $\Sig$ is a non-compact manifold that is exhausted by a sequence of compact manifolds
$$\Sig_1 \subset \Sig_2 \subset \dots, \quad \Sig=\cup_i\Sig_i.$$
Let $s_1$, $s_2 \in \Z_{\geq 0}$ and $p_1$, $p_2>0$ and let $F:W^{s_1,p_1}(\Sig) \to W^{s_2,p_2}(\Sig)$ be a differential operator that satisfies the following. The restriction $F|_{\Sig_i}:W^{s_1,p_1}(\Sig_i) \to W^{s_2,p_2}(\Sig_i)$ is a compact operator for all $i$. The restriction $F|_{\Sig \bs \Sig_i}$ has bounded norm for all $i$ and the operator norm $\Mod{F|_{\Sig \bs \Sig_i}} \to 0$ as $i\to \infty$. Then, the operator $F$ is compact.
\end{proposition}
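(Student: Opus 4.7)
The plan is to exhibit $F$ as a norm limit of a sequence of compact operators $F_i$ and then invoke the fact that compact operators form a closed subspace of the bounded operators under the operator-norm topology. Choose smooth cut-off functions $\chi_i : \Sig \to [0,1]$ with $\chi_i \equiv 1$ on $\Sig_{i-1}$ and $\on{supp}(\chi_i)$ contained in the interior of $\Sig_i$, and set $F_i(u) := \chi_i \cdot F(u)$ for $u \in W^{s_1,p_1}(\Sig)$.

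To see that each $F_i$ is compact, I would factor it as follows. Given $u \in W^{s_1,p_1}(\Sig)$, first restrict to $u|_{\Sig_i} \in W^{s_1,p_1}(\Sig_i)$, then apply the compact operator $F|_{\Sig_i}$ to obtain $(Fu)|_{\Sig_i} \in W^{s_2,p_2}(\Sig_i)$, then multiply by $\chi_i$, and finally extend by zero to all of $\Sig$. Since $\chi_i$ vanishes in a neighbourhood of $\partial \Sig_i$, the extension-by-zero step is bounded in $W^{s_2,p_2}$. The first and last maps are bounded, the multiplication by $\chi_i$ is bounded, and $F|_{\Sig_i}$ is compact by hypothesis, so $F_i$ is the composition of bounded maps with one compact factor and is therefore compact.

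To establish $\Mod{F - F_i} \to 0$, observe that $(F-F_i)(u) = (1-\chi_i)F(u)$ is supported in $\Sig \bs \Sig_{i-1}$. Multiplication by $(1-\chi_i)$ is bounded on $W^{s_2,p_2}(\Sig)$ with an operator norm controlled by finitely many derivatives of $\chi_i$, and these can be chosen uniformly in $i$ since the boundaries $\partial\Sig_{i-1}$ lie in the cylindrical part of $\Sig$ and are $\R$-translates of each other. Because $F$ is a differential operator (and hence local), $F(u)|_{\Sig \bs \Sig_{i-1}} = F\bigl(u|_{\Sig \bs \Sig_{i-1}}\bigr)$, so the hypothesis applied on $\Sig \bs \Sig_{i-1}$ yields
\[ \Mod{(F-F_i)(u)}_{W^{s_2,p_2}(\Sig)} \leq C \cdot \Mod{F|_{\Sig \bs \Sig_{i-1}}} \cdot \Mod{u}_{W^{s_1,p_1}(\Sig)}, \]
which tends to zero as $i \to \infty$. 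Thus $F$ is a norm limit of compact operators and is itself compact.

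The main subtlety is that one cannot simply use $u \mapsto (Fu)|_{\Sig_i}$: extension by zero is not bounded in $W^{s_2,p_2}(\Sig)$ when $s_2 \geq 1$ without a vanishing condition at the boundary. The cutoff $\chi_i$ is inserted precisely to kill the would-be boundary traces before extension, and it also furnishes the clean estimate $\Mod{F - F_i} \leq C\,\Mod{F|_{\Sig \bs \Sig_{i-1}}}$ needed for the argument to close.
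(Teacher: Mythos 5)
The paper does not actually write out a proof of this proposition; it only points to Lemma 2.1 in \cite{choquet} and Proposition E.6(v) in \cite{Zilt:thesis}, where the argument is sequential rather than by operator-norm approximation: given a bounded sequence $u_n$, compactness of $F|_{\Sig_i}$ plus a diagonal extraction gives a subsequence with $F(u_n)|_{\Sig_i}$ convergent for every $i$, and the tail bound $\Mod{F(u_n)}_{W^{s_2,p_2}(\Sig\bs\Sig_i)} \leq \Mod{F|_{\Sig\bs\Sig_i}}\cdot\sup_n\Mod{u_n}$ (using locality of $F$ and the splitting of the Sobolev norm over $\Sig=\Sig_i\cup(\Sig\bs\Sig_i)$) shows this subsequence is Cauchy in $W^{s_2,p_2}(\Sig)$. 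Your route --- exhibiting $F$ as a norm limit of the compact operators $\chi_i F$ --- is a reasonable alternative, and your factorization showing each $\chi_i F$ is compact is correct; the observation that the cutoff is needed before extension by zero is well taken.

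The one step that the hypotheses do not support is the uniformity of the multiplication constants. Your estimate is really $\Mod{F-F_i}\leq C_i\,\Mod{F|_{\Sig\bs\Sig_{i-1}}}$, where $C_i$ controls $s_2$ derivatives of $\chi_i$, and the conclusion needs $C_i$ not to grow faster than $\Mod{F|_{\Sig\bs\Sig_{i-1}}}$ decays. You justify uniform $C_i$ by saying the boundaries $\partial\Sig_i$ lie in a cylindrical end and are $\R$-translates of one another --- but that is a hypothesis of Lemma \ref{lem:gcactona} and Remark \ref{rem:multcyl}, not of this proposition, which is stated for an arbitrary exhaustion of an arbitrary non-compact manifold (where the collars between consecutive $\Sig_i$ may force the cutoff derivatives to blow up). In the paper's actual application (Lemma \ref{lem:vortcont}, where the $\Sig_i$ are translates along a cylinder) your argument is complete; to prove the proposition as stated, either add the uniform-cutoff hypothesis explicitly or use the diagonal argument above, which requires no cutoff functions at all.
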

The proof is similar to Lemma 2.1 in \cite{choquet}, see also Proposition E.6 (v) in \cite{Zilt:thesis}. 
\begin{theorem}[Uhlenbeck Compactness, Theorem 2.1 in \cite{Uh:compactness}]\label{thm:Uhcpt} Let $K$ be a compact Lie group, let $B \subset \R^d$ be the unit ball and $\frac d 2 < p < d$. There exist constants $\kappa(d,p)$, $c(d,p)$ such that the following is satisfied. Any $W^{1,p}$-connection $A$ on the trivial bundle $B \times K$ that satisfies the curvature bound $\Mod {F_A}_{L^p(B_1)}<\kappa$ is gauge equivalent to a connection $d+a$, such that 
\begin{enumerate}
\item\rm{(Coulomb gauge condition)} $\d^*a=0$, $*a|_{\partial B}=0$. 
\item $\Mod{a}_{W^{1,p}(B_1)} \leq c \Mod{F_A}_{L^p(B_1)}$.
\end{enumerate}
\end{theorem}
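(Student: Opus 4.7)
The plan is to prove the theorem by the continuity method along the path $A_t := \d + ta$ (where $A = \d + a$) for $t \in [0,1]$, treating the Coulomb gauge bound as an a priori estimate that is itself preserved along the path. Fix a constant $c_0$ (to be chosen), and let
$$S := \{t \in [0,1] \mid \exists\, g_t \in W^{2,p}(B,K) \text{ with } g_tA_t-\d =: a_t \text{ satisfying } \d^*a_t = 0,\ *a_t|_{\partial B} = 0,\ \Mod{a_t}_{W^{1,p}} \leq c_0\Mod{F_{A_t}}_{L^p}\}.$$
Since $A_0 = \d$, we have $0 \in S$ via $g_0 = \Id$. Showing $S$ is both open and closed yields $S = [0,1]$, and the case $t = 1$ is the theorem.

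For openness at $t_0 \in S$, I would apply the implicit function theorem (Proposition \ref{prop:impfn}) to the nonlinear map
$$F(t, \xi) := \bigl(\d^*(e^\xi g_{t_0} A_t - \d),\ *(e^\xi g_{t_0} A_t - \d)|_{\partial B}\bigr)$$
on a neighbourhood of $(t_0, 0)$ in $\R \times W^{2,p}(B,\k)$. Its $\xi$-derivative at $(t_0, 0)$ is the Laplacian $\d^*\d$ on $\k$-valued functions with Neumann-type boundary condition induced from $*a|_{\partial B} = 0$; this is an isomorphism onto its closed range after modding out by the kernel of constants (which does not affect the Coulomb condition). The resulting $\xi(t)$ gives a gauge transformation placing $A_t$ in Coulomb gauge for $t$ near $t_0$, and the bound persists by continuity. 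For closedness, let $t_n \to t_\infty$ with $t_n \in S$. A direct computation gives $F_{A_t} = t\,\d a + \tfrac{t^2}{2}[a \wedge a]$, so $\Mod{F_{A_{t_n}}}_{L^p}$ is uniformly bounded, and hence so is $\Mod{a_n}_{W^{1,p}}$. Extract $a_n \weakto a_\infty$ in $W^{1,p}$; by Rellich--Kondrachov this is strong in $L^q$ for any $q < dp/(d-p)$, which suffices (via Proposition \ref{prop:sobmult}, using $p > d/2$ so that $W^{1,p} \hra L^{2p}$) to pass to the limit in $[a_n \wedge a_n]$ and verify the Coulomb identities for $A_{t_\infty}$. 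The gauge transformations $g_n$ converge in $W^{2,p}$ by elliptic regularity applied to the gauge-transformation equation relating $g_n$, $a_n$ and $a$, and the bound passes to the limit.

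The heart of the argument is the a priori estimate
$$\Mod{a}_{W^{1,p}(B)} \leq C\bigl(\Mod{F_A}_{L^p(B)} + \Mod{a}_{W^{1,p}}^2\bigr)$$
for Coulomb connections on the ball, which follows from the Hodge-theoretic elliptic estimate for the first-order system $\d^*a = 0$, $\d a = F_A - \hh[a \wedge a]$ with boundary condition $*a|_{\partial B} = 0$, combined with $\Mod{[a \wedge a]}_{L^p} \leq c\Mod{a}_{W^{1,p}}^2$ coming from Proposition \ref{prop:sobmult}. The main obstacle is arranging the constants so the bootstrap closes uniformly along the continuity path: one chooses $\kappa$ small enough that any $a_t$ with $\Mod{a_t}_{W^{1,p}} \leq c_0\Mod{F_{A_t}}_{L^p} \leq 2c_0\kappa$ automatically satisfies $C\Mod{a_t}_{W^{1,p}} \leq \hh$, so that the quadratic term is absorbed, yielding $\Mod{a_t}_{W^{1,p}} \leq 2C\Mod{F_{A_t}}_{L^p}$ and closing the argument with $c_0 := 2C$ and $c := c_0$. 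A secondary subtlety is establishing the Hodge/elliptic estimate on the ball for the specific boundary condition $*a|_{\partial B} = 0$ (dual to a Dirichlet condition, rather than to the more familiar tangential condition), but this boundary value problem is classical and elliptic on the Euclidean ball.
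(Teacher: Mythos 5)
Your overall architecture (continuity method, implicit function theorem for openness, weak compactness plus an absorbed elliptic estimate for closedness, Sobolev multiplication $W^{1,p}\cdot W^{1,p}\to L^p$ from $p>d/2$) is the right one; note that the paper does not prove this statement itself but quotes it from Uhlenbeck. However, there is one genuine gap in your argument, and it is precisely the well-known delicate point of this theorem: the linear path $A_t=\d+ta$ does not stay inside the set of connections with small curvature. One has
$$F_{A_t}=t\,\d a+\tfrac{t^2}{2}[a\wedge a]=t\,F_A+\tfrac{t^2-t}{2}[a\wedge a],$$
and the second term is controlled only by $\Mod{a}_{L^{2p}}^2$, on which the theorem places no smallness hypothesis whatsoever. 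So for intermediate $t$ the quantity $\Mod{F_{A_t}}_{L^p}$ can be arbitrarily large even though $F_{A_0}=0$ and $\Mod{F_{A_1}}_{L^p}<\kappa$. This breaks the chain of inequalities $\Mod{a_t}_{W^{1,p}}\le c_0\Mod{F_{A_t}}_{L^p}\le 2c_0\kappa$ on which your absorption step rests: without it the quadratic term in the a priori estimate cannot be absorbed, the estimate is not self-improving, and the openness/closedness argument fails at those intermediate $t$.

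The standard repair --- and the real reason the hypothesis $p>d/2$ appears, beyond Sobolev multiplication --- is to replace the linear interpolation by the dilation path: set $\delta_t(x):=tx$ and $A_t:=\delta_t^*A$ on $B$. Then
$$\Mod{F_{A_t}}_{L^p(B)}=t^{2-d/p}\Mod{F_A}_{L^p(B_t)}\le\Mod{F_A}_{L^p(B)}<\kappa \quad\text{for all } t\in(0,1],$$
because $2-d/p>0$, and $A_t\to\d$ in $W^{1,p}(B)$ as $t\to 0$ for the same reason. Along this path the curvature hypothesis is preserved uniformly, your a priori estimate does close (choose $\kappa$ so that $2Cc_0\kappa$ lies below the absorption threshold), and the rest of your outline --- the implicit function theorem with the Neumann Laplacian modulo constants for openness, and weak $W^{1,p}$ limits with lower semicontinuity of the norm for closedness --- goes through essentially as you wrote it.
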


\def\dbar{\leavevmode\hbox to 0pt{\hskip.2ex \accent"16\hss}d}
\providecommand{\bysame}{\leavevmode\hbox to3em{\hrulefill}\thinspace}
\providecommand{\MR}{\relax\ifhmode\unskip\space\fi MR }
\providecommand{\MRhref}[2]{%
  \href{http://www.ams.org/mathscinet-getitem?mr=#1}{#2}
}
\providecommand{\href}[2]{#2}

\end{document}